\newcommand{\mysection}[1]{\section{#1}
      \setcounter{equation}{0}}
\newcommand\cbrk{\text{$]$\kern-.15em$]$}} 
\newcommand\opar{\text{\raise.2ex\hbox{${\scriptstyle | }$}\kern-.34em$($} }
\DeclareMathOperator*{\esssup}{ess\,sup}
\DeclareMathOperator*{\essup}{ess\,sup}
\newtheorem{theorem}{Theorem}[section]
\newtheorem{lemma}[theorem]{Lemma}
\newtheorem{corollary}[theorem]{Corollary}
\theoremstyle{definition}
\newtheorem{assumption}{Assumption}[section]
\newtheorem{definition}{Definition}[section]
\theoremstyle{remark}
\newtheorem{remark}{Remark}[section]
\newcommand{\F}{\mathcal{{F}}}
\newcommand{\R}{\mathbb{{R}}}
\newcommand\bB{\mathbb{B}}
\newcommand\bL{\mathbb{L}}
\newcommand\bR{\mathbb{R}}
\newcommand\bN{\mathbb{N}}
\newcommand\bM{\mathbb{M}}
\newcommand\bU{\mathbb{U}}
\newcommand\bV{\mathbb{V}}
\newcommand\bS{\mathbb{S}}
\newcommand\bT{\mathbb{T}}
\newcommand\bW{\mathbb{W}}
\newcommand\frc{\mathfrak{c}}
\newcommand\frM{\mathfrak{M}}
\newcommand\frZ{\mathfrak{Z}}
\newcommand\frz{\mathfrak{z}}
\newcommand\cB{\mathcal{B}}
\newcommand\cF{\mathcal{F}}
\newcommand\cG{\mathcal{G}}
\newcommand\cH{\mathcal{H}}
\newcommand\cL{\mathcal{L}}
\newcommand\cM{\mathcal{M}}
\newcommand\cP{\mathcal{P}}
\newcommand\cO{\mathcal{O}}
\newcommand\cQ{\mathcal{Q}}
\newcommand\cR{\mathcal{R}}
\newcommand\cS{\mathcal{S}}
\newcommand\cZ{\mathcal{Z}}
\newcommand{\ep}{{(\varepsilon)}}
\newcommand{\ke}{k_{\varepsilon}}
\newcommand{\E}{\mathbb{E}}
\newcommand{\Nte}{\tilde{N}_{1}}
\newcommand{\vp}{\varphi}
\newcommand{\bRY}{{\mathbb{R}^{d'}}}
 \newcommand{\sumstar}
 {\operatornamewithlimits{\sum@\kern-.2em\raise1ex\hbox{*}}}
\begin{document}

\author[A. Davie]{Alexander Davie}
\address{School of Mathematics and Maxwell Institute, 
University of Edinburgh, Scotland, United Kingdom.}
\email{a.davie@ed.ac.uk}

\author[F. Germ]{Fabian Germ}
\address{School of Mathematics and Maxwell Institute, 
University of Edinburgh, Scotland, United Kingdom.}
\email{fgerm@ed.ac.uk}

\author[I. Gy\"ongy]{Istv\'an Gy\"ongy}
\address{School of Mathematics and Maxwell Institute, 
University of Edinburgh, Scotland, United Kingdom.}
\email{i.gyongy@ed.ac.uk}

\keywords{Nonlinear filtering, random measures, L\'evy processes}

\subjclass[2020]{Primary  60G35, 60H15; Secondary 60G57, 60H20}

\begin{abstract} 
A partially observed jump diffusion $Z=(X_t,Y_t)_{t\in[0,T]}$
given by a stochastic differential equation driven by Wiener processes 
and  Poisson martingale measures is considered when the coefficients of the equation satisfy appropriate Lipschitz and growth conditions. Under general conditions it is shown that the conditional density of the unobserved 
component $X_t$ given the observations $(Y_s)_{s\in[0,t]}$ exists 
and belongs to $L_p$ if the conditional density of $X_0$ given $Y_0$ 
exists and belongs to $L_p$. 
\end{abstract}

\title[Filtering density]{On partially observed jump diffusions II. 
The filtering density
}

\maketitle

\tableofcontents

\mysection{Introduction}

As in \cite{GG2022} we consider a partially observed jump diffusion 
$Z=(X_t,Y_t)_{t\in[0,T]}$ satisfying 
the system of stochastic differential equations 
\begin{equation}                                                                          
\begin{split}
    dX_t    &= b(t,Z_t)dt + \sigma(t,Z_t)dW_t + \rho(t,Z_t)dV_t\\
            &+\int_{\frZ_0}\eta(t, Z_{t-},\frz)\,\tilde N_0(d\frz,dt) 
            + \int_{\frZ_1}\xi(t,Z_{t-},\frz)\,\tilde N_{1}(d\frz,dt),\\
    dY_t    &=B(t,Z_t)dt + dV_t + \int_{\frZ_1} \frz\,\tilde N_{1}(d\frz, dt),
    \end{split}                                                                                             \label{system_1}
\end{equation} 
driven by a $d_1+d'$-dimensional 
$\cF_t$-Wiener process $(W_t,V_t)_{t\geq0}$ and independent 
$\F_t$-Poisson martingale measures 
$\tilde N_i(d\frz,dt) = N_i(d\frz,dt)-\nu_i(d\frz)dt$ on  
$\bR_{+}\times\frZ_i$, for $i=0$ and $1$, carried by a complete 
filtered probability space $(\Omega,\cF,(\cF_t)_{t\in[0,T]},P)$,  
where $\nu_0$ and $\nu_1$, the characteristic measures of 
the Poisson random measures $N_0$ and $N_1$, are $\sigma$-finite 
measures on a separable measurable space $(\frZ_0,\cZ_0)$ and 
on 
$\frZ_1=\bR^{d'}\setminus\{0\}$ equipped with the  
$\sigma$-algebra $\cZ_1=\cB(\bR^{d'}\setminus\{0\})$ of its  
Borel sets, respectively. 
The mappings $b=(b^i)$, $B=(B^i)$, 
$\sigma=(\sigma^{ij})$ and $\rho=(\rho^{il})$ 
are Borel functions of $(t,z)=(t,x,y)\in\bR_+\times\bR^{d+d'}$, 
with values in $\bR^d$, $\bR^{d'}$, $\bR^{d\times d_1}$ 
and $\bR^{d\times d'}$, respectively, and $\eta=(\eta^i)$ and $\xi=(\xi^i)$ are 
$\bR^d$-valued $\cB(\bR_+\times\bR^{d+d'})\otimes\cZ_0$-measurable and 
$\bR^d$-valued $\cB(\bR_+\times\bR^{d+d'})\otimes\cZ_1$-measurable functions of  
$(t,z,\frz_0)\in\bR_+\times\bR^{d+d'}\times \frZ_0$ 
and $(t,z,\frz_1)\in\bR_+\times\bR^{d+d'}\times \frZ_1$, 
respectively, where $\cB(\bU)$ denotes the Borel $\sigma$-algebra on $\bU$ 
for topological spaces $\bU$.

In \cite{GG2022} we were interested in the equations for the evolution 
of the conditional distribution $P_t(dx)=P(X_t\in dx|Y_s, s\leq t)$ of the unobserved 
component $X_t$ given the observations $(Y_s)_{s\in[0,T]}$. 
Our aim in the present paper is to show, under fairly general conditions,  
that if the conditional distribution of $X_0$ given $Y_0$ has a density $\pi_0$, 
such that it is almost surely in $L_p$ for some $p\geq2$, then 
$X_t$ for every $t$ has a conditional density $\pi_t$ given $(Y_s)_{t\in[0,t]}$, 
which belongs also to $L_p$, almost surely for all $t$.  In a subsequent paper 
we  investigate the regularity properties of the conditional density. 

The filtering problem has been the subject of intense research 
for the past decades and the literature on it is vast. 
For a brief account on the filtering problem and on the history of the presentation 
of the filtering equations for partially observed diffusion processes 
we refer to \cite{K2011}. Concerning the filtering equations associated to \eqref{system_1} 
we refer the reader to \cite{GG2022} and the references therein.

In the present paper we investigate the existence  
of the conditional density $\pi_t=dP_t/dx$ under general conditions 
on the coefficients of the system \eqref{system_1}. 
We do not assume any non-degeneracy conditions on 
$\sigma$ and $\eta$, i.e., they are allowed 
to vanish. Thus, given the observations, there may not remain any randomness 
to smooth the conditional distribution $P_t(dx)$ of $X_t$, 
i.e., if the initial conditional density $\pi_0$ does not exists, then 
the conditional density $\pi_t$ for $t>0$ may not exist either.   
Therefore assuming that the initial conditional density $\pi_0$ exists, 
we are interested in the smoothness and growth conditions   
which we should require from the coefficients in order to get that $\pi_t$ exists 
for every $t\in[0,T]$ as well.        

For partially observed diffusion processes, i.e., 
when $\xi=\eta=0$ and the observation process $Y$ does not have jumps, 
the existence and the regularity properties of the conditional density $\pi_t$ 
have been extensively studied in the literature. For important results under 
non-degeneracy conditions see, for example,  \cite{K2011} \cite{K1978},  
 \cite{K2010}, \cite{P1982}, and the references therein. 
 Without any non-degeneracy assumptions, in \cite{R1980} the existence of  
$\pi_t$ is proved if $\pi_0\in W^2_p\cap W^2_2$ for some $p\geq 2$,  
the coefficients are bounded, $\sigma\sigma^*+\rho\rho^*$ has uniformly 
bounded derivatives in $x$ up to order 3, and $b$, $B$ have uniformly 
bounded derivatives in $x$ up to second order. Under these conditions 
it is also proved  that $(\pi_t)_{t\in[0,T]}$ is a weakly continuous process with values in 
$W^2_p\cap W^2_2$, and that $\pi_t$ has higher regularity   
if $\pi_0$ and the coefficients are appropriately smoother. 
By a result in a later work, \cite{KX}, one knows that if the coefficients $\sigma$, $\rho$, 
$b$ and $B$ are bounded Lipschitz continuous in $x$, and $\pi_0\in L_2(\bR^d, dx)$, 
then $\pi_t$ exists and remains in $L_2$. The approach to obtain this result is based 
on an $L_2$-estimate for the unnormalised conditional density smoothed by Gaussian 
kernels. 
The same method is also used in \cite{B2014} to prove uniqueness of measure-valued  
solutions for the Zakai equation in the case where the signal is a diffusion process, the observation contains a jump term and the coefficients are time-independent, globally Lipschitz, except for the observation drift term, which contains a time dependence, but is bounded and globally Lipschitz.
The approach from \cite{KX} is extended in \cite{MPX2019} to partially observed jump diffusions 
when the Wiener process  
in the observation process $Y$ is independent of the Wiener process in the unobserved 
process, to prove, in particular, the existence of the conditional density in $L_2$, 
if the initial conditional density exists, belongs to $L_2$, the coefficients 
are bounded Lipschitz functions, the coefficients of the random measures in the unobservable 
process are differentiable in $x$ and satisfy a condition in terms of their Jacobian. 
In \cite{QD2015} and \cite{Q2021} the filtering equations for fairly general filtering models 
with partially observed jump diffusions are obtained and studied,  
but the existence of the conditional 
density (in $L_2$) is proved only in \cite{QD2015}, in the special case when the equation for the unobserved process is driven by a Wiener process and an $\alpha$-stable additive L\'evy process, 
$\rho=0$, the coefficients $b$ and $\sigma$ are bounded functions of $x\in\bR^d$, 
$b$ has bounded first order derivatives, $\sigma$ has bounded derivatives up to second order 
and $B=B(t,x,y)$ is a bounded Lipschitz function in $z=(x,y)$.

The main theorem, Theorem \ref{theorem 1}, of the present paper
reads as follows.  Assume that the coefficients $b$, $\sigma$, $\rho$, 
$B$, $\xi$, $\eta$ and $\rho B$ are Lipschitz continuous in $z=(x,y)\in\bR^{d+d'}$,  
$B$ is bounded, $b$, $\sigma$, $\rho$, $\xi$ and $\eta$ satisfy a linear growth condition, 
$\xi$ and $\eta$ admit uniformly equicontinuous derivatives in $x\in\bR^d$, 
$x+\xi(x)$, $x+\eta(x)$ are bijective mappings in $x\in\bR^d$, and have a Lipschitz continuous 
inverse with Lipschitz constant independent of the other variables.
 Assume, moreover, that  $\E|X_0|^r<\infty$ for 
some $r>2$. Under these conditions, if 
the initial conditional density $\pi_0$ exists for some $p\geq2$, then the conditional density 
$\pi_t$ exists and belongs to $L_p$ for every $t$. Moreover, $(\pi_t)_{t\in[0,T]}$ is weakly 
cadlag as $L_p$-valued process.

To prove our main theorem we use the It\^o formula from \cite{GW2021} 
and adapt an approach from \cite{KX} to estimate  
the $L_p$-norm of the smoothed unnormalised conditional distribution 
for even integers $p\geq2$. Hence we obtain Theorem \ref{theorem 1} 
for even integers $p\geq2$. Then we use an interpolation theorem combined 
with an approximation procedure to get the main theorem for every $p\geq2$.  
In a follow-up paper we show that if  in addition to the above assumptions 
the coefficients have bounded derivatives up to order $m+1$ and $\pi_0$ belongs to 
the Sobolev space $W^m_p$ for some $p\geq 2$ and integer $m\geq1$, then 
$\pi_t$ belongs to $W^m_p$ for $t\in[0,T]$. 

The paper is organised as follows. In section 2 we formulate our main result. In section 3 we recall important results from \cite{GG2022} together with the filtering equations obtained therein. 
In section 4 we prove $L_p$ estimates needed for a priori bounds for the smoothed conditional distribution. In section 5 we obtain an Ito formula for the $L_p$-norm of the smoothed conditional distribution and prove our result for the case $p=2$. 
Section 6 contains existence results for the filtering equation in $L_p$-spaces. 
In the last section we prove our main theorem.

We conclude with some notions and notations 
used throughout the paper. 
For an integer $n\geq0$ the notation $C^n_b(\bR^d)$ means 
the space of real-valued bounded continuous functions on $\bR^d$, 
which have bounded and continuous derivatives up to order $n$. 
(If $n=0$, then $C^0_b(\bR^d)=C_b(\bR^d)$ denotes the space of 
real-valued bounded continuous functions on $\bR^d$). 
We use the notation $C^{\infty}_{0}=C^{\infty}_{0}(\bR^d)$ for the space of 
real-valued compactly supported smooth functions on $\bR^d$.
We denote by  $\bM=\bM(\bR^d)$ the set of finite Borel measures 
on $\bR^d$ and by $\frM=\frM(\bR^d)$ the set of finite signed Borel measures on $\bR^d$. For $\mu\in\frM$ we use the notation 
$$
\mu(\varphi)=\int_{\bR^d}\varphi(x)\,\mu(dx) 
$$
for Borel functions $\varphi$ on $\bR^d$. 
We say that a function $\nu:\Omega\to\bM$ is $\cG$-measurable 
for a $\sigma$-algebra $\cG\subset\cF$, if $\nu(\varphi)$ is a  
$\cG$-measurable random variable for every bounded Borel function 
$\varphi$ on $\bR^d$. 
An $\bM$-valued stochastic process $\nu=(\nu_t)_{t\in[0,T]}$ 
is said to be weakly cadlag if almost surely 
$\nu_t(\varphi)$ is a cadlag function of $t$ for all $\varphi\in C_b(\bR^d)$. An $\frM$-valued process $(\nu_t)_{t\in [0,T]}$ is weakly cadlag, if it is the difference of two $\bM$-valued weakly cadlag processes.
For processes $U=(U_t)_{t\in [0,T]}$ we use the notation
$
\cF_t^{U}
$
for the $P$-completion of the $\sigma$-algebra generated by 
$\{U_s: s\leq t\}$. By an abuse of notation, we often write $\cF_t^U$ when referring to the filtration $(\cF^U_t)_{t\in [0,T]}$, whenever this is clear from the context.  For a  measure space 
$(\frZ,\cZ,\nu)$ and $p\geq1$ we use the notation 
$L_p(\frZ)$ for the $L_p$-space of $\bR^d$-valued 
$\cZ$-measurable mappings defined on $\frZ$.  However, if not otherwise specified, the function spaces are considered to be over $\bR^d$.
We always use without mention the summation convention, 
by which repeated integer valued indices imply a summation. For a multi-index $\alpha=(\alpha_1,\dots,\alpha_d)$ of nonnegative integers $\alpha_i, i=1,\dots,d$, 
a function $\vp$ of $x=(x_1,\dots,x_d)\in\bR^d$ and a nonnegative  integer $k$ we use the notation
$$
D^\alpha\vp(x)=D_1^{\alpha_1}D_2^{\alpha_2}\dots D_d^{\alpha_d}\vp(x),
\quad\text{as well as}
\quad 
|D^k\vp|^2=\sum_{|\gamma|=k}|D^\gamma \vp|^2,
$$ 
where $D_i=\tfrac{\partial}{\partial {x^i}}$ and $|\cdot|$ denotes an appropriate norm. 
We also use the notation $D_{ij}=D_iD_j$.
If we want to stress that the derivative is taken in a variable $x$, we write $D^\alpha_x$. If the norm $|\cdot|$ is not clear from the context, we sometimes use appropriate subscripts, 
as in $|\vp|_{L_p}$ for the $L_p(\bR^d)$-norm of $\vp$. 
For $p\geq 1$ and integers $m\geq 0$ 
the space of functions from $L_p$, 
whose generalized derivatives 
up to order $m$ are also in $L_p$, is denoted by $W^m_p$. 
The norm $|f|_{W^m_p}$ of $f$ in $W^m_p$ is defined
by  
$$
|f|_{W^m_p}^p:=\sum_{k=0}^m \int_{\bR^d}|D^kf(x)|^p\,dx<\infty.
$$
For real-valued functions $f$ and $g$ defined on $\bR^d$ the notation $(f,g)$ means the Lebesgue integral of $fg$ over $\bR^d$  when it is well-defined.
Throughout the paper we work on the finite time interval $[0,T]$, 
where $T>0$ is fixed but arbitrary, as well as 
on a given complete probability space $(\Omega,\cF,P)$ equipped 
with a filtration $(\cF_t)_{t\geq0}$ such that $\cF_0$ 
contains all the $P$-null sets. 
For $p,q\geq 1$ and integers $m\geq 1$ 
we denote by $\bW^m_p= L_p((\Omega,\cF_0,P),W^m_p(\bR^d))$ and 
$\bL_{p,q}\subset  L_p(\Omega,L_q([0,T],L_p(\bR^d)))$ 
the set of $\cF_0\otimes  \cB(\bR^d)$-measurable 
real-valued functions   $f=f(\omega,x)$  
and $\cF_t$-optional $L_p$-valued functions  $g=g_t(\omega,x)$ such that
$$
|f|_{\bW^m_p}^p:=\E |f|_{W^m_p}^p<\infty\quad\text{and}\quad|g|_{\bL_{p,q}}^p:=\E\Big(\int_0^T |g_t|_{L_p}^q dt\Big)^{p/q}<\infty
$$
respectively. If $m=0$ we set $\bL_p=\bW^0_p$.

\mysection{Formulation of the main results}
\label{sec main results}

We fix nonnegative constants $K_0$, $K_1$, $L$, $K$ 
and functions 
$\bar\xi\in L_2(\frZ_1)=L_2(\frZ_1,\cZ_1,\nu_1)$, $\bar\eta\in 
L_2(\frZ_0)=L_2(\frZ_0,\cZ_0,\nu_0)$, used throughout 
the paper, and make the following assumptions.

\begin{assumption}                                                    \label{assumption SDE}
\begin{enumerate}
\item[(i)]For $z_j=(x_j,y_j)\in\mathbb{R}^{d+d'}$ ($j=1,2$), $t\geq0$ and $\frz_i\in\frZ_i$ ($i=0,1$) ,
$$
|b(t, z_1)-b(t,z_2)| + |B(t,z_1)-B(t,z_2)|
    +|\sigma(t,z_1)-\sigma(t, z_2)| 
    + |\rho(t,z_1)-\rho(t,z_2)|\leq L|z_1-z_2|,
$$
$$
|\eta(t,z_1,\frz_0)-\eta(t,z_2,\frz_0)|\leq \bar{\eta}(\frz_0)|z_1-z_2|,
$$
$$
|\xi(t,z_1,\frz_1)-\xi(t,z_2,\frz_1)|\leq \bar{\xi}(\frz_1)|z_1-z_2|.
$$
\item[(ii)] 
For all $z=(x,y)\in\mathbb{R}^{d+d'}$, $t\geq0$ 
and $\frz_i\in \frZ_i$ for $i=0,1$ we have 
$$
|b(t,z)|
 +|\sigma(t,z)|+ |\rho(t,z)|\leq K_0+K_1|z|,
 \quad |B(t,z)|\leq K,
$$
$$
|\eta(t,z,\frz_0)|\leq \bar{\eta}(\frz_0)(K_0+K_1|z|),
\quad   
|\xi(t,z,\frz_1)|
\leq \bar{\xi}(\frz_1)( K_0+K_1|z|), 
$$
$$
\int_{\frZ_1}|\frz|^2\,\nu_1(d\frz)\leq K_0^2.
$$
\item[(iii)] The initial condition $Z_0=(X_0,Y_0)$ is 
an $\cF_0$-measurable random variable 
with values in $\bR^{d+d'}$.
\end{enumerate}
\end{assumption}
\begin{assumption}                                                           \label{assumption p}
The functions $\bar\eta\in L_2(\frZ_0)$ 
and $\bar\xi\in L_2(\frZ_1)$ are bounded in magnitude 
by constants  $K_{\eta}$ 
and $K_{\xi}$, respectively.
\end{assumption}
\begin{assumption}                                                              \label{assumption nu}
For some $r>2$ we have $\E|X_0|^r<\infty$,
and the measure $\nu_1$ satisfies
$$
K_r:=\int_{\frZ_1} |\frz|^{r}\,\nu_1(d\frz)<\infty.
$$
\end{assumption}

By a well-known theorem of It\^o one knows that 
Assumption \ref{assumption SDE} ensures the existence and 
uniqueness of a solution $(X_t,Y_t)_{t\geq0}$ to  \eqref{system_1} 
for any given $\cF_0$-measurable initial 
value $Z_0=(X_0,Y_0)$, and for every $T>0$,
\begin{equation}                                                              \label{bound_Z}
\E\sup_{t\leq T}(|X_t|^q+|Y_t|^q)\leq N(1+\E|X_0|^q+\E|Y_0|^q)
\end{equation}
holds for $q=2$ with a constant $N$ depending only on 
$T$, $K_0$, $K$, $K_1$, 
$|\bar{\xi}|_{L_2}$, $|\bar{\eta}|_{L_2}$ and $d+d'$. 
If in addition to Assumption \ref{assumption SDE} 
we assume Assumptions  \ref{assumption p} 
and \ref{assumption nu}, then it is known, 
see, e.g., \cite{DKS}, that the moment estimate 
\eqref{bound_Z} holds with $q:=r$ 
for every $T>0$, 
where now the constant $N$ depends also on 
$r$, $K_r$ $K_{\xi}$ and $K_{\eta}$.

We also need the following additional assumption.
\begin{assumption}                                                             \label{assumption estimates}
(i) The functions $f_0(t,x,y,\frz_0):=\eta(t,x,y,\frz_0)$ 
and $f_1(t,x,y,\frz_1):=\xi(t, x,y,\frz_1)$ 
are continuously differentiable in $x\in\bR^d$ 
for each $(t,y,\frz_i)\in \bR_+\times\bRY\times \frZ_i$, for $i=0$ and $i=1$, 
respectively, such that 
$$
\lim_{\varepsilon\downarrow0}
\sup_{t\in[0,T]}\sup_{\frz\in\frZ_i}\sup_{|y|\leq R}
\sup_{|x|\leq R, |x'|\leq R, |x-x'|\leq\varepsilon}
|D_xf_i(t,x,y,\frz_i)-D_xf_i(t,x',y,\frz_i)|=0 
$$
for every $R>0$. 

\noindent
(ii) 
There is a constant $\lambda>0$ such that 
for $\theta\in [0,1]$,  $(t,y,\frz_i)\in \bR_+\times\bRY\times \frZ_i$ for $i=0,1$ 
we have  
\begin{equation*}
\lambda |x_1-x_2|\leq |x_1-x_2+\theta(f_i(t,x_1,y,\frz_i)-f_i(t,x_2,y,\frz_i))|
\quad
\text{for $x_1,x_2\in\bR^d$.} 
\end{equation*}
\newline
(iii) The function $\rho B=(\rho^{ik}B^k)$ is Lipschitz in $x\in\bR^d$, 
uniformly in $(t,y)$, i.e., 
$$
|(\rho B)(t,x_1,y)-(\rho B)(t,x_2,y)|\leq L|x_1-x_2|
\quad
\text{for all $x_1,x_2\in\bR^d$ and $(t,y)\in[0,T]\times\bR^{d'}$.}  
$$
\end{assumption}

Recall that $\cF_t^Y$ denotes the completion of the $\sigma$-algebra 
generated by $(Y_s)_{s\leq t}$.
Then the main result of the paper reads as follows. 
\begin{theorem}                                                                                                                \label{theorem 1}
Let Assumptions \ref{assumption SDE}, \ref{assumption p} and  
\ref{assumption estimates} hold. 
If $K_1\neq 0$ in Assumption \ref{assumption SDE}, then let additionally 
Assumption \ref{assumption nu} hold.
Assume the conditional density 
$\pi_0=P(X_0\in dx|\cF_0^Y)/dx$ 
exists and $\E|\pi_0|_{L_p}^p<\infty$ for some $p\geq2$. 
Then almost surely the conditional density 
$\pi_t=P(X_t\in dx|\cF^Y_t)/dx$ 
exists for all $t\in[0,T]$. Moreover,   
$(\pi_t)_{t\in[0,T]}$ is an $L_p$-valued 
weakly cadlag process. 
\end{theorem}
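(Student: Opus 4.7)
The plan is to work with the unnormalised conditional measure $\mu_t$, whose filtering equation is available from \cite{GG2022}, and to establish $L_p$-bounds on a smoothed version that can then be passed to the limit. Concretely, fix a standard mollifier $k_\varepsilon$ and set $\pi_t^{(\varepsilon)}(x):=\int_{\bR^d}k_\varepsilon(x-y)\,\mu_t(dy)$, which is a smooth $\cF_t^Y$-adapted function of $x$ satisfying a stochastic integro-differential equation obtained by testing the Zakai-type equation against $k_\varepsilon(x-\cdot)$. I first handle even integers $p\geq 2$, because then $v\mapsto|v|_{L_p}^p=\int v^p\,dx$ is smooth in $v$ and the It\^o formula of \cite{GW2021} applies cleanly.

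Applying that It\^o formula to $|\pi_t^{(\varepsilon)}|_{L_p}^p$ produces a drift term (from the generator of $X$ and the observation correction), continuous martingale terms driven by $V$, and compensated jump terms from $\tilde N_0$ and $\tilde N_1$. Integration by parts shifts the spatial derivatives onto $(\pi_t^{(\varepsilon)})^{p-1}$; the combination with the first-order contribution of $\rho B$ is Lipschitz by Assumption \ref{assumption estimates}(iii), and the usual cancellation between the top-order $\sigma\sigma^*+\rho\rho^*$ drift and the It\^o correction of the continuous martingale part tames the leading second-order terms. For the jump integrals I perform the changes of variables generated by $x\mapsto x+\eta(t,x,y,\frz_0)$ and $x\mapsto x+\xi(t,x,y,\frz_1)$; Assumption \ref{assumption estimates}(i)--(ii) makes these bi-Lipschitz homeomorphisms with Jacobian bounded above and below uniformly in the other variables, while $\bar\eta,\bar\xi\in L_2$ bounded by $K_\eta, K_\xi$ (Assumption \ref{assumption p}) let the $\nu_i(d\frz_i)$-integrals be controlled by $|\pi_t^{(\varepsilon)}|_{L_p}^p$ times a finite constant. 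Taking expectations and applying Gronwall I obtain, for every even integer $p\geq2$,
\[
\E|\pi_t^{(\varepsilon)}|_{L_p}^p\leq N\,\E|\pi_0|_{L_p}^p,\qquad t\in[0,T],
\]
with $N$ independent of $\varepsilon$. A weak compactness argument then extracts a subsequential limit $\varrho_t\in L_p$ that serves as an $L_p$-density of $\mu_t$ for each $t$; section 6 formalises this as existence and uniqueness for the filtering equation in $L_p$-spaces, which is the step where the $p=2$ argument of section 5 and the $L_p$-estimate of section 4 combine.

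To reach general $p\geq 2$ I interpolate between two even integers $p_1<p<p_2$ via $|f|_{L_p}\leq |f|_{L_{p_1}}^{\theta}|f|_{L_{p_2}}^{1-\theta}$ and an approximation procedure for $\pi_0$ (truncating $\pi_0$ so that both $\E|\pi_0|_{L_{p_1}}^{p_1}$ and $\E|\pi_0|_{L_{p_2}}^{p_2}$ are finite, applying the even-$p$ result, and recovering $\pi_0\in\bL_p$ in the limit). The conditional density $\pi_t$ is then obtained from $\varrho_t$ via the Kallianpur--Striebel formula $\pi_t=\varrho_t/\varrho_t(1)$ (with $\varrho_t(1)>0$ almost surely thanks to Assumption \ref{assumption SDE} and the boundedness of $B$), and the $L_p$-valued weakly cadlag property of $\pi_t$ is inherited from the corresponding weakly cadlag property of $\mu_t$ as a measure-valued functional of $Y$. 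I expect the main obstacle to be the uniform-in-$\varepsilon$ control of the $\tilde N_1$ jump terms: since $Y$ and $X$ share jump randomness through $\tilde N_1$ and $V$, the change of variables in the $x$-integral must be made compatible with conditioning on $\cF_t^Y$, and it is precisely the equicontinuity of $D_xf_i$ in Assumption \ref{assumption estimates}(i), together with the uniform bi-Lipschitz property from (ii), that makes these estimates stable as $\varepsilon\downarrow0$.
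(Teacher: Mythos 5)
Your overall strategy matches the paper's: smooth the unnormalised measure $\mu_t$ with a Gaussian kernel, apply the It\^o formula of \cite{GW2021} to $|\mu_t^{(\varepsilon)}|_{L_p}^p$ for \emph{even} integers $p$, exploit the Lipschitz cancellations (including the $\rho B$ condition and the bi-Lipschitz change of variables for the jump operators) to close a Gronwall estimate uniform in $\varepsilon$, extract an $L_p$-density by weak compactness, and finally normalise via $\pi_t=u_t\,{}^o\!\gamma_t$. However, your route from even integers to general $p\geq 2$ has a genuine gap. You propose the pointwise interpolation $|f|_{L_p}\leq|f|_{L_{p_1}}^{\theta}|f|_{L_{p_2}}^{1-\theta}$ together with a truncation of $\pi_0$ so that $\E|\pi_0^n|_{L_{p_1}}^{p_1}$ and $\E|\pi_0^n|_{L_{p_2}}^{p_2}$ are finite. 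This cannot produce the needed bound $\E\sup_t|u_t|_{L_p}^p\leq N\,\E|\pi_0|_{L_p}^p$ with $N$ independent of the approximation: a general $\pi_0\in L_p$ is neither in $L_{p_2}$ ($p_2>p$) nor, without compact support, in $L_{p_1}$ ($p_1<p$), so the $L_{p_1}$- and $L_{p_2}$-norms of the truncations $\pi_0^n$ blow up as $n\to\infty$, and the interpolated right-hand side degenerates rather than converging to anything controlled by $|\pi_0|_{L_p}$. The paper resolves this differently: it interpolates the \emph{linear solution operator} of the Zakai equation. For truncated, compactly supported coefficients ($K_1=0$, support condition) it first builds the operator $\bS:\psi\mapsto u$ on the dense class $\bB_0$ of bounded compactly supported initial data, proves it is bounded $\bL_{p_0}\to\bL_{p_0,q}$ and $\bL_{p_1}\to\bL_{p_1,q}$ for the neighbouring even integers, and then invokes a Riesz--Thorin type theorem to get boundedness $\bL_p\to\bL_{p,q}$ (Corollary \ref{corollary 1.3.4.22}); the linearity of the \emph{unnormalised} equation is exactly what makes operator interpolation available, and the general coefficients are recovered afterwards by a separate truncation-and-weak-limit argument with stopping times (Step III of the paper's proof).

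Two further points you should be aware of. First, your Gronwall step only yields the fixed-time bound $\E|\pi_t^{(\varepsilon)}|_{L_p}^p\leq N\E|\pi_0|_{L_p}^p$, but both the construction of an $L_p$-valued weakly cadlag version of the density and the uniqueness/identification argument in the paper rest on the stronger estimates $\E\sup_{t\in[0,T]}|\mu_t^{(\varepsilon)}|_{L_p}^p\leq N\E|\mu_0^{(\varepsilon)}|_{L_p}^p$ (and the analogous bound for $L_p$-solutions), obtained via the Davis inequality applied to the martingale parts (Lemmas \ref{lemma supemu} and \ref{lemma usup}); ``inheriting'' the weakly cadlag property from $\mu_t$ requires this uniform control. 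Second, when $K_1\neq0$ you must verify the second-moment condition $\sup_{t\le T}\int|x|^2\,\mu_t(dx)<\infty$ a.s.\ before any of the smoothed estimates are even well defined; this is where Assumption \ref{assumption nu} (the $r>2$ moment of $X_0$ and of $\nu_1$) enters, through Doob's and Jensen's inequalities for the optional projection of $\gamma^{-1}$, a step your proposal does not address.
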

\mysection{The filtering equations}
\label{sec filtering equations}

To describe the evolution of the conditional distribution 
$P_t(dx)=P(X_t\in dx|Y_s,s\leq t)$ 
for $t\in[0,T]$, we introduce 
the random differential operators 
$$
\cL_t=a^{ij}_t(x)D_{ij}+b^i_t(x)D_i, \quad \cM^k_t
=\rho_t^{ik}(x)D_i+B^k_t(x), \quad k=1,2,...,d', 
$$
where 
$$
a^{ij}_t(x):=\tfrac{1}{2}\sum_{k=1}^{d_1}(\sigma^{ik}_t\sigma^{jk}_t)(x)
+\tfrac{1}{2}\sum_{l=1}^{d'}(\rho_t^{il}\rho_t^{jl})(x), 
\quad\sigma_t^{ik}(x):=\sigma^{ik}(t,x,Y_t),\quad
\rho_t^{il}(x):=\rho^{il}(t,x,Y_t), 
$$
$$
b^i_t(x):=b^i(t,x,Y_t),
\quad 
B^k_t(x):=
B^k(t,x,Y_t)
$$
for $\omega\in\Omega$, $t\geq0$, $x=(x^1,...,x^d)\in\bR^d$, 
and $D_i=\partial/\partial x^i$, 
$D_{ij}=\partial^2/(\partial x^i\partial x^j)$ for $i,j=1,2...,d$. 
Moreover for every $t\geq0$ and $\frz \in \frZ_1$ 
we introduce the random operators $I_t^{\xi}$ and $J_t^{\xi}$ defined by 
\begin{equation}                                                                      \label{IJ}                                                       
I_t^{\xi}\varphi(x,\frz)=\varphi(x+\xi_t(x,\frz), \frz)-\varphi(x,\frz), 
\quad
J_t^{\xi}\phi(x, \frz)=I_t^{\xi}\phi(x, \frz)-\sum_{i=1}^d\xi_t^i(x,\frz)D_i\phi(x,\frz)
\end{equation}
for functions $\varphi=\varphi(x,\frz)$ and $\phi=\phi(x,\frz)$ of 
$x\in\bR^d$ and $\frz\in\frZ_1$, 
and furthermore the random operators 
$I_t^{\eta}$ and $J_t^{\eta}$, defined as $I_t^{\xi}$ and $J_t^{\xi}$, respectively, with 
$\eta_t(x,\frz)$ in place of $\xi_t(x,\frz)$, where 
$$
\xi_t(x,\frz_{1}):=\xi(t,x,Y_{t-},\frz_{1}),
\quad
\eta_t(x,\frz_{0}):=\eta(t,x,Y_{t-},\frz_{0})
$$
for $\omega\in\Omega$, $t\geq0$, $x\in\bR^d$ and $\frz_i\in\frZ_i$ for $i=0,1$.

Define the processes
$$
\gamma_t
=\exp\left(-\int_0^tB_s(X_s)\,dV_s-\tfrac{1}{2}\int_0^t|B_s(X_s)|^2\,ds\right), 
\quad t\in[0,T],  
$$
\begin{equation}                                                                \label{tilde Wiener}
\tilde V_t=\int_0^tB_s(X_s)\,ds+V_t, \quad t\in[0,T]. 
\end{equation}
Since by Assumption \ref{assumption SDE} (ii) 
$B$  is bounded in magnitude by a constant, it is well-known 
that $(\gamma_t)_{t\in[0,T]}$ is an $\cF_t$-martingale and hence 
by Girsanov's theorem the measure $Q$ defined by $dQ=\gamma_TdP$ 
is a probability measure equivalent to $P$, and under $Q$ the process 
$(W_t,\tilde V_t)_{t\in[0,T]}$ is a $d_1+d'$-dimensional $\cF_t$-Wiener 
process. 

The equations governing the conditional distribution and the unnormalised 
conditional distributions of $X_t$, given the observations $\{Y_s:s\in[0,t]\}$ 
are given by the following result. 
We denote by $(\cF^Y_t)_{t\in [0,T]}$ the completed filtration generated by 
$(Y_t)_{t\in [0,T]}$. 
\begin{theorem}                                                                   \label{theorem Z1}
Assume that $Z=(X_t,Y_t)_{t\in[0,T]}$ satisfies equation 
\eqref{system_1} and let Assumption  \ref{assumption SDE}(ii) 
hold. 
Assume also $\E|X_0|^2<\infty$ if $K_1\neq 0$ 
in Assumption \ref{assumption SDE}(ii). 
Then there exist measure-valued $\cF^Y_t$-adapted 
weakly cadlag processes 
$(P_t)_{t\in[0,T]}$ and $(\mu_t)_{t\in[0,T]}$ 
such that 
$$
P_t(\varphi)=\mu_t(\varphi)/\mu_t({\bf 1}),
\quad 
\text{for $\omega\in\Omega,\,\, t\in[0,T]$}, 
$$
$$
P_t(\varphi)=\E(\varphi(X_t)|\cF^Y_t),\quad \mu_t(\varphi)
=\E_{Q}(\gamma_t^{-1}\varphi(X_t)|\cF^Y_t) 
\quad\text{(a.s.) for each $t\in[0,T]$}, 
$$
for bounded Borel functions $\varphi$ on $\bR^d$, 
and for every $\varphi\in C^{2}_b(\bR^d)$ almost surely \begin{equation}
\begin{split}
\mu_t(\vp)=&  \mu_0(\vp) +  \int_0^t\mu_{s}(\cL_s\varphi)\,ds
+ \int_0^t \mu_{s}(\cM_s^k\varphi)\,d\tilde V^k_s
+ \int_0^t\int_{\frZ_0}\mu_{s}(J_s^{\eta}\varphi)\,\nu_0(d\frz)ds\\ 
&+ \int_0^t\int_{\frZ_1}\mu_{s}(J_s^{\xi}\varphi)\,\nu_1(d\frz)ds
+\int_0^t\int_{\frZ_1}\mu_{s-}(I_s^{\xi}\varphi)\,\tilde N_1(d\frz,ds), 
\end{split}
                                                                                                                                     \label{eqZ1}
\end{equation}
and
\begin{equation}
\begin{split}
P_t(\vp)=&  P_0(\vp) +  \int_0^tP_{s}(\cL_s\varphi)\,ds
+ \int_0^t \big(P_{s}(\cM_s^k\varphi)-P_{s}(\varphi)P_s(B^k_s)\big)\,d\bar V^k_s\\ 
&+ \int_0^t\int_{\frZ_0}P_{s}(J_s^{\eta}\varphi)\,\nu_0(d\frz)ds
+ \int_0^t\int_{\frZ_1}P_{s}(J_s^{\xi}\varphi)\,\nu_1(d\frz)ds\\
&+\int_0^t\int_{\frZ_1}P_{s-}(I_s^{\xi}\varphi)\,\tilde N_1(d\frz,ds)\\
\end{split}
                                                                                                                                      \label{eqZ2}
\end{equation}
for all $t\in[0,T]$, 
where $(\tilde V_t)_{t\in[0,T]}$ is given in \eqref{tilde Wiener}, 
and the process $(\bar V_t)_{t\in[0,T]}$ is defined by 
$$
d\bar V_t=d\tilde V_t-P_t(B_t)\,dt=dV_t+(B_t(X_t)-P_t(B_t))\,dt, 
\quad \bar V_0=0. 
$$
\end{theorem}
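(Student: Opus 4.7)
The plan is to follow the reference-measure (Kallianpur--Striebel) approach. Since $B$ is bounded by Assumption \ref{assumption SDE}(ii), $(\gamma_t)_{t\in[0,T]}$ is a genuine $\cF_t$-martingale, so $dQ=\gamma_T\,dP$ defines an equivalent probability measure under which Girsanov's theorem gives that $(W,\tilde V)$ is a $(d_1+d')$-dimensional Wiener process, while $\tilde N_0$ and $\tilde N_1$ retain the compensators $\nu_0(d\frz)\,dt$ and $\nu_1(d\frz)\,dt$ since $\gamma$ is continuous. Under $Q$ the observation satisfies $dY_t=d\tilde V_t+\int_{\frZ_1}\frz\,\tilde N_1(d\frz,dt)$, so $\cF^Y_t$ coincides modulo null sets with the filtration generated by $\tilde V$ and by the observation-side jump measure. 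The Kallianpur--Striebel formula then identifies
\[
\mu_t(\varphi):=\E_Q\bigl(\gamma_t^{-1}\varphi(X_t)\mid \cF^Y_t\bigr),\qquad P_t(\varphi)=\mu_t(\varphi)/\mu_t(\1),
\]
as the candidate unnormalised and normalised filters.

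For $\varphi\in C^2_b(\bR^d)$ I would next apply It\^o's formula to $\gamma_t^{-1}\varphi(X_t)$. Writing $dV^l=d\tilde V^l-B^l_s(X_s)\,ds$ in the equation for $X$ produces a drift $-\gamma_t^{-1}\rho^{il}_tB^l_tD_i\varphi\,dt$ in $\gamma_t^{-1}d\varphi(X_t)$, while the continuous semimartingale $d\gamma_t^{-1}=\gamma_t^{-1}B^k_t(X_t)\,d\tilde V^k_t$ contributes via the product rule a continuous covariation $+\gamma_t^{-1}\rho^{ik}_tB^k_tD_i\varphi\,dt$ that exactly cancels it, and the remaining $d\tilde V$-coefficient regroups into $\gamma_t^{-1}\cM^k_t\varphi(X_t)$ by the definition of $\cM^k_t$. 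The outcome is
\begin{align*}
d\bigl(\gamma_t^{-1}\varphi(X_t)\bigr)
&=\gamma_t^{-1}\Bigl[\cL_t\varphi(X_t)\,dt+\cM^k_t\varphi(X_t)\,d\tilde V^k_t+\sigma^{ik}_t(X_t)D_i\varphi(X_t)\,dW^k_t\\
&\quad+\int_{\frZ_0}(J_t^{\eta}\varphi)(X_t,\frz)\,\nu_0(d\frz)\,dt+\int_{\frZ_0}(I_t^{\eta}\varphi)(X_{t-},\frz)\,\tilde N_0(d\frz,dt)\\
&\quad+\int_{\frZ_1}(J_t^{\xi}\varphi)(X_t,\frz)\,\nu_1(d\frz)\,dt+\int_{\frZ_1}(I_t^{\xi}\varphi)(X_{t-},\frz)\,\tilde N_1(d\frz,dt)\Bigr].
\end{align*}

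The third step is to integrate from $0$ to $t$ and take $\E_Q[\,\cdot\mid\cF^Y_t]$. Under $Q$ the Wiener process $W$ and the Poisson martingale measure $\tilde N_0$ are independent of $\cF^Y_\cdot$, so their stochastic integrals contribute zero, while the $dt$-integrals commute with conditional expectation by ordinary Fubini. For the $d\tilde V^k$ and $\tilde N_1(d\frz,ds)$ integrals, whose integrators are $\cF^Y$-adapted, I would invoke a conditional stochastic Fubini / optional projection theorem to move $\E_Q[\,\cdot\mid\cF^Y_\cdot]$ inside, replacing the integrand $\gamma_s^{-1}f_s(X_s)$ by its $\cF^Y_{s-}$-optional projection $\mu_{s-}(f_s)$. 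This gives \eqref{eqZ1} with $\mu_0(\varphi)=\E(\varphi(X_0)\mid\cF^Y_0)$. Equation \eqref{eqZ2} follows by applying It\^o's formula to $\mu_t(\varphi)/\mu_t(\1)$: by \eqref{eqZ1} applied with $\varphi\equiv 1$ one has $d\mu_t(\1)=\mu_t(B^k_t)\,d\tilde V^k_t$, a strictly positive continuous semimartingale, so the quotient rule produces \eqref{eqZ2} with innovation $\bar V^k_t=\tilde V^k_t-\int_0^tP_s(B^k_s)\,ds$ after regrouping. The main obstacle is the conditional stochastic Fubini statement for the Poisson random measure $\tilde N_1$, together with the construction of a weakly cadlag $\bM(\bR^d)$-valued version of $\mu_\cdot$ (as opposed to a cadlag modification for each fixed $\varphi$ separately); these measurability and regularity issues are exactly what is carried out in \cite{GG2022}, from which the theorem is quoted.
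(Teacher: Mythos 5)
The paper gives no argument for this theorem beyond citing \cite{GG2022}, and your reference-measure (Kallianpur--Striebel/Girsanov) sketch is exactly the approach carried out there, so in outline it is correct and matches the source. Note only that the vanishing of the conditional expectations of the $dW$- and $\tilde N_0$-integrals does not follow from independence of the drivers alone (the integrands depend on $X$), and this point, together with the conditional Fubini argument for the $\cF^Y_t$-adapted integrators and the construction of a single weakly cadlag $\bM$-valued version of $(\mu_t)_{t\in[0,T]}$, is precisely the technical content you correctly defer to \cite{GG2022}.
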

\begin{proof}
This theorem, under more general assumptions, is proved 
in \cite{GG2022}. 
\end{proof}
\begin{remark}
Clearly, equation \eqref{eqZ1} can be rewritten as 
\begin{equation}
\begin{split}
\mu_t(\vp)=&  \mu_0(\vp) +  \int_0^t\mu_{s}(\tilde\cL_s\varphi)\,ds
+ \int_0^t \mu_{s}(\cM_s^k\varphi)\,dV^k_s
+ \int_0^t\int_{\frZ_0}\mu_{s}(J_s^{\eta}\varphi)\,\nu_0(d\frz)ds\\ 
&+ \int_0^t\int_{\frZ_1}\mu_{s}(J_s^{\xi}\varphi)\,\nu_1(d\frz)ds
+\int_0^t\int_{\frZ_1}\mu_{s-}(I_s^{\xi}\varphi)\,\tilde N_1(d\frz,ds), 
\end{split}
                                                                                                                                    \label{eqZ}
\end{equation}
where $\tilde\cL_s=\cL_s+B_s(X_s)\cM_s$. Moreover,  if 
$d\mu_t/dx$ exists for $P\otimes dt$-a.e. $(\omega,t)\in\Omega\times[0,T]$,  
and $u=u_t(x)$ is 
an $\cF_t$-adapted $L_p$-valued weakly cadlag process, for $p>1$, 
such that almost surely $u_t=d\mu_t/dx$ for all $t\in [0,T]$, 
 then for each $\varphi\in C_b^2(\bR^d)$ we have that almost surely 
\begin{equation}
\begin{split}
(u_t,\vp)=&(u_0,\vp) +  \int_0^t(u_{s},\tilde\cL_s\varphi)\,ds
+ \int_0^t(u_{s},\cM_s^k\varphi)\,dV^k_s
+ \int_0^t\int_{\frZ_0}(u_{s},J_s^{\eta}\varphi)\,\nu_0(d\frz)ds\\ 
&+ \int_0^t\int_{\frZ_1}(u_{s},J_s^{\xi}\varphi)\,\nu_1(d\frz)ds
+\int_0^t\int_{\frZ_1}(u_{s-},I_s^{\xi}\varphi)\,\tilde N_1(d\frz,ds).
\end{split}
                                                                                                                                   \label{equZ}
\end{equation}
holds for all $t\in [0,T]$.
\end{remark}
\begin{remark}
\label{remark gamma}
We also recall from \cite{GG2022} that there exists a cadlag $\cF^Y_t$-adapted 
positive process $( ^o\!\gamma_t)_{t\in [0,T]}$, 
 the optional projection of $(\gamma_t)_{t\in [0,T]}$ under $P$ with respect to $(\cF^Y_t)_{t\in [0,T]}$, such that for every $\F^Y_t$-stopping time $\tau\leq T$ we have 
\begin{equation}
\label{o gamma}
\E(\gamma_\tau|\cF^Y_\tau) = {^o\!\gamma}_\tau,\quad\text{almost surely.}
\end{equation}
Since for each $t\in [0,T]$, by known properties of conditional expectations (see i.e. \cite[Thm. 6.1]{RL2018}), almost surely
$$
\mu_t({\bf1}) = \E_Q(\gamma_t^{-1}|\cF^Y_t) 
= 1/\E(\gamma_t|\cF^Y_t) = 1/{^o\!\gamma}_t,
$$
we also have that 
for each $\vp\in C_b^2$ almost surely 
$P_t(\vp) = \mu_t(\vp){^o\!\gamma}_t$ for all $t\in [0,T]$.
\end{remark}

\begin{definition}                                                            
An $\frM$-valued weakly cadlag $\cF_t$-adapted process 
$(\mu_t)_{t\in[0,T]}$ is said to be an $\frM$-solution to the equation 
\begin{align}
d\mu_t=&\tilde\cL_t^{\ast}\mu_tdt+\cM_t^{k\ast}\mu_t\,dV^k_t
+\int_{\frZ_0}J_t^{\eta\ast}\mu_t\,\nu_0(d\frz)dt                                                               \nonumber\\
&+\int_{\frZ_1}J_t^{\xi\ast}\mu_t\,\nu_1(d\frz)dt
+\int_{\frZ_1}I_t^{\xi\ast}\mu_{t-}\,\tilde N_1(d\frz,dt)                                   \label{measureZ}
\end{align}
with initial value $\mu_0$, if for each $\varphi\in C^2_b$ 
almost surely equation \eqref{eqZ} holds for all $t\in[0,T]$. 
If $(\mu_t)_{t\in[0,T]}$ is an $\frM$-solution to equation \eqref{measureZ}, such that it takes values
in $\bM$, then we call it a measure-valued solution. 
\end{definition}

\begin{definition}                                                                \label{def Lp solution}
Let $p\geq1$ and let $\psi$ be an $L_p$-valued 
$\cF_0$-measurable random variable. 
Then we say that an $L_p$-valued $\cF_t$-adapted weakly cadlag 
process $(u_t)_{t\in[0,T]}$ 
is an $L_p$-solution of the equation 
\begin{align}
du_t=&\tilde\cL_t^{\ast}u_tdt+\cM_t^{k\ast}u_t\,dV^k_t
+\int_{\frZ_0}J_t^{\eta\ast}u_t\,\nu_0(d\frz)dt                                                               \nonumber\\
&+\int_{\frZ_1}J_t^{\xi\ast}u_t\,\nu_1(d\frz)dt
+\int_{\frZ_1}I_t^{\xi\ast}u_{t-}\,\tilde N_1(d\frz,dt)                                   \label{equdZ}
\end{align} 
with initial condition $\psi$, 
if for every $\varphi\in C_0^{\infty}$ almost surely \eqref{equZ} holds for all  
$t\in[0,T]$ and $u_0=\psi$ (a.s.). 
\end{definition}

\begin{lemma}                                                                                                                 \label{lemma E sup L1}
Let Assumption \ref{assumption SDE} hold, 
and assume also $\E|X_0|^2<\infty$ if $K_1\neq 0$ 
in Assumptions \ref{assumption SDE}(ii).
Let $(\mu_t)_{t\in[0,T]}$  
be the measure-valued process from Theorem \ref{theorem Z1}. 
Then we have
\begin{equation}                                                                          \label{6.11.2021.1}
\E\sup_{t\in [0,T]}\mu_t({\bf1})\leq N,
\end{equation}
with a constant $N$ depending only on $d$, $K$ and $T$. 
\end{lemma}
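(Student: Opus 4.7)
The plan is to use that under the reference measure $Q$, the scalar process $(\mu_t({\bf 1}))_{t\in[0,T]}$ is a positive, $L^2$-bounded true $Q$-martingale, apply Doob's $L^2$ maximal inequality under $Q$, and transfer the resulting bound back to $P$ via a change of measure and Cauchy--Schwarz.

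First, I would take $\varphi\equiv{\bf 1}$ in equation \eqref{eqZ1}. Since $\cL_s{\bf 1}=0$, $\cM_s^k{\bf 1}=B_s^k$, and $I_s^\xi{\bf 1}=J_s^\xi{\bf 1}=I_s^\eta{\bf 1}=J_s^\eta{\bf 1}=0$, the equation collapses to
$$
\mu_t({\bf 1})=1+\int_0^t\mu_s(B_s^k)\,d\tilde V^k_s,\qquad t\in[0,T],
$$
exhibiting $(\mu_t({\bf 1}))_{t\in[0,T]}$ as a positive local $Q$-martingale starting at $\mu_0({\bf 1})=1$, with cadlag paths inherited from the weakly cadlag property of $\mu$. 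Combining the representation $\mu_t({\bf 1})=\E_Q(\gamma_t^{-1}|\cF^Y_t)$ from Remark \ref{remark gamma} with the fact that $(\gamma_t^{-1})$ is a genuine $Q$-martingale (since $|B|\le K$) yields $\E_Q\mu_t({\bf 1})=1$ for every $t\in[0,T]$, which upgrades the local martingale to a true $Q$-martingale.

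Next, Doob's $L^2$ maximal inequality under $Q$, combined with conditional Jensen, gives
$$
\E_Q\sup_{t\in[0,T]}\mu_t({\bf 1})^2\le 4\,\E_Q\mu_T({\bf 1})^2\le 4\,\E_Q\gamma_T^{-2}.
$$
Writing $\gamma_T^{-2}=\exp\big(2\int_0^T B_s(X_s)\,d\tilde V_s-2\int_0^T|B_s(X_s)|^2\,ds\big)\cdot\exp\big(\int_0^T|B_s(X_s)|^2\,ds\big)$ and recognising the first factor as a Dol\'eans--Dade $Q$-exponential of mean $1$, the bound $|B|\le K$ yields $\E_Q\gamma_T^{-2}\le e^{K^2T}$.

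Finally, since $dQ=\gamma_T\,dP$, Cauchy--Schwarz yields
\begin{align*}
\E\sup_{t\in[0,T]}\mu_t({\bf 1})&=\E_Q\Big(\gamma_T^{-1}\sup_{t\in[0,T]}\mu_t({\bf 1})\Big)\\
&\le\big(\E_Q\gamma_T^{-2}\big)^{1/2}\big(\E_Q\sup_{t\in[0,T]}\mu_t({\bf 1})^2\big)^{1/2}\le 2e^{K^2T},
\end{align*}
which gives \eqref{6.11.2021.1} with $N=2e^{K^2T}$. The only step requiring attention is promoting $(\mu_t({\bf 1}))$ from a local to a true $Q$-martingale, but this is immediate from the boundedness of $B$; the remaining estimates are routine applications of Doob and Cauchy--Schwarz.
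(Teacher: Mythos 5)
Your argument is correct, but it follows a genuinely different route from the paper. The paper works entirely under $P$: taking $\varphi={\bf 1}$ in \eqref{eqZ1} and rewriting $d\tilde V$ as $B_t(X_t)dt+dV_t$, it gets $\mu_t({\bf1})=1+\int_0^t\mu_s(\beta^k_sB^k_s)\,ds+\int_0^t\mu_s(B^k_s)\,dV^k_s$, then localises with stopping times, applies Gronwall to obtain $\sup_t\E\mu_t({\bf1})\leq N$, and controls the supremum of the stochastic integral with Davis' and Young's inequalities followed by Fatou; no representation of $\mu_t({\bf1})$ as a conditional expectation and no exponential moment of $\gamma^{-1}$ are needed. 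You instead exploit the structure under $Q$: $\mu_t({\bf1})=\E_Q(\gamma_t^{-1}\mid\cF^Y_t)$ is a true $(\cF^Y_t,Q)$-martingale (this is immediate from the tower property once $\gamma^{-1}$ is a $Q$-martingale, which Novikov gives since $|B|\leq K$ -- you do not really need the local-martingale SDE step), Doob's $L_2$ maximal inequality plus conditional Jensen reduce everything to $\E_Q\gamma_T^{-2}$, which the Dol\'eans--Dade factorisation bounds by $e^{K^2T}$, and Cauchy--Schwarz with $dP=\gamma_T^{-1}dQ$ transfers the bound back to $P$. Your route avoids Gronwall and localisation and yields the explicit, dimension-free constant $2e^{K^2T}$, at the price of invoking the representation $\mu_t(\varphi)=\E_Q(\gamma_t^{-1}\varphi(X_t)\mid\cF^Y_t)$ from Theorem \ref{theorem Z1} and a second-moment computation for the Girsanov density; the paper's route is more elementary and stays within the Zakai equation itself. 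Two small points to keep explicit if you write this up: the cadlag property of $t\mapsto\mu_t({\bf1})$ (from weak cadlagness with ${\bf1}\in C_b$) is what legitimises Doob's inequality for the supremum over $[0,T]$, and $\mu_0({\bf1})=1$ comes from $\gamma_0=1$ in the same representation.
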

{\begin{proof}
Taking ${\bf1}$ instead of 
$\vp$ in the Zakai equation \eqref{eqZ1} 
yields for $(\beta_t)_{t\in [0,T]}:=(B_{t}(X_t))_{t\in [0,T]}$,                                                                              \begin{equation}                                                                          \label{27.10.21.1}
\mu_t({\bf1}) = 1 + \int_0^t\mu_s(\beta_{s}^kB_{s}^k)\,ds 
+ \int_0^t\mu_s (B_{s}^k)\,dV_s^k.
\end{equation}
Taking here $t\wedge\tau_n$ in place of $t$, with the stopping times 
$$
\tau_n:=\inf\{t\geq 0:\mu_t({\bf1})\geq n\},\quad n\geq 1,
$$
and using $|B|\leq K$ after taking expectations on both sides, we get 
$$
\E\mu_{t\wedge \tau_n}({\bf1}) \leq 1 + dK^2\int_0^{t}\E\mu_{s\wedge\tau_n}({\bf1})\,ds.
$$
Using Gronwall's inequality and Fatou's lemma, we obtain for each $n$,
\begin{equation}                                                              \label{9.4.2022}
\sup_{t\in [0,T]}\E\mu_t({\bf1})\leq N,
\end{equation}
with a constant $N=N(d,K,T)$. 
Moreover, by Davis' and Young's inequalities and due to \eqref{9.4.2022} we have 
$$
\E\sup_{t\in[0,T]} \int_0^{t\wedge\tau_n}\mu_s(B_{s}^k)\,dV_s^k
\leq 3\E\Big( \sum_k\int_0^{T\wedge\tau_n} \mu_s(B_{s}^k)^2\,ds\Big)^{1/2}
$$
$$
\leq N\E\Big(\sup_{t\in [0,T]}\mu_{t\wedge\tau_n}({\bf1})\int_0^T\mu_s({\bf1})\,ds\Big)^{1/2}
\leq \tfrac{1}{2}\E\sup_{t\in [0,T]}\mu_{t\wedge\tau_n}({\bf1}) + N'E\int_0^T\mu_s({\bf1})\,ds<\infty, 
$$
with constants $N$ and $N'$ depending only on $d,K,T$. Using this and  \eqref{9.4.2022}, from \eqref{27.10.21.1}
we get 
$$
\E \sup_{t\in[0,T]}\mu_{t\wedge\tau_n}({\bf1}) \leq N,\quad\text{for all $n\geq1$}
$$
with a constant $N(d,K,T)$. By Fatou's lemma we then obtain \eqref{6.11.2021.1}
\end{proof}


\mysection{$L_p$-estimates}                                                                        \label{sec estimates}

Recall that $\bM=\bM(\bR^d)$ denotes the set of 
finite measures on $\cB(\bR^d)$, and $\frM:=\{\mu-\nu:\mu,\nu\in\bM\}$. 
For $\nu\in\frM$ we  
use the notation $|\nu|:=\nu^{+}+\nu^{-}$ for the 
total variation and set $\|\nu\|=|\nu|(\bR^d)$,  
where 
$\nu^{+}\in\bM$ and $\nu^{-}\in\bM$ are the positive and negative parts of $\nu$.
For $\varepsilon>0$ we use the notation $k_{\varepsilon}$ 
for the Gaussian density function on $\bR^d$ 
with mean 0 and 
covariance matrix $\varepsilon I$. For linear functionals $\Phi$, 
acting on a real vector space $V$
containing $\cS=\cS(\bR^d)$, the rapidly decreasing smooth 
functions on $\bR^d$,  the mollification 
 $\Phi^{(\varepsilon)}$ is defined by 
 $$
 \Phi^{(\varepsilon)}(x)=\Phi(\ke(x-\cdot)), \quad x\in\bR^d.
 $$
In particular, when $\Phi=\mu$ is a (signed) measure from $\cS^{\ast}$, 
the dual of $\cS$, or $\Phi=f$ is a function from $\cS^{\ast}$, 
then 
$$
\mu^{(\varepsilon)}(x)=\int_{\bR^d}k_{\varepsilon}(x-y)\,\mu(dy),
\quad
f^{(\varepsilon)}(x)=\int_{\bR^d}k_{\varepsilon}(x-y)f(y)\,dy,\quad x\in\bR^d,   
$$
and 
$$
(L^{\ast}\mu)^{(\varepsilon)}(x)
:=\int_{\bR^d}L_yk_{\varepsilon}(x-y)\mu(dy),\quad x\in\R^d,
$$
$$
(L^{\ast}f)^{(\varepsilon)}(x)
:=\int_{\bR^d}(L_yk_{\varepsilon}(x-y))f(y)\,dy,\quad x\in\R^d,
$$
when $L$ is a linear operator on $V$ such that 
the integrals are well-defined for every $x\in\bR^d$. 
The subscript $y$ in $L_y$ indicates that 
the operator $L$ acts in the $y$-variable of  
the function $\bar k_{\varepsilon}(x,y):= k_{\varepsilon}(x-y)$. 
For example, if $L$ is a differential operator of the form $a^{ij}D_{ij}+b^iD_i+c$,  
where $a^{ij}$, $b^i$ and $c$ are functions defined on $\bR^d$, then 
$$
(L^*\mu)^{(\varepsilon)}(x)=\int_{\bR^d}(a^{ij}(y)\tfrac{\partial^2}{\partial y^i\partial y^j}
+b^i(y)\tfrac{\partial}{\partial y^i}+c(y))k_{\varepsilon}(x-y)\mu(dy). 
$$
We will often use the following well-known 
properties of mollifications with $k_{\varepsilon}$: 
\begin{enumerate}
\item[(i)] $|\varphi^{(\varepsilon)}|_{L_p}
\leq |\varphi|_{L_p}$ for $\varphi\in L_p(\bR^d)$, $p\in[1,\infty)$;
\item[(ii)] $\mu^{(\varepsilon)}(\varphi)
:=\int_{\bR^d}\mu^{(\varepsilon)}(x)\varphi(x)\,dx
=\int_{\bR^d}\varphi^{(\varepsilon)}(x)\mu(dx)
=:\mu(\varphi^{(\varepsilon)})$ for 
$\mu\in \frM$ 
and $\varphi\in L_p(\bR^d)$, $p\geq1$; 
\item[(iii)] $|\mu^{(\delta)}|_{L_p}\leq |\mu^{(\varepsilon)}|_{L_p}$ 
for $0\leq\varepsilon\leq\delta$,  
$\mu\in \frM$ and $p\geq1$. 
This property follows immediately from (i) 
and the ``semigroup property" of the Gaussian kernel,  
\begin{equation}                                                                                               \label{sg}
k_{r+s}(y-z)=\int_{\bR^d}k_{r}(y-x)k_{s}(x-z)\,dx,
\quad \text{$y,z\in\bR^d$ and $r,s\in(0,\infty)$}. 
\end{equation}
\end{enumerate}
The following generalization of (iii) is also useful: 
for integers $p\geq 2$ we have 
\begin{equation}                                                                                       \label{rho}
\rho_{\varepsilon}(y):=\int_{\bR^d}\Pi_{r=1}^pk_{\varepsilon}(x-y_r)\,dx
=c_{p,\varepsilon}e^{-\sum_{1\leq r<s\leq p}|y_r-y_s|^2/(2\varepsilon p)}, 
\quad y=(y_1,....,y_p)\in\bR^{pd},
\end{equation} 
for $\varepsilon>0$, with a constant 
$c_{p,\varepsilon}=c_{p,\varepsilon}(d)=p^{-d/2}(2\pi\varepsilon)^{(1-p)d/2}$. 
This can be  
seen immediately by noticing that for $x,y_k\in\bR^d$ 
and $y=(y_k)_{k=1}^p\in\bR^{pd}$ we have 
$$
\sum_{k=1}^p(x-y_k)^2=p\Big(x-\sum_ky_k/p\Big)^2
+\tfrac{1}{p}\sum_{1\leq k<l\leq p}(y_k-y_l)^2. 
$$
Clearly, for every $r=1,2,...,p$ and $i=1,2,...,d$,  
\begin{equation}                                                                                               \label{partialrho}
\partial_{y^i_r}\rho_{\varepsilon}(y)=\tfrac{1}{\varepsilon p}
\sum_{s=1}^p(y^i_s-y^i_r)\rho_{\varepsilon}(y), 
\quad y=(y_1,...,y_p)\in\bR^d, \quad y_r=(y^1_r...,y^d_r)\in\bR^d. 
\end{equation}
It is easy to see that 
$$
\sum_{r=1}^p\partial_{y_r^j}\rho_{\varepsilon}(y)=0
\quad
\text{for $y\in\bR^{pd}$, $j=1,2,...,d$}. 
$$
We will often use this in the form 
\begin{equation}                                                                                              \label{Drho}
\partial_{y^j_r}\rho_{\varepsilon}(y)
=-\sum_{s\neq r}^p\partial_{y_s^j}\rho_{\varepsilon}(y)
\quad\text{for $r=1,...,p$ and $j=1,2,...,d$}.  
\end{equation}

In order for the left-hand side of the following $L_p$-estimates 
for $\mu\in\frM$ in this section to be well-defined, we require that
\begin{equation}                                                                            \label{second moment mu}
K_1\int_{\bR^d}|x|^2\,|\mu|(dx)<\infty,
\end{equation}
where we use the formal convention that $0\cdot\infty = 0$, 
i.e., if $K_1=0$, then 
condition \eqref{second moment mu} is satisfied.
The following lemma generalises a lemma 
from \cite{KX}.
\begin{lemma}                                                                                       \label{lemma pe1}
Let $p\geq2$ be an integer.  
Let $\sigma=(\sigma^{ik})$ and $b=(b^{i})$ be Borel functions on $\bR^d$ 
with values in $\bR^{d\times m}$ and $\bR^d$, 
respectively, 
such that for some nonnegative constants 
$K_0$, $K_1$ and $L$ we have 
\begin{equation}                                                                                               \label{c1}                                                                                         
|\sigma(x)|+|b(x)|\leq K_0+K_1|x|
\quad |\sigma(x)-\sigma(y)|\leq L|x-y|,
\quad
|b(x)-b(y)|\leq L|x-y|
\end{equation} 
for all $x,y\in\bR^d$. Set $a^{ij}=\sigma^{ik}\sigma^{jk}/2$ for $i,j=1,2,...,d$. 
Let  $\mu\in\frM$ such that it satisfies \eqref{second moment mu}.
Then we have
$$                                                                                                                                                                      
p((\mu^{(\varepsilon)})^{p-1}, ((a^{ij}D_{ij})^*\mu)^{(\varepsilon)})
+\tfrac{p(p-1)}{2}
((\mu^{(\varepsilon)})^{p-2}
((\sigma^{ik}D_i)^*\mu)^{(\varepsilon)}, ((\sigma^{jk}D_j)^*\mu)^{(\varepsilon)})
$$
\begin{equation}                                                                                                           \label{pe1}     
\leq NL^2||\mu|^{(\varepsilon)}|^p_{L_p},                                                                      
\end{equation}
\begin{equation}                                                                                                            \label{pe2}    
((\mu^{(\varepsilon)})^{p-1}, ((b^{i}D_{i})^*\mu)^{(\varepsilon)})
\leq NL^2||\mu|^{(\varepsilon)}|^p_{L_p} 
\end{equation}
with a constant $N=N(d,p)$. 
\end{lemma}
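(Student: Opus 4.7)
The plan is to rewrite both pairings as $p$-fold integrals over $\bR^{pd}$ against the product measure $\mu^{\otimes p}$ and then exploit the vanishing-sum identity \eqref{Drho} to re-express the integrands in terms of Lipschitz differences of the coefficients, thereby extracting the $L^2$-factor. The polynomial growth in $|y_r-y_s|$ thus produced is absorbed by widening the Gaussian variance of $\rho_\varepsilon$ slightly, after which property (iii) of mollifications yields the bound by $\bigl||\mu|^{(\varepsilon)}\bigr|_{L_p}^p$.

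Concretely, using Fubini together with $\rho_\varepsilon(y_1,\ldots,y_p)=\int_{\bR^d}\prod_{r=1}^p k_\varepsilon(x-y_r)\,dx$, each pairing on $\bR^d$ becomes an integral over $\bR^{pd}$ in which the coefficients $a^{ij}$ and $\sigma^{ik}$ are evaluated at $y_r$-slots carrying derivatives of $k_\varepsilon(x-y_r)$, and after the $x$-integration become derivatives of $\rho_\varepsilon$. Symmetrising over the labels $r$ (by the symmetry of both $\rho_\varepsilon$ and $\mu^{\otimes p}$) and combining the two terms via $a^{ij}=\tfrac12\sigma^{ik}\sigma^{jk}$, the left-hand side of \eqref{pe1} becomes
$$
\tfrac12\int_{\bR^{pd}}\sum_{k,r,s}\sigma^{ik}(y_r)\sigma^{jk}(y_s)\,\partial_{y_r^iy_s^j}\rho_\varepsilon(y)\,\mu^{\otimes p}(dy).
$$
I would then invoke \eqref{Drho} in \emph{both} variables: since $\sum_r\partial_{y_r^iy_s^j}\rho_\varepsilon=0$ and $\sum_s\partial_{y_r^iy_s^j}\rho_\varepsilon=0$, any quantity constant in the relevant summation index may be subtracted from $\sigma^{ik}(y_r)$ (respectively from $\sigma^{jk}(y_s)$) without changing the value of the integrand. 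This permits the integrand to be rewritten so that each of the two $\sigma$-factors acquires a Lipschitz difference bounded by $L|y_r-y_s|$, producing the factor $L^2|y_r-y_s|^2$.

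The remaining factor $|y_r-y_s|^2|\partial_{y_r^iy_s^j}\rho_\varepsilon|$ is then controlled using the explicit Gaussian formulas \eqref{rho} and \eqref{partialrho}: the polynomial in $y$ is absorbed by a Gaussian of slightly larger variance, so that $|y_r-y_s|^2|\partial_{y_r^iy_s^j}\rho_\varepsilon(y)|\le N(d,p)\,\rho_{c\varepsilon}(y)$ for some $c>1$. Integrating against $|\mu|^{\otimes p}$ gives
$$
\int_{\bR^{pd}}\rho_{c\varepsilon}(y)\,|\mu|^{\otimes p}(dy)=\int_{\bR^d}\bigl(|\mu|^{(c\varepsilon)}(x)\bigr)^p\,dx=\bigl||\mu|^{(c\varepsilon)}\bigr|_{L_p}^p\le\bigl||\mu|^{(\varepsilon)}\bigr|_{L_p}^p
$$
by property (iii), completing \eqref{pe1}. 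Inequality \eqref{pe2} follows by an analogous but simpler argument: by symmetry the pairing equals $p^{-1}\sum_r\int b^i(y_r)\partial_{y_r^i}\rho_\varepsilon\,\mu^{\otimes p}(dy)$; expanding $\partial_{y_r^i}\rho_\varepsilon$ via \eqref{partialrho} and antisymmetrising in $(r,s)$ gives $\sum_r b^i(y_r)\partial_{y_r^i}\rho_\varepsilon=-(2\varepsilon p)^{-1}\sum_{r,s}(b^i(y_r)-b^i(y_s))(y_r^i-y_s^i)\rho_\varepsilon$, after which the Lipschitz hypothesis on $b$ and the same Gaussian absorption conclude the proof.

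The main obstacle I foresee is the combinatorial rearrangement in the second step: producing two Lipschitz differences from the quadratic form $\sum_{r,s}\sigma^{ik}(y_r)\sigma^{jk}(y_s)\partial_{y_r^iy_s^j}\rho_\varepsilon$ using only the Lipschitz continuity of $\sigma$, with no higher smoothness available. A secondary technical point, relevant because $\mu$ is a signed measure, is that the intermediate integrand may not have a definite sign; one therefore passes to absolute values only after the central combinatorial identity has been established, so that the final bound legitimately uses $|\mu|^{\otimes p}$ and hence $\bigl||\mu|^{(\varepsilon)}\bigr|_{L_p}^p$.
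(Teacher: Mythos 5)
Your overall plan --- Fubini to $\bR^{pd}$, the vanishing-sum identity \eqref{Drho}, Gaussian absorption via \eqref{qrho}, and property (iii) --- is the paper's, and your argument for \eqref{pe2} is correct and essentially the paper's computation. The genuine gap is exactly the step you flagged, and as written it fails. The identity $\sum_r\partial_{y^i_r}\partial_{y^j_s}\rho_\varepsilon=0$ lets you subtract from $\sigma^{ik}(y_r)$ only a quantity whose product with the remaining cofactor does not depend on $r$. The first subtraction (of $\sigma^{ik}(y_s)$) is legitimate, since the cofactor $\sigma^{jk}(y_s)$ is free of $r$; but afterwards the coefficient of $\partial_{y^i_r}\partial_{y^j_s}\rho_\varepsilon$ is $(\sigma^{ik}(y_r)-\sigma^{ik}(y_s))\sigma^{jk}(y_s)$, which no longer splits into an $r$-only times an $s$-only factor, so the second vanishing sum cannot be invoked to subtract $\sigma^{jk}(y_r)$ from the other factor: indeed $\sum_s(\sigma^{ik}(y_r)-\sigma^{ik}(y_s))\sigma^{jk}(y_r)\partial_{y^i_r}\partial_{y^j_s}\rho_\varepsilon=-\sigma^{jk}(y_r)\sum_s\sigma^{ik}(y_s)\partial_{y^i_r}\partial_{y^j_s}\rho_\varepsilon\neq0$. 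Worse, the identity you are implicitly after --- that the quadratic form $\sum_{r,s}\sigma^{ik}(y_r)\sigma^{jk}(y_s)\partial_{y^i_r}\partial_{y^j_s}\rho_\varepsilon$ may be replaced, under the integral against $\mu^{\otimes p}$, by the version in which both factors are differences taken between $y_r$ and $y_s$ --- is false for $p\geq3$: the two versions differ by the antisymmetric term $\tfrac12\sum_{r\neq s}\big(\sigma^{ik}(y_r)\sigma^{jk}(y_s)-\sigma^{ik}(y_s)\sigma^{jk}(y_r)\big)\partial_{y^i_r}\partial_{y^j_s}\rho_\varepsilon$, whose integral is in general nonzero (take $d=2$, $m=1$, $\sigma^{i1}(x)=x^i$, $p=3$ and $\mu$ the sum of the point masses at $(1,0),(0,1),(0,0)$; a direct computation with \eqref{partialrho} gives a strictly positive value). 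Only for $p=2$ does this term vanish identically, which is why the Kurtz--Xiong computation is exact; so no purely combinatorial rearrangement can produce two factors of the form $\sigma(y_r)-\sigma(y_s)$.

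The step is repaired within your scheme by anchoring both factors at a quantity independent of \emph{both} summation indices, for instance the fixed slot $y_1$: since $\sum_r\partial_{y^i_r}\partial_{y^j_s}\rho_\varepsilon=\sum_s\partial_{y^i_r}\partial_{y^j_s}\rho_\varepsilon=0$, all cross terms vanish and
$$
\sum_{r,s}\sigma^{ik}(y_r)\sigma^{jk}(y_s)\partial_{y^i_r}\partial_{y^j_s}\rho_\varepsilon
=\sum_{r,s}\big(\sigma^{ik}(y_r)-\sigma^{ik}(y_1)\big)\big(\sigma^{jk}(y_s)-\sigma^{jk}(y_1)\big)\partial_{y^i_r}\partial_{y^j_s}\rho_\varepsilon ,
$$
so each factor is bounded by $L|y_r-y_1|$, respectively $L|y_s-y_1|$, and with the explicit form of $\partial_{y^i_r}\partial_{y^j_s}\rho_\varepsilon$ obtained from \eqref{partialrho} the resulting terms are dominated by $NL^2\rho_{2\varepsilon}$ via \eqref{qrho}; integrating against $|\mu|^{\otimes p}$ and using property (iii) then yields \eqref{pe1}. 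This anchored subtraction is the ingredient missing from your sketch. For comparison, the paper keeps the diagonal $2a^{ij}(y_r)$-term, converts it by \eqref{Drho}, and then passes to the doubly-differenced kernel $a^{ij}(y_r,y_s)$ by an $(r,s)$-symmetrisation of the cross term; that symmetrisation is exact for $p=2$ but for $p\geq3$ leaves precisely the antisymmetric remainder displayed above, so in that route too the remainder has to be estimated separately rather than eliminated by symmetry --- it admits a bound of the same $NL^2\rho_{2\varepsilon}$ form, the $\delta_{ij}$-part of the second derivative being annihilated by antisymmetry and the remaining part handled by the same anchoring, using $\sum_r\sum_m(y^i_m-y^i_r)=0$. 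In short, the obstacle you identified is the real crux of the lemma, and with the anchored version of your subtraction your proof closes cleanly.
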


\begin{proof}

Let $A$ and $B$ denote the left-hand side of the inequalities \eqref{pe1} 
and \eqref{pe2}, respectively.  
Note first that using 
\begin{equation}
\label{wellposedness lefthand side}
\sup_{x\in\bR^d}\sum_{k=0}^2|D^k\ke(x)|<\infty,\quad\text{and}
\quad 
\int_{\bR^d}(1+|x|+K_1|x|^2)\,|\mu|(dx)<\infty,
\end{equation}
as well as the conditions on on $\sigma$ and $b$, 
it is easy to verify that $A$ and $B$ are well-defined.
Then by Fubini's theorem and 
the symmetry of the Gaussian kernel 
$$
A=\int_{\bR^{(p+1)d}}f(x,y)\,\mu_p(dy)\,dx
$$
with
$$
f(x,y):=\left(pa^{ij}(y_p)\partial_{y^i_p}\partial_{y^j_p}
+\tfrac{p(p-1)}{2}
\sigma^{ik}(y_{p-1})\sigma^{jk}(y_{p})\partial_{y^i_{p-1}}\partial_{y^j_p}\right)
\Pi_{k=1}^pk_{\varepsilon}(x-y_k), 
$$
where $x\in\bR^d$, $y=(y_k)_{k=1}^p\in\bR^{dp}$, and $y^i_k$ denotes the $i$-th coordinate of $y_k\in\bR^d$ 
for $k=1,...,p$, and $\mu_p(dy):=\mu^{\otimes p}(dy)=\mu(dy_1)...\mu(dy_p)$. 
Hence by Fubini's theorem and symmetry again
\begin{equation}                                                                                 \label{equation A}                                     
A=\int_{\bR^{pd}}
\left(pa^{ij}(y_p)\partial_{y^i_p}\partial_{y^j_p}
+\tfrac{p(p-1)}{2}
\sigma^{ik}(y_{p-1})\sigma^{jk}(y_{p})\partial_{y^i_{p-1}}\partial_{y^j_p}\right)
\rho_{\varepsilon}(y)\,\mu_{p}(dy)
\end{equation}
\begin{equation}                                                                       \label{1}
=\tfrac{1}{2}\sum_{r=1}^p\int_{\bR^{pd}}
\Big(2a^{ij}(y_r)\partial_{y^i_r}\partial_{y^j_r}
+\sum_{s\neq r}
\sigma^{ik}(y_{r})\sigma^{jk}(y_{s})\partial_{y^i_{r}}\partial_{y^j_s}\Big)
\rho_{\varepsilon}(y)\,\mu_{p}(dy),
\end{equation}
where $\rho_{\varepsilon}$ is given in \eqref{rho}. 
Using here \eqref{Drho} and symmetry of expressions in $y_k$ and $y_l$, 
we obtain 
$$
A=-\tfrac{1}{2}\sum_{r=1}^p\sum_{s\neq r}\int_{\bR^{pd}}
\left(2a^{ij}(y_r)\partial_{y^i_r}\partial_{y^j_s}
-
\sigma^{ik}(y_{r})\sigma^{jk}(y_{s})\partial_{y^i_{r}}\partial_{y^j_s}\right)
\rho_{\varepsilon}(y)\,\mu_p(dy)
$$
$$
=-\tfrac{1}{2}\sum_{r=1}^p\sum_{s\neq r}\int_{\bR^{pd}}
\left((a^{ij}(y_r)+a^{ij}(y_s))\partial_{y^i_r}\partial_{y^j_s}
-
\sigma^{ik}(y_{r})\sigma^{jk}(y_{s})\partial_{y^i_{r}}\partial_{y^j_s}\right)
\rho_{\varepsilon}(y)\,\mu_{p}(dy)
$$
$$
=-\tfrac{1}{2}\sum_{r=1}^p\sum^p_{s=1}\int_{\bR^{pd}}
a^{ij}(y_r,y_s)\partial_{y^i_{r}}\partial_{y^j_s}
\rho_{\varepsilon}(y)\,\mu_{p}(dy)
$$
$$
=-\tfrac{1}{2}\sum_{r=1}^p\sum_{s=1}^p\int_{\bR^{pd}}
a^{ij}(y_r,y_s)l^{ij,rs}_{\varepsilon}(y)
\rho_{\varepsilon}(y)\,\mu_{p}(dy), 
$$
where 
\begin{equation}                                                                                               \label{a2}
a^{ij}(x,z)=\frac{1}{2}(\sigma^{ik}(x)-\sigma^{ik}(z))(\sigma^{jk}(x)-\sigma^{jk}(z))  
\quad\text{for $x,z\in\bR^d$}
\end{equation}
 and 
$$
l^{ij,rs}_{\varepsilon}(y)=\rho^{-1}_{\varepsilon}(y)\partial_{y^i_{r}}\partial_{y^j_s}
\rho_{\varepsilon}(y)
=\tfrac{1}{(p\varepsilon)^2}\sum^p_{k=1}\sum_{l=1}^p(y^i_k-y^i_r)(y^j_l-y^j_s)
+\tfrac{\delta_{ij}}{p\varepsilon}.
$$
Making use of the Lipschitz condition on $\sigma$ and 
using for $q=1,2$ that 
\begin{equation}                                                                                                \label{qrho}
\varepsilon^{-q}\sum_{s\neq r}|y_s-y_r|^{2q}\rho_{\varepsilon}
(y)\leq N\rho_{2\varepsilon}(y), \quad y\in\bR^{pd}, \quad q\geq0
\end{equation}
with a constant $N=N(d,p,q)$, 
we have  
$$
\big|\sum_{r=1}^p\sum_{s=1}^p
a^{ij}(y_r,y_s)l^{ij,rs}_{\varepsilon}(y)\big|
\leq \tfrac{NL^2}{(p\varepsilon)^2}\sum_{1\leq r<s\leq p}|y_r-y_s|^4\rho_{\varepsilon}(y)+
\tfrac{NL^2}{p\varepsilon}\sum_{1\leq r<s\leq p}|y_r-y_s|^2\rho_{\varepsilon}(y)
$$
$$
\leq N'L^2\rho_{2\varepsilon}(y)\quad \text{for $y\in\bR^{pd}$}
$$
with constants $N=N(d,p)$ and $N'=N'(d,p)$. Hence 
$$
A\leq N'L^2\int_{\bR^{pd}}\rho_{2\varepsilon}(y)|\mu_{p}|(dy)=N'L^2
\int_{\bR^{pd}}\int_{\bR^d}\Pi_{r=1}^pk_{2\varepsilon}(x-y_r)\,dx|\mu_{p}|(dy) 
$$
$$
=N'L^2\int_{\bR^{d}}\Pi_{r=1}^p\int_{\bR^d}k_{2\varepsilon}(x-y_r)|\mu|(dy_r)\,dx
=N'L^2||\mu|^{(2\varepsilon)}|^p_{L_p}. 
$$
To prove \eqref{pe2} we proceed similarly. By Fubini's theorem and symmetry
$$
pB=\int_{\bR^{pd}}pb^i(y_p)\partial_{y^i_p}\rho_{\varepsilon}(y)\,\mu_{p}(dy)
=\sum_{r=1}^p\int_{\bR^{pd}}b^i(y_r)\partial_{y^i_r}\rho_{\varepsilon}(y)\,\mu_{p}(dy)
$$
$$
=-\sum_{r=1}^p\sum_{s\neq r}\int_{\bR^{pd}}b^i(y_r)\partial_{y^i_s}\rho_{\varepsilon}(y)\,\mu_{p}(dy)
=-\sum_{r=1}^p\sum_{s\neq r}\int_{\bR^{pd}}b^i(y_s)\partial_{y^i_r}\rho_{\varepsilon}(y)\,\mu_{p}(dy). 
$$
Thus 
$$
B=\tfrac{p-1}{p}\sum_{r=1}^p\int_{\bR^{pd}}
b^i(y_r)\partial_{y^i_r}\rho_{\varepsilon}(y)\,\mu_{p}(dy)
-\tfrac{1}{p}\sum_{r=1}^p\sum_{s\neq r}
\int_{\bR^{pd}}b^i(y_s)\partial_{y^i_r}\rho_{\varepsilon}(y)\,\mu_{p}(dy)
$$
$$
=\tfrac{1}{p}\sum_{r=1}^p
\int_{\bR^{pd}}\sum_{s\neq r}(b^i(y_r)-b^i(y_s))\partial_{y^i_r}\rho_{\varepsilon}(y)\,\mu_{p}(dy)
$$
\begin{equation}                                                                                     \label{pb}
=\tfrac{1}{\varepsilon p^2}\sum_{r=1}^p
\int_{\bR^{pd}}\sum_{s\neq r}(b^i(y_r)-b^i(y_s))\sum_{l\neq r}(y^i_l-y^i_r)
\rho_{\varepsilon}(y)\,\mu_{p}(dy). 
\end{equation}
Using the Lipschitz condition on $b$ and the inequality \eqref{qrho}, we obtain 
$$
B\leq 
\tfrac{NL}{\varepsilon}\int_{\bR^{pd}}\sum_{s\neq r}|y_r-y_s|^2\rho_{\varepsilon}(y)
|\mu_p|(dy)
\leq N'L\int_{\bR^{pd}}\rho_{2\varepsilon}(y)|\mu_p|(dy)
=N'L||\mu|^{(2\varepsilon)}|^p_{L_p}
$$
with constants $N=N(p,d)$ and $N'=N(p,d)$, which completes the proof of the lemma.
\end{proof}

\begin{lemma}                                                                                          \label{lemma pe4}
Let $p\geq 2$ be an integer and let $\sigma = (\sigma^{i})$ and $b$ be 
Borel functions on $\bR^d$ with values in $\bR^d$ and $\bR$ respectively. 
Assume furthermore that there exist constants $K\geq 1$, $K_0$ and $L$ such that
$$
|b(x)|\leq K,
\quad |\sigma(x)|\leq K_0+K_1|x|,
\quad |\sigma(x)-\sigma(y)|\leq L|x-y|
,\quad |b\sigma(x)-b\sigma(y)|\leq L|x-y|$$
for all $x,y\in\bR^d$. Let  $\mu\in\frM$ 
such that it satisfies \eqref{second moment mu}. 
Then we have 
\begin{equation}                                                                                                 \label{pe4_1}
\big((\mu^{\ep})^{p-2}(b\mu)^\ep,(b\mu)^\ep \big)
\leq K^2||\mu|^\ep|_{L_p}^p,
\end{equation}
\begin{equation}                                                                                                   \label{pe4_2}
\big((\mu^\ep)^{p-2},((\sigma^iD_i)^*\mu)^\ep(b\mu)^\ep \big)
\leq NKL||\mu|^\ep|_{L_p}^p 
\end{equation}
for every $\varepsilon>0$ with a constant $N=N(d,p)$.
\end{lemma}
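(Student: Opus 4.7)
The plan is to treat the two inequalities separately. For \eqref{pe4_1}, I would use a direct pointwise estimate: the bound $|b|\leq K$ gives $|(b\mu)^{(\varepsilon)}(x)|\leq K|\mu|^{(\varepsilon)}(x)$, and combined with $|\mu^{(\varepsilon)}|\leq|\mu|^{(\varepsilon)}$ and the nonnegativity of $((b\mu)^{(\varepsilon)})^2$, the integrand is dominated by $K^2(|\mu|^{(\varepsilon)})^p$; integration over $\bR^d$ then yields the estimate. No symmetrization is needed here.

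For \eqref{pe4_2}, I would adapt the Fubini argument from the proof of Lemma \ref{lemma pe1}. Expanding $(\mu^{(\varepsilon)})^{p-2}$ as an iterated integral against $p-2$ copies of $k_\varepsilon$, combined with the definitions of $((\sigma^iD_i)^{*}\mu)^{(\varepsilon)}$ and $(b\mu)^{(\varepsilon)}$, and interchanging the order of integration (justified by the moment condition \eqref{second moment mu}, the linear growth of $\sigma$, and the boundedness of $b$), the left-hand side rewrites as
$$A=\int_{\bR^{pd}}b(y_p)\sigma^i(y_{p-1})\partial_{y^i_{p-1}}\rho_\varepsilon(y)\,\mu^{\otimes p}(dy).$$
I would then split $b(y_p)\sigma^i(y_{p-1})=(b\sigma^i)(y_p)+b(y_p)(\sigma^i(y_{p-1})-\sigma^i(y_p))$, so that $A=A_1+A_2$. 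The term $A_2$ is handled directly: using $|b|\leq K$, the Lipschitz bound $|\sigma^i(y_{p-1})-\sigma^i(y_p)|\leq L|y_{p-1}-y_p|$, the formula \eqref{partialrho} for $\partial_{y^i_{p-1}}\rho_\varepsilon$, AM-GM on the cross products $|y_{p-1}-y_p||y_l-y_{p-1}|$, and finally \eqref{qrho}, one obtains $|A_2|\leq NKL\,||\mu|^{(\varepsilon)}|_{L_p}^p$.

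The main subtlety is $A_1=\int(b\sigma^i)(y_p)\partial_{y^i_{p-1}}\rho_\varepsilon(y)\mu^{\otimes p}(dy)$, where the coefficient no longer displays an explicit Lipschitz difference. The plan is to exploit the symmetry of $\rho_\varepsilon$ and of $\mu^{\otimes p}$ under any transposition of the indices $\{1,\dots,p-1\}$: swapping $y_{p-1}$ with $y_s$ (for $s\in\{1,\dots,p-2\}$) leaves $\mu^{\otimes p}$ and the factor $(b\sigma^i)(y_p)$ unchanged, and a direct computation using the explicit formula \eqref{partialrho} shows that the derivative factor evaluated at the permuted point equals $\partial_{y^i_s}\rho_\varepsilon(y)$. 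Hence $A_1=\int(b\sigma^i)(y_p)\partial_{y^i_s}\rho_\varepsilon(y)\mu^{\otimes p}(dy)$ for every $s\in\{1,\dots,p-1\}$; averaging these $p-1$ identities and invoking \eqref{Drho} then gives
$$A_1=-\frac{1}{p-1}\int_{\bR^{pd}}(b\sigma^i)(y_p)\partial_{y^i_p}\rho_\varepsilon(y)\,\mu^{\otimes p}(dy).$$
This last integral is exactly of the form handled in the proof of \eqref{pe2} in Lemma \ref{lemma pe1}, with $b^i$ there replaced by $(b\sigma)^i$ here; since $b\sigma$ is Lipschitz in $x$ with constant $L$ by assumption, repeating the same symmetrization and AM-GM chain verbatim yields $|A_1|\leq NL\,||\mu|^{(\varepsilon)}|_{L_p}^p \leq NKL\,||\mu|^{(\varepsilon)}|_{L_p}^p$, the last inequality using $K\geq 1$. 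Adding the bounds for $A_1$ and $A_2$ gives \eqref{pe4_2}. The hard part will be the careful verification of the symmetrization identity for $A_1$, whose entire role is to manufacture the Lipschitz difference needed to absorb the singular factor $1/\varepsilon$ coming from $\partial\rho_\varepsilon$.
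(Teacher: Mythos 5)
Your proposal is correct, and for \eqref{pe4_2} it is organized differently from the paper's proof. The paper never splits the integrand up front: it keeps $b(y_k)\sigma^i(y_r)\partial_{y_r^i}\rho_\varepsilon$ intact, derives two symmetrized identities for $p(p-1)^2R$ and $p(p-1)R$ (its \eqref{R1} and \eqref{R2}), adds them, and only then uses the algebraic identity $(b(y_r)-b(y_s))\sigma^i(y_s)=(b\sigma^i)(y_r)-(b\sigma^i)(y_s)-b(y_r)(\sigma^i(y_r)-\sigma^i(y_s))$ to produce the two Lipschitz differences $f^i$ and $g^i$, which are then estimated via \eqref{partialrho} and \eqref{qrho}. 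You instead decompose $b(y_p)\sigma^i(y_{p-1})=(b\sigma^i)(y_p)+b(y_p)(\sigma^i(y_{p-1})-\sigma^i(y_p))$ already in the un-symmetrized representation, bound the second piece directly, and handle the first piece by the transposition identity $A_1=\int(b\sigma^i)(y_p)\partial_{y_s^i}\rho_\varepsilon\,\mu_p(dy)$ for all $s\le p-1$ (which is correct: $\mu_p$, $\rho_\varepsilon$ and the factor $(b\sigma^i)(y_p)$ are invariant under swapping slots $s$ and $p-1$, and \eqref{partialrho} shows the derivative transforms as you claim), then \eqref{Drho} reduces $A_1$ to $-\tfrac1{p-1}$ times exactly the quantity bounded in the proof of \eqref{pe2} with $b\sigma$ (Lipschitz with constant $L$) in place of $b$; the intermediate identity \eqref{pb} there indeed gives a two-sided bound, so $|A_1|\le NL||\mu|^{(\varepsilon)}|^p_{L_p}\le NKL||\mu|^{(\varepsilon)}|^p_{L_p}$ since $K\ge1$. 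What your route buys is less combinatorial bookkeeping and explicit reuse of \eqref{pe2}; what the paper's route buys is a self-contained computation that exhibits both Lipschitz differences in one symmetrization. For \eqref{pe4_1} your pointwise domination $|(b\mu)^{(\varepsilon)}|\le K|\mu|^{(\varepsilon)}$, $|\mu^{(\varepsilon)}|\le|\mu|^{(\varepsilon)}$, valid because $((b\mu)^{(\varepsilon)})^2\ge0$, is a slightly more elementary version of the paper's Fubini computation and is perfectly fine; your well-definedness remarks (linear growth of $\sigma$, boundedness of $b$, condition \eqref{second moment mu}) match the paper's justification.
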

\begin{proof}
We note again that by \eqref{wellposedness lefthand side} 
together with the conditions on $\sigma$ and $b$, the left-hand sides of \eqref{pe4_1} and \eqref{pe4_2} are well-defined. 
Rewriting products of integrals as multiple integrals and using  Fubini's theorem 
for the left-hand side of the inequality \eqref{pe4_1} we have  
\begin{equation*}
\int_{\bR^{dp}}b(y_r)b(y_{s})\int_{\bR^d}\Pi_{j=1}^p\ke (x-y_j)\,dx \mu_p(dy)
\end{equation*}
\begin{equation*}
\leq K^2 \int_{\bR^{d(p+1)}}\Pi_{k=1}^p\ke (x-y_k) \,dx|\mu_p|(dy) 
= K^2||\mu|^{\ep}|_{L_p}^p,
\end{equation*}
for any $r,s\in\{1,2,...,p\}$, 
where $y=(y_1,...,y_p)\in\bR^{pd}$, $y_j\in\bR^d$ for $j=1,2,...,p$, 
and the notation $\mu_p(dy)=\mu(dy_1)...\mu(dy_p)$ is used. This proves \eqref{pe4_1}. 

Rewriting products of integrals as multiple integrals, using Fubini's theorem, interchanging 
the order of taking derivatives and integrals, and using equation \eqref{rho},  
for the left-hand side $R$ of the inequality \eqref{pe4_2} we have 
$$
R = \int_{\bR^{dp}}b(y_{k})\sigma^i(y_{r})\partial_{y_{r}^i}
\int_{\bR^d}\Pi_{j=1}^p\ke (x-y_j)\,dx \mu_p(dy)
$$
\begin{equation}                                                                                                         \label{R0}
 = \int_{\bR^{dp}}
 b(y_{k})\sigma^i(y_{r})\partial_{y_{r}^i}\rho_\varepsilon(y)\, \mu_p(dy)
\end{equation}
for any $r, k\in\{1,2,..,p\}$ such that $r\neq k$. Hence 
$$
p(p-1)^2R=\sum_{s=1}^p\sum_{r\neq s}\sum_{k\neq s}
\int_{\bR^{dp}}b(y_k)\sigma^i(y_{s})\partial_{y_s^i}\rho_\varepsilon(y)\, \mu_p(dy) 
$$
$$
=\sum_{s=1}^p\sum_{r\neq s}\sum_{k\neq r}
\int_{\bR^{dp}}
b(y_k)\sigma^i(y_{s})\partial_{y_s^i}\rho_\varepsilon(y)\, \mu_p(dy) 
$$
\begin{equation}                                                                                     \label{R1}
+\sum_{s=1}^p\sum_{r\neq s}
\int_{\bR^{dp}}
(b(y_r)-b(y_s))\sigma^i(y_{s})\partial_{y_s^i}\rho_\varepsilon(y)\, \mu_p(dy), 
\end{equation}
and using \eqref{Drho} from  \eqref{R0} we obtain 
$$
p(p-1)R= -\sum_{r=1}^p\sum_{k\neq r}\sum_{s\neq r}
\int_{\bR^{dp}}b(y_k)\sigma^i(y_{r})\partial_{y_s^i}\rho_\varepsilon(y)\, \mu_p(dy) 
$$
\begin{equation}                                                                                  \label{R2}
=-\sum_{s=1}^p\sum_{r\neq s}\sum_{k\neq r}
\int_{\bR^{dp}}b(y_k)\sigma^i(y_{r})\partial_{y_s^i}\rho_\varepsilon(y)\, \mu_p(dy) 
\end{equation}
Adding up equations \eqref{R1} and \eqref{R2}, 
and taking into account the equation  
$$
(b(y_r)-b(y_s))\sigma^i(y_{s})=b(y_r)\sigma^i(y_r)-b(y_s)\sigma^i(y_{s})
-b(y_r)(\sigma^i(y_{r})-\sigma^i(y_{s}))
$$
we get 
$$
p^2(p-1)R=\sum_{s=1}^p\sum_{r\neq s}\sum^p_{k=1}
\int_{\bR^{dp}}f^i(y_k,y_s,y_r)\partial_{y_s^i}\rho_\varepsilon(y)\, \mu_p(dy) 
$$
\begin{equation}                                                                                                                   \label{R}
+\sum_{s=1}^p\sum_{r\neq s}
\int_{\bR^{dp}}g^i(y_r,y_s)\partial_{y_s^i}\rho_\varepsilon(y)\, \mu_p(dy)
\end{equation}
with functions 
\begin{equation}                                                                                                                        \label{fg}
f^i(x,u,v):=b(x)(\sigma^i(u)-\sigma^i(v)), \quad g^i(u,v)
:=b(u)\sigma^i(u)-b(v)\sigma^i(v)
\end{equation}
defined for $x,u,v\in\bR^d$ for each $i=1,2,...,d$. 
By the boundedness of $|b|$ and the Lipschitz 
condition on $\sigma$ and $b\sigma$ we have 
$$
|f^{i}(x,u,v)|\leq KL|u-v|, \quad |g^i(u,v)|\leq L|u-v|\quad \text{$x,u,v\in\bR^d$, $i=1,2,...,d$}.
$$
Thus, taking into account \eqref{partialrho} and \eqref{qrho}, from \eqref{R} we obtain 
$$
p^2(p-1)R\leq KLN\int_{\bR^{dp}}\rho_{2\varepsilon}(y)|\mu_p|(dy)
=KL N||\mu|^{(2\varepsilon)}|^p_{L_p}
\leq NKL||\mu|^{(\varepsilon)}|^p_{L_p}
$$
with a constant $N=N(d,p )$, that finishes the proof of \eqref{pe4_2}. 
\end{proof}

For $\bR^d$-valued functions  $\xi$ on $\bR^d$ we define the linear operators 
$I^{\xi}$, $J^{\xi}$ and $T^\xi$ by 
$$
T^\xi\vp(x) = \vp(x+\xi(x)),\quad I^{\xi}\varphi(x):=T^\xi\vp(x)-\varphi(x),
$$
\begin{equation}
\label{equ operators TIJ}
J^{\xi}\psi(x):=I^{\xi}\psi(x)-\xi^i(x)D_i\psi(x),\quad x\in\bR^d
\end{equation}
acting on functions $\varphi$ and differentiable functions 
$\psi$ on $\bR^d$. If $\xi$ depends also on some parameters, then 
$I^{\xi}\phi$ and $J^{\xi}\psi$ are defined for each fixed parameter as above. 

\begin{lemma}                                                                                                  \label{lemma pe3}
Let $\xi$ be an $\bR^d$-valued function of $x\in\bR^d$ such that for 
some constants $\lambda>0$, $K_0$ and $L$ 
$$
 |\xi(x)-\xi(y)|\leq L|x-y|
\quad
\text{for all $x,y\in\bR^d$}
$$
and 
\begin{equation}                                                                                                  \label{pc2}
\lambda|x-y|\leq |x-y+\theta(\xi(x)-\xi(y))|
\quad
\text{for all $x,y\in\bR^d$ and $\theta\in[0,1]$}. 
\end{equation}
Let  $\mu\in\frM$ such that it satisfies \eqref{second moment mu},  
let $p\geq2$ be an integer, and for $\varepsilon>0$ set 
$$
C:=
\int_{\bR^d}p(\mu^{(\varepsilon)})^{p-1}(J^{\xi*}\mu)^{(\varepsilon)}
+(\mu^{(\varepsilon)}+(I^{\xi*}\mu)^{(\varepsilon)})^p-(\mu^{(\varepsilon)})^p
-p(\mu^{(\varepsilon)})^{p-1}(I^{\xi*}\mu)^{(\varepsilon)}\,dx,  
$$
where, to ease notation, the argument $x\in\bR^d$ 
is suppressed in the integrand. Then 
\begin{equation}                                                                                               \label{pe3}
|C|\leq N(1+L^2)L^2||\mu|^{(\varepsilon)}|^p_{L_p}
 \quad 
 \text{for all $\varepsilon>0$}, 
\end{equation}
with a constant $N=N(d,p,\lambda)$.   
\end{lemma}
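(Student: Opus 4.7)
The plan is to adapt the strategy of Lemmas \ref{lemma pe1} and \ref{lemma pe4} to the nonlocal operators $I^\xi,J^\xi$: rewrite $C$ as a single integral over $\bR^{pd}$ against $\mu^{\otimes p}$ which is a second-order Taylor remainder of the multi-product Gaussian $\rho_\varepsilon$ from \eqref{rho}, exploit the identity $\sum_r\partial_{y_r^i}\rho_\varepsilon\equiv 0$ together with the Lipschitz property of $\xi$ to extract an $L^2|y_r-y_s|^2$ factor, and then combine \eqref{qrho} with the bi-Lipschitz structure of $x\mapsto x+\theta\xi(x)$ to conclude.

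First I would use the algebraic identity $(A+B)^p-A^p-pA^{p-1}B=\sum_{k=2}^p\binom{p}{k}A^{p-k}B^k$ with $A=\mu^{(\varepsilon)}$ and $B=(I^{\xi*}\mu)^{(\varepsilon)}$ (so that $A+B=(T^\xi\mu)^{(\varepsilon)}$), and rewrite each mollification coordinate by coordinate via Fubini as in \eqref{equation A}, to obtain
$$
C = \int_{\bR^{pd}}\bigl[\rho_\varepsilon(y+\xi(y))-\rho_\varepsilon(y)-\sum_{r=1}^p\xi^i(y_r)\partial_{y_r^i}\rho_\varepsilon(y)\bigr]\,\mu^{\otimes p}(dy),
$$
where $y+\xi(y):=(y_r+\xi(y_r))_{r=1}^p$. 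Recognising the bracket as the second-order Taylor remainder at $\theta=0$ of $F(\theta):=\rho_\varepsilon(\psi_\theta(y))$, with $\psi_\theta(y):=(y_r+\theta\xi(y_r))_r$, and interchanging the order of integration, gives
$$
C = \int_0^1(1-\theta)\int_{\bR^{pd}}\sum_{r,s=1}^p\xi^i(y_r)\xi^j(y_s)(\partial_{w_r^i}\partial_{w_s^j}\rho_\varepsilon)(\psi_\theta(y))\,\mu^{\otimes p}(dy)\,d\theta.
$$

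Next, since $\sum_r\partial_{w_r^i}\rho_\varepsilon\equiv 0$ on $\bR^{pd}$, so does its $\partial_{w_s^j}$-derivative; this lets me replace $\xi^i(y_r)$ by $\xi^i(y_r)-\xi^i(y_s)$ and, symmetrically, $\xi^j(y_s)$ by $\xi^j(y_s)-\xi^j(y_r)$ inside the inner sum. Combined with $|\xi(y_r)-\xi(y_s)|\leq L|y_r-y_s|$, this gives the pointwise estimate $|F''(\theta)|\leq L^2\sum_{r\neq s}|y_r-y_s|^2|\partial_{w_r^i}\partial_{w_s^j}\rho_\varepsilon(\psi_\theta(y))|$. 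Inserting the explicit formula for $\partial^2\rho_\varepsilon$ obtained in the proof of Lemma \ref{lemma pe1}, a term involving $|w_k-w_r||w_l-w_s|$ appears; bounding it via the upper Lipschitz inequality $|\psi_\theta(y_k)-\psi_\theta(y_l)|\leq(1+L)|y_k-y_l|$ produces the factor $(1+L)^2\leq 2(1+L^2)$.

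Finally, taking $\theta=0$ in (pc2) forces $\lambda\in(0,1]$, and applying the lower bound $|\psi_\theta(y_r)-\psi_\theta(y_s)|^2\geq\lambda^2|y_r-y_s|^2$ in the exponent of the Gaussian gives the pointwise rescaling $\rho_\varepsilon(\psi_\theta(y))\leq\lambda^{-(p-1)d}\rho_{\varepsilon/\lambda^2}(y)$. Applying \eqref{qrho} in the $y$-variables (with $\varepsilon/\lambda^2$ in place of $\varepsilon$) then yields $|F''(\theta)|\leq N(d,p,\lambda)(1+L^2)L^2\,\rho_{2\varepsilon/\lambda^2}(y)$; integrating against $\mu^{\otimes p}$ and invoking property (iii) of mollifications (applicable since $2\varepsilon/\lambda^2\geq\varepsilon$) gives the claim. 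The main obstacle is the careful bookkeeping of the two Gaussian rescalings---one produced by the bi-Lipschitz perturbation of the argument of $\rho_\varepsilon$ and one from \eqref{qrho}---so that the final mollification scale exceeds $\varepsilon$ and every numerical constant is absorbed into $N(d,p,\lambda)$; well-posedness of the integrals defining $C$ is verified as in Lemmas \ref{lemma pe1} and \ref{lemma pe4} via \eqref{wellposedness lefthand side}.
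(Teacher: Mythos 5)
Your overall architecture is the same as the paper's: reduce $C$ to $\int_{\bR^{pd}}\bigl[\Pi_{r}T^{\xi}_{y_r}\rho_\varepsilon(y)-\rho_\varepsilon(y)-\sum_r\xi^i(y_r)\partial_{y_r^i}\rho_\varepsilon(y)\bigr]\mu^{\otimes p}(dy)$, Taylor-expand in $\theta$, rescale the Gaussian via \eqref{pc2}, and absorb the polynomial factors with \eqref{qrho} and mollification property (iii). The difference is that the paper first passes to the pairwise-difference coordinates $z_{rs}=y_r-y_s$, so that the chain rule automatically attaches the factors $\xi(y_k)-\xi(y_l)$ to \emph{both} derivative slots, whereas you expand in the original coordinates and then try to manufacture these differences by hand from the identity $\sum_r\partial_{w_r^i}\rho_\varepsilon\equiv0$.

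That manufacturing step is where your proof has a genuine gap. The vanishing-sum identity lets you replace $\xi^i(y_r)$ by $\xi^i(y_r)-c^i$ only when $c^i$ does not depend on the summed index; taking $c^i=\xi^i(y_s)$ is legitimate for the first slot (it is constant in $r$ for each fixed $s$), but afterwards the coefficient of $\partial_{w_r^i}\partial_{w_s^j}\rho_\varepsilon$ is $(\xi^i(y_r)-\xi^i(y_s))\xi^j(y_s)$, which depends on both indices, so the second replacement $\xi^j(y_s)\mapsto\xi^j(y_s)-\xi^j(y_r)$ is not covered by the same argument. In fact the double-difference sum equals $S+S'$, where $S$ is your sum and $S'=\sum_{r,s}\xi^i(y_s)\xi^j(y_r)\partial_{w_r^i}\partial_{w_s^j}\rho_\varepsilon$; using \eqref{partialrho} one checks that for the Gaussian these differ in general (their quadratic parts are of the type $(\operatorname{tr}M)^2$ versus $\operatorname{tr}(M^2)$ for a suitable $d\times d$ matrix $M$), so your pointwise bound $|F''(\theta)|\leq L^2\sum_{r\neq s}|y_r-y_s|^2|\partial_{w_r^i}\partial_{w_s^j}\rho_\varepsilon(\psi_\theta(y))|$ does not follow as argued — without the difference structure in both slots a factor $|\xi(y_s)|$ survives, which may grow linearly and cannot be absorbed by the Gaussian. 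The gap is repairable: since both the $r$-sums and the $s$-sums of $\partial_{w_r^i}\partial_{w_s^j}\rho_\varepsilon$ vanish, you may subtract from \emph{both} factors one common reference value independent of $r$ and $s$ (e.g.\ $\xi(y_1)$ or $p^{-1}\sum_k\xi(y_k)$), which yields factors bounded by $L|y_r-y_1|\,L|y_s-y_1|$ and lets \eqref{qrho} finish as before; alternatively, pass to the difference variables before Taylor, as the paper does. Your remaining ingredients — well-posedness, the cancellation of the first-order $J$ and $I$ terms, the rescaling $\rho_\varepsilon(\psi_\theta(y))\leq\lambda^{-(p-1)d}\rho_{\varepsilon/\lambda^2}(y)$ from \eqref{pc2}, and the final appeal to property (iii) — are correct and coincide with the paper's proof.
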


\begin{remark}                                                            \label{remark p3}
Notice that in the special case $p=2$ the 
estimate \eqref{pe3} can be rewritten as 
$$
2(\mu^{(\varepsilon)},(J^{\xi*}\mu)^{(\varepsilon)})
+((I^{\xi*}\mu)^{(\varepsilon)},(I^{\xi*}\mu)^{(\varepsilon)}) 
\leq N(1+L^2)L^2||\mu|^{(\varepsilon)}|^2_{L_2} \quad \text{for all $\varepsilon>0$}. 
$$
\end{remark}
\begin{proof}[Proof of Lemma \ref{lemma pe3}]
Again we note that by \eqref{wellposedness lefthand side}, 
together with the conditions on $\xi$ and that by Taylor's formula
$$
I^\xi\ke(x) = \int_0^1 (D_i\ke)(x-r-\theta\xi(r))\,d\theta\,\xi^i(r),
$$
$$
J^\xi\ke(x) = \int_0^1(1-\theta)(D_{ij}\ke)(x-r-\theta\xi(r))\,d\theta\,\xi^i(r)\xi^j(r),
$$
as well as that $\sup_{x\in\bR^d}\sum_{k=0}^2|D^k\rho_\varepsilon(x)|<\infty$,
it is easy to verify that $C$ is well-defined.
Notice that 
$$
\mu^{(\varepsilon)}+(I^{\xi*}\mu)^{(\varepsilon)}=(T^{\xi*}\mu)^{(\varepsilon)}
$$
and 
$$
p(\mu^{(\varepsilon)})^{p-1}(J^{\xi*}\mu)^{(\varepsilon)}
-p(\mu^{(\varepsilon)})^{p-1}(I^{\xi*}\mu)^{(\varepsilon)}
=-p(\mu^{(\varepsilon)})^{p-1}((\xi^iD_i)^*\mu)^{(\varepsilon)}. 
$$
Hence 
$$
C=\int_{\bR^d}((T^{\xi*}\mu)^{(\varepsilon)})^p-(\mu^{(\varepsilon)})^p
-p(\mu^{(\varepsilon)})^{p-1}((\xi^iD_i)^*\mu)^{(\varepsilon)}\,dx. 
$$
Rewriting here the product of integrals as multiple 
integrals and using the product measure 
$\mu_p(dy):=\mu(dy_1)...\mu(dy_p)$ by Fubini's theorem we get 
$$
((T^{\xi*}\mu)^{(\varepsilon)})^p(x)=\int_{\bR^{pd}}
\Pi_{r=1}^pT^{\xi}_{y_r}\Pi_{r=1}^pk_{\varepsilon}(x-y_r)\,\mu_p(dy), 
$$
$$
(\mu^{(\varepsilon)})^p=\int_{\bR^{pd}}\Pi_{r=1}^pk_{\varepsilon}(x-y_r)\,\mu_p(dy),
$$
\begin{align}
p(\mu^{(\varepsilon)})^{p-1}((\xi^iD_i)^*\mu)^{(\varepsilon)}
=&p\int_{\bR^{pd}}\Pi_{r=1}^{p-1}k_{\varepsilon}(x-y_r)
\xi^i(y_p)\partial_{y^i_p}k_{\varepsilon}(x-y_p)
\mu_p(dy),                                                                                                               \nonumber\\ 
=&p\int_{\bR^{pd}}\xi^i(y_p)\partial_{y^i_p}
\Pi_{r=1}^{p}k_{\varepsilon}(x-y_r)\mu_p(dy)                                                           \nonumber\\ 
=&\int_{\bR^{pd}}\sum_{r=1}^p\xi^i(y_r)\partial_{y^i_r}
\Pi_{r=1}^{p}k_{\varepsilon}(x-y_r)\,\mu_p(dy)        
\end{align}
where the last equation is due to the symmetry of the function 
$\Pi_{r}^pk_{\varepsilon}(x-y_r)$ and the measure $\mu_p(dy)$ in 
$y=(y_1,...,y_p)\in\bR^{pd}$. 
Thus 
$$
C=\int_{\bR^d}\int_{\bR^{pd}}L_y\Pi_{r=1}^pk_{\varepsilon}(x-y_r)\,\mu_p(dy)\,dx
$$
with the operator 
$$
L^\xi_y=\Pi_{r=1}^pT^{\xi}_{y_r}-\mathbb I-\sum_{r=1}^p\xi^i(y_r)\partial_{y^i_r}.  
$$
Using here Fubini's theorem then changing the order of the operator $L_y^{\xi}$ and 
the integration against $dx$ we have  
$$
C=\int_{\bR^{pd}}\int_{\bR^d}L_y^\xi\Pi_{r=1}^pk_{\varepsilon}(x-y_r)\,dx\,\mu_p(dy)
=\int_{\bR^{pd}}L_y^\xi\int_{\bR^d}\Pi_{r=1}^pk_{\varepsilon}(x-y_r)\,dx\,\mu_p(dy)
$$
\begin{equation}
=\int_{\bR^{pd}}L^{\xi}_y\rho_{\varepsilon}(y)\,\mu_p(dy),                       \label{pC}
\end{equation}
where, see \eqref{rho}, 
\begin{equation}                                                                                       \label{rho2}
\rho_{\varepsilon}(y)=
c_{p,\varepsilon}e^{-\sum_{1\leq r<s\leq p}|y_r-y_s|^2/(2\varepsilon p)}
\end{equation}
with $c_{p,\varepsilon}=p^{-d/2}(2\pi\varepsilon)^{(1-p)d/2}$.  

Introduce for $\varepsilon>0$ the function 
$$
\psi_{\varepsilon}(z)=c_{p,\varepsilon}e^{-\sum_{1\leq r<s\leq p}z^2_{rs}/(2p\varepsilon)},
\quad  
z=(z_{rs})_{1\leq r<s\leq p}\in\bR^{p(p-1)d/2}. 
$$
Then clearly, $\rho_{\varepsilon}(y)=\psi_{\varepsilon}(\tilde y)$ with 
$$
\tilde y:=(y_{rs})_{1\leq r<s\leq p}:=(y_r-y_s)_{1\leq r<s\leq p}\in \bR^{p(p-1)d/2},   
$$ 
$$
\Pi_{r=1}^pT^{\xi}_{y_r}\rho_{\varepsilon}(y)=\psi_{\varepsilon}(\tilde y+\tilde \xi(y))
\quad
\text{with $\tilde \xi(y)=(\tilde\xi_{rs}(y))_{1\leq r<s\leq p}$, 
\,\,$\tilde\xi_{rs}(y)=\xi(y_r)-\xi(y_s)$}, 
$$
and by the chain rule 
$$
\sum_{r=1}^p\xi^i(y_r)\partial_{y^i_r}\rho_{\varepsilon}
=\sum_{r=1}^p\xi^i(y_r)\sum_{1\leq k<l\leq p}(\delta_{kr}-\delta_{lr})
(\partial_{z_{kl}^i}\psi_{\varepsilon})(\tilde y)
$$
$$
=\sum_{1\leq k<l\leq p}\sum_{r=1}^p
(\delta_{kr}-\delta_{lr})\xi^i(y_r)
(\partial_{z_{kl}^i}\psi_{\varepsilon})(\tilde y)
=
\sum_{1\leq k<l\leq p}
(\xi^i(y_k)-\xi^i(y_l))
(\partial_{z_{kl}^i}\psi_{\varepsilon})(\tilde y). 
$$
Consequently,
$$
L^{\xi}_y\rho_{\varepsilon}(y)=\psi_{\varepsilon}(\tilde y+\tilde \xi(y))-\psi_{\varepsilon}(\tilde y)
-\sum_{1\leq k<l\leq p}
\tilde\xi^i_{kl}
(y)(\partial_{z_{kl}^i}\psi_{\varepsilon})(\tilde y), 
$$
which by Taylor's formula gives 
$$
L^{\xi}_y\rho_{\varepsilon}(y)
=\int_0^1(1-\theta)\sum_{1\leq k<l\leq p}\sum_{1\leq r<s\leq p}
(\partial_{z_{kl}^i}\partial_{z_{rs}^j}\psi_{\varepsilon})(\tilde y+\theta \tilde\xi(y))
\tilde\xi^i_{kl}(y)\tilde\xi^j_{rs}(y)\,d\theta, 
$$
where the summation convention is used with respect to the repeated indices $i,j=1,2,...,d$. 
Note that 
$$
(\partial_{z_{kl}^i}\partial_{z_{rs}^j}\psi_{\varepsilon})(\tilde y+\theta \tilde\xi(y))
=\psi_{\varepsilon}(\tilde y+\theta \tilde\xi(y))l^{ij,rs,kl}_{\varepsilon}(\tilde y+\theta \tilde\xi(y)) 
$$
with 
$$
l^{ij,rs,kl}_{\varepsilon}(z)
:=\tfrac{1}{(p\varepsilon)^2}z^i_{kl}z^j_{rs}-\tfrac{1}{p\varepsilon}\delta_{rk}\delta_{sl}\delta_{ij}
\quad
\text{for $z=(z_{kl})_{1\leq k<l\leq p}$}. 
$$
Due to the condition \eqref{pc2} there is a constant $\kappa=\kappa(d,\lambda)>1$ 
such that for $\theta\in[0,1]$
\begin{equation}                                                                                               \label{kt}
\kappa^{-1}|x_1-x_2|^2\leq |x_1-x_2+\theta(\xi(x_1)-\xi(x_2))|^2
\quad
\text{for $x_1,x_2\in\bR^d$},  
\end{equation}
which implies 
$$
\psi_{\varepsilon}(\tilde y+\theta \tilde\xi(y))
\leq N\psi_{\varepsilon}(\tilde y/\kappa)
=N\rho_{\kappa\varepsilon}(y), \quad\text{$y\in\bR^{dp}$},  
$$
and together with the Lipschitz condition on $\xi$,
$$
|l^{ij,rs,kl}_{\varepsilon}(\tilde y+\theta \tilde\xi(y))|
\leq 
\tfrac{N}{\varepsilon^2}(1+L^2)
\sum_{1\leq r<s\leq p}|y_r-y_s|^2+\tfrac{N}{\varepsilon}
$$
for all $1\leq r<s\leq p$, $1\leq k<l\leq p$ and $i,j=1,2,...,d$ with a constant $N=N(d,p,\lambda)$. 
Moreover,
$$
|\tilde\xi^i_{kl}(y)\tilde\xi^j_{rs}(y)|\leq NL^2\sum_{1\leq r<s\leq p}|y_s-y_p|^2
\quad\text{for $y=(y_r)_{r=1}^p\in\bR^{pd}$} 
$$
with a constant $N=N(d,p)$. Consequently, taking into account \eqref{qrho} we have 
\begin{align}
|L^{\xi}_y\rho_{\varepsilon}(y)|
\leq &\tfrac{N}{\varepsilon^2}L^2(1+L^2)
\big(\sum_{1\leq r<s\leq p}|y_r-y_s|^2\big)^2
\rho_{\kappa\varepsilon}(y)+
\tfrac{N}{\varepsilon}L^2
\sum_{1\leq r<s\leq p}|y_r-y_s|^2\rho_{\kappa\varepsilon}(y)                                    \nonumber\\
\leq&N'L^2(1+L^2)\rho_{2\kappa\varepsilon}(y)
\quad \text{for $y\in\bR^{dp}$}                                                                                    \nonumber
\end{align}
with constants $N=N(d,p,\lambda)$ and 
$N'=N'(d,p,\lambda)$. Using this we finish 
the proof by noting that \eqref{pC} implies 
$$
|C|\leq N'L^2(1+L^2)\int_{\bR^{pd}}\rho_{2\kappa\varepsilon}(y)\,|\mu_p|(dy)
=N'L^2(1+L^2)||\mu |^{(2\kappa\varepsilon)}|^p_{L_p}
\leq N{L^2}(1+L^2)||\mu|^{(\varepsilon)}|^p_{L_p}. 
$$
\end{proof}

\begin{corollary}                                                                                           \label{corollary J}
Let the conditions of Lemma \ref{lemma pe3} hold. 
Then for every even integer $p\geq2$ 
there is a constant $N=N(d,p,\lambda)$ 
such that for $\varepsilon>0$ and $\mu\in\frM$ we have
\begin{equation}                                                                                           \label{J}
\int_{\bR^d}(\mu^{(\varepsilon)})^{p-1}(x)
(J^{\xi*}\mu)^{(\varepsilon)}(x)\,dx
\leq NL^2(1+L^2)||\mu|^{(\varepsilon)}|^p_{L_p}. 
\end{equation}
\end{corollary}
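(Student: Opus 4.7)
The plan is to rewrite the target integral in terms of the quantity $C$ estimated in Lemma \ref{lemma pe3} and to exploit the parity of $p$ to discard the remaining terms as nonnegative. Set $f:=\mu^{(\varepsilon)}$ and $g:=(I^{\xi\ast}\mu)^{(\varepsilon)}$. Directly from the definition of $C$ in Lemma \ref{lemma pe3}, rearranging the four-term expression,
\begin{equation*}
p\int_{\bR^d}(\mu^{(\varepsilon)})^{p-1}(J^{\xi\ast}\mu)^{(\varepsilon)}\,dx
= C - \int_{\bR^d}\bigl[(f+g)^p-f^p-pf^{p-1}g\bigr]\,dx.
\end{equation*}

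The key observation is that for \emph{even} integers $p\geq 2$ the bracketed remainder is nonnegative pointwise. Indeed, using the integral form of Taylor's theorem applied to $s\mapsto(f+sg)^p$,
\begin{equation*}
(f+g)^p-f^p-pf^{p-1}g = p(p-1)\int_0^1(1-s)(f+sg)^{p-2}g^2\,ds,
\end{equation*}
and because $p-2$ is even the factor $(f+sg)^{p-2}$ is nonnegative regardless of the sign of $f+sg$. Consequently $\int_{\bR^d}[(f+g)^p-f^p-pf^{p-1}g]\,dx \geq 0$, so
\begin{equation*}
p\int_{\bR^d}(\mu^{(\varepsilon)})^{p-1}(J^{\xi\ast}\mu)^{(\varepsilon)}\,dx \leq C \leq |C|.
\end{equation*}

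I would then invoke Lemma \ref{lemma pe3} to bound $|C|\leq N(d,p,\lambda)(1+L^2)L^2\,||\mu|^{(\varepsilon)}|_{L_p}^p$ and divide by $p$, absorbing the factor $1/p$ into the constant, which yields the claim. There is no real obstacle here: the statement is specifically restricted to even $p$ precisely so that the Taylor remainder has the right sign, and well-definedness of all quantities under the condition \eqref{second moment mu} is inherited from the setup of Lemma \ref{lemma pe3}.
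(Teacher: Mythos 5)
Your proposal is correct and follows essentially the same route as the paper: split off the target term from the quantity $C$ of Lemma \ref{lemma pe3}, observe that the remaining integrand $(f+g)^p-f^p-pf^{p-1}g$ is pointwise nonnegative for even $p$ (you via the Taylor integral remainder, the paper via convexity of $|a|^p$ — an immaterial difference), and then apply the bound $|C|\leq N(1+L^2)L^2||\mu|^{(\varepsilon)}|^p_{L_p}$. The paper also records, as you do implicitly, that $\int_{\bR^d}|\mu^{(\varepsilon)}|^{p-1}|(J^{\xi*}\mu)^{(\varepsilon)}|\,dx<\infty$ so that the rearrangement of the integral is legitimate.
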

\begin{proof}
Notice that 
$|a+b|^p-|b|^p-p|a|^{p-2}ab\geq0$ for $p\geq2$ for any $a,b\in\bR$ 
by the convexity of the function $f(a)=|a|^p$, $a\in\bR$.   
Using this with $a=\mu^{(\varepsilon)}$ and $b=(I^{\xi}\mu)^{(\varepsilon)}$ we have 
$$
(\mu^{(\varepsilon)}+(I^{\xi*}\mu)^{(\varepsilon)})^p-(\mu^{(\varepsilon)})^p
-p(\mu^{(\varepsilon)})^{p-1}(I^{\xi*}\mu)^{(\varepsilon)}\geq 0
\quad\text{for $x\in\bR^d$}, 
$$
which shows that \eqref{pe3} implies \eqref{J}, since 
$$
\int_{\bR^d}|\mu^{(\varepsilon)}(x)|^{p-1}|(J^{\xi*}\mu)^{(\varepsilon)}(x)|\,dx<\infty.
$$
\end{proof}

\begin{lemma}                                                                                                  \label{lemma 7.6.1}
Let the conditions of Lemma \ref{lemma pe3} hold. 
Let $p\geq2$ be an even integer.
Then 
there is a constant $N=N(d,p,\lambda)$ such that for $\varepsilon>0$ and 
$\mu\in\frM$ we have
\begin{equation}                                                                                                \label{7.6.2}
\Big|\int_{\bR^d}
(\mu^{(\varepsilon)}+(I^{\xi*}\mu)^{(\varepsilon)})^p-(\mu^{(\varepsilon)})^p
\,dx\Big|
\leq N(1+L)L||\mu|^{(\varepsilon)}|^p_{L_p}, 
\end{equation}
where the argument $x\in\bR^d$ 
is suppressed in the integrand.   
\end{lemma}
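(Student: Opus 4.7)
The strategy is to mimic the proof of Lemma \ref{lemma pe3}, but replacing the second-order Taylor expansion there by a first-order one, which accounts for the linear (rather than quadratic) dependence on $L$ in the bound.

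First I would rewrite the integrand using the identity $\mu^{(\varepsilon)} + (I^{\xi*}\mu)^{(\varepsilon)} = (T^{\xi*}\mu)^{(\varepsilon)}$, expand each $p$-th power as a product of $p$ integrals against $\mu$, and apply Fubini. Exactly as in the derivation of \eqref{pC}, this gives
\begin{equation*}
\int_{\bR^d}\bigl(((T^{\xi*}\mu)^{(\varepsilon)})^p - (\mu^{(\varepsilon)})^p\bigr)\,dx
= \int_{\bR^{pd}} \widetilde L^{\xi}_y \rho_{\varepsilon}(y)\, \mu_p(dy),
\end{equation*}
with the operator $\widetilde L^{\xi}_y := \Pi_{r=1}^p T^{\xi}_{y_r} - \mathbb I$ acting in the $y$-variables. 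Note this is simpler than $L_y^\xi$ in Lemma \ref{lemma pe3} because the $p(\mu^{(\varepsilon)})^{p-1}((\xi^iD_i)^*\mu)^{(\varepsilon)}$ subtraction is absent here.

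Next, using the representation $\rho_{\varepsilon}(y)=\psi_{\varepsilon}(\tilde y)$ with $\tilde y = (y_r-y_s)_{r<s}$, I get
\begin{equation*}
\widetilde L^{\xi}_y\rho_{\varepsilon}(y) = \psi_{\varepsilon}(\tilde y + \tilde\xi(y)) - \psi_{\varepsilon}(\tilde y)
= \int_0^1 \sum_{1\leq k<l\leq p} \tilde\xi_{kl}^i(y)\,(\partial_{z_{kl}^i}\psi_{\varepsilon})(\tilde y + \theta\tilde\xi(y))\,d\theta,
\end{equation*}
by first-order Taylor expansion. A direct computation gives $\partial_{z_{kl}^i}\psi_{\varepsilon}(z) = -\tfrac{z_{kl}^i}{p\varepsilon}\psi_{\varepsilon}(z)$, so
\begin{equation*}
\bigl|(\partial_{z_{kl}^i}\psi_{\varepsilon})(\tilde y + \theta\tilde\xi(y))\bigr|
\leq \tfrac{1}{p\varepsilon}\bigl|y_k-y_l + \theta(\xi(y_k)-\xi(y_l))\bigr|\,\psi_{\varepsilon}(\tilde y + \theta\tilde\xi(y)).
\end{equation*}

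Now I combine the Lipschitz bound $|\tilde\xi_{kl}^i(y)| \leq L|y_k-y_l|$ with the triangle inequality $|y_k-y_l + \theta(\xi(y_k)-\xi(y_l))| \leq (1+L)|y_k-y_l|$, and use the non-degeneracy \eqref{kt} from the proof of Lemma \ref{lemma pe3}, which yields the pointwise comparison $\psi_{\varepsilon}(\tilde y + \theta\tilde\xi(y)) \leq N\rho_{\kappa\varepsilon}(y)$ for some $\kappa = \kappa(d,\lambda)>1$. This gives
\begin{equation*}
|\widetilde L^{\xi}_y\rho_{\varepsilon}(y)| \leq \tfrac{N L(1+L)}{\varepsilon}\sum_{1\leq k<l\leq p}|y_k-y_l|^2 \rho_{\kappa\varepsilon}(y),
\end{equation*}
and then an application of the estimate \eqref{qrho} (with $q=1$, applied to $\rho_{\kappa\varepsilon}$) yields $|\widetilde L^{\xi}_y\rho_{\varepsilon}(y)| \leq N'L(1+L)\rho_{2\kappa\varepsilon}(y)$.

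Finally, integrating against $|\mu_p|$ and invoking property (iii) of mollifications (the semigroup property \eqref{sg}) to pass from $||\mu|^{(2\kappa\varepsilon)}|_{L_p}^p$ to $||\mu|^{(\varepsilon)}|_{L_p}^p$, I arrive at \eqref{7.6.2} with $N=N(d,p,\lambda)$. I expect the only subtlety to be the well-definedness of the left-hand side, which follows as in Lemma \ref{lemma pe3} from \eqref{wellposedness lefthand side} together with a Taylor expansion $T^\xi k_\varepsilon - k_\varepsilon = \int_0^1 (D_ik_\varepsilon)(x-r-\theta\xi(r))\xi^i(r)\,d\theta$; the rest is a routine adaptation of the argument for $C$, with the simplification that only first-order terms arise.
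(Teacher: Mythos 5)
Your proposal is correct and follows essentially the same route as the paper's proof: the paper likewise writes the difference as $\int_{\bR^{pd}} M^{\xi}_y\rho_{\varepsilon}(y)\,\mu_p(dy)$ with $M^{\xi}_y=\Pi_{r=1}^pT^{\xi}_{y_r}-\mathbb I$, applies a first-order Taylor expansion in the $\psi_{\varepsilon}$ variables, and combines the Lipschitz bound on $\tilde\xi$, the comparison $\psi_{\varepsilon}(\tilde y+\theta\tilde\xi(y))\leq N\rho_{\kappa\varepsilon}(y)$ from \eqref{kt}, and \eqref{qrho} to get the $(1+L)L$ bound before passing from $2\kappa\varepsilon$ back to $\varepsilon$ via the mollification property. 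No gaps to report.
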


\begin{proof} By the same arguments as in the proof of Lemma \ref{lemma pe3} 
we see that the left-hand side of \eqref{7.6.2} is well-defined. Clearly, 
$$
D:=\int_{\bR^d}((T^{\xi*}\mu)^{(\varepsilon)})^p-(\mu^{(\varepsilon)})^p\,dx 
=\int_{\bR^d}\int_{\bR^{pd}}M^{\xi}_y\Pi_{r=1}^pk_{\varepsilon}(x-y_r)\,\mu_p(dy)\,dx
$$
with the operator 
$$
M^\xi_y=\Pi_{r=1}^pT^{\xi}_{y_r}-\mathbb I,  
$$
where $\mu_p(dy)=\Pi_{r=1}^p\mu(dy_r)$, $y=(y_1,...,y_p)\in\bR^{pd}$.
Hence by Fubini's theorem, then changing the order of the operator $M_y^{\xi}$ and 
the integration against $dx$ and by taking into account \eqref{rho} we have  
$$
D=\int_{\bR^{pd}}\int_{\bR^d}M_y^\xi\Pi_{r=1}^pk_{\varepsilon}(x-y_r)\,dx\,\mu_p(dy)
=\int_{\bR^{pd}}M_y^\xi\int_{\bR^d}\Pi_{r=1}^pk_{\varepsilon}(x-y_r)\,dx\,\mu_p(dy)
$$
\begin{equation}
=\int_{\bR^{pd}}M^{\xi}_y\rho_{\varepsilon}(y)\,\mu_p(dy).                       \label{7.6.4}
\end{equation}
As in the proof of Lemma \ref{lemma pe3} we introduce for $\varepsilon>0$ the function 
$$
\psi_{\varepsilon}(z)=c_{p,\varepsilon}e^{-\sum_{1\leq r<s\leq p}z^2_{rs}/(2p\varepsilon)},\quad  
z=(z_{rs})_{1\leq r<s\leq p}\in\bR^{p(p-1)d/2},  
$$
such that $\rho_{\varepsilon}(y)=\psi_{\varepsilon}(\tilde y)$ with 
$
\tilde y:=(y_{rs})_{1\leq r<s\leq p}:=(y_r-y_s)_{1\leq r<s\leq p}\in \bR^{p(p-1)d/2},     
$ 
and 
$$
\Pi_{r=1}^pT^{\xi}_{y_r}\rho_{\varepsilon}(y)
=\psi_{\varepsilon}(\tilde y+\tilde \xi(y))
\quad
\text{with $\tilde \xi(y)=(\tilde\xi_{rs}(y))_{1\leq r<s\leq p}$, \,\,
$\tilde\xi_{rs}(y)=\xi(y_r)-\xi(y_s)$}.  
$$
By Taylor's formula
$$
M^{\xi}_y\rho_{\varepsilon}(y)
=\psi_{\varepsilon}(\tilde y+\tilde \xi(y))-\psi_{\varepsilon}(\tilde y)
=\int_0^1\sum_{1\leq k<l\leq p}
(\partial_{z_{kl}^i}\psi_{\varepsilon})(\tilde y+\theta \tilde\xi(y))
\tilde\xi^i_{kl}(y)\,d\theta, 
$$
where the summation convention is used with respect 
to the repeated indices $i=1,2,...,d$. 
Note that 
$$
(\partial_{z_{kl}^i}\psi_{\varepsilon})(\tilde y+\theta \tilde\xi(y))
=\psi_{\varepsilon}
(\tilde y+\theta \tilde\xi(y))l^{kl,i}_{\varepsilon}(\tilde y+\theta \tilde\xi(y)) 
$$
with 
$$
l^{kl,i}_{\varepsilon}(z)
:=\tfrac{1}{p\varepsilon}z^i_{kl}
\quad
\text{for $z=(z_{kl})_{1\leq k<l\leq p}\in\bR^{p(p-1)d/2}$}. 
$$
By \eqref{kt} we have 
$$
\psi_{\varepsilon}(\tilde y+\theta \tilde\xi(y))
\leq N\psi_{\varepsilon}(\tilde y/\kappa)
=N\rho_{\kappa\varepsilon}(y), \quad\text{$y\in\bR^{dp}$},  
$$
and due to the Lipschitz condition on $\xi$,
$$
|l^{kl,i}_{\varepsilon}(\tilde y+\theta \tilde\xi(y))|
\leq 
\tfrac{N}{\varepsilon}(1+L)|y_k-y_l|
$$
for all $i=1,2,...,d$ with constants $\kappa(d,\lambda)>1$ 
and $N=N(d,p,\lambda)$. 
Moreover, we get 
$$
|\tilde\xi^i_{kl}(y)|\leq NL|y_k-y_l|
\quad\text{for $y=(y_r)_{r=1}^p\in\bR^{pd}$} 
$$
with a constant $N=N(d,p)$. Consequently, 
taking into account \eqref{qrho} we have 
\begin{equation*}
|M^{\xi}_y\rho_{\varepsilon}(y)|
\leq \tfrac{N}{\varepsilon}(1+L)L
\sum_{1\leq k<l\leq p}|y_k-y_l|^2
\rho_{\kappa\varepsilon}(y)\leq N' (1+L)L\rho_{2\kappa\varepsilon}(y)
\quad \text{for $y\in\bR^{dp}$}                                                                                     
\end{equation*}
with constants $N$ and 
$N'$ depending only on $d$, $p$ and $\lambda$. 
Using this, from \eqref{7.6.4} we 
obtain 
$$
|D|\leq N'(1+L)L\int_{\bR^{pd}}\rho_{2\kappa\varepsilon}(y)\,|\mu_p|(dy)
=N'(1+L)L||\mu |^{(2\kappa\varepsilon)}|^p_{L_p}
\leq N'(1+L)L||\mu|^{(\varepsilon)}|^p_{L_p}, 
$$
which completes the proof of the lemma.
\end{proof}


\mysection{The smoothed measures}

We use the notations introduced in Section \ref{sec estimates}. 
Recall that 
an $\bM$-valued stochastic process $(\nu_t)_{t\in[0,T]}$ is 
said to be weakly cadlag if almost surely 
$\nu_t(\varphi)$ is a cadlag function of $t$ for every $\varphi\in C_b(\bR^d)$. 
For such a process $(\nu_t)_{t\in[0,T]}$ there is a set 
$\Omega'\subset\Omega$ of full probability and there is uniquely defined 
(up to indistinguishability)   $\bM$-valued processes $(\nu_{t-})_{t\geq0}$ 
such that for every $\omega\in\Omega'$ 
$$
\nu_{t-}(\varphi)=\lim_{s\uparrow t}\nu_{s}(\varphi)
\quad\text{for all $\varphi\in C_b(\bR^d)$
and $t\in[0,T]$,}
$$
and for each $\omega\in\Omega'$ we have $\nu_{t-}=\nu_t$, 
for all but at most countably many $t\in(0,T]$. 
We say that an $\frM$-valued process 
$(\nu_t)_{t\in[0,T]}$ is weakly cadlag 
if it is the difference of weakly cadlag $\bM$-valued 
processes. An $\frM$-valued process 
$(\nu_t)_{t\geq0}$ is said to be adapted to 
a filtration $(\cG_t)_{t\in[0,T]}$ if $\nu_t(\varphi)$ is a 
$\cG_t$-measurable random variable for every 
$t\in[0,T]$ and bounded Borel function $\varphi$ on $\bR^d$.}

We present first a version of an It\^o formula, Theorem 2.1 from \cite{GW},  
for $L_p$-valued processes. To formulate 
it, let $\psi=(\psi(x))$, $f=(f_t(x))$, $g=(g_t^{j}(x))$ and $h=(h_t(x,\frz))$ be functions 
with values in $\bR$, $\bR$, $\bR^{m}$ and $\bR$, respectively, 
defined on $\Omega\times\bR^d$, $\Omega\times H_T$, $\Omega\times H_T$ 
and $\Omega\times H_T\times\frZ$, respectively, where $H_T:=[0,T]\times\bR^d$ 
and $(\frZ, \cZ,\nu)$ is a measure space with a $\sigma$-finite measure $\nu$ and 
countably generated $\sigma$-algebra $\cZ$. 
Assume that $\psi$ is $\cF_0\otimes\cB(\bR^d)$-measurable, $f$ 
and $g$ are $\cO\otimes\cB(\bR^d)$-measurable and $h$ 
is $\cP\otimes\cB(\bR^d)\otimes\cZ$-measurable, such that almost surely 
\begin{equation}                                                                                        \label{xintegral}
\int_0^T|f_t(x)|\,dt<\infty,\quad\int_0^T\sum_j|g^j_t(x)|^2\,dt<\infty, 
\quad \int_0^T\int_{\frZ}|h_t(x,\frz)|^2\,\nu(d\frz)\,dt<\infty 
\end{equation}
for each $x\in\bR^d$, and for each 
bounded Borel set $\Gamma\subset\bR^d$ almost surely 
$$
\int_{\Gamma}\int_0^T|f_t(x)|\,dt\,dx<\infty,\quad
\int_{\Gamma}\Big(\int_0^T\sum_j|g^j_t(x)|^2\,dt\Big)^{1/2}\,dx<\infty, 
$$
\begin{equation}                                                                                             \label{assumptionFubini}
\int_{\Gamma}
\Big(\int_0^T\int_{\frZ}\sum_j|h_t(x,\frz)|^2\,\nu(d\frz)\,dt\Big)^{1/2}\,dx<\infty.
\end{equation}
Assume, moreover, that for a number $p\in[2,\infty)$ almost surely
$$
\int_{\bR^d}|\psi(x)|^p\,dx<\infty,
\quad
 \int_0^T\int_{\bR^d}|f_t(x)|^p\,dx\,dt<\infty, 
\quad
\int_0^T\int_{\bR^d}\Big(\sum_{j}|g^j_t(x)|^2\Big)^{p/2}\,dx\,dt<\infty,
$$
\begin{equation}                                                                                             \label{pcondition}
\int_0^T\int_{\bR^d}\int_{\frZ}|h_t(x,\frz)|^p\,\nu(d\frz)\,dx\,dt<\infty, 
\,\,
\int_0^T\int_{\bR^d}\Big(\int_{\frZ}|h_t(x,\frz)|^2\,\nu(d\frz)\Big)^{p/2}\,dx\,dt <\infty.  
\end{equation}

\begin{theorem}                                                                                             \label{theorem x}
Let conditions \eqref{xintegral}, \eqref{assumptionFubini} and 
\eqref{pcondition} with a number $p\geq2$ hold. Assume there is 
an $\cO\otimes\cB(\bR^d)$-measurable 
real-valued function $v$ on $\Omega\times H_T$ such that 
almost surely 
\begin{equation}                                                                                             \label{vcondition}
\int_{\bR^d}|v_t(x)|^p\,dx<\infty\quad\text{for all $t\in[0,T]$}
\end{equation}
and for every $x\in\bR^d$ almost surely 
\begin{equation}                                                                                             \label{xdifferential}
v_t(x)=\psi(x)+\int_0^tf_s(x)\,ds+\int_0^tg^j_s(x)\,dw^j_s
+\int_0^t\int_{\frZ}h_s(x,\frz)\,\tilde\pi(d\frz,ds)
\end{equation}
holds for all $t\in[0,T]$, where 
$(w_t)_{t\geq0}$ is an $m$-dimensional  $\cF_t$-Wiener process, 
$\pi(d\frz,ds)$ is an $\cF_t$-Poisson  
measure with characteristic measure $\nu$, and  
$\tilde{\pi}(d\frz,ds)=\pi(d\frz,ds)-\nu(d\frz)ds$ is the compensated martingale measure. 
Then almost surely $(v_t)_{t\in[0,T]}$ is an $L_p$-valued 
$\cF_t$-adapted cadlag process and almost surely 
\begin{align}
|v_t|^p_{L_p}
&= |\psi|_{L_p}^p
+p\int_0^t\int_{\mathbb{R}^d}|v_s|^{p-2} v_sg^{j}_s\,dx\, dw^j_s                    \nonumber\\
& 
+\tfrac{p}{2}\int_0^t\int_{\mathbb{R}^d}\big( 2|v_s|^{p-2}v_sf_s
+(p-1)|v_s|^{p-2}\sum_{j}|g^j_s|^2\big)\,dx\,ds                                                                \nonumber\\
& 
+p\int_0^t\int_{\frZ}\int_{\mathbb{R}^d}|v_{s-}|^{p-2}v_{s-}h_s
\,dx\,\tilde{\pi}(d\frz,ds)                                                                                              \nonumber\\
&
+\int_0^t\int_{\frZ}\int_{\mathbb{R}^d}
(|v_{s-}+h_s|^p-|v_{s-}|^p-p|v_{s-}|^{p-2}v_{s-}h_s)
\,dx\,\pi(d\frz,ds)                                                                                                         \label{pIto}
\end{align}
for all $t\in[0,T]$, where   $v_{s-}$ means the left-hand limit 
in $L_p$ at $s$ of $v$. 
\end{theorem}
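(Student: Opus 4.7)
The plan is to adapt the proof of Theorem~2.1 in \cite{GW} to this setting: mollify in the spatial variable, apply the classical finite-dimensional It\^o formula pointwise in $x$, integrate over $\bR^d$ using stochastic Fubini, and then pass to the limit as $\varepsilon\downarrow 0$.

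As a preliminary step I would verify that $v$ admits an $L_p$-valued cadlag $\cF_t$-adapted modification. Since \eqref{xdifferential} only holds pointwise in $x$ up to an $x$-dependent null set, I would test against a countable dense family in $C_0^{\infty}(\bR^d)$, use the commutation of bounded linear functionals with stochastic integrals (justified by \eqref{assumptionFubini}), and exploit \eqref{vcondition} to assemble the cadlag modification in $L_p$. Next, for $\varepsilon>0$ I would set $v_t^{(\varepsilon)}(x)=\int_{\bR^d}k_{\varepsilon}(x-y)v_t(y)\,dy$ and define $\psi^{(\varepsilon)}$, $f^{(\varepsilon)}$, $g^{(\varepsilon)}$, $h^{(\varepsilon)}$ analogously. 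A stochastic Fubini argument based on the boundedness of $k_{\varepsilon}$ together with \eqref{assumptionFubini} then shows that $v^{(\varepsilon)}$ satisfies the mollified version of \eqref{xdifferential} pointwise in $x$, with integrands that are smooth in $x$. Applying the classical It\^o formula for real semimartingales with jumps to $a\mapsto|a|^p$ and $v_t^{(\varepsilon)}(x)$ at each fixed $x\in\bR^d$ produces the pointwise analogue of \eqref{pIto}. Integrating over $\bR^d$ in $dx$ and swapping the order of integration by stochastic Fubini then yields \eqref{pIto} with $v$, $\psi$, $f$, $g$, $h$ replaced by their mollifications.

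To pass to the limit $\varepsilon\downarrow 0$ I would use the $L_p$-convergences $\psi^{(\varepsilon)}\to\psi$, $f^{(\varepsilon)}\to f$, $g^{(\varepsilon)}\to g$, $h^{(\varepsilon)}\to h$, the norm domination $|u^{(\varepsilon)}|_{L_p}\leq |u|_{L_p}$, and dominated convergence for the pathwise integrals, combined with the It\^o isometry and the Burkholder--Davis--Gundy inequality for the martingale terms. A key tool throughout is the Taylor expansion
$$
|a+b|^p-|a|^p-p|a|^{p-2}ab=p(p-1)\int_0^1(1-\theta)|a+\theta b|^{p-2}b^2\,d\theta\leq C_p(|a|^{p-2}+|b|^{p-2})b^2,
$$
valid for $p\geq 2$ and $a,b\in\bR$, which supplies the pointwise bound needed to handle the jump terms.

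The main obstacle will be the passage to the limit in the two jump integrals. For the compensated Poisson integral, convergence follows from the $L_2$-isometry together with H\"older's inequality and \eqref{pcondition} to control the predictable quadratic variation; for the non-compensated integral over $\pi(d\frz,ds)$, the integrand is non-negative by convexity of $|\cdot|^p$ for $p\geq 2$, so the Taylor bound above together with monotone/dominated convergence delivers the limit. A further subtlety is identifying the left limits $v_{s-}^{(\varepsilon)}\to v_{s-}$ in $L_p$, which follows from the cadlag property established in the preliminary step combined with the pointwise convergence of the mollifiers.
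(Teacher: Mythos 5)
Your plan is not the paper's proof: the paper does not redo any mollification analysis. It multiplies \eqref{xdifferential} by $\varphi\in C_0^{\infty}$, integrates in $x$ using the deterministic and stochastic Fubini theorems of \cite{K-ItoWentzell} and \cite{GW} (justified by \eqref{xintegral} and \eqref{assumptionFubini}) to arrive at the tested equation \eqref{1.15.2.22}, and then simply invokes Theorem 2.1 of \cite{GW}, which already produces an $L_p$-valued $\cF_t$-adapted cadlag process $\bar v$ satisfying the same tested equation together with the It\^o formula \eqref{pIto}; the proof is finished by identifying $v=\bar v$ through testing against $C_0^{\infty}$, after a preliminary truncation/stopping-time reduction making the quantities in \eqref{pcondition} have finite expectation. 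What you propose is, in effect, to re-prove the cited theorem (mollify, apply the finite-dimensional It\^o formula pointwise, Fubini, pass to the limit); this is the strategy used inside \cite{GW} and can be carried out, but it duplicates work the paper deliberately outsources, and it is exactly in that duplicated part that your sketch has a hole.

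The genuine gap is your preliminary step. Testing \eqref{xdifferential} against a countable dense family of $C_0^{\infty}$ only shows that $(v_t,\varphi)$ has a cadlag version for each fixed $\varphi$, i.e. weak cadlag; combined with \eqref{vcondition} (finiteness of $|v_t|_{L_p}$ at each $t$) this gives neither $\sup_{t\leq T}|v_t|_{L_p}<\infty$ (a countable supremum of cadlag functions need not be bounded) nor right-continuity and existence of left limits in the $L_p$-norm. The strong cadlag property — and with it the very meaning of $v_{s-}$ as an $L_p$-left limit in the two jump integrals of \eqref{pIto} — is the nontrivial conclusion of Theorem 2.1 in \cite{GW}; there it is obtained together with, not prior to, the It\^o formula, by combining weak cadlag with the cadlag behaviour of $t\mapsto|v_t|_{L_p}^p$ and the uniform convexity of $L_p$ (Radon--Riesz). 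As written, your limit passage uses $v_{s-}$ and a.s. boundedness of $\sup_t|v_t|_{L_p}$ before they are available, and you also omit the localization needed to apply the It\^o isometry/BDG estimates, since \eqref{pcondition} is only an almost-sure condition. So either reorganize the argument in the order of \cite{GW} (localize, prove the mollified identity, obtain uniform-in-$\varepsilon$ estimates, pass to the limit in the norm identity, and only then extract strong cadlag and the left limits), or follow the paper and reduce to the weak formulation and quote Theorem 2.1 of \cite{GW} directly.
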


\begin{proof} 
By a truncation and stopping time argument it is not difficult to see that without loss 
of generality we may assume that the random variables in  \eqref{pcondition} have finite 
expectation. From \eqref{xdifferential} we get that
for each $\varphi\in C^{\infty}_0$  almost surely 
\begin{equation}                                                                                                      \label{1.15.2.22}                                                                                   
(v_t,\varphi)=(\psi,\varphi)+\int_0^t(f_s,\varphi)\,ds+\int_0^t(g^j_s,\varphi)\,dw^j_s
+\int_0^t\int_{\frZ}(h_s(\frz),\varphi)\,\tilde\pi(d\frz,ds) 
\end{equation}
holds for all $t\in[0,T]$. This we can see 
if we multiply both sides of equation \eqref{xdifferential} 
with $\varphi$ and then,  
making use of our measurability conditions 
and the conditions \eqref{xintegral} and \eqref{assumptionFubini}, 
we integrate over $\bR^d$ with respect to $dx$ 
and use deterministic and stochastic 
Fubini theorems from \cite{K-ItoWentzell} 
and \cite{GW} to change the order of integrations.  
Due to \eqref{1.15.2.22}, the measurability conditions 
on $\psi$, $f$, $g$, $h$ and $v$ and to their integrability conditions, 
\eqref{pcondition} and \eqref{vcondition}, 
by virtue of Theorem 2.1 from \cite{GW} 
there is an $L_p$-valued $\cF_t$-adapted 
cadlag process $(\bar v_t)_{t\in[0,T]}$ such that 
for each $\varphi\in C^{\infty}_0$ 
almost surely \eqref{1.15.2.22} holds with $\bar v$ in place of $v$, 
and almost surely 
\eqref{pIto} holds with $\bar v$ in place of $v$. Thus 
for each $\varphi\in C_0^{\infty}$ almost surely
$(v_t,\varphi)=(\bar v_t,\varphi)$ for all $t\in[0,T]$, 
which implies that almost surely $v=\bar v$ 
as $L_p$-valued processes, and finishes the proof of the theorem. 
\end{proof} 

\begin{lemma}                                                                                   \label{lemma ItoLp}  
Let Assumption \ref{assumption SDE}  hold. Assume                                    
$(\mu_t)_{t\in[0,T]}$ is an $\frM$-solution to equation \eqref{measureZ}
If $K_1\neq0$ in Assumption \ref{assumption SDE} (ii) then assume also 
\begin{equation}                                                                               \label{moment2}
\essup_{t\in[0,T]}\int_{\bR^d}|y|^2\,|\mu_t|(dy)<\infty\,\,(\rm{a.s.)}.
\end{equation}
Then for each $x\in\bR^d$ and $\varepsilon>0$,
\begin{equation}                                                                                         \label{eZakai}                                                                  
\begin{split}
 \mu^\ep_t(x)&= \mu^\ep_0(x) +  \int_0^t(\tilde{\cL}^*_s\mu_s)^{(\varepsilon)}(x)\,ds
+\int_0^t(\cM^{j*}_s\mu_s)^{(\varepsilon)}(x)\,dV^j_s \\
    &+ \int_0^t\int_{\frZ_0}(J^{\eta\ast}_s\mu_s)^{(\varepsilon)}(x)\,\nu_0(d\frz)\,ds
    + \int_0^t\int_{\frZ_1}(J^{\xi\ast}_s\mu_s)^{(\varepsilon)}(x)\,\nu_1(d\frz)\,ds\\
    &+ \int_0^t\int_{\frZ_1}(I^{\xi\ast}_s\mu_{s-})^{(\varepsilon)}(x)\,\Nte(d\frz,ds)
\end{split}
\end{equation}
holds almost surely for all $t\in[0,T]$. Moreover, 
for each $\varepsilon>0$ and $p\geq2$ 
$$
|\mu^\ep_t|_{L_p}^p 
= |\mu^\ep_0|_{L_p}^p 
+ p\int_0^t \big(|\mu^\ep_s|^{p-2}\mu^\ep_s,(\mathcal{M}_s^{k*}\mu_s)^\ep \big)\,dV^k_s
 + p\int_0^t \big(|\mu^\ep_s|^{p-2}\mu^\ep_s,(\tilde{\cL}^*_s\mu_s)^\ep \big)\,ds
$$
$$
+ \tfrac{p(p-1)}{2}\sum_k
\int_0^t\big(|\mu_s^\ep|^{p-2}, |(\mathcal{M}_s^{k*}\mu_s)^\ep|^2 \big)\,ds
+ p\int_0^t\int_{\frZ_0} 
\big(|\mu^\ep_s|^{p-2}\mu^\ep_s,(J^{\eta*}_s\mu_s)^\ep \big)\,\nu_0(d\frz)ds
$$
$$
+ p\int_0^t\int_{\frZ_1} 
\big(|\mu^\ep_s|^{p-2}\mu^\ep_s,(J^{\xi*}_s\mu_s)^\ep \big)\,\nu_1(d\frz)ds 
+p\int_0^t\int_{\frZ_1} 
\big(|\mu^\ep_{s-}|^{p-2}\mu^\ep_{s-},(I^{\xi*}_s\mu_{s-})^\ep \big)\,\tilde N_1(d\frz,ds)
 $$
 \begin{align}                                                                        \label{Ito formula mu epsilon p norm}
 +\int_0^t\int_{\frZ_1}\int_{\bR^d}
 \Big\{
 &\big|\mu_{s-}^\ep + (I_s^{\xi*}\mu_{s-})^\ep\big|^p 
- |\mu_{s-}^\ep|^p - p|\mu_{s-}^\ep|^{p-2}\mu^\ep_{s-}(I_s^{\xi*}\mu_{s-})^\ep
\Big\}\,dxN_1(d\frz,ds)
 \end{align}
holds almost surely for all $t\in[0,T]$.

\end{lemma}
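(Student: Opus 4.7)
The proof splits into two parts: first derive the pointwise stochastic integral equation \eqref{eZakai} satisfied by $\mu_t^{(\varepsilon)}(x)$, then apply the generalised It\^o formula Theorem \ref{theorem x} to obtain \eqref{Ito formula mu epsilon p norm}. For the first part, fix $x\in\bR^d$ and $\varepsilon>0$ and set $\varphi_x(\cdot):=k_\varepsilon(x-\cdot)$. Since $k_\varepsilon$ together with its derivatives up to order two is bounded, $\varphi_x\in C_b^2$, so the $\frM$-solution identity \eqref{eqZ} applies with $\varphi=\varphi_x$. A direct computation shows $\mu_s(\tilde\cL_s\varphi_x)=(\tilde\cL_s^*\mu_s)^{(\varepsilon)}(x)$, $\mu_s(\cM_s^k\varphi_x)=(\cM_s^{k*}\mu_s)^{(\varepsilon)}(x)$, $\mu_s(J_s^{\eta}\varphi_x)=(J_s^{\eta*}\mu_s)^{(\varepsilon)}(x)$, and analogously for $J^\xi$ and $I^\xi$, so substituting $\varphi_x$ into \eqref{eqZ} yields \eqref{eZakai}. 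Well-definedness of each integrand uses Assumption \ref{assumption SDE}, the rapid decay of $k_\varepsilon$ and its derivatives, and, when $K_1\neq0$, the second-moment bound \eqref{moment2} to absorb the linear growth of the coefficients.

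For the It\^o formula, I apply Theorem \ref{theorem x} with $v_t(x):=\mu_t^{(\varepsilon)}(x)$, $\psi:=\mu_0^{(\varepsilon)}$, $g_s^k:=(\cM_s^{k*}\mu_s)^{(\varepsilon)}$, $h_s(\cdot,\frz):=(I_s^{\xi*}\mu_{s-})^{(\varepsilon)}$, noise pair $(\tilde\pi,\nu):=(\tilde N_1,\nu_1)$, and
\[
f_s:=(\tilde\cL_s^*\mu_s)^{(\varepsilon)}+\int_{\frZ_0}(J^{\eta*}_s\mu_s)^{(\varepsilon)}\,\nu_0(d\frz)+\int_{\frZ_1}(J^{\xi*}_s\mu_s)^{(\varepsilon)}\,\nu_1(d\frz).
\]
With these identifications \eqref{xdifferential} coincides with \eqref{eZakai}, and the conclusion \eqref{pIto} becomes \eqref{Ito formula mu epsilon p norm} term-by-term. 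The remaining task is to verify the measurability and integrability conditions \eqref{xintegral}, \eqref{assumptionFubini}, \eqref{pcondition}, \eqref{vcondition}; measurability is inherited from the predictability (resp.\ adaptedness) of the coefficients and of $(\mu_s)$, and the $L_p$ bounds follow from Minkowski's integral inequality
\[
\Big|\int F(\cdot,y)\,|\mu_s|(dy)\Big|_{L_p(\bR^d)}\le\int|F(\cdot,y)|_{L_p(\bR^d)}\,|\mu_s|(dy),
\]
applied with $F(x,y)$ built from $k_\varepsilon$ and its first two derivatives at $x-y$, possibly translated by $\xi$ or $\eta$.

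The main obstacle is obtaining these $L_p$ bounds for the jump terms at general $p\geq2$, since only $\bar\xi\in L_2(\frZ_1)$ and $\bar\eta\in L_2(\frZ_0)$ are available. The naive Taylor estimate $|I^{\xi}k_\varepsilon(\cdot-y)|_{L_p(dx)}\le|\xi_s(y,\frz)|\,|Dk_\varepsilon|_{L_p}$ alone does not give integrability in $\nu_1$ for $p>2$. The fix is the interpolated bound
\[
|I^{\xi}k_\varepsilon(\cdot-y)|_{L_p(dx)}\le C_{\varepsilon,p}\min(1,|\xi_s(y,\frz)|),
\]
obtained by combining the Taylor bound with the trivial triangle bound $|I^{\xi}k_\varepsilon(\cdot-y)|_{L_p(dx)}\le 2|k_\varepsilon|_{L_p}$; since $\min(1,|\xi|)^p\le |\xi|^2\wedge 1\le|\xi|^2$ for $p\ge2$, this combines with $\bar\xi\in L_2(\nu_1)$, Assumption \ref{assumption SDE} and the moment bound \eqref{moment2} to produce the required integrability. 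An analogous second-order interpolation, using the two different bounds on the Taylor remainder $J^{\xi}k_\varepsilon$, handles the $J^\xi$ and $J^\eta$ compensator terms. Finally, the estimates obtained this way are only almost sure, but Theorem \ref{theorem x} explicitly accommodates almost-sure integrability via its built-in truncation argument, so no further localisation is required on our part.
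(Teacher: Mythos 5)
Your proposal is correct and follows essentially the same two-step route as the paper: test the $\frM$-solution identity \eqref{eqZ} against the Gaussian kernel to get \eqref{eZakai}, then apply Theorem \ref{theorem x} with exactly the identifications you list and verify its hypotheses by Minkowski's inequality and Taylor bounds on $I^\xi k_\varepsilon$, $J^\xi k_\varepsilon$, $J^\eta k_\varepsilon$. The differences are local rather than structural. First, you substitute $k_\varepsilon(x-\cdot)\in C_b^2$ directly, which the stated definition of an $\frM$-solution permits; the paper is more cautious and tests against the compactly supported functions $k_\varepsilon(x-\cdot)\psi_r\in C_0^\infty$, then lets $r\to\infty$ by dominated convergence (this is where \eqref{moment2} first enters in their argument), so the lemma is established using only compactly supported test functions. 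Second, for the $p$-th power jump term (the quantity $G$ in the paper's proof) the paper bounds $\int_{\frZ_1}\bar\xi^p\,d\nu_1\le K_\xi^{p-2}|\bar\xi|^2_{L_2}$, i.e.\ it invokes the boundedness of $\bar\xi$ from Assumption \ref{assumption p} (which, incidentally, is not listed among the lemma's hypotheses), whereas your interpolated bound $|I^\xi k_\varepsilon(\cdot-y)|_{L_p}\le C\min(1,|\xi_s(y,\frz)|)$ gets by with $\bar\xi\in L_2(\frZ_1)$ alone; to complete that step you should make explicit the Jensen inequality with respect to the normalized measure $|\mu_s|/\|\mu_s\|$ (using $\esssup_t\|\mu_t\|<\infty$, which holds since $\mu$ is weakly cadlag) so that the power $p$ can be moved inside the $y$-integral before integrating over $\frZ_1$. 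Also note that for the compensator terms $C_\eta$, $C_\xi$ and for $H$ no interpolation is needed at all, since only $\bar\eta^2$, $\bar\xi^2$ appear there, which is how the paper treats them. Your closing remark is right: the integrability conditions of Theorem \ref{theorem x} are almost-sure conditions and its proof contains the truncation/stopping argument, so no extra localisation is required.
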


\begin{proof}
Let $\psi\in C^{\infty}_0(\bR^d)$ such that $\psi(0)=1$, 
and for integers $r\geq1$ define $\psi_r$ by dilation,  
$\psi_r(x)=\psi(x/r)$, $x\in\bR^d$.
Then substituting 
$k_{\varepsilon}(x-\cdot)\psi_r(\cdot)\in C^{\infty}_0$ 
in place of $\varphi$ in \eqref{eqZ},  for each $x\in\bR^d$  we get 
$$
\mu_t(\ke(x-\cdot)\psi_r)=  \mu_0(\ke(x-\cdot)\psi_r) 
+  \int_0^t\mu_{s}(\tilde{\cL_s}(\ke(x-\cdot)\psi_r))\,ds
$$
$$
+ \int_0^t \mu_{s}(\cM^k_s(\ke(x-\cdot)\psi_r))\,dV^k_s
+ \int_0^t\int_{\frZ_0}\mu_{s}(J_s^{\eta}(\ke(x-\cdot)\psi_r))\,\nu_0(d\frz)ds 
$$
\begin{equation}                                                                                                  \label{erZakai}
+ \int_0^t\int_{\frZ_1}\mu_{s}(J_s^{\xi}(\ke(x-\cdot)\psi_r))\,\nu_1(d\frz)ds
+\int_0^t\int_{\frZ_1}\mu_{s-}(I_s^{\xi}(\ke(x-\cdot)\psi_r))\,\tilde N_1(d\frz,ds),
\end{equation}
almost surely for all $t\in[0,T]$. 
Clearly, 
$\lim_{r\to\infty}\ke(x-y)\psi_r(y)=\ke(x-y)$ and there is a constant $N$, 
independent of $r$, such that 
$$
|\ke(x-y)\psi_r(y)|\leq N\quad\text{for all $x,y\in\bR^d$}. 
$$ 
Hence almost surely 
\begin{equation}
\lim_{r\to\infty}\mu_t(\ke(x-\cdot)\psi_r)=\mu_t(\ke(x-\cdot))
\quad\text{for each $x\in\bR^d$ and $t\in[0,T]$}.
\end{equation}
It is easy to see that for every $\omega\in\Omega$, 
$x,y\in\bR^d$, $s\in[0,T]$ and $\frz_i\in\frZ_i$ $(i=0,1)$  
we have 
\begin{equation}                                                                        \label{limit}
\lim_{r\to\infty}A_s(\ke(x-y)\psi_r(y))=A_s(\ke(x-y))
\end{equation}
with $\tilde{\cL}$, $\cM^k$, $J^{\eta}$, $J^{\xi}$ and $I^{\xi}$ 
in place of $A$. Clearly, 
$$
\sup_{x\in\bR^d}|\psi_r(x)|=\sup_{x\in\bR^d}|\psi(x)|<\infty, 
\quad
\sup_{x\in\bR^d}|D\psi_r(x)|=r^{-1}\sup_{x\in\bR^d}|D\psi(x)|<\infty,
$$
$$ 
\sup_{x\in\bR^d}|D^2\psi_r(x)|=r^{-2}\sup_{x\in\bR^d}|D^2\psi(x)|<\infty, 
$$
and there is a constant $N$ depending only on $d$ 
and $\varepsilon$ such that for all $x,y\in\bR^d$
\begin{equation}                                                                              \label{bound0}
|k_{\varepsilon}(x-y)|
+|Dk_{\varepsilon}(x-y)|
+|D^2k_{\varepsilon}(x-y)|\leq N.  
\end{equation}
Hence, due to Assumption \ref{assumption SDE}, we have 
a constant $N=N(\varepsilon,  d,K,K_0,K_1)$ such that 
\begin{equation}                                                                                   \label{bound1}
|\tilde{\cL_s}(\ke(x-y)\psi_r(y))|
\leq N(K^2_0+K_1^2|y|^2+K^2_1|Y_s|^2),
\end{equation}
\begin{equation}                                                                                        \label{bound2}
\sum_k|\cM^k_s(\ke(x-y)\psi_r(y))|^2\leq 
N(K^2_0+K^2_1|y|^2+K^2_1|Y_s|^2) 
\end{equation}
for $x,y\in\bR^d$, $s\in[0,T]$, $r\geq1$ and $\omega\in\Omega$. 
Similarly, applying Taylor's formula to
$$
 A_s(\ke(x-y)\psi_r(y))\quad\text{with 
$J^{\eta}$, $J^{\xi}$ and $I^{\xi}$ in place of $A$}, 
$$
we have a constant $N=N(\varepsilon, d, K_0,K_1)$ such that 
\begin{align}                                                                                                 \label{bound3}
|J_s^{\eta}(\ke(x-y)\psi_r(y))|
&\leq\sup_{v\in\bR^d}|D^2_v(\ke(x-v)\psi_r(v))||\eta_{s}(y,\frz_0)|^2
\leq N|\eta_{s}(y,\frz_0)|^2, \\
 \label{bound4}
|J_s^{\xi}(\ke(x-y)\psi_r(y))|
&\leq\sup_{v\in\bR^d}|D^2_v(\ke(x-v)\psi_r(v))||\xi_{s}(y,\frz_1)|^2
\leq N|\xi_{s}(y,\frz_1)|^2
\end{align}
and 
\begin{equation}                                                                                           \label{bound5}
|I^{\xi}_{s}(\ke(x-y)\psi_r(y))|^2
\leq 
\sup_{v\in\bR^d}|D_v(\ke(x-v)\psi_r(v))|^2|\xi_{s}(y,\frz_1)|^2
\leq N|\xi_{s}(y,\frz_1)|^2, 
\end{equation}
respectively, for all $x,y\in\bR^d$, $s\in[0,T]$, 
$\frz_i\in\frZ_i$, $i=0,1$ and $\omega\in\Omega$. 
Using \eqref{limit} (with $A:=\cL$), \eqref{bound1} and  \eqref{moment2}, 
by Lebesgue's theorem on dominated convergence 
we get for each $x\in\bR^d$ 
$$
\lim_{r\to\infty}\int_0^t\mu_{s}(\tilde{\cL}_s(\ke(x-\cdot)\psi_r))\,ds
=\int_0^t\mu_{s}(\tilde{\cL}_s\ke(x-\cdot))\,ds
\quad\text{almost surely,}
$$
uniformly in $t\in [0,T]$. Using Jensen's inequality, 
\eqref{limit} (with $A:=\cM$), \eqref{bound2} and  \eqref{moment2}, 
by Lebesgue's theorem on dominated convergence 
we obtain
$$
\limsup_{r\to\infty}\int_0^T
\sum_k|\mu_{s}(\cM^k_s(\ke(x-\cdot)\psi_r))-\mu_{s}(\cM^k_s(\ke(x-\cdot))|^2\,ds
$$
$$
\leq \esssup_{s\in[0,T]}\|\mu_s\|\limsup_{r\to\infty}\int_0^T\int_{\bR^d}
\sum_k|\cM^k_s(\ke(x-\cdot)\psi_r))-\cM^k_s(\ke(x-\cdot))|^2\,|\mu_s|(dy)\,ds=0
$$
almost surely,  which implies that for $r\to\infty$ 
$$
\int_0^t\mu_{s}(\cM^k_s(\ke(x-\cdot)\psi_r))\,dV^k_s
\to \int_0^t\mu_{s}(\cM^k_s\ke(x-\cdot))\,dV^k_s
$$
in probability, for each $x\in\bR^d$, uniformly in $t\in [0,T]$. 
Since by Assumption \ref{assumption SDE}(ii)  
and \eqref{moment2} 
$$
\int_0^T\int_{\bR^d}\int_{\frZ_0}|\eta_s(y,\frz_0)|^2\nu_0(d\frz_0)\,|\mu_s|(dy)\,ds
\leq 2K^2_0|\bar\eta|_{L_2}^2\int_0^T\|\mu_s\|\,ds
$$
$$
+2K^2_1|\bar\eta|_{L_2}^2
\int_0^T\int_{\bR^d}|y|^2\,|\mu_s|(dy)\,ds
+2K^2_1|\bar\eta|_{L_2}^2
\int_0^T\int_{\bR^d}|Y_s|^2\,|\mu_s|(dy)\,ds<\infty\quad\text{(a.s.)}, 
$$
from \eqref{limit} (with $A:=J^{\eta}$) and \eqref{bound3} 
by Lebesgue's theorem on dominated convergence 
we get 
$$
\lim_{r\to\infty}\int_0^t\int_{\frZ_0}\mu_{s}(J_s^{\eta}(\ke(x-\cdot)\psi_r))\,\nu_0(d\frz)ds 
=\int_0^t\int_{\frZ_0}\mu_{s}(J_s^{\eta}\ke(x-\cdot))\,\nu_0(d\frz)ds
\quad\text{(a.s.)},  
$$
uniformly in $t\in[0,T]$. In the same way we obtain this with $J^{\xi}$, 
$\nu_1$ and $\frZ_1$ in place of $J^{\eta}$,  
$\nu_0$ and $\frZ_0$, respectively. Similarly, using first Jensen's inequality 
and Fubini's theorem we have 
$$
\limsup_{r\to\infty}\int_0^T\int_{\frZ_1}
|\mu_{s}(I_s^{\xi}(\ke(x-\cdot)\psi_r))-\mu_{s}(I_s^{\xi}(\ke(x-\cdot))|^2\,\nu_1(d\frz)\,ds
$$
$$
\leq  \esssup_{s\in[0,T]}\|\mu_s\|\limsup_{r\to\infty}\int_0^T\int_{\bR^d}\int_{\frZ_1}
|I_s^{\xi}(\ke(x-y)\psi_r(y)))-I_s^{\xi}(\ke(x-y))|^2\,\nu_1(d\frz)\,|\mu_s|(dy)\,ds=0\,\,\text{(a.s.)},  
$$
which implies that for $r\to\infty$ for each $x\in\bR^d$ we have 
$$
\int_0^t\int_{\frZ_1}
\mu_{s}(I_{s-}^{\xi}(\ke(x-\cdot)\psi_r))\,\tilde N_1(d\frz,ds)\to \int_0^t\int_{\frZ_1}
\mu_{s}(I_{s-}^{\xi}(\ke(x-\cdot)))\,\tilde N_1(d\frz,ds) 
$$
in probability, uniformly in  $t\in[0,T]$. Consequently, letting $r\to\infty$ 
in equation \eqref{erZakai}, we obtain 
that \eqref{eZakai} holds almost surely for each $t\in[0,T]$.

To prove \eqref{Ito formula mu epsilon p norm} we are going to verify that 
$$
f_t(x):=(\tilde{\cL}^*_t\mu_t)^{(\varepsilon)}(x)
+\int_{\frZ_0}(J^{\eta *}_t\mu_s)^{(\varepsilon)}(x)\,\nu_0(d\frz_0)
+\int_{\frZ_1}(J^{\xi *}_t\mu_s)^{(\varepsilon)}(x)\,\nu_1(d\frz_1), 
$$
$$
g^j_t(x):=(\cM^{j*}_t\mu_t)^{(\varepsilon)}(x), 
\quad
h_t(x,\frz):=(I^{\xi*}_t\mu_{t-})^{(\varepsilon)}(x), 
\quad
v_t(x):=\mu^{(\varepsilon)}_t(x), 
$$
($\omega\in\Omega$, $t\in[0,T]$, $x\in\bR^d$, 
$\frz\in\frZ_1$, $j=1,2,...,d'$)  
satisfy the conditions of Theorem \ref{theorem x} 
with the $\cF_t$-Wiener process $w:=V$ and 
$\cF_t$-Poisson martingale measure 
$\tilde\pi:=\tilde N_1$, carried by the probability space $(\Omega, \cF,Q)$ 
equipped with the 
filtration $(\cF_t)_{t\geq0}$. To see that $f$, $g$, $h$ 
satisfy the required measurability properties first we claim that  
for bounded 
$\cO\otimes\cB(\bR^d)\otimes\cB(\bR^{d})\otimes\cZ_0$-measurable functions 
$A=A_t(x,y,\frz)$ and bounded 
$\cP\otimes\cB(\bR^d)\otimes\cB(\bR^{d})\otimes\cZ_0$-measurable  
$A=A_t(x,y,\frz)$,  the functions 
\begin{equation}                                                                                       \label{1.20.2.22}
\int_{\bR^d}A_t(x,y,\frz)\mu_t(dy)
\quad
\text{and}\
\quad 
\int_{\bR^d}B_t(x,y,\frz)\mu_{t-}(dy)
\end{equation}
are $\cO\otimes\cB(\bR^d)\otimes\cZ_0$-  
and  $\cP\otimes\cB(\bR^d)\otimes\cZ_0$-measurable,  
in $(\omega,t,x,\frz)\in\Omega\times[0,T]\times\bR^d\times\frZ_0$, 
respectively.  Indeed, this is 
obvious if $A_t(x,y,\frz)=\alpha_t\varphi(x)\phi(y)\kappa(\frz)$ 
and 
$B_t(x,y,\frz)=\beta_t\varphi(x)\phi(y)\kappa(\frz)$
with $\varphi,\phi\in C_b(\bR^d)$, bounded $\cZ_0$-measurable function 
$\kappa$ on $\frZ_0$,  
and bounded $\cO$-measurable function $\alpha$ and 
bounded $\cP$-measurable $\beta$ 
on $\Omega\times[0,T]$. Thus our claim follows 
by a standard application of the monotone class lemma for functions. 
Hence one can easily see 
that our claim remains valid if we replace the boundedness condition 
with the existence of the integrals 
in \eqref{1.20.2.22}. Using this and taking into account 
\eqref{bound1} and \eqref{bound2}
and the estimates obtained by Taylor's formula, 
\begin{equation}                                                                                                 \label{estimate3}
|J_s^{\eta}\ke(x-y)|\leq N|\eta_{s}(y,\frz_0)|^2, 
\quad                                                                        
|J_s^{\xi}\ke(x-y)|\leq N|\xi_{s}(y,\frz_1)|^2, 
\end{equation}
\begin{equation}                                                                                           \label{estimate5}
|I^{\xi}_{s}\ke(x-y)|^2\leq N|\xi_{s}(y,\frz_1)|^2 
\end{equation}
for $x,y\in\bR^d$, $s\in[0,T]$, 
$\frz_i\in\frZ_i$ and $\omega\in\Omega$, 
where $N=N(\varepsilon,  d)$,  
it is not difficult to show  that  
$(\tilde{\cL}^*_t\mu_t)^{(\varepsilon)}(x)$, 
$(\cM^{j\ast}\mu_t)^{(\varepsilon)}(x)$ 
are $\cO\otimes\cB(\bR^d)$-measurable in $(\omega,t)$, 
$(J_t^{\eta\ast}\mu_t)^{(\varepsilon)}(x)$ and 
$(J_t^{\xi\ast}\mu_t)^{(\varepsilon)}(x)$ 
are $\cP\otimes\cB(\bR^d)\otimes\cZ_0$- and 
$\cP\otimes\cB(\bR^d)\otimes\cZ_1$-measurable 
in $(\omega,t,x,\frz_0)$ and 
$(\omega,t,x,\frz_1)$, respectively, and 
$(I_t^{\xi\ast}\mu_{t-})^{(\varepsilon)}(x)$ is  
$\cP\otimes\cB(\bR^d)\otimes\cZ_1$-measurable  
in $(\omega,t,x,\frz_1)$. Finally, integrating 
$(J_t^{\eta\ast}\mu_t)^{(\varepsilon)}(x)$ 
and $(J_t^{\xi\ast}\mu_t)^{(\varepsilon)}(x)$ 
over $\frZ_0$ and $\frZ_1$, respectively, 
by Fubini's theorem we get that $f$ is 
$\cO\otimes\cB(\bR^d)$-measurable. 
Using the estimates \eqref{bound1}, \eqref{bound2} together with 
\eqref{estimate3} and \eqref{estimate5} 
it is easy to see that 
due to $\esssup_{t\in[0,T]}|\mu_t|(\bR^d)<\infty$ (a.s.) 
and \eqref{moment2} the conditions 
\eqref{xintegral}, \eqref{assumptionFubini} hold. 
By Minkowski's inequality for every $x\in\bR^d$, $t\in[0,T]$ 
and $\omega\in\Omega$ we have 
$$
|\mu_t^{(\varepsilon)}|^p_{L_p}
=\int_{\bR^d}|\int_{\bR^d}k_{\varepsilon}(x-y)\mu_t(dy)|^p\,dx
\leq |k_{\varepsilon}|^p_{L_p}|\mu_t|^p(\bR^d)<\infty, 
$$
which shows that condition \eqref{vcondition} holds. 
To complete the proof of the lemma 
it remains to show that almost surely 
$$
A:=\int_0^T\int_{\bR^d}|(\tilde{\cL}^*_s\mu_s)^{(\varepsilon)}(x)|^p\,dxds<\infty, 
\quad
B:=\int_0^T\int_{\bR^d}
\big(\sum_k|(\cM^{k*}_s\mu_s)^{(\varepsilon)}(x)|^2\big)^{p/2}\,dxds<\infty, 
$$
$$
C_{\eta}:=\int_0^T
\int_{\bR^d}\Big|\int_{\frZ_0}
(J^{\eta *}_s\mu_s)^{(\varepsilon)}(x,\frz)\,\nu_0(d\frz)\Big|^pdxds<\infty, 
$$
$$
C_{\xi}:=\int_0^T
\int_{\bR^d}\big|\int_{\frZ_1}
(J^{\xi *}_s\mu_s)^{(\varepsilon)}(x,\frz)\,\nu_1(d\frz)\big|^pdxds<\infty, 
$$
$$
G:=\int_0^T\int_{\bR^d}
\int_{\frZ_1}|(I^{\xi *}_s\mu_s)^{(\varepsilon)}(x,\frz)|^p\,\nu_1(d\frz)dxds<\infty, 
$$
$$
H:=\int_0^T
\int_{\bR^d}
\Big(\int_{\frZ_1}
|(I^{\xi *}_s\mu_s)^{(\varepsilon)}(x,\frz)|^2\,\nu_1(d\frz)\Big)^{p/2}dxds<\infty.  
$$
To this end note first that with a constant $N=N(\varepsilon,d)$ 
\begin{equation}                                                                                     \label{e}
|k_{\varepsilon}(x-y)|
+|Dk_{\varepsilon}(x-y)|
+|D^2k_{\varepsilon}(x-y)|\leq Nk_{2\varepsilon}(x-y)\quad\text{for all $x,y\in\bR^d$}.  
\end{equation}
Thus, using Minkowski's inequality and Assumption \ref{assumption SDE}(ii), 
we have a constant 
$N$, depending on $\varepsilon$, $d$, $K_0$, $K$ and $K_1$, 
such that almost surely
$$
A\leq \int_0^T\Big(\int_{\bR^d}
\Big(\int_{\bR^d}|\tilde{\cL}_s k_{\varepsilon}(x-y)|^pdx\Big)^{1/p}|\mu_s|(dy)\Big)^pds
$$
$$
\leq N |k_{2\varepsilon}|^p_{L_p}
\int_0^T\Big(\int_{\bR^d}(K^2_0+K_1^2|y|^2+
K_1^2|Y_s|^2)|\mu_s|(dy)\Big)^p ds. 
$$
Hence taking into account $\esssup_{s\in[0,T]}\|\mu_s\|<\infty$ (a.s.),
\eqref{moment2} (if $K_1\neq0$), as well as the cadlagness of $(Y_t)_{t\in [0,T]}$, 
we get $A<\infty$ (a.s.). In the same way we have $B<\infty$ (a.s.). 
By Taylor's formula and \eqref{e} 
for each $x\in\bR^d$ we have 
$$
|J^{\eta}_yk_{\varepsilon}(x-y)|
\leq \int_0^1|D^2_yk_{\varepsilon}|(x-y-\theta\eta(y,\frz))|\eta(y,\frz)|^2\,d\theta, 
$$
$$
\leq N\int_0^1k_{2\varepsilon}(x-y-\theta\eta(y,\frz))\,d\theta\,|\eta(y,\frz)|^2,  
$$
for all $y\in\bR^d$, $s\in[0,T]$, $\frz\in\frZ_0$ and $\omega\in\Omega$.  
Here, and often later on, the variable $s$ is suppressed, and the subscript 
$y$ in $J^{\eta}_y$ indicates that the operator 
$J^{\eta}$ acts in the variable $y$. 
Hence Minkowski's inequality gives 
$$
\Big(\int_{\bR^d}|J^{\eta}_yk_{\varepsilon}(x-y)|^p\,dx\Big)^{1/p}
\leq N|k_{2\varepsilon}|_{L_p}|\eta(y,\frz)|^2
$$
with a constant $N=N(d,\varepsilon)$. 
Thus by the Minkowski inequality and Fubini's theorem,  
$$
C_{\eta}\leq \int_0^T
\left(\int_{\frZ_0}
\int_{\bR^d}
\left(
\int_{\bR^d}|J^{\eta}_yk_{\varepsilon}(x-y)|^p\,dx
\right)^{1/p}\,|\mu_s|(dy)
\,\nu_0(d\frz)
\right)^pds
$$
$$
\leq N^p|k_{2\varepsilon}|^p_{L_p}
\int_0^T
\Big(
\int_{\frZ_0}
\int_{\bR^d}|\eta_s(y,\frz)|^{2}\,|\mu_s|(dy)
\,\nu_0(d\frz)
\Big)^pds
$$
$$
\leq 2^pN^p|\bar\eta|^{2p}_{L_2}|k_{2\varepsilon}|^p_{L_p}
\int_0^T
\Big(
\int_{\bR^d}(K_0^2+K_1^2|y|^{2}+K_1^2|Y_s|^{2})\,|\mu_s|(dy)
\Big)^p\,ds<\infty \,\,\text{(a.s.)}.  
$$
In the same way we get $C_{\xi}<\infty$ (a.s.). 
By Taylor's formula and \eqref{e} 
for each $x\in\bR^d$ we have 
$$
|I^{\xi}_yk_{\varepsilon}(x-y)|
\leq 
\int_0^1
|D_yk_{\varepsilon}|(x-y-\theta\xi(y,\frz))|\xi(y,\frz)|\,d\theta, 
$$
\begin{equation}                                                                                          \label{1.6.2.22}
\leq N\int_0^1k_{2\varepsilon}(x-y-\theta\xi(y,\frz))\,d\theta\,|\xi(y,\frz)|,  
\end{equation}
for all $y\in\bR^d$, $s\in[0,T]$, $\frz\in\frZ_0$ and $\omega\in\Omega$,  
with a constant $N=N(d,p,\varepsilon)$. 
Hence similarly to above we 
obtain 
$$
G\leq NK_{\xi}^{p-2}|\bar\xi|^2_{L_2}|k_{2\varepsilon}|^p_{L_p}
\int_0^T\Big(
\int_{\bR^d}(K_0+K_1|y|+K_1|Y_s|)\,|\mu_s|(dy)
\Big)^p\,ds<\infty \,\,\text{(a.s.)}.   
$$
with a constant $N=N(d,p,\varepsilon)$. 
By Minkowski's inequality, 
taking into account \eqref{1.6.2.22} 
and Assumption \ref{assumption p} we have 
$$
H\leq \int_0^T
\left(
\int_{\frZ_1}\Big(\int_{\bR^d}|(I_t^{\xi*}\mu_{t})^{\ep}|^p\,dx\Big)^{2/p}\,\nu_1(d\frz)
\right)^{p/2}\,dt
$$
$$
\leq\int_0^T
\left(
\int_{\frZ_1}\Big(\int_{\bR^d}
\Big(
\int_{\bR^d}|I_t^\xi k_{\varepsilon}(x-y)|^p\,dx\Big)^{1/p}|\mu_t|(dy)
\Big)^{2}\,\nu_1(d\frz)\right)^{p/2}\,dt
$$
$$
\leq N |\bar\xi|^p_{L_2}|k_{2\varepsilon}|^p_{L_p}
\int_0^T
\Big(
\int_{\bR^d}(K_0+K_1|y|+K_1|Y_t|)\,|\mu_t|(dy)
\Big)^{p}\,dt<\infty
$$
almost surely, with a constant $N=N(d,p,\varepsilon)$. 
\end{proof}
\begin{lemma}                                                                                   \label{lemma Ito_u_Lp}  
Let Assumption \ref{assumption SDE} hold. Assume                                    
$(u_t)_{t\in[0,T]}$ is an $L_p$-solution of equation \eqref{equdZ} 
for a given $p\geq2$ 
such that $\esssup_{t\in [0,T]}|u_t|_{L_1}<\infty$ (a.s.), and if  
$K_1\neq0$ in Assumption \ref{assumption SDE} (ii), then 
\begin{equation}                                                                               \label{umoment2}
\essup_{t\in[0,T]}\int_{\bR^d}|y|^2\,|u_t|(dy)<\infty\,\,(\rm{a.s.)}.
\end{equation}
Then for each $x\in\bR^d$ and $\varepsilon>0$,
\begin{equation}                                                                                         \label{euZakai}                                                                  
\begin{split}
 u^\ep_t(x)&= u^\ep_0(x) +  \int_0^t(\tilde{\cL}^*_su_s)^{(\varepsilon)}(x)\,ds
+\int_0^t(\cM^{j\ast}_su_s)^{(\varepsilon)}(x)\,dV^j_s \\
    &+ \int_0^t\int_{\frZ_0}(J^{\eta\ast}_su_s)^{(\varepsilon)}(x)\,\nu_0(d\frz)\,ds
    + \int_0^t\int_{\frZ_1}(J^{\xi\ast}_su_s)^{(\varepsilon)}(x)\,\nu_1(d\frz)\,ds\\
    &+ \int_0^t\int_{\frZ_1}(I^{\eta *}_su_{s-})^{(\varepsilon)}(x)\,\Nte(d\frz,ds)
\end{split}
\end{equation}
holds almost surely for all $t\in[0,T]$. Moreover, 
for each $\varepsilon>0$ and $p\geq2$ 
$$
|u^\ep_t|_{L_p}^p 
= |u^\ep_0|_{L_p}^p 
+ p\int_0^t \big(|u^\ep_s|^{p-2}u^\ep_s,(\mathcal{M}_s^{k\ast}u_s)^\ep \big)\,dV^k_s
 + p\int_0^t \big(|u^\ep_s|^{p-2}u^\ep_s,(\tilde{\cL}^*_s u_s)^\ep \big)\,ds
$$
$$
+ \tfrac{p(p-1)}{2}\sum_k\int_0^t\big(|u_s^\ep|^{p-2}, |(\mathcal{M}_s^{k\ast}u_s)^\ep|^2 \big)\,ds
+ p\int_0^t\int_{\frZ_0} \big(|u^\ep_s|^{p-2}u^\ep_s,(J^{\eta\ast}_su_s)^\ep \big)\,\nu_0(d\frz)ds
$$
$$
+ p\int_0^t\int_{\frZ_1} 
\big(|u^\ep_s|^{p-2}u^\ep_s,(J^{\xi\ast}_su_s)^\ep 
\big)\,\nu_1(d\frz)ds 
+p\int_0^t\int_{\frZ_1} 
\big(
|u^\ep_{s-}|^{p-2}u^\ep_{s-},(I^{\xi\ast}u_{s-})^\ep
 \big)\,\tilde N_1(d\frz,ds)
 $$
 \begin{align}                                                                        \label{Ito formula u epsilon p norm}
 +\int_0^t\int_{\frZ_1}\int_{\bR^d}
 \Big\{&\big|u_{s-}^\ep + (I_s^{\xi\ast}u_{s-})^\ep\big|^p 
- |u_{s-}^\ep|^p - p|u_{s-}^\ep|^{p-2}u^\ep_{s-}(I_s^{\xi\ast}u_{s-})^\ep
\Big\}\,dxN_1(d\frz,ds)
 \end{align}
holds almost surely for all $t\in[0,T]$, where $u_{s-}$ denotes the left limit in $L_p$. 
\end{lemma}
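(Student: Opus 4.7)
The plan is to mirror the argument of Lemma \ref{lemma ItoLp} almost verbatim, viewing the $L_p$-valued process $u_t$ as a signed-measure-valued process with density $u_t(y)$ with respect to Lebesgue measure: formally, $\mu_t(dy) := u_t(y)\,dy$, so that $|\mu_t|(\bR^d) = |u_t|_{L_1}$ is finite almost surely by hypothesis, and the second-moment assumption \eqref{umoment2} plays the role of \eqref{moment2}.

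First, I would derive \eqref{euZakai}. By Definition \ref{def Lp solution}, equation \eqref{equZ} holds with $u$ in place of $\mu$ for every $\varphi\in C_0^{\infty}$. For each fixed $x\in\bR^d$, $\varepsilon>0$, and integer $r\geq 1$, the test function $\varphi(y) := k_{\varepsilon}(x-y)\psi_r(y)$ lies in $C_0^{\infty}$, where $\psi_r$ is the dilation of a fixed cutoff as in the proof of Lemma \ref{lemma ItoLp}. Substituting this test function yields the analogue of \eqref{erZakai} with $u_s(y)\,dy$ in place of $\mu_s(dy)$. Then I would pass to the limit $r\to\infty$ term by term, reusing exactly the dominated convergence estimates \eqref{bound1}--\eqref{bound5}, whose integrands depend only on $|\eta|$, $|\xi|$, $|Y_s|$ and polynomials in $|y|$; the moment hypotheses $\esssup_t |u_t|_{L_1}<\infty$ and \eqref{umoment2}, together with Assumption \ref{assumption SDE}, supply the required dominating functions. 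For the stochastic integrals one uses Jensen's inequality and dominated convergence as in Lemma \ref{lemma ItoLp} to obtain convergence in probability uniformly on $[0,T]$. This gives \eqref{euZakai} almost surely for each fixed $x$ and $t$.

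Next, to obtain \eqref{Ito formula u epsilon p norm}, I would apply Theorem \ref{theorem x} to the processes
\begin{equation*}
v_t(x) := u_t^{(\varepsilon)}(x),\qquad g^j_t(x) := (\cM^{j\ast}_t u_t)^{(\varepsilon)}(x),\qquad h_t(x,\frz) := (I^{\xi\ast}_t u_{t-})^{(\varepsilon)}(x,\frz),
\end{equation*}
\begin{equation*}
f_t(x) := (\tilde\cL_t^{\ast} u_t)^{(\varepsilon)}(x) + \int_{\frZ_0}(J^{\eta\ast}_t u_t)^{(\varepsilon)}(x)\,\nu_0(d\frz) + \int_{\frZ_1}(J^{\xi\ast}_t u_t)^{(\varepsilon)}(x)\,\nu_1(d\frz),
\end{equation*}
with $w:=V$ and $\tilde\pi:=\tilde N_1$. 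Equation \eqref{euZakai} is precisely the pointwise SDE \eqref{xdifferential} required by Theorem \ref{theorem x}, so it remains to check the measurability and integrability hypotheses. The measurability of $f,g,h$ follows by the same monotone-class argument used in Lemma \ref{lemma ItoLp} (now applied to measures of the form $A_t(x,y,\frz)u_t(y)\,dy$), and the integrability bounds \eqref{xintegral}, \eqref{assumptionFubini}, \eqref{pcondition}, \eqref{vcondition} can be verified by repeating the estimates for $A,B,C_\eta,C_\xi,G,H$ at the end of the proof of Lemma \ref{lemma ItoLp}, since those estimates use only $\esssup_{s}|\mu_s|(\bR^d)<\infty$, the moment bound, and the Gaussian control \eqref{e}; all three are available here with $|u_s(y)|\,dy$ in place of $|\mu_s|(dy)$. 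Once the hypotheses are verified, Theorem \ref{theorem x} gives \eqref{Ito formula u epsilon p norm} directly.

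The main technical obstacle is the bookkeeping in verifying the $L_p$-integrability conditions \eqref{pcondition}, particularly the two jump terms involving $|h|^p$ and $(\int|h|^2\,\nu_1)^{p/2}$. These require Minkowski's inequality applied in the right order together with Taylor-type bounds on $I^\xi k_\varepsilon$; fortunately the same computation, with $|u_s(y)|\,dy$ in place of $|\mu_s|(dy)$, already appears at the end of the proof of Lemma \ref{lemma ItoLp}, so it can be transcribed with only notational changes. No new ideas beyond those of Lemma \ref{lemma ItoLp} are needed.
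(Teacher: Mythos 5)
Your proposal is correct and is essentially the paper's own argument: the paper proves this lemma precisely by copying the proof of Lemma \ref{lemma ItoLp} with $u_t(x)\,dx$ and $u_{t-}(x)\,dx$ in place of $\mu_t(dx)$ and $\mu_{t-}(dx)$, using $\esssup_t|u_t|_{L_1}<\infty$ and \eqref{umoment2} exactly as you do, and then invoking Theorem \ref{theorem x} with the same choices of $f$, $g$, $h$, $v$. The only point worth adding is the paper's explicit remark that $u_t(x)\,dx$ is a finite signed measure only for $P\otimes dt$-a.e.\ $(\omega,t)$ (so the result does not follow from Lemma \ref{lemma ItoLp} as a black box, only its proof can be repeated), and that since $(u_t)$ is weakly cadlag in $L_p$ one has $u_{t-}=u_t$ for all but countably many $t$ and $\sup_t|u_t|_{L_p}<\infty$ a.s., which justifies the substitutions in the jump terms.
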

\begin{proof}
Notice that equations \eqref{euZakai} and \eqref{Ito formula u epsilon p norm} 
can be formally obtained from equations \eqref{eZakai} 
and \eqref{Ito formula mu epsilon p norm}, respectively, by substituting 
$u_t(x)dx$ and $u_{t-}(x)dx$ in place of $\mu_t(dx)$ and $\mu_{t-}(dx)$, 
respectively. Note, however, that $u_t(x)dx$, defines a signed measure only 
for $P\otimes dt$-almost every $(\omega,t)\in\Omega\times[0,T]$. Thus this lemma 
does not follow directly from Lemma \ref{lemma ItoLp}. We can copy, however, 
the proof of Lemma \ref{lemma ItoLp} by replacing $\mu_t(dx)$ and $\mu_{t-}(dx)$ with 
$u_t(x)dx$ and $u_{t-}(x)dx$, respectively. We need also take into account that 
since $(u_{t})_{t\in[0,T]}$ is an $L_p$-valued weakly cadlag process, 
we have have a set $\Omega'$ 
of full probability such that $u_{t-}(\omega)=u_t(\omega)$ for all but countably many $t\in[0,T]$, 
and $\sup_{t\in[0,T]}|u_t(\omega)|_{L_p}<\infty$ for $\omega\in\Omega'$.  
\end{proof}

\begin{lemma}                                                             \label{lemma supemu}
Let Assumptions \ref{assumption SDE},  \ref{assumption p}
and \ref{assumption estimates}  hold. 
Let $(\mu _t)_{t\in[0,T]}$ be a measure-valued solution to 
\eqref{measureZ}. If $K_1\neq 0$ in Assumption \ref{assumption SDE}, 
then assume additionally \eqref{moment2}. 
Then for $\varepsilon>0$ and even integers $p\geq2$ we have   
\begin{equation}                                                                                                     \label{esupbecsles}
\E\sup_{t\in[0,T]}|\mu^{(\varepsilon)}_t|^p_{L_p}
\leq N\E|\mu^{(\varepsilon)}_0|^p_{L_p}
\end{equation}
with a constant 
$N=N(p,d,T,K, K_{\xi}, K_{\eta},L,\lambda, |\bar\xi|_{L_2}, |\bar\eta|_{L_2})$.
\end{lemma}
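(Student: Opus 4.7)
The plan is to apply the It\^o formula \eqref{Ito formula mu epsilon p norm} from Lemma \ref{lemma ItoLp} and dominate each resulting drift term using the $L_p$-estimates of Section \ref{sec estimates}, while handling the two martingale terms by Davis' inequality combined with a localization argument. Note first that since $\mu_t\in\bM$ we have $|\mu_t|=\mu_t$ and $||\mu_t|^{(\varepsilon)}|_{L_p}=|\mu_t^{(\varepsilon)}|_{L_p}$, so the right-hand sides of the estimates in Section \ref{sec estimates} match the left-hand side of \eqref{esupbecsles}. Also, moment condition \eqref{moment2} (trivially satisfied when $K_1=0$) together with $\esssup_{t\leq T}\mu_t({\bf 1})<\infty$ (a.s.) justifies the use of Lemma \ref{lemma ItoLp}.

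For the drift terms I first decompose $\tilde\cL_s=\cL_s+\beta_s^k\cM_s^k$ with $\beta_s:=B_s(X_s)$ bounded by $K$, split the coefficient $a^{ij}=\tfrac12\sigma^{ik}\sigma^{jk}+\tfrac12\rho^{il}\rho^{jl}$ into its $\sigma$- and $\rho$-parts, and expand $|(\cM_s^{k*}\mu)^{(\varepsilon)}|^2$ as the sum of $|((\rho^{ik}D_i)^*\mu)^{(\varepsilon)}|^2$, a cross term with $(B^k\mu)^{(\varepsilon)}$, and $|(B^k\mu)^{(\varepsilon)}|^2$. Since $p$ is even, $|\mu^{(\varepsilon)}|^{p-2}\mu^{(\varepsilon)}=(\mu^{(\varepsilon)})^{p-1}$, so Lemma \ref{lemma pe1} applies directly: applied with the SDE's $\sigma$ it dominates the $\sigma\sigma^T$-part of $a$ (the corresponding quadratic term is non-negative and may be discarded), and applied with $\sigma:=\rho$ it dominates the $\rho\rho^T$-part of $a$ jointly with $\tfrac{p(p-1)}{2}\sum_k(|\mu^{(\varepsilon)}|^{p-2},|((\rho^{ik}D_i)^*\mu)^{(\varepsilon)}|^2)$. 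The drift $b^iD_i$ and the Lipschitz-in-$x$ drift $\beta^k\rho^{ik}D_i$ are handled by the second estimate of Lemma \ref{lemma pe1}; the pure product $(B^k\mu)^{(\varepsilon)}$ terms by Lemma \ref{lemma pe4}(i); and the crucial cross term $((\rho^{ik}D_i)^*\mu)^{(\varepsilon)}(B^k\mu)^{(\varepsilon)}$ by Lemma \ref{lemma pe4}(ii), where the Lipschitz continuity of $\rho B$ in Assumption \ref{assumption estimates}(iii) is exactly what is needed. Corollary \ref{corollary J} bounds the $\nu_0$-integral of $J^{\eta}$ pointwise in $\frz_0$ by $N\bar\eta(\frz_0)^2(1+\bar\eta(\frz_0)^2)|\mu^{(\varepsilon)}|^p_{L_p}$; integrating over $\nu_0$ and using $\bar\eta\leq K_\eta$ from Assumption \ref{assumption p} gives a bound $N(1+K_\eta^2)|\bar\eta|_{L_2}^2|\mu^{(\varepsilon)}|_{L_p}^p$. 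Finally, upon splitting $N_1(d\frz,ds)=\tilde N_1(d\frz,ds)+\nu_1(d\frz)ds$, the compensator part of the $N_1$-integral combines with $p\int(|\mu^{(\varepsilon)}|^{p-2}\mu^{(\varepsilon)},(J^{\xi*}\mu)^{(\varepsilon)})$ to give precisely the quantity $C$ estimated in Lemma \ref{lemma pe3}, yielding a bound of the same form after integrating over $\nu_1$ and using $\bar\xi\leq K_\xi$.

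The remaining two stochastic integrals are the continuous martingale $M^c_t=p\int_0^t(|\mu^{(\varepsilon)}|^{p-2}\mu^{(\varepsilon)},(\cM^{k*}\mu)^{(\varepsilon)})\,dV^k_s$ and, after cancellation with the $p(\ldots,I^{\xi*}\mu)^{(\varepsilon)}\tilde N_1$ term, the pure-jump martingale
$$M^{j}_t=\int_0^t\!\!\int_{\frZ_1}\!\!\int_{\bR^d}\bigl\{|\mu_{s-}^{(\varepsilon)}+(I^{\xi*}_s\mu_{s-})^{(\varepsilon)}|^p-|\mu_{s-}^{(\varepsilon)}|^p\bigr\}\,dx\,\tilde N_1(d\frz,ds).$$
By Davis' inequality, Cauchy--Schwarz in the form $|(|\mu^{(\varepsilon)}|^{p-2}\mu^{(\varepsilon)},g)|^2\leq|\mu^{(\varepsilon)}|^p_{L_p}(|\mu^{(\varepsilon)}|^{p-2},g^2)$, and the identical $L_p$-estimates used above for the quadratic variation, together with Lemma \ref{lemma 7.6.1} which gives $|\int\{|\mu^{(\varepsilon)}+(I^{\xi*}\mu)^{(\varepsilon)}|^p-|\mu^{(\varepsilon)}|^p\}dx|\leq N(1+\bar\xi(\frz))\bar\xi(\frz)|\mu^{(\varepsilon)}|^p_{L_p}$ for the jump martingale, followed by Young's inequality $ab\leq \delta a^2+\tfrac1{4\delta}b^2$, I obtain
$$\E\sup_{t\leq T}|M^c_t|+\E\sup_{t\leq T}|M^{j}_t|\leq \tfrac12\E\sup_{s\leq T}|\mu_s^{(\varepsilon)}|^p_{L_p}+C'\E\int_0^T|\mu_s^{(\varepsilon)}|^p_{L_p}\,ds.$$

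To justify taking expectations I introduce the stopping times $\tau_n:=\inf\{t\in[0,T]:|\mu_t^{(\varepsilon)}|^p_{L_p}\geq n\}$, apply the above bounds on $[0,t\wedge\tau_n]$, rearrange the $\tfrac12\E\sup$ term to the left, apply Gronwall's inequality in the variable $t$, and finally let $n\to\infty$ using Fatou's lemma to reach \eqref{esupbecsles}. The main obstacle is the bookkeeping in Step 2: one must be careful that the $\rho\rho^T$ part of $a^{ij}$ is matched with the martingale quadratic variation via Lemma \ref{lemma pe1}, while the $\sigma\sigma^T$ part stands alone (and is controlled by discarding a non-negative term), and that the $\nu_1$-integral of $J^\xi$ is bundled with the compensator of the $N_1$-integral to fit exactly the hypothesis of Lemma \ref{lemma pe3}; the essential structural identity that makes this bundling possible is the evenness of $p$, which allows us to replace $|\mu^{(\varepsilon)}|^{p-2}\mu^{(\varepsilon)}$ by $(\mu^{(\varepsilon)})^{p-1}$ throughout.
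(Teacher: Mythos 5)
Your overall route is the paper's own: the It\^o formula of Lemma \ref{lemma ItoLp}, the drift bookkeeping via Lemmas \ref{lemma pe1} and \ref{lemma pe4} (your identification of the role of Assumption \ref{assumption estimates}(iii) through \eqref{pe4_2} is exactly right), Corollary \ref{corollary J} for the $\eta$-term and Lemma \ref{lemma pe3} for the $\xi$-term bundled with the $\nu_1$-compensator (this is \eqref{7.23.3}--\eqref{7.27.4} in the paper), then Davis' inequality, localization, Gronwall and Fatou. There is, however, one step that fails as written: your control of the quadratic variation of the continuous martingale $M^c$. The Cauchy--Schwarz step reduces $|(|\mu^{(\varepsilon)}|^{p-2}\mu^{(\varepsilon)},(\cM^{k*}_s\mu_s)^{(\varepsilon)})|^2$ to $|\mu_s^{(\varepsilon)}|^p_{L_p}\,(|\mu_s^{(\varepsilon)}|^{p-2},|(\cM^{k*}_s\mu_s)^{(\varepsilon)}|^2)$, and you then appeal to ``the identical $L_p$-estimates used above''. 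But above, the quantity $\sum_k(|\mu^{(\varepsilon)}|^{p-2},|(\cM^{k*}\mu)^{(\varepsilon)}|^2)$ was never bounded on its own: it was controlled only in combination with the second-order part of $\tilde\cL$, and that cancellation is the whole content of Lemma \ref{lemma pe1}. A standalone bound $(|\mu^{(\varepsilon)}|^{p-2},|((\rho^{ik}D_i)^*\mu)^{(\varepsilon)}|^2)\le N|\mu^{(\varepsilon)}|^p_{L_p}$ with $N$ independent of $\varepsilon$ is false: take $\rho$ constant, $B=0$ and $\mu$ a Dirac mass at the origin; the left-hand side is then of order $\varepsilon^{-1}|k_{\varepsilon}|^p_{L_p}$. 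Since the constant in \eqref{esupbecsles} must not depend on $\varepsilon$ (this uniformity is precisely what Lemma \ref{lemma eu1} exploits when letting $\varepsilon\to0$), your route breaks down at this point. The repair is the paper's: bound the pairing directly, $|(|\mu^{(\varepsilon)}|^{p-2}\mu^{(\varepsilon)},(\cM^{k*}_s\mu_s)^{(\varepsilon)})|\le N(L+K)|\mu_s^{(\varepsilon)}|^p_{L_p}$, using the difference/cancellation argument behind \eqref{pe2} (whose proof bounds the absolute value) for the $\rho^{ik}D_i$ part and $|(B^k\mu)^{(\varepsilon)}|\le K\mu^{(\varepsilon)}$ for the zeroth-order part; this is \eqref{1.1.3.22} and yields $\langle M^c\rangle_T\le N\int_0^T|\mu_s^{(\varepsilon)}|^{2p}_{L_p}\,ds$ with an $\varepsilon$-free constant, after which your Davis/Young/Gronwall/Fatou scheme does go through.

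Two smaller points you should not gloss over. First, recombining the $\tilde N_1$-integral of $p(\cdot,(I^{\xi*}\mu)^{(\varepsilon)})$ with the martingale part of the $N_1$-integral into the single compensated integral of $|(T^{\xi*}\mu)^{(\varepsilon)}|^p_{L_p}-|\mu^{(\varepsilon)}|^p_{L_p}$ requires checking almost sure integrability of the remainder $\cR_p$ against $\nu_1\otimes dt$ (the paper's \eqref{6.10.21.3}, obtained from the Taylor/Minkowski bounds \eqref{7.14.4}--\eqref{7.14.3}) and square-integrability of the combined integrand (via Lemma \ref{lemma 7.6.1} and \eqref{7.14.1}); the paper also passes through an exhaustion of $\frZ_1$ by sets of finite $\nu_1$-measure to justify the algebra. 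Second, before you can ``rearrange the $\tfrac12\E\sup$ term to the left'' you need $\E\sup_{t\le T}|\mu^{(\varepsilon)}_{t\wedge\tau_n}|^p_{L_p}<\infty$; with your choice of $\tau_n$ this is not immediate from the bound $c<n$ on $[0,\tau_n)$ because of the jump at $\tau_n$, but it does follow from Lemma \ref{lemma 7.6.1}, which bounds that jump by a constant multiple of the left limit (alternatively, use the paper's stopping times built from $\int_0^t|\mu_s^{(\varepsilon)}|^p_{L_p}\,ds$ and a localizing sequence for the local martingale, together with its two-stage argument: first $\sup_t\E$, then $\E\sup$).
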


\begin{proof}
We may assume that $\E |\mu_0^{(\varepsilon)}|_{L_p}^p<\infty$. 
Define  
$$
Q_p(b, \sigma, \rho, \beta,\mu, k_{\varepsilon})
=p\big(|\mu^\ep|^{p-2}\mu^\ep,(\tilde\cL^*\mu)^\ep \big)                                                
+\tfrac{p(p-1)}{2}\sum_k\big(|\mu^\ep|^{p-2}, |(\mathcal{M}^{k*}\mu)^\ep|^2 \big),  
$$
$$    
\cQ_p^{(0)}(\eta(\frz_0),\mu, k_{\varepsilon})
=p\big(|\mu^{\ep}|^{p-2}\mu^{\ep},(J^{\eta(\frz_0)*}\mu)^{\ep}\big),                          
$$
\begin{equation}                                                                         \label{Q}
\cQ_p^{(1)}(\xi(\frz_1), \mu, k_{\varepsilon})
=
p(|\mu^{\ep}|^{p-2}\mu^{\ep}, (J^{\xi(\frz_1)*}\mu)^{\ep}),
\end{equation}
$$                                                    
\cR_p(\xi(\frz_1),\mu,k_{\varepsilon})=|\mu^{\ep} + (I^{\xi(\frz_1)*}\mu)^{\ep}|^p_{L_p} 
- |\mu^{\ep}|^p_{L_p} - p(|\mu^{\ep}|^{p-2}\mu^{\ep},(I^{\xi(\frz_1)*}\mu)^{\ep}),                   
$$
for $\mu\in\bM$, $\beta\in\bR^{d'}$, functions $b$, $\sigma$ and $\rho$ on $\bR^d$, 
with values in $\bR^d$, $\bR^{d\times d_1}$ and $\bR^{d\times d'}$, respectively, 
and $\bR^d$-valued functions $\eta(\frz_0)$ and $\xi(\frz_1)$ on $\bR^d$ for each $\frz_i\in\frZ_i$, $i=0,1$, 
where 
$$ 
\tilde\cL=\tfrac{1}{2}(\sigma^{il}\sigma^{jl}+\rho^{ik}\rho^{jk})D_{ij}+
\beta^l\rho^{il}D_i+\beta^lB^l,   
\quad
\cM^k=\rho^{ik}D_i+B^k, \quad k=1,2,...,d'. 
$$
Recall that $(f,g)$ denotes the integral of the product of Lebesgue measurable functions 
$f$ and $g$ over $\bR^d$ against the Lebesgue measure on $\bR^d$. 
By Lemma \ref{lemma ItoLp}
$$
d|\mu_t^{(\varepsilon)}|^p_{L_p}               
=\cQ_p(b_t, \sigma_t, \rho_t, \beta_t,\mu_t, k_{\varepsilon})\,dt                                                                  
 +\int_{\frZ_0}\cQ_p^{(0)}(\eta_t(\frz),\mu_t, k_{\varepsilon}) \,\nu_0(d\frz)\,dt 
 $$
 \begin{equation}                                                                 \label{7.23.1}                                                                                       
 +\int_{\frZ_1}\cQ_p^{(1)}(\xi_t(\frz), \mu_t, k_{\varepsilon})\,\nu_1(d\frz)\,dt
+\int_{\frZ_1}\cR_p(\xi_{t}(\frz), \mu_{t-}, k_{\varepsilon})\,N_1(d\frz,dt)
+d\zeta_1(t)+d\zeta_2(t),                        
\end{equation}
where $\beta_t=B_t(X_t)$  and 
\begin{equation}                                                            \label{30.9.21.7}
\zeta_1(t)=p\int_0^t \big(|\mu^\ep_s|^{p-2}
\mu^\ep_s,(\mathcal{M}_s^{k *}\mu_s)^\ep \big)\,dV^k_s, 
\end{equation}
$$
\zeta_2(t)=p\int_0^t\int_{\frZ_1} 
\big(|\mu^\ep_{s}|^{p-2}\mu^\ep_{s},(I_s^{\xi *}\mu_{s})^\ep \big)\,\Nte(d\frz,ds)\quad t\in[0,T]   
$$
are local martingales under $P$. 
We write 
\begin{equation}                                                                 \label{7.23.2}
\int_{\frZ_1}\cR_p(\xi_{t}(\frz_1), \mu_{t-}, k_{\varepsilon})\,N_1(d\frz,dt)
=\int_{\frZ_1}\cR_p(\xi_{t}(\frz_1), \mu_{t-}, k_{\varepsilon})\,\nu_1(d\frz)dt+d\zeta_3(t)
\end{equation}
with 
$$
\zeta_3(t)=\int_0^t\int_{\frZ_1}\cR_p(\xi_s(\frz), \mu_{s-}, k_{\varepsilon})\,N_1(d\frz,ds)
-\int_0^t\int_{\frZ_1}\cR_p(\xi_s(\frz), \mu_{s-}, k_{\varepsilon})\,\nu_1(d\frz)ds, 
$$
which we can justify if we show 
\begin{equation}
\label{6.10.21.3}
A:=\int_0^T
\int_{\frZ_1}
|\cR_p(\xi_s(\frz), \mu_s, k_{\varepsilon})|\,
\nu_1(d\frz)\,ds<\infty \,\text{(a.s.)}.  
\end{equation}
To this end observe that by 
Taylor's formula 
\begin{equation}                                                                                    \label{7.14.4}                                                                                                  
0\leq\cR_p(\xi_t(\frz), \mu_t, k_{\varepsilon})
\leq N\int_{\bR^d}
|\mu^{(\varepsilon)}_t|^{p-2}|
(I^{\xi(\frz)*}_t\mu_t)^{(\varepsilon)}|^{2}
+|(I^{\xi(\frz)*}_t\mu_t)^{(\varepsilon)}|^{p}\,dx 
\end{equation}
with a constant $N=N(d,p)$. Hence 
$$
\int_{\frZ_1}\cR_p(\xi_t(\frz), \mu_t, k_{\varepsilon})\,\nu_1(d\frz)
\leq N\int_{\bR^d}
|\mu^{(\varepsilon)}_t|^{p-2}|
(I^{\xi(\frz)*}_t\mu_t)^{(\varepsilon)}|_{L_2(\frZ_1)}^{2}
+|(I^{\xi(\frz)*}_t\mu_t)^{(\varepsilon)}|^{p}_{L_p(\frZ_1)}\,dx
$$
$$
\leq N'\big(
|\mu_t^{(\varepsilon)}|^p_{L_p}+A_1(t)+A_2(t)\big)
$$
with
\begin{equation}
\label{6.10.21.2}
A_1(t)=\int_{\bR^d}|(I^{\xi(\frz) *}_t\mu_t)^{(\varepsilon)}|^{p}_{L_2(\frZ_1)}\,dx,
\quad
A_2(t)=
\int_{\bR^d}|(I^{\xi(\frz) *}_t\mu_t)^{(\varepsilon)}|^{p}_{L_p(\frZ_1)}\,dx
\end{equation}
and constants $N$ and $N'$ depending only on $d$ and $p$. 
By Minkowski's inequality
\begin{equation}                                                                              \label{7.14.1}
|\mu_t^{(\varepsilon)}|^p_{L_p}
=\int_{\bR^d}\Big|\int_{\bR^d}k_{\varepsilon}(x-y)\,\mu_t(dy)\Big|^p\,dx
\leq
\Big|
\int_{\bR^d}|k_{\varepsilon}|_{L_p}\,\mu_t(dy)
\Big|^p
\leq |k_{\varepsilon}|_{L_p}^p\,\mu_t^p({\bf1}), 
\end{equation}
$$
A_1(t)=\int_{\bR^d}\Big|\int_{\frZ_1}
\big|
\int_{\bR^d}(k_{\varepsilon}(x-y-\xi_t(y,\frz))-k_{\varepsilon}(x-y))\,\mu_t(dy)\big|^2\nu_1(d\frz)
\Big|^{p/2}dx
$$
$$
\leq \Big|\int_{\frZ_1}
\Big|
\int_{\bR^d}(k_{\varepsilon}(\cdot-y-\xi_t(y,\frz))-k_{\varepsilon}(\cdot-y))\,\mu_t(dy)
\Big|^{2}_{L_p}
\nu_1(d\frz)\Big|^{p/2}
$$
$$
\leq \Big|
\int_{\frZ_1}
\Big|
\int_{\bR^d}|Dk_{\varepsilon}|_{L_p}|\xi_t(y,\frz)|\,\mu_t(dy)
\Big|^{2}
\nu_1(d\frz)\Big|^{p/2}
$$
\begin{equation}                                                                              \label{7.14.2}
\leq 
|Dk_{\varepsilon}|^p_{L_p}
|\bar\xi|_{L_2(\frZ_1)}^p
\Big(
\int_{\bR^d}(K_0+K_1|y|+K_1|Y_t|)\,\mu_t(dy) 
\Big)^p,
\end{equation}
and similarly, using Assumption \ref{assumption p},
$$
A_2(t)= \int_{\bR^d}
\int_{\frZ_1}
\Big|
\int_{\bR^d}(k_{\varepsilon}(x-y-\xi_{t}(y,\frz))-k_{\varepsilon}(x-y))\,\mu_t(dy)
\Big|^{p}
\nu_1(d\frz)dx
$$
$$
\leq\int_{\frZ_1}
\Big|\int_{\bR^d}
\big|k_{\varepsilon}(\cdot-y-\xi_t(y,\frz))-k_{\varepsilon}(\cdot-y))\big|_{L_p}\mu_t(dy)
\Big|^{p}
\nu_1(d\frz)
$$
\begin{equation}                                                                                                   \label{7.14.3}
\leq K^{p-2}_{\xi}|Dk_{\varepsilon}|^p_{L_p}
|\bar\xi|^{2}_{L_2(\frZ_1)}
\Big(
\int_{\bR^d}(K_0+K_1|y|+K_1|Y_t|)\,\mu_t(dy) 
\Big)^p. 
\end{equation}                                                                            
By \eqref{7.14.4}--\eqref{7.14.3} we have a constant 
$N=N(K_{\xi},p,d,\varepsilon,|\bar\xi|_{L_2{(\frZ_1)}})$ such that 
$$
A\leq N\int_0^T\mu_t^{p}({\bf1})\,dt
+N\int_0^T
\Big(\int_{\bR^d}(K_0+K_1|y|+K_1|Y_t|)\,\mu_t(dy) 
\Big)^{p}dt<\infty \,\text{(a.s.)}.   
$$
Next we claim  that, with the operator $T^{\xi}$ defined in \eqref{equ operators TIJ}, 
\begin{equation}                                                                                                      \label{7.19.1}
\zeta_2(t)+\zeta_3(t)
=\int_0^t\int_{\frZ_1}
|(T^{\xi *}_s\mu_s)^{(\varepsilon)}|^p_{L_p}-|\mu_s^{(\varepsilon)}|^p_{L_p}
\tilde N_1(d\frz,ds)=:\zeta(t)\quad\text{for $t\in[0,T]$}. 
\end{equation}
To see that the stochastic integral $\zeta(t)$  is well-defined as an It\^o integral 
note that by Lemma \ref{lemma 7.6.1} and \eqref{7.14.1}
\begin{equation}                                                                               \label{7.27.1}                                                          
\int_0^T\int_{\frZ_1}||(T_s^{\xi *}\mu_s)^{(\varepsilon)}|^p_{L_p}
-|\mu_s^{(\varepsilon)}|^p_{L_p}|^2\,\nu_1(d\frz)ds
\leq N|\bar\xi|^2_{L_2(\frZ_1)}
\int_{0}^T|\mu_s^{(\varepsilon)}|^{2p}_{L_p}\,ds
\end{equation}
$$
\leq N|\bar\xi|^2_{L_2(\frZ_1)}|k_{\varepsilon}|_{L_p}^{2p}
\int_0^T\mu_s^{2p}({\bf1})\,ds<\infty\,\text{(a.s.)}
$$
with a constant $N=N(d,p,\lambda, K_{\xi})$. Since $\frZ_1$ is $\sigma$-finite, 
there is an increasing sequence $(\frZ_{1n})_{n=1}^{\infty}$,  
$\frZ_{1n}\in\cZ_1$,  
such that $\nu_1(\frZ_{1n})<\infty$ for every $n$ and $\cup_{n=1}^{\infty}\frZ_{1n}=\frZ_1$. 
Then it is easy to see that 
$$
\bar\zeta_{2n}(t)=p\int_0^t\int_{\frZ_1}{\bf1}_{\frZ_{1n}}(\frz)
\big(|\mu^\ep_{s}|^{p-2}\mu^\ep_{s},(I_s^{\xi *}\mu_{s})^\ep \big)\,N_{1}(d\frz,ds), 
$$
$$
\hat\zeta_{2n}(t)=p\int_0^t\int_{\frZ_1}{\bf1}_{\frZ_{1n}}(\frz)
\big(|\mu^\ep_{s}|^{p-2}\mu^\ep_{s},(I_s^{\xi *}\mu_{s})^\ep \big)\,\nu_1(d\frz)ds, 
$$
$$
\bar\zeta_{3n}(t)=\int_0^t\int_{\frZ_1}
{\bf1}_{\frZ_{1n}}(\frz)\cR_p(\xi_s(\frz), \mu_{s-}, k_{\varepsilon})\,N_1(d\frz,ds),  
$$
$$
\hat\zeta_{3n}(t)=\int_0^t\int_{\frZ_1}{\bf1}_{\frZ_{1n}}(\frz)
\cR_p(\xi_s(\frz), \mu_{s-}, k_{\varepsilon})\,\nu_1(d\frz)ds
$$
are well-defined, and 
$$
\zeta_2(t)=\lim_{n\to\infty}(\bar\zeta_{2n}(t)-\hat\zeta_{2n}(t)), 
\quad
\zeta_3(t)=\lim_{n\to\infty}\bar\zeta_{3n}(t)-\lim_{n\to\infty}\hat\zeta_{3n}(t), 
$$
where the limits are understood in probability. Hence 
$$
\zeta_2(t)+\zeta_3(t)=\lim_{n\to\infty}\Big(\bar\zeta_{2n}(t)
+\bar\zeta_{3n}(t)-\big(\hat\zeta_{2n}(t)+\hat\zeta_{3n}(t)\big)\Big)
$$
$$
=\lim_{n\to\infty}\Big(\int_0^t\int_{\frZ_1}{\bf1}_{\frZ_{1n}}(\frz)
(|(T_s^{\xi *}\mu_s)^{(\varepsilon)}|^p_{L_p}-|\mu_s^{(\varepsilon)})|^p_{L_p})
\tilde N_1(d\frz,ds)\Big)=\zeta(t), 
$$
which completes the proof of \eqref{7.19.1}. Consequently, from \eqref{7.23.1}-\eqref{7.23.2}
we have
$$
d|\mu_t^{(\varepsilon)}|^p_{L_p}               
=\cQ_p(b_t, \sigma_t, \rho_t, \beta_t,\mu_t, k_{\varepsilon})\,dt                                                                  
 +\int_{\frZ_0}\cQ_p^{(0)}(\eta_t(\frz_0),\mu_t, k_{\varepsilon}) \,\nu_0(d\frz)\,dt 
 $$
 \begin{equation}                                                                                  \label{7.23.3}                                                                                                                                                                              
 +\int_{\frZ_1}\cQ_p^{(1)}(\xi_t(\frz_1), \mu_t, k_{\varepsilon})
 +\cR_p(\xi_{t}(\frz_1), \mu_{t}, k_{\varepsilon})\,\nu_1(d\frz)\,dt
+d\zeta_1(t)+d\zeta(t).                        
\end{equation}
By  Lemma \ref{lemma pe1} and Lemma \ref{lemma pe4} we have 
\begin{equation}                                                                    \label{7.27.3}
Q(b_s, \sigma_s, \rho_s, \beta_s,\mu_s, k_{\varepsilon})
\leq N(L^2+K^2)|\mu_s^\ep|_{L_p}^p
\end{equation}
with a constant $N=N(d,p)$, 
and by Lemma \ref{lemma pe3} and Corollary \ref{corollary J}, 
using that $\bar\xi\leq K_{\xi}$ and $\bar\eta\leq K_{\eta}$, we have 
\begin{equation}                                                                    \label{7.27.4}
\cQ^{(0)}(\eta_s(\frz), \mu_s, k_{\varepsilon})
\leq N{\bar\eta}^2(\frz)|\mu_s^\ep|_{L_p}^p, 
\quad
(\cQ^{(1)}+\cR_p)(\xi_s(\frz), \mu_s, k_{\varepsilon})
\leq N{\bar\xi}^2(\frz)|\mu_s^\ep|_{L_p}^p  
\end{equation}
with a constant $N=N(K_{\xi},K_{\eta},d,p,\lambda)$. Thus from \eqref{7.23.3} for 
$c^{\varepsilon}_t:=|\mu^{(\varepsilon)}_t|^p_{L_p}$ we obtain that almost surely 
\begin{equation}                                                                                            \label{Ctp}
c_t^{\varepsilon}
\leq |\mu^{(\varepsilon)}_0|^p_{L_p}+N\int_0^tc_s^\varepsilon\,ds+m^{\varepsilon}_t
\quad
\text{for all $t\in[0,T]$}
\end{equation}
with a constant $N=N(T,p,d,K,K_{\xi}, K_{\eta}, L,\lambda,|\bar\xi|_{L_2},|\bar\eta|_{L_2})$ 
and the local martingale $m^{\varepsilon}=\zeta_1+\zeta$.  
For integers $n\geq1$ set $\tau_n=\bar\tau_n\wedge\tilde\tau_n$, where 
$(\tilde\tau_n)_{n=1}^{\infty}$ is a localising sequence of stopping times for 
$m^{\varepsilon}$
and 
$$
\bar\tau_n=\bar\tau_n(\varepsilon)=\inf\Big\{t\in[0,T]:\int_0^tc_s^\varepsilon\,ds\geq n\Big\}. 
$$
Then from  \eqref{Ctp} we get 
$$
\E c^{\varepsilon}_{t\wedge\tau_n}
\leq \E|\mu^{(\varepsilon)}_0|_{L_p}^p+N\int_0^t\E c^{\varepsilon}_{s\wedge\tau_n}\,ds<\infty
\quad
\text{for $t\in[0,T]$ and integers $n\geq1$}. 
$$
Hence by Gronwall's lemma 
$$
\E c^{\varepsilon}_{t\wedge\tau_n}\leq N\E|\mu^{(\varepsilon)}_0|_{L_p}^p
\quad
\text{for $t\in[0,T]$ and integers $n\geq1$}
$$
with a constant $N=N(T,p,d,K,,K_{\xi}, K_{\eta}, L,\lambda,|\bar\xi|_{L_2},|\bar\eta|_{L_2})$. 
Letting here $n\to\infty$, by Fatou's lemma we obtain 
\begin{equation}                                                                                           \label{7.27.2}                                                                                         
\sup_{t\in[0,T]}\E|\mu^{(\varepsilon)}_t|^p_{L_p}\leq N\E|\mu^{(\varepsilon)}_0|^p_{L_p}. 
\end{equation}
Hence we follow a standard way to prove \eqref{esupbecsles}. 
Clearly, from \eqref{Ctp}, taking into account \eqref{7.27.2}, we have a constant 
$N=N(T,p,d,K,K_{\xi}, K_{\eta}, L,\lambda,|\bar\xi|_{L_2},|\bar\eta|_{L_2})$ 
such that  
\begin{equation}                                                                                           \label{2.1.3.22}
\E\sup_{t\leq T}c^{\varepsilon}_{t\wedge\tau}\leq N\E|\mu_0^\ep|^p_{L_p}
+
\E\sup_{t\leq T}|\zeta_1(t\wedge\tau)|+\E\sup_{t\leq T}|\zeta(t\wedge\tau)|
\end{equation}
for every stopping time $\tau$. By estimates in Lemmas \ref{lemma pe1} and 
\ref{lemma 7.6.1} for the Doob-Meyer processes $\langle \zeta_1\rangle$ and 
$\langle \zeta\rangle$ of $\zeta_1$ and $\zeta$ we have 
\begin{align}
\langle\zeta_1\rangle(t)&=p^2\int_0^t \big|(|\mu^\ep_s|^{p-2}
\mu^\ep_s,(\mathcal{M}_s^{k\ast}\mu_s)^\ep \big)|^2\,ds
\leq
N_1\int_0^{t} |\mu_s^\ep|_{L_p}^{2p}\,ds<\infty,                                                  \nonumber\\
\langle\zeta\rangle(t)&=\int_0^t\int_{\frZ_1}||(T_s^{\xi *}\mu_s)^{(\varepsilon)}|^p_{L_p}
-|\mu_s^{(\varepsilon)}|^p_{L_p}|^2\nu_1(d\frz)ds
\leq N_2\int_0^{t} |\mu_s^\ep|_{L_p}^{2p}\,ds<\infty                                             \label{1.1.3.22}
\end{align}
almost surely for all $t\in[0,T]$, 
with constants $N_1=N_1(d,p,L)$ and $N_2=N_2(d,p,\lambda, K_\xi, |\bar\xi|_{L_2(\frZ_1)})$. 
Using the Davis inequality, by \eqref{1.1.3.22} and \eqref{7.27.2}
we get 
$$
\E\sup_{t\leq T}|\zeta_1(t\wedge\tau)|+\E\sup_{t\leq T}|\zeta(t\wedge\tau)|
\leq 3\E\langle\zeta_1\rangle^{1/2}(t\wedge\tau)+3\E\langle\zeta\rangle^{1/2}(t\wedge\tau)
$$
$$
\leq N'\E\Big(\int_0^{T} |\mu_{s\wedge\tau}^\ep|_{L_p}^{2p}\,ds \Big)^{1/2}
\leq N'\E\Big(\sup_{t\leq T}|\mu_{s}^\ep|_{L_p}^{p}
\int_0^{T} |\mu_{s\wedge\tau}^\ep|_{L_p}^{p}\,ds \Big)^{1/2}
$$
\begin{equation}
\label{30.9.21.3}
\leq \frac{1}{2}\E\sup_{t\leq T}|\mu_{t\wedge\tau}^\ep|_{L_p}^p 
+ N^{''}\E\int_0^T |\mu_{s}^\ep|_{L_p}^{p}\,ds
\leq \tfrac{1}{2}\E\sup_{t\leq T}|\mu_{t\wedge\tau}^\ep|_{L_p}^p
+N^{'''}\E|\mu_0^\ep|^p_{L_p}, 
\end{equation}
which by \eqref{2.1.3.22} gives
$$
\E\sup_{t\leq T}c^{\varepsilon}_{t\wedge\tau}\leq N\E|\mu_0^\ep|^p_{L_p}
+
\tfrac{1}{2}\E\sup_{t\leq T}c^{\varepsilon}_{t\wedge\tau}  
$$
with constants $N,N',N'',N'''$ depending 
on $T$, $p$, $d$, $K$, $K_{\xi}$, $K_\eta$, $L$, $\lambda$, 
$|\bar\xi|_{L_2}$ and $|\bar\eta|_{L_2}$
Substituting here the stopping time 
$$
\rho_n=\inf\{t\in[0,T]: \langle\zeta_1\rangle(t)+ \langle\zeta\rangle(t)\geq n\}
$$
in place of $\tau$, from \eqref{2.1.3.22} by virtue of the Davis inequality we have 
$$
\E\sup_{t\leq T}c^{\varepsilon}_{t\wedge\rho_n}\leq N\E|\mu_0^\ep|^p_{L_p}
+
\tfrac{1}{2}\E\sup_{t\leq T}c^{\varepsilon}_{t\wedge\rho_n}<\infty
$$
for every integer $n\geq1$. Hence 
$$
\E\sup_{t\leq T\wedge\rho_n}|\mu_t^\ep|_{L_p}^p
\leq 
2N\E |\mu_0^{(\varepsilon)}|^p_{L_p}, 
$$
and letting here $n\to\infty$ by Fatou's lemma we finish the proof of 
\eqref{esupbecsles}. 
\end{proof}

\begin{lemma}                                                             \label{lemma usup}
Let Assumptions \ref{assumption SDE},  
\ref{assumption p}
and \ref{assumption estimates}  hold. Let $(u _t)_{t\in[0,T]}$ be 
an $L_p$-solution to \eqref{equdZ} 
for an even integer $p\geq2$ such that 
$\essup_{t\in[0,T]}|u_t|_{L_1}<\infty$ (a.s.). 
If $K_1\neq 0$ in Assumption \ref{assumption SDE}, 
then assume additionally \eqref{moment2}. 
Then we have 
\begin{equation}                                                                                                     \label{eusupbecsles}
\E\sup_{t\in[0,T]}|u_t|^p_{L_p}\leq N\E|u_0|^p_{L_p}
\end{equation}
with a constant 
$N=N(p,d,T,K,K_{\xi}, K_{\eta}, L, \lambda, |\bar\xi|_{L_2}, |\bar\eta|_{L_2})$.
\end{lemma}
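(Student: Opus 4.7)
The strategy is to mirror the proof of Lemma \ref{lemma supemu}, viewing the $L_p$-solution $u_t$ as the density of the finite signed measure $\mu_t(dx) := u_t(x)\,dx$ (with total variation $|u_t|\,dx$), and using Lemma \ref{lemma Ito_u_Lp} in place of Lemma \ref{lemma ItoLp}. The assumption $\essup_{t\in[0,T]}|u_t|_{L_1}<\infty$ and \eqref{umoment2} (when $K_1\neq 0$) ensure that the hypothesis \eqref{second moment mu} of the $L_p$-estimates of Section~\ref{sec estimates} (Lemmas~\ref{lemma pe1}, \ref{lemma pe4}, \ref{lemma pe3}, Corollary~\ref{corollary J}, Lemma~\ref{lemma 7.6.1}) is satisfied for $\mu = u_t\,dx$.

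Substituting these bounds into the It\^o formula \eqref{Ito formula u epsilon p norm} and merging the compensator of the $N_1$-integral with the martingale $\zeta_2$ exactly as in the passage \eqref{7.23.1}--\eqref{7.23.3} (so that both jump contributions combine into the single local martingale $\zeta$ of \eqref{7.19.1}), I obtain the pathwise inequality
$$|u_t^{(\varepsilon)}|_{L_p}^p \leq |u_0^{(\varepsilon)}|_{L_p}^p + N \int_0^t \big|\,|u_s|^{(\varepsilon)}\big|_{L_p}^p\,ds + m_t^{(\varepsilon)},$$
where $m^{(\varepsilon)} = \zeta_1 + \zeta$ is a local martingale. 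The decisive difference from Lemma \ref{lemma supemu} is that for a signed density $u$ the quantity $\big|\,|u|^{(\varepsilon)}\big|_{L_p}$ is in general strictly larger than $|u^{(\varepsilon)}|_{L_p}$, so the Gronwall loop cannot be closed at the mollified level. I close it instead by the dominations $\big|\,|u|^{(\varepsilon)}\big|_{L_p}\leq |u|_{L_p}$ and $|u_0^{(\varepsilon)}|_{L_p}\leq |u_0|_{L_p}$ given by property (i) of the Gaussian mollification.

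I then run the localization and Davis-inequality argument of \eqref{1.1.3.22}--\eqref{30.9.21.3}, with a localizing sequence $\rho_n$ for $m^{(\varepsilon)}$ (together with, if needed, an auxiliary truncation controlling $\int_0^\cdot |u_s^{(\varepsilon)}|_{L_p}^p\,ds$, which is a.s.\ finite thanks to $|u_s^{(\varepsilon)}|_{L_p}\leq |k_\varepsilon|_{L_p}|u_s|_{L_1}$). This yields an estimate whose right-hand side depends only on $\E|u_0|_{L_p}^p$ and $\int_0^T\E|u_{s\wedge\rho_n}|_{L_p}^p\,ds$, and is thus $\varepsilon$-independent. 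Letting $\varepsilon \downarrow 0$, standard mollification gives $u_t^{(\varepsilon)}\to u_t$ in $L_p$ for each $t$ a.s., whence $|u_t|_{L_p} = \liminf_{\varepsilon \downarrow 0}|u_t^{(\varepsilon)}|_{L_p}$, so $\sup_{t}|u_{t\wedge\rho_n}|_{L_p}^p \leq \liminf_{\varepsilon\downarrow 0}\sup_t |u_{t\wedge\rho_n}^{(\varepsilon)}|_{L_p}^p$ and Fatou's lemma transfers the bound to the unsmoothed process. Gronwall applied to $\phi(t) := \E\sup_{s\leq t\wedge\rho_n}|u_s|_{L_p}^p$ then gives $\phi(T) \leq N \E|u_0|_{L_p}^p$, and a final Fatou as $n\to\infty$ along the localizing sequence gives \eqref{eusupbecsles}.

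The main obstacle is precisely the signed-versus-positive asymmetry noted above: the estimates of Section~\ref{sec estimates} naturally produce $\big|\,|u|^{(\varepsilon)}\big|_{L_p}^p$ on the right, while the It\^o formula evolves $|u^{(\varepsilon)}|_{L_p}^p$ on the left, and for signed $u$ these cannot be identified; reconciling them forces Gronwall to be carried out at the level of $|u|_{L_p}^p$ after passing $\varepsilon \downarrow 0$, rather than at the mollified level as in Lemma \ref{lemma supemu}.
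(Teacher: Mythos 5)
Your proposal is correct and takes essentially the same route as the paper's proof: apply Lemma \ref{lemma Ito_u_Lp}, repeat the argument of Lemma \ref{lemma supemu} with $u_t(x)\,dx$ in place of $\mu_t(dx)$, dominate $\big|\,|u_s|^{(\varepsilon)}\big|_{L_p}$ by $|u_s|_{L_p}$ (and $|u_0^{(\varepsilon)}|_{L_p}$ by $|u_0|_{L_p}$), pass $\varepsilon\downarrow 0$ by Fatou before closing the Gronwall loop, and then localize and use Davis's inequality. The only inessential differences are bookkeeping: the paper keeps the $\varepsilon$-dependent localizer of $m^{\varepsilon}$ separate from $\varepsilon$-independent truncations $\bar\tau_n$, $\rho_n$ built from $\int_0^t|u_s|^{p}_{L_p}\,ds$ and $\int_0^t|u_s|^{2p}_{L_p}\,ds$ (taking $k\to\infty$, then $\varepsilon\to0$, then $n\to\infty$, and performing the Young absorption only after the $\varepsilon$-limit), and it first establishes $\sup_{t}\E|u_t|^p_{L_p}\leq e^{NT}\E|u_0|^p_{L_p}$ by Gronwall and only afterwards the $\E\sup$ bound, rather than running Gronwall directly on $\E\sup_{s\leq t\wedge\rho_n}|u_s|^p_{L_p}$.
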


\begin{proof}
We may assume $\E|u_0|^p_{L_p}<\infty$. 
By Lemma \ref{lemma Ito_u_Lp} for every $\varepsilon>0$ 
equation \eqref{Ito formula u epsilon p norm} 
holds almost surely for all $t\in[0,T]$. 
Hence following the proof of Lemma \ref{lemma supemu} 
with $u^{(\varepsilon)}_t(x)$, 
$u_t(x)dx$, $u_{t-}(x)dx$, $|u_t(x)|dx$ in place of $\mu^{(\varepsilon)}_t(x)$, 
$\mu_t(dx)$, $\mu_{t-}(dx)dx$ and $|\mu_t|(dx)$, respectively, 
and taking into account that 
almost surely $u_t=u_{t-}$ for all but countable many $t\in[0,T]$,  
we obtain the counterpart 
of \eqref{Ctp}, 
\begin{align}                                                                                      
|u^{(\varepsilon)}_t|^p_{L_p}
&\leq |u^{(\varepsilon)}_0|^p_{L_p}
+N\int_0^t||u_s|^{(\varepsilon)}|^p_{L_p}\,ds+m^{\varepsilon}_t    \nonumber\\
&\leq |u_0|^p_{L_p}
+N\int_0^t|u_s|^p_{L_p}\,ds+m^{\varepsilon}_t                               
\quad
\text{almost surely for all $t\in[0,T]$}                                      \label{1.27.2.22}
\end{align}
with a constant 
$N=N(T,p,d,K,K_{\xi}, K_{\eta}, L,\lambda,|\bar\xi|_{L_2},|\bar\eta|_{L_2})$ 
and a (cadlag) local martingale 
$m^{\varepsilon}_t=\zeta_1^{\varepsilon}(t)+\zeta^{\varepsilon}(t)$, $t\in[0,T]$,  
where 
$$
\zeta_1^{\varepsilon}(t)=p\int_0^t \big(|u^\ep_s|^{p-2}
u^\ep_s,(\cM^{k\ast}_su_s)^{\varepsilon}\big)\,dV^k_s, 
$$
$$
\zeta^{\varepsilon}(t):=\int_0^t\int_{\frZ_1}
|(T^{\xi\ast}u_s)^{(\varepsilon)}|^p_{L_p}-|u_s^{(\varepsilon)}|^p_{L_p}
\tilde N_1(d\frz,ds).   
$$
Since $(u_t)_{t\in[0,T]}$ 
is a weakly cadlag $\cF_t$-adapted process, 
we have $\sup_{t\in[0,T]}|u_t|_{L_p}<\infty$ (a.s.),  and hence 
$$
\int_0^t|u_s|^r_{L_p}\,ds, \quad t\in[0,T]
$$
is a continuous $\cF_t$-adapted process for every $r>0$. 
For $\varepsilon>0$ 
and integers $n\geq1$, $k\geq1$ define the stopping times 
$\tau^{\varepsilon}_{n,k}:=\bar\tau_{n}\wedge\tilde\tau^{\varepsilon}_{k}$, 
where 
$$
\bar\tau_n:=\inf\Big\{t\in[0,T]:\int_0^t|u_s|^p_{L_p}\,ds\geq n\Big\} 
$$
for integers $n\geq1$, and 
$(\tilde \tau_k^{\varepsilon})_{k=1}^{\infty}$ is a localizing sequence for the 
local martingale $m^{\varepsilon}$. Thus from \eqref{1.27.2.22} for 
$c^{\varepsilon}_t:=|u^{\varepsilon}_t|_{L_p}^p$ 
and $c_t:=|u_t|_{L_p}^p$ we get 
$$
\E c^{\varepsilon}_{t\wedge\tau^{\varepsilon}_{n,k}}
\leq \E c_0+N\E\int_0^tc_{s\wedge\bar\tau_n}\,ds<\infty 
$$
for every $t\in[0,T]$. Letting here first $k\to\infty$ 
and then $\varepsilon \to0$ by Fatou's lemma we obtain 
$$
\E c_{t\wedge\bar\tau_n}
\leq\E c_0+N\E\int_0^tc_{s\wedge\bar\tau_n}\,ds<\infty,  \quad t\in[0,T], 
$$
which by Gronwall's lemma gives 
$$
\E c_{t\wedge\bar\tau_n}
\leq e^{NT}\E|u_0|^p_{L_p}\quad\text{for $t\in[0,T]$}. 
$$
Letting now $n\to\infty$ by Fatou's lemma we have 
\begin{equation}                                                                                      \label{3.3.3.22}
\sup_{t\in[0,T]}\E|u_t|_{L_p}^p\leq e^{NT}\E|u_0|^p_{L_p}. 
\end{equation}
Hence we are going to prove \eqref{eusupbecsles} in an already familiar way. 
Analogously to \eqref{2.1.3.22}, due to Lemmas \ref{lemma pe1} 
and \ref{lemma 7.6.1},
for the Doob-Meyer processes of $\zeta_1^{\varepsilon}$ 
and $\zeta^{\varepsilon}$ 
we have with constants $N_1=N_1(d,p,L)$ 
and $N_2=N_2(d,p,\lambda, K_\xi, |\bar\xi|_{L_2(\frZ_1)})$, 
\begin{align}
\langle\zeta^{\varepsilon}_1\rangle(t)
&=p^2\int_0^t \big|(|u^\ep_s|^{p-2}
u^\ep_s,(\mathcal{M}_s^{k\ast}u_s)^\ep \big)|^2\,ds                                   \nonumber\\
&\leq N_1\int_0^{t} ||u_s|^\ep|_{L_p}^{2p}\,ds                                                   
\leq N_1\int_0^{t} |u_s|_{L_p}^{2p}\,ds,                                                                \nonumber\\
\langle\zeta^{\varepsilon}\rangle(t)                                              
&=\int_0^t\int_{\frZ_1}||(T_s^{\xi *}u_s)^{(\varepsilon)}|^p_{L_p}
-|u_s^{(\varepsilon)}|^p_{L_p}|^2\nu_1(\frz)ds                                            \nonumber\\
&\leq N_2\int_0^{t} ||u_s|^\ep|_{L_p}^{2p}\,ds
\leq N_2\int_0^{t} |u_s|_{L_p}^{2p}\,ds.                                           \label{1.3.3.22}
\end{align} 
We define the stopping time 
$\rho_{n,k}^{\varepsilon}=\tilde\tau^{\varepsilon}_k\wedge\rho_n$, where 
$$
\rho_n=\inf\Big\{t\in[0,T]:\int_0^t|u_s|^{2p}_{L_p}\,ds\geq n\Big\} 
\quad
\text{for every integer $n\geq1$},  
$$
and $(\tilde\tau^{\varepsilon}_k)_{k=1}^{\infty}$ denotes, 
as before, a localizing sequence 
of stopping times for $m^{\varepsilon}$. 
Notice that from \eqref{1.27.2.22}, 
due to  \eqref{3.3.3.22} and \eqref{1.3.3.22}, by using 
the Davis inequality we have 
\begin{align}                                                                                           
\E\sup_{t\leq T}c^{\varepsilon}_{t\wedge\rho^{\varepsilon}_{n,k}}
&\leq N'\E|u_0|^p_{L_p}+
\E\sup_{t\leq T}|\zeta_1(t\wedge\rho^{\varepsilon}_{n,k})|
+\E\sup_{t\leq T}|\zeta(t\wedge\rho^{\varepsilon}_{n,k})|               \nonumber\\
&\leq N\E|u_0|^p_{L_p}
+N\E
\Big(
\int_0^{T\wedge\rho_n}|u_t|^{2p}_{L_p}\,dt
\Big)^{1/2}<\infty,                                                                                 \nonumber
\end{align}
where $N'$ and $N$ are constants, depending only on 
$p$, $d$, $T$, $K$, $K_{\xi}$, $K_{\eta}$ $L$ $\lambda$, 
$|\bar\xi|_{L_2}$ 
and $|\bar\eta|_{L_2}$. Letting here first $k\to\infty$ 
and then $\varepsilon\to0$ by 
Fatou's lemma we obtain 
\begin{equation}                                                                  \label{1.4.3.22}
\E\sup_{t\leq T}c_{t\wedge\rho_{n}}
\leq N\E|u_0|^p_{L_p}
+N\E
\Big(
\int_0^{T\wedge\rho_n}|u_t|^{2p}_{L_p}\,dt
\Big)^{1/2}<\infty
\quad  
\text{for every $n$.}
\end{equation}
Hence, in the same standard way as before, 
by Young's inequality and \eqref{3.3.3.22} we have 
\begin{align*}                                                                 
\E\sup_{t\leq T}c_{t\wedge\rho_{n}}
&\leq N\E|u_0|^p_{L_p}
+N\E\Big(
\sup_{t\leq T}c_{t\wedge\rho_{n}}\int_0^{T}|u_t|^{p}_{L_p}\,dt
\Big)^{1/2}                                                                                              \nonumber\\
&\leq N\E|u_0|^p_{L_p}
+\tfrac{1}{2}\E\sup_{t\leq T}c_{t\wedge\rho_{n}}
+N^2\E\int_0^{T}|u_t|^{p}_{L_p}\,dt                                                                  \nonumber\\
&\leq N'\E|u_0|^p_{L_p}
+\tfrac{1}{2}\E\sup_{t\leq T}c_{t\wedge\rho_{n}}<\infty, 
\end{align*}
with a constant 
$N'=N'(T,p,d,K,K_{\xi}, K_{\eta}, L,\lambda,|\bar\xi|_{L_2},|\bar\eta|_{L_2})$, 
which gives 
$$
\E\sup_{t\leq T}c_{t\wedge\rho_{n}}\leq 2N'\E|u_0|^p_{L_p}. 
$$
Letting here $n\to\infty$ by Fatou's lemma 
we finish the proof of  \eqref{eusupbecsles}.
\end{proof}

To formulate the next lemma let $(S,\cS)$ 
denote a measurable space, and let 
$\cH\subset\cF\otimes\cS$ be a $\sigma$-algebra.
\begin{lemma}                                                                               \label{lemma density}
Let $\mu=(\mu_s)_{s\in S}$ be an $\bM$-valued function 
on $\Omega\times S$ 
such that $\mu_s(\varphi)$ is an $\cH$-measurable 
random variable for every bounded Borel function $\varphi$ on $\bR^d$,  
and $\E\mu_s({\bf1})<\infty$ for every $s\in S$.  
Let $p>1$ and assume that for a positive sequence 
$\varepsilon_n\to0$  we have
$$
\limsup_{\varepsilon_n\to0}\E|\mu_s^{(\varepsilon_n)}|^p_{L_p}=:N_s^p<\infty 
\quad\text{for every $s\in S$}. 
$$
Then for every $s\in S$ the density $d\mu_s/dx$ exists almost surely, and there is an 
$L_p(\bR^d)$-valued $\cH$-measurable mapping $u$ on $\Omega\times S$ such that 
for each $s$ we have $u_s=d\mu_s/dx$ (a.s.).
Moreover, $\lim_{n\to\infty}|\mu_s^{(\varepsilon_n)}-u_s|_{L_p}=0$ (a.s.)    
and $\E|u_s|_{L_p}^p\leq N_s^p$ for each $s\in S$. 
 \end{lemma}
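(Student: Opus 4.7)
The plan is to exploit the monotonicity of mollification $L_p$-norms given by property (iii) of Section~4: $|\mu^{(\delta)}|_{L_p} \leq |\mu^{(\varepsilon)}|_{L_p}$ for $0 < \varepsilon \leq \delta$. Consequently, for any sequence $\varepsilon_n \to 0^+$,
$$
\lim_{n\to\infty}|\mu_s^{(\varepsilon_n)}|_{L_p} = M_s(\omega) := \sup_{\varepsilon > 0}|\mu_s^{(\varepsilon)}|_{L_p}
$$
holds pointwise in $\omega$. Applying Fatou's lemma then gives $\E M_s^p \leq \liminf_n \E|\mu_s^{(\varepsilon_n)}|_{L_p}^p \leq N_s^p < \infty$, so $M_s < \infty$ almost surely for each $s \in S$.

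On the full-measure event $\{M_s < \infty\}$, the sequence $(\mu_s^{(\varepsilon_n)})$ is bounded in the reflexive space $L_p(\bR^d)$, and Banach--Alaoglu yields a weakly convergent subsequence with limit $v \in L_p$. For each $\varphi \in C_0^\infty(\bR^d)$, property (ii) of mollifications gives
$$
\int_{\bR^d}\mu_s^{(\varepsilon_n)}(x)\varphi(x)\,dx = \mu_s(\varphi^{(\varepsilon_n)}) \longrightarrow \mu_s(\varphi),
$$
since $\varphi^{(\varepsilon_n)} \to \varphi$ uniformly and $\mu_s$ is finite, while the weak $L_p$-convergence forces the same quantity to tend to $\int v\varphi\,dx$. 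Hence $\mu_s = v\,dx$ as measures, so the density $u_s := d\mu_s/dx$ exists in $L_p$. Standard mollification theory then upgrades $\mu_s^{(\varepsilon_n)} = u_s * k_{\varepsilon_n} \to u_s$ to strong $L_p$-convergence along the full sequence, which also forces $|u_s|_{L_p} = M_s$ and therefore $\E|u_s|_{L_p}^p \leq N_s^p$.

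For the joint measurability, I observe that for each fixed $\varepsilon > 0$ the function $(\omega,s,x) \mapsto \mu_s^{(\varepsilon)}(\omega,x) = \mu_s(\omega)(k_\varepsilon(x-\cdot))$ is $\cH \otimes \cB(\bR^d)$-measurable, since the integrand is jointly Borel and the hypothesis on $\mu_s(\varphi)$ transfers via the monotone class theorem. Setting $u_s(\omega,x) := \limsup_{n\to\infty}\mu_s^{(\varepsilon_n)}(\omega,x)$ (with an arbitrary convention on the exceptional set) produces an $\cH \otimes \cB(\bR^d)$-measurable function that agrees Lebesgue-a.e. in $x$ with the $L_p$-limit on the event $\{M_s < \infty\}$, because $L_p$-convergence of the original sequence passes to $x$-a.e. convergence along a subsequence.

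The step I expect to require the most care is bridging the inherently pointwise-in-$\omega$ weak compactness argument with the requirement of a single $\cH$-measurable representative along the prescribed sequence $\varepsilon_n$. The monotonicity afforded by property (iii) is what makes this painless: it delivers existence of the density for every $\omega$ in a full-measure event without any $\omega$-dependent subsequence selection, after which the measurability and the strong convergence along the full sequence are essentially automatic consequences of classical mollification theory.
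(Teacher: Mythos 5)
Your argument is correct, but it reaches the existence of the density by a different mechanism than the paper. The paper works in the Bochner space $\bL_p=L_p(\Omega;L_p(\bR^d))$: it extracts a weakly convergent subsequence of $\mu_s^{(\varepsilon_n)}$ there, identifies the weak limit as $d\mu_s/dx$ by testing against ${\bf1}_G\varphi$ with $G\in\cF$, $\varphi\in C_0^\infty$, shows the limit is independent of the subsequence so that the whole sequence converges weakly in $\bL_p$, and only then passes to a.s.\ strong $L_p$-convergence via the identity $\mu_s^{(\varepsilon_n)}=u_s*k_{\varepsilon_n}$. You instead localize the compactness to a fixed $\omega$: the monotonicity of $\varepsilon\mapsto|\mu^{(\varepsilon)}|_{L_p}$ (property (iii)) plus Fatou gives $\sup_\varepsilon|\mu_s^{(\varepsilon)}|_{L_p}<\infty$ a.s., and then weak compactness of bounded sets in the reflexive space $L_p(\bR^d)$, together with $\mu_s(\varphi^{(\varepsilon_n)})\to\mu_s(\varphi)$, identifies the density $\omega$-by-$\omega$; the $\omega$-dependence of the subsequence is harmless because the endgame (strong convergence of $u_s*k_{\varepsilon_n}$ to $u_s$ along the full sequence, hence $|u_s|_{L_p}=M_s$ and $\E|u_s|_{L_p}^p\le N_s^p$) is the same as in the paper. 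Your route avoids the Bochner-space functional analysis and the subsequence-uniqueness argument, at the price of having to produce the $\cH$-measurable representative by hand; the paper's route keeps measurability almost automatic, since it defines $u$ as the $L_p$-valued limit of the $\cH$-measurable maps $\mu_s^{(\varepsilon_n)}$ on the measurable set where the sequence converges in $L_p$.

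One justification in your measurability step is off as stated: $L_p$-convergence only gives a.e.\ convergence along a subsequence (which may depend on $(\omega,s)$), and that does not by itself show that $\limsup_n\mu_s^{(\varepsilon_n)}(\omega,x)$ along the full sequence equals $u_s(x)$ for a.e.\ $x$. The gap is easily closed: once the density $u_s\in L_p$ exists, $\mu_s^{(\varepsilon_n)}(x)=(u_s*k_{\varepsilon_n})(x)\to u_s(x)$ at every Lebesgue point of $u_s$, hence a.e.\ along the full sequence (the Gaussian is an approximate identity dominated by a radially decreasing integrable function); alternatively, follow the paper and define $u$ as the $L_p$-limit of ${\bf1}_A\mu_s^{(\varepsilon_n)}$ on the $\cH$-measurable set $A$ where the sequence converges in $L_p$, which sidesteps pointwise limits altogether.
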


\begin{proof}
Fix $s\in S$. Since $(\mu_s^{(\varepsilon_n)})_{n=1}^{\infty}$ is a bounded sequence 
in $\bL_p:=L_p((\Omega,\cF,P), L_p(\bR^d))$ 
from any subsequence of it one can choose a subsequence, 
$\mu_s^{(\varepsilon_{n'})}$, which  converges weakly 
in $\bL_p$ to some $\bar u_s\in\bL_p$. Thus 
for every $\varphi\in C^{\infty}_0(\bR^d)$ and $G\in \cF$ we have 
$$
\E\int_{\bR^d}\mu_s^{(\varepsilon_{n'})}(x){\bf1}_G\varphi(x)\,dx\to \E\int_{\bR^d}\bar u_s(x){\bf1}_G\varphi(x)\,dx   
\quad\text{as $n'\to\infty$}. 
$$
On the other hand, since 
$$
\E\int_{\bR^d}\int_{\bR^d}k_{\varepsilon_n}(x-y){\bf1}_G|\varphi(x)|\,\mu_s(dy)dx
\leq |\mu_s^{(\varepsilon_{n})}|_{\bL_p}|\varphi|_{L_q}<\infty\quad \text{with $q=p/(p-1)$}, 
$$
we can use Fubini's theorem, and then, due to $\E\mu_s({\bf1})<\infty$, 
we can use Lebesgue's theorem 
on dominated convergence to get 
$$
\E\int_{\bR^d}\mu^{(\varepsilon_{n'})}_s(x){\bf1}_G\varphi(x)\,dx
=\E\int_{\bR^d}{\bf1}_G\varphi^{(\varepsilon_{n'})}(x)\,\mu_s(dx)
\to \E\int_{\bR^d}{\bf1}_G\varphi(x)\,\mu_s(dx). 
$$
Consequently, 
\begin{equation}                                                                                 \label{weak limit}                                                                                                                                                                                  
\E{\bf1}_G\int_{\bR^d}\varphi(x)\,\mu_s(dx)
=\E{\bf1}_G\int_{\bR^d}\varphi(x)\bar u_s(x)\,dx
\quad
\text{for any $G\in \cF$ and $\varphi\in C_0^{\infty}$}, 
\end{equation}
which implies that $d\mu_s/dx$ almost surely exists in $L_p$ and equals $\bar u_s$. 
Notice, that $\bar u_s$, as an element of $\bL_p$, 
is independent of the chosen subsequences, i.e., if $\tilde u_s$ is the weak limit 
in $\bL_p$ of some subsequence of a subsequence of $\mu_s^{(\varepsilon_n)}$, then 
by \eqref{weak limit} we have
$$
\E{\bf1}_G\int_{\bR^d}\varphi(x)\bar u_s(x)\,dx
=\E{\bf1}_G\int_{\bR^d}\varphi(x)\tilde u_s(x)\,dx
\quad
\text{for any $G\in\cF$ and $\varphi\in C_0^{\infty}$}, 
$$
which means $\bar u_s=\tilde u_s$ in $\bL_p$. Consequently, the whole sequence  
$\mu^{(\varepsilon_n)}_s$ converges weakly to $\bar u_s$ in $\bL_p$ for every $s$, 
and for each $s$ almost surely $\bar u_s=d\mu_s/dx\in L_p$. 
Hence $\mu_s^{(\varepsilon_n)}=\bar u_s^{(\varepsilon_n)}\in L_p$ (a.s.), 
and thus by a well-known property of mollifications, 
$\lim_{n\to\infty}|\mu^{(\varepsilon_n)}_s-\bar u_s|_{L_p}=0$ (a.s.). Set 
$$
A:=\{(\omega,s)\in\Omega\times S: \text{$\mu_s^{(\varepsilon_n)}$ 
is convergent in $L_p$ as $n\to\infty$}\},  
$$
and let $u_s$ denote the limit of ${\bf1}_A\mu_s^{(\varepsilon_n)}$ in $L_p$. 
Then, since $(\mu_s^{(\varepsilon_n)})_{s\in S}$ is an $L_p$-valued 
$\cH$-measurable function 
of $(\omega,s)$ for every $n$, the function $u=(u_s)_{s\in S}$
 is also an $L_p$-valued $\cH$-measurable 
function, and clearly, $u_s=d\mu_s/dx$ (a.s.) 
and $\E|u_s|^p_{L_p}\leq N_s^p$ for each $s$. 
\end{proof}

\begin{lemma}                                                                                                     \label{lemma eu1}
Let Assumptions \ref{assumption SDE},  \ref{assumption p}
and  \ref{assumption estimates} hold. Let $\mu=(\mu_t)_{t\in[0,T]}$ be 
a measure-valued solution to \eqref{measureZ}.
If $K_1\neq 0$ in Assumption \ref{assumption SDE}, 
then assume additionally 
\eqref{moment2}.
Assume $u_0= d\mu_0/dx$ exists almost surely and 
$\E|u_0|^p_{L_p}<\infty$ for some even $p\geq2$. 
Then the following statements hold. 
\begin{enumerate}
\item[(i)]
For each $t\in[0,T]$ the density $d\mu_t/dx$ 
exists almost surely, and there is an $L_p$-valued $\cF_t$-adapted 
 weakly cadlag process 
$(u_t)_{t\in[0,T]}$ such that almost surely $u_t=d\mu_t/dx$ 
for every $t\in[0,T]$ and 
$\E\sup_{t\in[0,T]}|u_t|^p_{L_p}<\infty$. 
\item[(ii)] 
If $\mu'=(\mu'_t)_{t\in[0,T]}$ satisfies the same conditions 
(with the same even integer $p$) 
as $\mu$, then for $u_t=d\mu_t/dx$ and $u'_t=d\mu'_t/dx$ we have 
\begin{equation}                                                                                     \label{7.5.1}
\E\sup_{t\in[0,T]}|u_t-u'_t|^p_{L_p}
\leq N\E|u_0-u_0'|^p_{L_p}
\quad
\text{for $t\in[0,T],$}
\end{equation}
with a constant $N$ depending only on $d$, $p$, $K$, $K_{\xi}$, $K_{\eta}$, $L$, 
$\lambda$, $T$, $|\bar\eta|_{L_2(\frZ_1)}$  
and $|\bar\xi|_{L_2(\frZ_0)}$.
\end{enumerate}
\end{lemma}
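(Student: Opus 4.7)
My plan is to obtain $u_t = d\mu_t/dx$ as the $L_p$-limit of the mollifications $\mu_t^{(\varepsilon)}$, using Lemma \ref{lemma supemu} for a uniform bound and Lemma \ref{lemma density} to extract the density, and then to deduce part (ii) by applying the already established Lemma \ref{lemma usup} to the $L_p$-solution $u-u'$.

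For part (i), I would first apply Lemma \ref{lemma supemu} along a sequence $\varepsilon_n\downarrow0$: using $\mu_0=u_0\,dx$ and the contraction property of mollification in $L_p$,
$$
\E\sup_{t\in[0,T]}|\mu_t^{(\varepsilon_n)}|^p_{L_p} \leq N\E|\mu_0^{(\varepsilon_n)}|^p_{L_p} \leq N\E|u_0|^p_{L_p}
$$
uniformly in $n$. Next, I would apply Lemma \ref{lemma density} with $S=[0,T]$ and $\cH$ the optional $\sigma$-algebra on $\Omega\times[0,T]$ (note $\mu_t(\varphi)$ is optional since $\mu$ is adapted and weakly cadlag), producing an optional $L_p$-valued $u$ with $u_t=d\mu_t/dx$ a.s.\ for each $t$ and $\mu_t^{(\varepsilon_n)}\to u_t$ in $L_p$ a.s. The bound $\E\sup_t|u_t|^p_{L_p}<\infty$ then follows from the strong convergence $|u_t|_{L_p}=\lim_n|\mu_t^{(\varepsilon_n)}|_{L_p}$ a.s.\ together with Fatou's lemma applied on both sides of
$$
\E\sup_t|u_t|^p_{L_p} \leq \E\liminf_n\sup_t|\mu_t^{(\varepsilon_n)}|^p_{L_p} \leq \liminf_n\E\sup_t|\mu_t^{(\varepsilon_n)}|^p_{L_p}.
$$

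For the weakly cadlag property, I would take a countable dense $\{\varphi_k\}\subset C^{\infty}_{0}$ of $L_q$ with $q=p/(p-1)$. Since $(u_t,\varphi_k)=\mu_t(\varphi_k)$ a.s.\ for each $t,k$ and $\mu_t(\varphi_k)$ is cadlag in $t$, on a full-probability event the identity holds for all rational $t$ and all $k$, with $\sup_t|u_t|_{L_p}<\infty$. After modifying $u$ on a null set so that $(u_t,\varphi_k)=\mu_t(\varphi_k)$ at every $t$ (via the cadlag extension from rationals), the estimate $|(u_t,\psi)-(u_t,\varphi_k)|\leq (\sup_t|u_t|_{L_p})|\psi-\varphi_k|_{L_q}$ exhibits $(u_t,\psi)$ as a uniform-in-$t$ limit of cadlag functions for every $\psi\in L_q$, hence cadlag. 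This establishes weak cadlag-ness in $L_p$ and completes (i). For (ii), substituting $\mu_t=u_t\,dx$ and $\mu_t'=u_t'\,dx$ into \eqref{eqZ} shows that $u,u'$ are $L_p$-solutions of \eqref{equdZ}; by linearity $w:=u-u'$ is an $L_p$-solution with initial datum $u_0-u_0'$. The required bound $\esssup_{t\in[0,T]}|w_t|_{L_1}\leq \sup_t(\|\mu_t\|+\|\mu_t'\|)<\infty$ (a.s.) follows from the cadlag-ness of the total masses, and \eqref{umoment2} is inherited from \eqref{moment2}, so Lemma \ref{lemma usup} applied to $w$ yields \eqref{7.5.1}.

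The main obstacle is the selection of a single process $u$ which is weakly cadlag as an $L_p$-valued process: Lemma \ref{lemma density} produces $u_t=d\mu_t/dx$ only at each fixed $t$ with an exceptional null set depending on $t$, and one must upgrade this pointwise construction to a version whose pairing with every $\psi\in L_q$ is cadlag in $t$. The argument above handles this by combining the almost-sure uniform bound $\sup_t|u_t|_{L_p}<\infty$ coming from Fatou with density of $C^{\infty}_{0}$ in $L_q$, reducing the uncountable family of test functions to the countable collection $\{\varphi_k\}$.
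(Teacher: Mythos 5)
Your proposal is correct and follows essentially the paper's own route: Lemma \ref{lemma supemu} gives the uniform bound on $\E\sup_{t}|\mu_t^{(\varepsilon)}|_{L_p}^p$, Lemma \ref{lemma density} produces the density process, a countable dense family of times and test functions plus weak limits yields the adapted weakly cadlag version with $\E\sup_t|u_t|_{L_p}^p<\infty$, and part (ii) follows by applying Lemma \ref{lemma usup} to $u-u'$. The one step to tighten is your displayed Fatou inequality: since the a.s.\ convergence $|\mu_t^{(\varepsilon_n)}|_{L_p}\to|u_t|_{L_p}$ holds with a null set depending on $t$, the bound $\sup_t|u_t|_{L_p}\le\liminf_n\sup_t|\mu_t^{(\varepsilon_n)}|_{L_p}$ should first be taken over a countable dense set of times and then extended to all $t$ through the weak-limit construction (lower semicontinuity of the norm) that you already invoke for the cadlag extension — which is exactly how the paper's proof orders these steps.
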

\begin{proof}
By Lemma \ref{lemma supemu} we have
$$ 
\E\sup_{t\in[0,T]}|\mu_t^{(\varepsilon)}|^p_{L_p}
\leq N\E|\mu_0^{(\varepsilon)}|^p_{L_p}<\infty
\quad\text{for every $t\in[0,T]$ and $\varepsilon>0$}
$$
with a constant 
$N=N(d,p,K, T, 
K_{\xi},K_{\eta}, L,\lambda,|\bar\eta|_{L_2(\frZ_1)},|\bar\xi|_{L_2(\frZ_0)})$. 
Moreover, by Lemma \ref{lemma density}, 
there is an $L_p$-valued $\cF_t$-adapted 
$\cF\otimes\cB([0,T])$-measurable process 
$(\bar u_t)_{t\in[0,T]}$ such that $\bar u_t=d\mu_t/dx$ (a.s.) for every $t\in[0,T]$.
To prove (i) let $A$ be a countable dense subset of $[0,T]$, 
such that $T\in A$. 
Then 
$$
\E\sup_{t\in A}|\bar u_t|_{L_p}^p
=\E\sup_{t\in A}
\liminf_{n\to\infty}|\mu_t^{(\varepsilon_n)}|_{L_p}^p
\leq \E\liminf_{n\to\infty}\sup_{t\in A}|\mu_t^{(\varepsilon_n)}|_{L_p}^p
$$
\begin{equation}                                                                                 \label{26.07.2}
\leq \liminf_{n\to \infty} \E \sup_{t\in A}|\mu_t^{(\varepsilon_n)}|_{L_p}^p
\leq N\E|d\mu_0/dx|^p_{L_p}<\infty 
\end{equation} 
for a sequence $\varepsilon_n\downarrow0$,
and there is a set $\Omega'\in\cF_0$ of full probability such that 
$$
\sup_{t\in A}|\bar u_t(\omega)|_{L_p}<\infty, 
\quad
d\mu_t/dx=\bar u_t 
\quad
 \text{for every $\omega\in\Omega'$ and $t\in A$}, 
$$
and
$
\mu_t(\varphi) 
$
is a cadlag function in $t\in[0,T]$ for $\omega\in\Omega'$ and 
$\varphi\in C^{\infty}_0(\bR^d)$. Hence, if $t\in[0,T]$ and 
$\omega\in\Omega'$,  
then there is a sequence $t_n=t_n(\omega)\in A$ 
such that $t_n\downarrow t$ and $\bar u_{t_n}(\omega)$ converges weakly 
in $L_p$ to an element, denoted by $u_t(\omega)$. Note that 
since $\bar u_{t_n}(\omega)$ is $dx$-everywhere nonnegative  
for every $n$, the function $u_t(\omega)$ is also $dx$-almost everywhere 
nonnegative. Moreover, by property of a weak limit 
we have 
\begin{equation*}
|u_t(\omega)|_{L_p}\leq \liminf_{n\to\infty}|\bar u_{t_n}(\omega)|_{L_p}
\leq \sup_{s\in A}|\bar u_{s}(\omega)|_{L_p},  
\end{equation*}
which gives 
\begin{equation}                                                           \label{sup}
\sup_{t\in[0,T]}|u_t(\omega)|_{L_p}
\leq \sup_{s\in A}|\bar u_{s}(\omega)|_{L_p}<\infty 
\quad
\text{for $\omega\in\Omega'$}. 
\end{equation}
Notice that                                                                    
\begin{equation}                                                            \label{umu}
(u_t(\omega),\varphi)
=\lim_{n\to\infty}\mu_{t_n}(\omega,\varphi)=\mu_{t}(\omega,\varphi)
\quad
\text{for $\omega\in\Omega'$, $t\in[0,T]$ and $\varphi\in C_0^{\infty}$}, 
\end{equation}
which shows that $u_t(\omega)$ does not depend on the 
sequence $t_n$. In particular, for $\omega\in\Omega'$ we have 
$\bar u_t(\omega)=u_t(\omega)$ for $t\in A$. Moreover, it shows that  
 $(u_t(\omega),\varphi)$ 
is a cadlag function of $t\in[0,T]$ for every $\varphi\in C_0^{\infty}$. 
Hence, due to \eqref{sup},  since $C_0^{\infty}$ is dense in $L_q$,
it follows that $u_t(\omega)$ is a weakly cadlag $L_p$-valued 
function of $t\in[0,T]$ for each $\omega\in\Omega'$. 
Moreover, from \eqref{umu}, by the monotone class lemma it follows that $u_t=d\mu_t/dx$ for 
every $\omega\in\Omega'$ and $t\in[0,T]$. 
Define $u_t(\omega)=0$ for $\omega\notin\Omega'$ and $t\in[0,T]$. 
Then $(u_t)_{t\in[0,T]}$ is an $L_p$-valued weakly cadlag function in $t\in[0,T]$ 
for every $\omega\in\Omega$, and since due to 
\eqref{umu} almost surely $(u_t,\varphi)=\mu_t(\varphi)$ for 
$\varphi\in C_0^{\infty}$, it follows that $u_t$ is an $\cF_t$-measurable 
$L_p$-valued random variable for every $t\in[0,T]$. Moreover, by virtue 
of \eqref{26.07.2} and \eqref{sup} we have 
$\E\sup_{t\in[0,T]}|u_t|^p_{L_p}<\infty$. 
To prove (ii), notice that by (i) the process $\bar u_t:=u_t-u'_t$, $t\in[0,T]$, is an 
$L_p$-solution to equation \eqref{equdZ} such that 
$\esssup_{t\in[0,T]}|\bar u_t|_{L_1}<\infty$ (a.s.). 
Thus we have \eqref{7.5.1} by Lemma \ref{lemma usup}.  
\end{proof}
\begin{definition}
Let $p>1$ and let $\psi$ be an $L_p$-valued $\cF_0$-measurable 
random variable. Then we say that an $L_p$-valued $\cF_t$-optional 
process $v=(v_t)_{t\in[0,T]}$ is a $\bV_p$-solution to \eqref{equdZ} 
with initial value $\psi$ if for each $\varphi\in C_0^{\infty}$ 
\begin{equation}
\begin{split}
(v_t,\vp)=&(\psi,\vp) +  \int_0^t(v_{s},\tilde\cL_s\varphi)\,ds
+ \int_0^t(v_{s},\cM_s^k\varphi)\,dV^k_s
+ \int_0^t\int_{\frZ_0}(v_{s},J_s^{\eta}\varphi)\,\nu_0(d\frz)ds\\ 
&+ \int_0^t\int_{\frZ_1}(v_{s},J_s^{\xi}\varphi)\,\nu_1(d\frz)ds
+\int_0^t\int_{\frZ_1}(v_{s},I_s^{\xi}\varphi)\,\tilde N_1(d\frz,ds) 
\end{split}
                                                                                                                \label{eqvZ}
\end{equation}
for $P\otimes dt$-a.e.  
$(\omega,t)\in\Omega\times[0,T]$. 
\end{definition}

\begin{lemma}                                                                                     \label{lemma weakly cadlag}
Let Assumption \ref{assumption SDE} (ii) holds.  Let 
$(v_t)_{t\in[0,T]}$ be a $\bV_p$-solution for a $p>1$ such that 
$\esssup_{t\in[0,T]}|v_t|_{L_p}<\infty$ (a.s.), and there is an $L_p$-valued 
random variable $g$ such that  for each $\varphi\in C_0^{\infty}$ equation 
\eqref{eqvZ} for $t:=T$ holds  almost surely with $g$ in place of $v_T$. 
Then there exists an $L_p$-solution 
$u=(u_t)_{t\in[0,T]}$ to equation \eqref{equdZ} such that $u_0=\psi$ 
and $u=v$, $P\otimes dt$-almost everywhere. 
\end{lemma}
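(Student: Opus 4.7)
The plan is to promote the $\bV_p$-solution $v$ to an $L_p$-solution $u$ by using the cadlag structure of the right-hand side of \eqref{eqvZ} to generate, at each time $t$, a weak-$L_p$ limit of values of $v$ approaching $t$ from the right. Throughout, fix $q=p/(p-1)$ and a countable subset $D\subset C_0^{\infty}$ that is dense in the $L_q$-norm (such a set exists by separability). For each $\varphi\in C_0^{\infty}$, let $Z_t^\varphi$ denote the right-hand side of \eqref{eqvZ}.

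By standard properties of Lebesgue and stochastic (Wiener and compensated Poisson) integrals, each $Z^\varphi$ admits an $\cF_t$-adapted cadlag modification; the integrands are well-defined and have the required integrability because $|v_s|_{L_p}$ is essentially bounded in $s$ a.s. and $\tilde\cL_s\varphi,\cM_s^k\varphi,J_s^{\eta}\varphi,J_s^{\xi}\varphi,I_s^{\xi}\varphi$ lie in $L_q$ with suitable $L^2$-in-$(s,\frz)$ control. Combining these cadlag modifications over $\varphi\in D$ with the $\bV_p$-identity $(v_t,\varphi)=Z_t^\varphi$ (which holds $P\otimes dt$-a.e.), the essential-sup bound on $|v_t|_{L_p}$, and the endpoint hypothesis $(g,\varphi)=Z_T^\varphi$ a.s., I obtain a set $\Omega_0\in\cF$ of full probability such that for every $\omega\in\Omega_0$: (i) $C(\omega):=\esssup_{t\in[0,T]}|v_t(\omega)|_{L_p}<\infty$; (ii) for every $\varphi\in D$, $t\mapsto Z_t^\varphi(\omega)$ is cadlag and $(v_t(\omega),\varphi)=Z_t^\varphi(\omega)$ for all $t$ outside a Lebesgue null set $N_\varphi(\omega)$; (iii) $(g(\omega),\varphi)=Z_T^\varphi(\omega)$ for every $\varphi\in D$.

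For $\omega\in\Omega_0$ and $t\in[0,T)$, choose a sequence $t_n\downarrow t$ avoiding $\bigcup_{\varphi\in D}N_\varphi(\omega)$ along which $|v_{t_n}(\omega)|_{L_p}\le C(\omega)$. Reflexivity of $L_p$ (using $p>1$) gives a weakly convergent subsequence with limit $u_t(\omega)\in L_p$; right-continuity of each $Z^\varphi$ yields $(u_t(\omega),\varphi)=Z_t^\varphi(\omega)$ for all $\varphi\in D$. Since $D$ is dense in $L_q$, this property determines $u_t(\omega)$ uniquely in $L_p$, so the whole sequence converges weakly and the construction is independent of all choices. Set $u_T:=g$ (consistent by (iii)) and $u\equiv 0$ off $\Omega_0$; lower semicontinuity of the norm gives $|u_t|_{L_p}\le C(\omega)$ for every $t$ on $\Omega_0$.

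The remaining verification proceeds as follows. Weakly cadlag property: by construction $(u_t,\varphi)=Z_t^\varphi$ is cadlag for $\varphi\in D$, and the uniform bound $|u_t|_{L_p}\le C(\omega)$ combined with density of $D$ in $L_q$ extends this to all $\varphi\in L_q$, hence to all $\varphi\in C_b(\bR^d)$ by approximation. Adaptedness: each $(u_t,\varphi)=Z_t^\varphi$ is $\cF_t$-measurable for $\varphi\in D$, and Pettis' theorem (separability of $L_p$) promotes this to $\cF_t$-measurability of $u_t$ as an $L_p$-valued random variable. Initial value: $Z_0^\varphi=(\psi,\varphi)$, so $(u_0,\varphi)=(\psi,\varphi)$ for all $\varphi\in D$, whence $u_0=\psi$. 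Identification: at every $t$ in the full-Lebesgue set $[0,T]\setminus\bigcup_{\varphi\in D}N_\varphi(\omega)$ one has $(v_t,\varphi)=(u_t,\varphi)$ for all $\varphi\in D$, hence $v_t=u_t$ in $L_p$, so $u=v$ $P\otimes dt$-a.e. Finally, since $u=v$ $P\otimes dt$-a.e., the Lebesgue and stochastic integrals in \eqref{equZ} are unchanged upon replacing $v$ by $u$, so the identity $(u_t,\varphi)=Z_t^\varphi$ is exactly \eqref{equZ} for $u$; by a standard approximation argument using $|u_t|_{L_p}\le C(\omega)$ the identity extends from $\varphi\in D$ to all $\varphi\in C_0^\infty$, establishing that $u$ is an $L_p$-solution. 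The main technical subtlety is uniqueness and measurability of the weak limits $u_t(\omega)$, which is precisely what the countable dense set $D\subset L_q$ and the essential-sup bound are designed to provide; everything else reduces to routine bookkeeping.
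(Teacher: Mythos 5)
Your proposal is correct and follows essentially the same route as the paper's own proof: a countable set $D\subset C_0^{\infty}$ dense in $L_q$, the a.s.\ cadlag right-hand side $Z^{\varphi}_t$, weak $L_p$-limits of $v_{t_n}$ along times $t_n\downarrow t$ taken from the full-measure set where the $\bV_p$-identity holds, the definition $u_T:=g$, and the identification $u=v$ $P\otimes dt$-a.e. One small caution: the final extension from $\varphi\in D$ to all $\varphi\in C_0^{\infty}$ (and the replacement of $u_s$ by $u_{s-}$ in the $\tilde N_1$-integral required by \eqref{equZ}) should not be argued by approximating $\varphi$ in $L_q$ alone, since $Z^{\varphi}_t$ involves $\tilde\cL_s\varphi$, $\cM^k_s\varphi$, etc.\ and is not controlled by $|\varphi|_{L_q}$; instead, for each fixed $\varphi\in C_0^{\infty}$ both $t\mapsto(u_t,\varphi)$ and $t\mapsto Z^{\varphi}_t$ are a.s.\ cadlag and agree $P\otimes dt$-a.e.\ (using $u=v$ a.e.), hence agree for all $t$ a.s., and since $u$ is weakly cadlag, $u_{s-}=u_s$ for all but countably many $s$, so the stochastic integrals against $\tilde N_1$ coincide — exactly as in the paper's closing step.
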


\begin{proof}
Let $\Phi\in C_0^\infty$ be a countable dense set in $L_q$ for $q=p/(p-1)$. 
Then there is a set $\Omega'\in\Omega$ of full probability 
and for every $\omega\in\Omega'$ 
there is a set $\bT_\omega\subset[0,T]$ of 
full Lebesgue measure in $[0,T]$, such that 
$\sup_{t\in\bT_{\omega}}|v_t(\omega)|_{L_p}<\infty$  
for $\omega\in\Omega'$,   
and for all $\varphi\in\Phi$  
equation \eqref{eqvZ} holds for all $\omega\in\Omega'$ 
and $t\in\bT_\omega$. 
We may also assume that for each $\varphi\in\Phi$ 
and $\omega\in\Omega'$ 
equation  \eqref{equZ} holds for $t=T$ with $g$ in place of $v_T$. 
Since the right-hand side of equation \eqref{eqvZ}, which 
we denote by $F_t(\varphi)$ for short, is 
almost surely a cadlag function of $t$, we may assume, 
that for $\omega\in\Omega'$ 
it is cadlag for all $\varphi\in\Phi$. 
Since $\bT_{\omega}$ is dense in $[0,T]$ and 
$\sup_{t\in\bT_{\omega}}|v_t(\omega)|_{L_p}<\infty$ for 
$\omega\in\Omega'$, for each $\omega\in\Omega'$ 
and $t\in[0,T)$ we have a sequence 
$t_n=t_n(\omega)\in\bT_{\omega}$  such that 
$t_n\downarrow t$ and $v_{t_n}\to \bar v_t$ weakly in $L_p$ 
for some element $\bar v_t=\bar v_t(\omega)\in L_p$. Hence 
\begin{equation}                                                                                \label{modification}
(\bar v_t(\omega),\varphi)=\lim_{n\to\infty}(v_{t_n}(\omega),\varphi)
=\lim_{n\to\infty}F_{t_n(\omega)}(\omega,\varphi)
=F_{t}(\omega,\varphi)\quad\text{for all $\varphi\in \Phi$}, 
\end{equation}
which implies that for every sequence $t_n=t_n(\omega)\in\bT_{\omega}$ 
such that $t_n\downarrow t$ the sequence $v_{t_n(\omega)}(\omega)$ 
converges weakly to $\bar v_t(\omega)$ in $L_p$. In particular, 
$\bar v_t(\omega)=v_t(\omega)$ for $\omega\in\Omega'$ 
and $t\in\bT_{\omega}$. For $\omega\in\Omega'$ 
we define $u_t(\omega):=\bar v_t(\omega)$ for $t\in[0,T)$ 
and $u_T(\omega):=g(\omega)$, and for $\omega\in\Omega\setminus\Omega'$ 
we set $u_t(\omega)=0$ for all $t\in[0,T]$. 
Then due to \eqref{modification} and that almost surely 
$(u_T,\varphi)=F_T(\varphi)$ 
for all $\varphi\in\Phi$, the process $u=(u_t)_{t\in[0,T]}$ is 
an $L_p$-valued $\cF_t$-adapted  
weakly cadlag process such that almost surely \eqref{eqvZ} 
holds for all $\varphi\in C_0^{\infty}$. 
Clearly, $u=v$ $P\otimes dt$ (a.e.).  Thus we also have that almost surely 
$$
\int_0^t\int_{\frZ_1} (u_{s-},I_s^\xi\vp)\,\Nte(d\frz,ds) =  \int_0^t\int_{\frZ_1} (u_{s},I_s^\xi\vp)\,\Nte(d\frz,ds)
$$
for all $t\in[0,T]$ and hence $u$ satisfies \eqref{eqvZ}, with $u_s$ replaced by $u_{s-}$ in the last term on the right-hand side, almost surely for all $\vp\in C_0^\infty$ for all $t\in [0,T]$, i.e., $u$ is an $L_p$ solution to \eqref{equdZ}.
\end{proof}

\section{Solvability of the filtering equations in $L_p$-spaces}

To show the solvability of the linear filtering equation \eqref{equdZ}, the Zakai equation,
with any $\cF_0$-measurable $L_p$-valued initial condition,  
we want to apply the existence and uniqueness theorem for 
stochastic integro-differential equations proved in \cite{GW2020}. 
With this purpose in mind first we assume that the coefficients 
$\sigma$, $b$, $\rho$, $B$, $\xi$, $\eta$ are smooth in $x\in\bR^d$, 
and under this additional assumption we are going 
to determine the form of the  ``adjoint" 
operators $\tilde\cL^{\ast}$, $\cM^{k\ast}$, $J^{\eta\ast}$, $J^{\xi\ast}$ 
and $I^{\xi\ast}$ as operators acting directly on $C^{\infty}_0$ such that 
$$
\int_{\bR^d} A^{\ast}\varphi(x)\phi(x)\,dx=
\int_{\bR^d} \varphi(x)A\phi(x)\,dx\quad \text{for all $\varphi, \phi\in C_0^{\infty}$}, 
$$ 
for $\tilde\cL$, $\cM$, $J^{\xi}$, $J^{\eta}$ and $I^{\xi}$ in place of $A$. 
The form of $\tilde\cL^{\ast}$ and $\cM^{k\ast}$ 
is immediately obvious by integrating by parts. 
To find the form of the other operators (defined in \eqref{equ operators TIJ}), let 
$\zeta:\bR^d\to\bR^d$ 
such that 
$$
\tau(x):=\tau^{\zeta}(x):=x+\zeta(x),\quad x\in\bR^d, 
$$
is a $C^1$-diffeomorphism on $\bR^d$. 
Then observe that for $\varphi$, $\phi\in C^{\infty}_0$ 
we have 
$$
(\phi, T^{\zeta}\varphi)
= \int_{\bR^d} \phi(\tau^{ -1}(x))|\det D\tau^{-1}(x)|\vp(x)\,dx
=(|\det D\tau^{-1}|\, T^{\zeta^{\ast}}\phi,\varphi)
$$
with 
\begin{equation}                                                                                 \label{starfunction}
\zeta^{\ast}(x):=-x+\tau^{-1}(x)=-\zeta(\tau^{-1}(x)), 
\quad
T^{\zeta^{\ast}}\phi(x)=\phi(x+\zeta^{\ast}(x)).
\end{equation}
Similarly, 
\begin{align*} 
(\phi, I^{\zeta}\varphi) 
&= \int_{\bR^d} 
\big(
\phi(\tau^{ -1}(x))|\det D\tau^{-1}(x)|-\phi(x)
\big)\vp(x)\,dx                                                                                                  \\
&=\int_{\bR^d}
\big(
\phi(\tau^{-1}(x))|\det D\tau^{-1}(x)|-\phi(\tau^{-1}(x))+\phi(\tau^{-1}(x))-\phi(x)
\big)
\vp(x)\,dx                                                                                                                 \\
&=(\frc T^{\zeta^{\ast}}\phi,\vp)+(I^{\zeta^{\ast}}\phi, \vp),  
\end{align*}
where
$$
\frc(x)=|\det D\tau^{-1}(x)|-1, 
$$
and 
\begin{align*}
(\phi,J^{\zeta}\vp) &= (\phi,I^\zeta\vp) - (\phi,\zeta^iD_i\vp)                                        \\
&=  (I^{\zeta^{\ast}}\phi, \varphi)
+ (\frc T^{\zeta^{\ast}}\phi,\varphi) 
+ (\zeta^iD_i\phi,\varphi) + ((D_i\zeta^i)\phi,\varphi) \\
&= (J^{\zeta^{\ast}}\phi, \varphi) + (\frc I^{\zeta^{\ast}}\phi,\varphi)
+ ((\frc + D_i\zeta^i)\phi,\varphi) + (({\zeta^{\ast}}^i+\zeta^i)D_i\phi,\varphi) \\
&=(J^{\zeta^{\ast}}\phi, \varphi) + (\frc I^{\zeta^{\ast}}\phi,\varphi)
+ ((\bar\frc 
+D_i{\zeta^{\ast}}^i +D_i\zeta^i)\phi,\varphi) 
+ (({\zeta^{\ast}}^i+\zeta^i)D_i\phi,\varphi),  
\end{align*}
where
$$
\bar\frc=|\det D\tau^{-1}(x)|-1-D_i{\zeta^{\ast}} ^i. 
$$
Consequently, $T^{\zeta\ast}$, $I^{\zeta\ast}$ and $J^{\zeta\ast}$, 
the formal adjoint of  $T^{\zeta}$, $I^{\zeta}$ and $J^{\zeta}$, 
can be written in the form 
\begin{equation*}
T^{\zeta\ast}=|\det D\tau^{-1}|\, T^{\zeta^{\ast}},  
\end{equation*}
\begin{equation}                                                                       \label{*}
I^{\zeta\ast}=I^{\zeta^{\ast}}+\frc T^{\zeta^{\ast}},
\quad
J^{\zeta\ast}=J^{\zeta^{\ast}}+\frc I^{\zeta^{\ast}}
+(\zeta^{\ast i}+\zeta^i)D_i +\bar\frc +D_i(\zeta^{\ast i}+\zeta^i).  
\end{equation}
\begin{lemma}                                                                     \label{lemma 1.14.3.22}
Let $\zeta$ be an $\bR^d$-valued function on $\bR^d$ such that 
for an integer $m\geq1$ it is continuously differentiable up to order $m$, 
and 
\begin{equation}                                                              \label{diffeomorphism}
\inf_{x\in\bR^d}|\det(I+D\zeta(x))|=:\lambda>0, 
\quad 
\max_{1\leq k\leq m}\sup_{x\in\bR^d}|D^k\zeta(x)|=:M_m<\infty. 
\end{equation}
Then the following statements hold.
\begin{enumerate}
\item[(i)] The function $\tau=x+\zeta(x)$, $x\in\bR^d$, 
is a $C^m$-diffeomorphism, such that 
\begin{equation}                                                                  \label{Dinverse}
\inf_{x\in\bR^d}|\det D\tau^{-1}(x)|\geq\lambda',
\quad 
\max_{1\leq k\leq m}\sup_{x\in\bR^d}|D^k\tau^{-1}|\leq M'_{m}<\infty, 
\end{equation}
with constants $\lambda'=\lambda'(d, M_1)>0$  
and $M'_m=M'_m(d,\lambda,M_m)$. 
\item[(ii)] The function $\zeta^{\ast}(x)=-x+\tau^{-1}(x)$, $x\in\bR^d$,  
is continuously differentiable up to order $m$, such that 
\begin{align}
\sup_{\bR^d}|\zeta^{\ast}|&=\sup_{\bR^d}|\zeta|,                                   \label{1.12.3.22} \\
\sup_{\bR^d}|D^k\zeta^{\ast}|
&\leq M^{\ast}_m\max_{1\leq j\leq k}\sup_{\bR^d}|D^j\zeta|, 
\quad\text{for $k=1,2,...,m$},                                                                  \label{2.12.3.22}\\                                                               
\inf_{\theta\in[0,1]}\inf_{\bR^d}|\det(I+\theta D\zeta^{\ast})|
&\geq \lambda'\inf_{\theta\in[0,1]}\inf_{\bR^d}|\det(I+\theta D\zeta)|,              \label{3.12.3.22}                                         
\end{align}
with a constant $M_m^{\ast}=M_m^{\ast}(d,\lambda,M_m)$ and with 
$\lambda'$ from  \eqref{Dinverse}.
\item[(iii)] For the functions 
$\frc=\det(I+D\zeta^{\ast})-1$, 
$\bar\frc=\frc-D_i\zeta^{\ast i}$ and $\zeta+\zeta^{\ast}$ 
we have 
\begin{equation}                                                                                        \label{4.12.3.22}
\sup_{x\in\bR^d}|D^k\frc(x)|
\leq N\max_{1\leq j\leq k+1}\sup_{\bR^d}|D^j\zeta|,        
\quad 
\sup_{x\in\bR^d}|D^k\bar\frc(x)|
\leq N\max_{1\leq j\leq k+1}\sup_{\bR^d}|D^j\zeta|^2
\end{equation}                                                                                            \label{5.12.3.22}
\begin{equation}                                                                                         \label{6.12.3.22}
\sup_{\bR^d}|D^{k}(\zeta+\zeta^{\ast})|
\leq N \max_{1\leq j\leq k+1}\sup_{\bR^d}|D^j\zeta|^2.                             
\end{equation}
for $1\leq k\leq m-1$ with a constant $N=N(d,\lambda, m, M_m)$. 
\end{enumerate}
\end{lemma}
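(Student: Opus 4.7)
My plan is to prove (i) via Hadamard's global inverse function theorem, and then to exploit the algebraic identity $\zeta^{\ast}(x)=-\zeta(\tau^{-1}(x))$ recorded in \eqref{starfunction} together with the matrix identity $A(I+A)^{-1}=A-A^{2}(I+A)^{-1}$ to transfer bounds on $\zeta$ into bounds on $\zeta^{\ast}$ and on the remainder functions $\frc$, $\bar\frc$ and $\zeta+\zeta^{\ast}$. Everything else will reduce to Fa\`a di Bruno bookkeeping.

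\textbf{Part (i).} From \eqref{diffeomorphism} and Cramer's rule, $(I+D\zeta(x))^{-1}$ is bounded uniformly in $x$ by a constant depending only on $d$, $\lambda$ and $M_{1}$. Together with the $C^{m}$-regularity of $\tau(x)=x+\zeta(x)$, Hadamard's global inverse function theorem yields that $\tau$ is a $C^{m}$-diffeomorphism of $\bR^{d}$. The lower bound $|\det D\tau^{-1}(x)|\geq (1+M_{1})^{-d}$ in \eqref{Dinverse} follows from $|\det D\tau^{-1}(x)|=1/|\det(I+D\zeta(\tau^{-1}(x)))|$ combined with $|\det(I+D\zeta)|\leq(1+M_{1})^{d}$, and the higher-derivative bounds on $\tau^{-1}$ come from differentiating $\tau(\tau^{-1}(x))=x$ inductively and expressing $D^{k}\tau^{-1}$ as a polynomial in $(I+D\zeta(\tau^{-1}))^{-1}$ and the $D^{j}\zeta(\tau^{-1})$, $j\leq k$.

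\textbf{Part (ii).} Since $\zeta^{\ast}=-\zeta\circ\tau^{-1}$, equality \eqref{1.12.3.22} is immediate from the bijectivity of $\tau^{-1}$, and the derivative bounds \eqref{2.12.3.22} follow from the chain rule and part (i). For \eqref{3.12.3.22} set $A(x)=D\zeta(\tau^{-1}(x))$; differentiating $\zeta^{\ast}=-\zeta\circ\tau^{-1}$ gives $D\zeta^{\ast}(x)=-A(I+A)^{-1}$, whence
\[
I+\theta D\zeta^{\ast}(x)=(I+(1-\theta)A)(I+A)^{-1},
\]
so that $|\det(I+\theta D\zeta^{\ast}(x))|=|\det(I+(1-\theta)A)|\,|\det(I+A)|^{-1}$, which combined with $|\det(I+A)|\leq(1+M_{1})^{d}$ proves the estimate.

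\textbf{Part (iii) and the main obstacle.} The first estimate in \eqref{4.12.3.22} follows because $\frc=\det(I+D\zeta^{\ast})-1$ is a polynomial in the entries of $D\zeta^{\ast}$ with no constant term, so Leibniz's rule and part (ii) give the bound; the analogous quadratic estimate for $\bar\frc$ uses the stronger fact that $\det(I+A)-1-\mathrm{tr}(A)$ starts at quadratic order in the entries of $A$. The estimate \eqref{6.12.3.22} for $D^{k}(\zeta+\zeta^{\ast})$ is the most delicate step: the identity
\[
A+D\zeta^{\ast}=A-A(I+A)^{-1}=A^{2}(I+A)^{-1}
\]
exhibits $D\zeta(\tau^{-1}(x))+D\zeta^{\ast}(x)$ as manifestly quadratic in $D\zeta$, while the missing piece $D\zeta(x)-D\zeta(\tau^{-1}(x))$ is handled by Taylor's formula, with $x-\tau^{-1}(x)=-\zeta^{\ast}(x)=\zeta(\tau^{-1}(x))$ supplying an extra factor that is reinvested, via iterated use of $\zeta^{\ast}=-\zeta\circ\tau^{-1}$, to produce a second factor of a derivative of $\zeta$. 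The main obstacle will be organising this bookkeeping for general $k$: differentiating the above identities $k$ times via Fa\`a di Bruno produces many cross terms, and the task is to regroup them so that every surviving contribution carries at least two factors drawn from $\{D^{j}\zeta:1\le j\le k+1\}$, thereby collapsing into the claimed quadratic bound.
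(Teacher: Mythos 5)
Parts (i), (ii) and the two estimates in \eqref{4.12.3.22} are correct, and they are essentially the paper's own argument in light disguise: the paper obtains \eqref{Dinverse} by quoting Lemma 3.3 of \cite{DGW}, where you invoke the Hadamard--Levy theorem (both work, since $(I+D\zeta)^{-1}=\adj(I+D\zeta)/\det(I+D\zeta)$ is uniformly bounded by a constant depending on $d,\lambda,M_1$); your matrix identity $I+\theta D\zeta^{\ast}=(I+(1-\theta)A)(I+A)^{-1}$ with $A=D\zeta(\tau^{-1})$ is exactly the differentiated form of the paper's composition identity $x+\theta\zeta^{\ast}(x)=\tau^{-1}(x)+(1-\theta)\zeta(\tau^{-1}(x))$; and your ``no constant term / starts at quadratic order'' observation for $\frc$ and $\bar\frc$ is the paper's first- and second-order Taylor expansion of $\det$ at the identity, so the Leibniz bookkeeping you describe there does close.

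The genuine gap is in \eqref{6.12.3.22}, precisely the step you defer. Your plan is to regroup the Fa\`a di Bruno terms so that every contribution carries two factors from $\{D^{j}\zeta:1\le j\le k+1\}$, and the mechanism you propose for the leftover piece $D\zeta(x)-D\zeta(\tau^{-1}(x))$ is that the Taylor factor $x-\tau^{-1}(x)=\zeta(\tau^{-1}(x))$ can be ``reinvested to produce a second factor of a derivative of $\zeta$''. It cannot: $\zeta(\tau^{-1}(x))=-\zeta^{\ast}(x)$ is a value of $\zeta$, and the hypotheses \eqref{diffeomorphism} control only $D^{j}\zeta$ for $1\le j\le m$, not $\sup|\zeta|$. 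Concretely, in $d=1$ take $\zeta(x)=\pi+\epsilon\sin x$: then $\lambda\ge 1-\epsilon$ and $M_m=\epsilon$, while $D(\zeta+\zeta^{\ast})$ is of size $\epsilon$ (not $\epsilon^{2}$), so no regrouping can turn that term into a product of two derivative factors; what your decomposition $A+D\zeta^{\ast}=A^{2}(I+A)^{-1}$ plus Taylor actually yields is a bound quadratic in the larger family $\{\sup|\zeta|\}\cup\{\sup|D^{j}\zeta|:1\le j\le k+1\}$. The paper proceeds differently here: by Newton--Leibniz along $\theta\mapsto\tau^{-1}(x)-\theta\zeta^{\ast}(x)$ it writes $\zeta+\zeta^{\ast}$ as $\zeta^{\ast i}$ contracted against $\int_0^1(D_i\zeta)(\tau^{-1}-\theta\zeta^{\ast})\,d\theta$, and then differentiates $k$ times using \eqref{1.12.3.22}, \eqref{2.12.3.22} and \eqref{Dinverse}; every Leibniz term in which a derivative falls on $\zeta^{\ast i}$ is quadratic in derivative sup-norms by \eqref{2.12.3.22}, and the single term with no derivative on $\zeta^{\ast i}$ carries the factor $\sup|\zeta^{\ast}|=\sup|\zeta|$ --- which in the intended application ($\zeta$ a truncated, mollified jump coefficient with $\sup|\zeta|\le K_0\bar\xi(\frz)$ comparable to its Lipschitz constant $\bar\xi(\frz)$) is of the same order as $\sup|D\zeta|$, so the quadratic bound is the one that gets used. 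To repair your proof, switch to this one-line identity (or add a bound on $\sup|\zeta|$ to your working hypotheses and record it in the estimate); the Fa\`a di Bruno regrouping alone, as announced, cannot produce the second derivative factor.
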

\begin{proof}
To prove (i) note that \eqref{diffeomorphism} implies that $\tau$ is 
a $C^m$-diffeomorphism and the estimates in \eqref{Dinverse} are proved 
in \cite{DGW} (see Lemma 3.3 therein). By (i) the mapping $\tau$ 
is a $C^m$-diffeomorphism. From $\tau(x)=x+\zeta(x)$, by substituting 
$\tau^{-1}(x)$ in place of $x$ we obtain $\zeta^{\ast}(x)=-\zeta(\tau^{-1}(x))$. 
Hence \eqref{1.12.3.22} follows immediately, and due to the second estimates 
in \eqref{Dinverse}, the estimate in 
\eqref{2.12.3.22} also follows.  Notice that
$$
x+\theta\zeta^{\ast}(x)=\tau^{-1}(x)+\zeta(\tau^{-1}(x))-\theta\zeta(\tau^{-1}(x))
=\tau^{-1}(x)+(1-\theta)\zeta(\tau^{-1}(x)).  
$$
Hence, by the first inequality in \eqref{Dinverse}, 
\begin{align*}
|\det(I+\theta D\zeta^{\ast})|&=|\det(I+(1-\theta)D\zeta(\tau^{-1}))||\det D\tau^{-1}|  \\
&\geq\lambda' |\det(I+(1-\theta)D\zeta(\tau^{-1}))|,                                              
\end{align*}
which implies  \eqref{3.12.3.22}. To prove the inequalities in \eqref{4.12.3.22} 
notice that for the function $F(A)=\det A$, considered as the function 
of the entries $A^{ij}$ of $d\times d$ real matrices $A$, 
we have 
$$
\frac{\partial}{\partial A^{ij}}\det A\big |_{A=I}=\delta_{ij}, \quad i,j=1,2,...,d.
$$
Thus 
$$
\frac{\partial}{\partial\theta}\det(I+\theta D\zeta^{\ast})\big|_{\theta=0}
=\delta_{ij}D_j\zeta^{\ast i}=D_i\zeta^{\ast i}, 
$$
and by Taylor's formula we get 
$$
\frc=\det(I+D\zeta^{\ast})-\det I
=\int_0^1
\tfrac{\partial}{\partial A^{ij}}F(I+\theta D\zeta^{\ast})\,d\theta D_i\zeta^{\ast j}  
$$
and
$$
\bar\frc=\det(I+D\zeta^{\ast})-\det I-D_i\zeta^{\ast i}
=\int_0^1(1-\theta)\tfrac{\partial^2}{\partial A^{ij}\partial A^{kl}}
F(I+\theta D\zeta^{\ast})\,d\theta D_i\zeta^{\ast j}D_k\zeta^{\ast l}.
$$
Hence using the estimates in \eqref{2.12.3.22} 
we get \eqref{4.12.3.22}. Note that  
$$
\zeta+\zeta^{\ast}=\zeta(\tau^{-1}-\theta\zeta^{\ast})\big|^{\theta=1}_{\theta=0}
=\zeta^{\ast i}\int_0^1(D_i\zeta)(\tau^{-1}-\theta\zeta^{\ast})\,d\theta. 
$$
Hence by the second estimate in \eqref{diffeomorphism} and \eqref{2.12.3.22} 
we obtain \eqref{6.12.3.22}. 
\end{proof}

In this section for $\varepsilon>0$ and functions $v$ on $\bR^d$ 
we use the notation $v^{(\varepsilon)}$ for the convolution of $v$ 
with $\kappa_{\varepsilon}(\cdot)=\varepsilon^{-d}\kappa(\cdot/\varepsilon)$, where $\kappa$ 
is a fixed nonnegative $C_0^{\infty}$ function of unit integral such that 
$\kappa(x)=0$ for $|x|\geq1$ and $\kappa(-x)=\kappa(x)$ 
for $x\in\bR^d$. 
\begin{lemma}                                                                                \label{lemma ediff}
Let $\tau$ be an $\bR^d$-valued function on $\bR^d$ 
with uniformly continuous derivative  
$D\tau$ on $\bR^d$ such that with positive constants $\lambda$ and $K$ 
$$
\lambda\leq |\det D\tau| \quad\text{and}\quad |D\tau|\leq M\quad \text{on $\bR^d$}.  
$$
Then 
$$
\tfrac{1}{2}\lambda\leq |\det D\tau^{(\varepsilon)}| \quad\text{on $\bR^d$} 
$$
for $\varepsilon\in(0,\varepsilon_0)$ for $\varepsilon_0>0$ satisfying 
$\delta(\varepsilon_0)\leq \lambda/(2d!dM^{d-1})$, where 
$\delta=\delta(\varepsilon)$ is the modulus of continuity of $D\tau$. 
\end{lemma}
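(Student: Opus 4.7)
The plan is to exploit the fact that convolution commutes with differentiation, so $D\tau^{(\varepsilon)} = (D\tau)^{(\varepsilon)}$, and then to compare $\det D\tau^{(\varepsilon)}(x)$ with $\det D\tau(x)$ via the uniform continuity of $D\tau$.

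First I would record two pointwise inequalities for $(D\tau)^{(\varepsilon)}$. Since $\kappa_{\varepsilon}$ is supported in the ball of radius $\varepsilon$, has unit integral, and is nonnegative, the uniform continuity of $D\tau$ with modulus $\delta$ yields
\begin{equation*}
|(D\tau)^{(\varepsilon)}(x) - D\tau(x)|
\leq \int_{|y|\leq \varepsilon}\kappa_{\varepsilon}(y)|D\tau(x-y) - D\tau(x)|\,dy
\leq \delta(\varepsilon)
\quad\text{for all }x\in\bR^d,
\end{equation*}
and the bound $|D\tau|\leq M$ gives $|(D\tau)^{(\varepsilon)}(x)|\leq M$ for all $x\in\bR^d$.

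Next I would use the standard Lipschitz-type estimate for the determinant of $d\times d$ matrices with entries bounded by $M$. Expanding $\det$ as a sum over $S_d$ and telescoping each product, one gets
\begin{equation*}
|\det A - \det B| \leq d!\cdot d\cdot M^{d-1}\max_{i,j}|A^{ij}-B^{ij}|
\end{equation*}
whenever $\max(|A|,|B|)\leq M$. Applying this with $A=D\tau^{(\varepsilon)}(x)$ and $B=D\tau(x)$, and combining with the two pointwise inequalities above, gives
\begin{equation*}
|\det D\tau^{(\varepsilon)}(x) - \det D\tau(x)| \leq d!\,d\,M^{d-1}\,\delta(\varepsilon)
\quad\text{for all }x\in\bR^d.
\end{equation*}

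Finally, I would choose $\varepsilon_0$ so that $d!\,d\,M^{d-1}\,\delta(\varepsilon_0)\leq \lambda/2$. Since $\det D\tau$ is continuous and does not vanish on the connected set $\bR^d$, it has constant sign; say $\det D\tau\geq \lambda$ (the other case is symmetric). Then for $\varepsilon\in(0,\varepsilon_0)$ the above display yields $\det D\tau^{(\varepsilon)}(x)\geq \lambda - \lambda/2 = \lambda/2$ at every $x$, which is the claimed inequality. The whole argument is routine; the only point worth pausing over is the interchange of mollification and differentiation, which is valid because $\tau$ is $C^1$ by the assumed uniform continuity of $D\tau$, and the telescoping bound on $\det A - \det B$, which is where the combinatorial constant $d!\,d\,M^{d-1}$ arises.
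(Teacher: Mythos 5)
Your proof is correct and follows essentially the same route as the paper: mollification commutes with differentiation, the uniform-continuity bound $\delta(\varepsilon)$ on the entries, the telescoped determinant estimate with the same constant $d!\,d\,M^{d-1}$, and the same choice of $\varepsilon_0$. The only (harmless) difference is your final constant-sign argument via connectedness, which is unnecessary: the paper simply applies the reverse triangle inequality, $|\det D\tau^{(\varepsilon)}|\geq |\det D\tau|-|\det D\tau-\det D\tau^{(\varepsilon)}|\geq \lambda/2$, which covers both signs at once.
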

\begin{proof}
Clearly,
$$
\sup_{x\in\bR^d}|D_{j}\tau^i-D_{j}\tau^{i(\varepsilon)}|\leq\delta(\varepsilon)
\quad
\text{for $\varepsilon>0$, $i,j=1,2,...,d$}.  
$$
Hence 
$$
\sup_{x\in \bR^d}|\Pi_{i=1}^dD_{j_i}\tau^i-\Pi_{i=1}^dD_{j_i}\tau^{i(\varepsilon)}|
\leq \sum_{i=1}^dM^{d-1}\sup_{\bR^d}|D_{j_i}\tau^i-D_{j_i}\tau^{i(\varepsilon)}|
\leq dM^{d-1}\delta(\varepsilon)
\quad
\text{for $\varepsilon>0$},   
$$
for every permutation $(j_1,...,j_d)$ of $1,2,...,d$. 
Therefore 
$$
\sup_{x\in\bR^d}|\det D\tau-\det D\tau^{(\varepsilon)}|
\leq d!\,dM^{d-1}\delta(\varepsilon)
\quad
\text{for $\varepsilon>0$}. 
$$
Consequently, choosing $\varepsilon_0>0$ such that 
$\delta(\varepsilon_0)\leq \lambda/(2d!dM^{d-1})$, 
for $\varepsilon\in(0,\varepsilon_0)$ 
we have 
$$
|\det D\tau^{(\varepsilon)}|
\geq 
|\det D\tau|-|\det D\tau-\det D\tau^{(\varepsilon)}|
\geq \lambda/2
\quad
\text{on $\bR^d$}. 
$$
\end{proof}
\begin{corollary}                                                                   \label{corollary 1.13.3.22}
Let $\zeta$ be an $\bR^d$-valued function on $\bR^d$ such that 
$D\zeta$ is a uniformly continuous function on $\bR^d$ and 
\begin{equation}                                                                     \label{3.13.3.22}
0<\lambda\leq\inf_{\bR^d}\det(I+D\zeta), 
\quad
\sup_{\bR^d}|D\zeta|\leq M<\infty  
\end{equation}
with some positive constants $\lambda$ and $M$. 
Let $\varepsilon_0>0$ such that $\delta(\varepsilon_0)
\leq \lambda/(2d!dM^{d-1})$.  
Then for every $\varepsilon\in(0,\varepsilon_0)$ 
the first inequality in \eqref{3.13.3.22} holds 
for $\zeta^{(\varepsilon)}$ in place of $\zeta$ with $\lambda/2$ 
in place of $\lambda$. Moreover, 
$\sup_{\bR^d}|D^k\zeta^{(\varepsilon)}|\leq M_k$ for every 
integer $k$ with a constant $M_k=M_k(d, M,\varepsilon)$, where $M_1=M$. 
Hence Lemma \ref{lemma 1.14.3.22} holds 
with $\zeta^{(\varepsilon)}$ in place of $\zeta$, 
for $\varepsilon\in(0,\varepsilon_0)$ 
for every integer $m\geq1$. 
\end{corollary}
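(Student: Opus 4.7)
The plan is to deduce the corollary as a direct consequence of Lemma \ref{lemma ediff} applied to the $C^1$ mapping $\tau(x) = x + \zeta(x)$, combined with elementary mollification estimates. First I would observe that $D\tau = I + D\zeta$ is uniformly continuous with the same modulus of continuity $\delta$ as $D\zeta$, that $\sup_{\bR^d}|D\tau| \leq 1 + M$, and that $|\det D\tau| \geq \lambda$. Choosing $\varepsilon_0$ small enough so that $\delta(\varepsilon_0)$ meets the smallness requirement of Lemma \ref{lemma ediff} (possibly with $1+M$ in place of the stated $M$; this is harmless since $D\zeta$ is uniformly continuous and $\delta(\varepsilon) \to 0$ as $\varepsilon \to 0$), the lemma yields $|\det D\tau^{(\varepsilon)}| \geq \lambda/2$ on $\bR^d$ for every $\varepsilon \in (0,\varepsilon_0)$. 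Since $\kappa_\varepsilon$ has unit integral and convolution commutes with differentiation, $D\tau^{(\varepsilon)} = I + D\zeta^{(\varepsilon)}$, which establishes the first claim with $\lambda/2$ replacing $\lambda$.

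Next I would verify the derivative bounds on $\zeta^{(\varepsilon)}$. The case $k=1$ follows from Young's inequality for convolutions:
$$
\sup_{\bR^d}|D\zeta^{(\varepsilon)}| = \sup_{\bR^d}|(D\zeta)\ast\kappa_\varepsilon| \leq \sup_{\bR^d}|D\zeta|\cdot |\kappa_\varepsilon|_{L_1} \leq M,
$$
giving $M_1 = M$. For $k \geq 2$ I would write $D^k\zeta^{(\varepsilon)} = (D\zeta)\ast D^{k-1}\kappa_\varepsilon$ and use the scaling identity $|D^{k-1}\kappa_\varepsilon|_{L_1} = \varepsilon^{-(k-1)}|D^{k-1}\kappa|_{L_1}$ to conclude
$$
\sup_{\bR^d}|D^k\zeta^{(\varepsilon)}| \leq M\,\varepsilon^{-(k-1)}|D^{k-1}\kappa|_{L_1},
$$
which depends only on $d$, $M$, $\varepsilon$ and $k$, as required.

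The final assertion, that Lemma \ref{lemma 1.14.3.22} applies to $\zeta^{(\varepsilon)}$ for every integer $m \geq 1$, is then immediate: $\zeta^{(\varepsilon)}\in C^\infty(\bR^d)$, the lower bound $\lambda/2$ on $\det(I + D\zeta^{(\varepsilon)})$ comes from the first step, and the uniform bounds on $D^k\zeta^{(\varepsilon)}$ for all $k \leq m$ come from the second step. I do not foresee any serious obstacle here; the only mild technical subtlety is reconciling the precise constant $\lambda/(2d!dM^{d-1})$ stated for $\delta(\varepsilon_0)$ with the constant that would literally arise from invoking Lemma \ref{lemma ediff} on $\tau = I+\zeta$ (where the bound on $|D\tau|$ is $1+M$ rather than $M$), but this is immaterial since $\varepsilon_0$ may always be further shrunk to absorb the discrepancy.
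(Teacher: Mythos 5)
Your proposal is correct and is essentially the paper's own (implicit) argument: the corollary is stated as an immediate consequence of Lemma \ref{lemma ediff} applied to $\tau(x)=x+\zeta(x)$ (the symmetry of $\kappa$ makes the mollified identity map the identity, so $D\tau^{(\varepsilon)}=I+D\zeta^{(\varepsilon)}$), combined with Young's inequality and the scaling of $D^{k-1}\kappa_\varepsilon$ for the higher derivative bounds. The only minor points, both harmless, are the $1+M$ versus $M$ constant you already flag, and that Lemma \ref{lemma ediff} bounds $|\det D\tau^{(\varepsilon)}|$ while the corollary asserts the signed lower bound; the estimate $\sup_{x}|\det D\tau(x)-\det D\tau^{(\varepsilon)}(x)|\le d!\,d M^{d-1}\delta(\varepsilon)\le\lambda/2$ from the lemma's proof, together with $\det(I+D\zeta)\ge\lambda$, gives the signed version directly.
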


Consider for $\varepsilon\in(0,1)$ the equation 
\begin{align}                                                                       
du_t^{\varepsilon}=&\tilde\cL_t^{\varepsilon\ast}u_t^{\varepsilon}\,dt
+\cM_t^{\varepsilon k\ast}u_t^{\varepsilon}\,dV^k_t
+\int_{\frZ_0}J_t^{\eta^{\varepsilon}\ast}u_t^{\varepsilon}\,\nu_0(d\frz)dt                        \nonumber\\
&+\int_{\frZ_1}J_t^{\xi^{\varepsilon}*}u_t^{\varepsilon}\,\nu_1(d\frz)dt
+\int_{\frZ_1}I_t^{\xi^{\varepsilon}*} u_t^{\varepsilon}\,\tilde N_1(d\frz,dt),
\quad
\text{with $u_0^{\varepsilon}=\psi^{(\varepsilon)}$,}                                                           \label{equdZn}
\end{align}
where  
$$
\cM_t^{\varepsilon k} = \rho^{(\varepsilon) ik}_t D_i + B_t^{(\varepsilon)k},\quad k=1,\dots,d',
$$
$$
\tilde\cL_t^{\varepsilon}=a_t^{\varepsilon, ij}D_{ij}
+b_t^{(\varepsilon)i}D_i+
\beta^k_t\cM^{\varepsilon k}_t, 
\quad
\beta_t=B(t,X_t,Y_t),  
$$
$$
a_t^{\varepsilon, ij}
:=\tfrac{1}{2}\sum_k (\sigma_t^{(\varepsilon)ik}\sigma_t^{(\varepsilon)jk}
+\rho_t^{(\varepsilon)ik}\rho_t^{(\varepsilon)jk}), 
\quad i,j=1,2,...,d, 
$$
the operators $J_t^{\eta^{\varepsilon}}$ and $J_t^{\xi^{\varepsilon}}$ 
are defined as 
$J^{\xi}_t$ in \eqref{IJ}  with $\eta^{(\varepsilon)}_t$ and 
$\xi^{(\varepsilon)}_t$ in place of $\xi_t$,  and the operator 
 $I_t^{\xi^{\varepsilon}}$ is  
defined as $I_t^{\xi}$ in \eqref{IJ} with $\xi_t^{(\varepsilon)}$ in place of 
$\xi_t$. 
(Remember that $v^{(\varepsilon)}$ 
denotes the convolution of functions $v$ in $x\in\bR^d$, with the kernel 
$\kappa_{\varepsilon}$ described above.) We define the 
$L_p$-solution $(u^{\varepsilon}_t)_{t\in[0,T]}$ to \eqref{equdZn} 
in the sense of Definition \ref{def Lp solution}. 
Define now for each $\omega\in\Omega$, $t\geq0$ 
and $\frz_i\in\frZ_i$ the functions 
\begin{equation}
\label{12.7.22.1}
\tau^{\eta_t^{\varepsilon}}(x)=x + \eta_t^{(\varepsilon)}(x),
\quad
\tau^{\xi_t^{\varepsilon}}(x)=x + \xi_t^{(\varepsilon)}(x),
\quad x\in\bR^d,  
\end{equation}
where, and later on, we suppress the variables $\frz_i$, $i=0,1$. 

We recall that for $p\geq1$, $\bL_p$ denotes the space of $L_p$-valued 
$\cF_0$-measurable random variables $Z$ such that $\E|Z|_{L_p}^p<\infty$, as well as that 
for $p,q\geq 1$ the notation $\bL_{p,q}$ stands for the space of $L_p$-valued 
$\cF_t$-optional processes $v=(v_t)_{t\in[0,T]}$ such that   
$$
|v|_{\bL_{p,q}}^p:=\E\left(\int_0^T|v_t|^q_{L_p}\,dt\right)^{p/q}<\infty.
$$
Let $\bB_0$ denote the set of $\cF_0\otimes\cB(\bR^d)$-measurable 
real-valued bounded functions $\psi$ on $\Omega\times\bR^d$ such that 
and $\psi(x)=0$ for $|x|\geq R$ 
for some constant $R>0$ depending on $\psi$.
It is easy to see that $\bB_0$ is a dense 
subspace of $\bL_p$ for every $p\in[1,\infty)$. 

\begin{lemma}                                                                                            \label{lemma compact}
Let Assumptions \ref{assumption SDE},  
\ref{assumption p} and \ref{assumption estimates} 
hold with  $K_1=0$. 
Assume that the following ``support condition" holds: There is 
some $R>0$ such that 
\begin{equation}                                                                                            \label{supp_condition}
\big(b_t(x),B_t(x),\sigma_t(x),\rho_t(x), \eta_t(x,\frz_0),\xi_t(x,\frz_1)\big)=0
\end{equation}
for $\omega\in\Omega$, $t\geq0$, $\frz_0\in\frZ_0$, $\frz_1\in\frZ_1$ and  
$x\in\bR^{d}$ such that 
$|x|\geq R$.
Let $\psi\in\bB_0$ such that $\psi(x)=0$ if $|x|\geq R$.
Then there exists an $\varepsilon_0>0$ 
and a constant $\bar R=\bar R(R,K,K_0,K_\xi,K_\eta)$ 
such that the following statements hold for all $m\geq 1$ 
and even integers $p \geq 2$.\newline
(i) For every $\varepsilon\in (0,\varepsilon_0)$ 
there is an $L_p$-solution $u^\varepsilon = (u^\varepsilon_t)_{t\in [0,T]}$ 
to \eqref{equdZn}, which is a $W^m_p$-valued weakly cadlag process. 
Moreover, it satisfies
\begin{equation}
\label{12.7.22.2}
\E\sup_{t\in [0,T]}|u^\varepsilon_t|_{W^m_p}^p<\infty
\quad
\text{and}\quad u^\varepsilon_t(x) = 0,
\quad
\text{for $dx$-a.e. $x\in \{x\in\bR^d:|x|\geq \bar R\}$,} 
\end{equation}
almost surely for all $t\in [0,T]$.
\newline
(ii) There exists a unique 
$L_p$-solution $u=(u_t)_{t\in[0,T]}$ to equation \eqref{equdZ} 
with initial condition $u_0=\psi$, such that 
almost surely $u_t(x)=0$ for $dx$-almost every  
$x\in\{x\in\bR^d:|x|\geq \bar R\}$ for every $t\in[0,T]$ 
and 
\begin{equation}                                                                                                   \label{30.9.21.5}
\E\sup_{t\in [0,T]}|u_t|_{L_p}^p\leq N\E|\psi|_{L_p}^p
\end{equation}
with a constant 
$N=N(d,p,T,K, K_{\xi}, K_{\eta}, L,\lambda,|\bar{\xi}|_{L_2},|\bar{\eta}|_{L_2})$.
\newline
(iii) If $(\varepsilon_n)_{n=1}^\infty\subset (0,\varepsilon_0)$ such that 
$\varepsilon\to0$ then we have
$$
u^{\varepsilon_n}\rightarrow u
\quad
\text{weakly in $\bL_{p,q}$, for every integer $q\geq 1$}.
$$
\end{lemma}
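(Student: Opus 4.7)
The plan is to derive $u^\varepsilon$ from a general $L_p$-existence theorem for smooth coefficients, establish uniform bounds and compact support, then pass to a weak limit which must coincide with the unique $L_p$-solution. First, for $\varepsilon\in(0,\varepsilon_0)$ with $\varepsilon_0$ given by Corollary \ref{corollary 1.13.3.22}, the mollified coefficients $\sigma^{(\varepsilon)}, b^{(\varepsilon)}, \rho^{(\varepsilon)}, B^{(\varepsilon)}, \eta^{(\varepsilon)}, \xi^{(\varepsilon)}$ are $C^\infty$ in $x$ with bounded derivatives of every order and supported in $\{|x|\leq R+1\}$, while the maps $\tau^{\eta^\varepsilon}_t, \tau^{\xi^\varepsilon}_t$ from \eqref{12.7.22.1} are $C^\infty$-diffeomorphisms satisfying the estimates of Lemma \ref{lemma 1.14.3.22}. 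The formal adjoints therefore admit the explicit smooth representation \eqref{*}, and equation \eqref{equdZn} falls within the scope of the $L_p$-existence and uniqueness theory for stochastic integro-differential equations developed in \cite{GW2020}, yielding a $W^m_p$-valued $\cF_t$-adapted weakly cadlag solution $u^\varepsilon$ with $\E\sup_{t\leq T}|u^\varepsilon_t|^p_{W^m_p}<\infty$, since $\psi^{(\varepsilon)}\in W^m_p$ for every $m$.

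To prove the support property, set $\bar R := R + 1 + K_0(K_\xi+K_\eta)$ and fix $\varphi\in C_0^\infty$ with $\mathrm{supp}\,\varphi\subset\{|x|\geq \bar R\}$. Each of $\tilde\cL_t^\varepsilon\varphi$, $\cM_t^{\varepsilon k}\varphi$, $J_t^{\eta^\varepsilon}\varphi$, $J_t^{\xi^\varepsilon}\varphi$, $I_t^{\xi^\varepsilon}\varphi$ vanishes pointwise: on $\{|x|\geq R+1\}$ the coefficients and $\xi^{(\varepsilon)}, \eta^{(\varepsilon)}$ vanish, while on $\{|x|<R+1\}$ the bounds $|\xi^{(\varepsilon)}|\leq K_0K_\xi$ and $|\eta^{(\varepsilon)}|\leq K_0K_\eta$ (from Assumptions \ref{assumption SDE}(ii) and \ref{assumption p} with $K_1=0$) place both $x$ and its images under $\tau^{\xi^\varepsilon}, \tau^{\eta^\varepsilon}$ inside $\{|x|<\bar R\}$, where $\varphi$ and its derivatives vanish. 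The weak form of \eqref{equdZn} then gives $(u^\varepsilon_t,\varphi)=(\psi^{(\varepsilon)},\varphi)=0$ almost surely for every $t$, yielding the support claim in (i). Since mollification preserves all the structural bounds in Assumptions \ref{assumption SDE}, \ref{assumption p}, \ref{assumption estimates} (with $\lambda/2$ replacing $\lambda$), Lemma \ref{lemma usup} applied to $u^\varepsilon$ (whose $L_1$-norm is controlled by the $L_p$-norm on account of the compact support) yields the uniform bound
\begin{equation*}
\E\sup_{t\in[0,T]}|u^\varepsilon_t|^p_{L_p}\leq N\,\E|\psi|^p_{L_p}
\end{equation*}
with $N$ independent of $\varepsilon$, so $(u^\varepsilon)_{\varepsilon\in(0,\varepsilon_0)}$ is bounded in $\bL_{p,q}$ for every $q\geq 1$.

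By reflexivity, extract a subsequence $u^{\varepsilon_n}\rightharpoonup u$ weakly in $\bL_{p,q}$; the limit inherits the support in $\{|x|\leq \bar R\}$. To identify $u$ as a $\bV_p$-solution of \eqref{equdZ}, test \eqref{equdZn} against $\varphi\in C_0^\infty$ and write each mollified operator as $A^{\varepsilon_n}\varphi = A\varphi + (A^{\varepsilon_n}-A)\varphi$. The correction tends to zero uniformly in $(\omega,t)$ in the relevant norms: for the local operators the coefficients converge uniformly on $B_{R+1}$, while for the nonlocal operators the estimate $|\xi^{(\varepsilon_n)}-\xi|(\cdot,\frz_1)\leq \varepsilon_n\bar\xi(\frz_1)$ (and analogously for $\eta$) combined with Assumption \ref{assumption SDE}(i) supplies the required $L_2(\frZ_1)$-control uniformly in $(\omega,t,x)$. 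Together with the weak convergence of $u^{\varepsilon_n}$ this allows passage to the limit in each term, showing that $u$ satisfies the weak form of \eqref{equdZ} for $P\otimes dt$-a.e.\ $(\omega,t)$. Lemma \ref{lemma weakly cadlag} (with terminal value $g$ obtained as a weak $L_p$-limit of $u^{\varepsilon_n}_T$) then produces the desired weakly cadlag $L_p$-solution, and \eqref{30.9.21.5} follows by a further application of Lemma \ref{lemma usup}.

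For uniqueness, the difference $u-u'$ of two such solutions is an $L_p$-solution of \eqref{equdZ} with zero initial condition and support in $\{|x|\leq \bar R\}$, so $\essup_t|u_t-u_t'|_{L_1}<\infty$ almost surely; as $K_1=0$, no further moment assumption is needed and Lemma \ref{lemma usup} delivers $\E\sup_t|u_t-u_t'|^p_{L_p}=0$. This uniqueness in turn forces every weakly convergent subsequence of $(u^{\varepsilon_n})$ to share the same limit $u$, so the full sequence converges weakly in $\bL_{p,q}$, proving (iii). The principal obstacle is the passage to the limit in the stochastic and, especially, Poisson integrals: one must control the error $A^{\varepsilon_n}\varphi - A\varphi$ in norms strong enough to pair with the merely weakly convergent $u^{\varepsilon_n}$, in particular for the compensated term $\int_0^t\!\int_{\frZ_1}(u^{\varepsilon_n}_{s-}, I_s^{\xi^{\varepsilon_n}}\varphi)\,\tilde N_1(d\frz,ds)$, where the uniformity in $\frz_1\in\frZ_1$ must be handled via the envelope $\bar\xi\in L_2(\frZ_1)$.
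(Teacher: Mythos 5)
Your proposal is correct and follows essentially the same route as the paper: solvability of the mollified equation via Corollary \ref{corollary 1.13.3.22} and the $L_p$-theory of \cite{GW2020}, the support argument with $\bar R$ determined by $R$ and the bounds $K_0K_\xi$, $K_0K_\eta$, the $\varepsilon$-uniform bound from Lemma \ref{lemma usup} (using the compact support to control the $L_1$-norm), weak compactness in $\bL_{p,q}$, identification of the limit as a $\bV_p$-solution and upgrade via Lemma \ref{lemma weakly cadlag}, and uniqueness again from Lemma \ref{lemma usup}. The only point you leave implicit — how to pair the error operators with the merely weakly convergent $u^{\varepsilon_n}$ in the stochastic and compensated Poisson integrals — is exactly what the paper makes precise by integrating against bounded $\cF_t$-optional processes and showing the resulting functionals $F_i^{\varepsilon}$ converge to $F_i$ in the dual of $\bL_{p,q}$, which is the mechanism you describe.
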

\begin{proof}
We may assume $\E|\psi|^p_{L_p}<\infty$.
To prove (i), we look for a $W^m_p$-valued 
weakly cadlag $\cF_t$-adapted process $(u^{\varepsilon}_t)_{t\in[0,T]}$ such that 
for each $\varphi\in C_0^{\infty}$ almost surely 
\begin{align}                                                                       
(u_t^{\varepsilon},\varphi)=&(\psi^{(\varepsilon)},\varphi)+
\int_0^t(\tilde\cL_s^{\varepsilon\ast}u_s^{\varepsilon},\varphi)\,ds
+\int_0^t(\cM_s^{\varepsilon k\ast}u_s^{\varepsilon},\varphi)\,dV^k_s
+\int_0^t\int_{\frZ_0}
(J_s^{\eta^{\varepsilon}\ast}u_s^{\varepsilon},\varphi)\,\nu_0(d\frz)\,ds                        \nonumber\\
&+\int_0^t\int_{\frZ_1}
(J_s^{\xi^{\varepsilon}*}u_s^{\varepsilon},\varphi)\,\nu_1(d\frz)\,ds
+\int_0^t\int_{\frZ_1}
(I_s^{\xi^{\varepsilon}*}u_s^{\varepsilon},\varphi)\,\tilde N_1(d\frz,ds),   \label{equdZ*}
\end{align}
holds for all $t\in[0,T]$, where by virtue of \eqref{*} 
\begin{equation*}                                                                     
I_s^{\xi^{\varepsilon}\ast}=I^{\xi_s^{\varepsilon\ast}}
+\frc^{\xi_s^{\varepsilon}}T^{\xi_s^{\varepsilon\ast}},
\quad
J^{\xi^{\varepsilon}\ast}_s=J^{\xi^{\varepsilon\ast}}_s
+\frc^{\xi_s^{\varepsilon}}I^{\xi_s^{\varepsilon\ast}}
+(\xi_s^{\varepsilon\ast i}+\xi_s^{(\varepsilon) i})D_i 
+\bar\frc^{\xi_t^{\varepsilon}} +D_i(\xi_s^{\varepsilon\ast i}+\xi_s^{(\varepsilon) i}),   
\end{equation*}
\begin{equation}
J^{\eta^{\varepsilon}\ast}_s=J^{\eta^{\varepsilon\ast}}_s
+\frc^{\eta_t^{\varepsilon}}I^{\eta_s^{\varepsilon\ast}}
+(\eta_s^{\varepsilon\ast i}+\eta_s^{(\varepsilon) i})D_i 
+\bar\frc^{\eta_s^{\varepsilon}} +D_i(\eta_s^{\varepsilon\ast i}
+\eta_s^{(\varepsilon) i}), 
\end{equation}
with the functions
$$
\eta_t^{\varepsilon\ast}(x)=-x + (\tau^{\eta^{\varepsilon}_t})^{-1}(x),  
\quad                          
\xi_t^{\varepsilon\ast}(x)=-x + (\tau^{\xi_t^{\varepsilon}})^{-1}(x), 
$$
\begin{equation*}
\frc^{\xi_t^{\varepsilon}}(x)=|\det D(\tau^{\xi_t^{\varepsilon}})^{-1}(x)|-1,
\quad 
\frc^{\eta_t^{\varepsilon}}(x)=|\det D(\tau^{\eta_t^{\varepsilon}})^{-1}(x)|-1,
\end{equation*}
\begin{align}                                                                                      
\bar\frc^{\xi_t^{\varepsilon}}(x)=&|\det D(\tau^{\xi_t^{\varepsilon}})^{-1}(x)|-1
-D_i\xi_t^{\varepsilon\ast i}(x),                                                                        \nonumber\\
\quad
\bar\frc^{\eta_t^{\varepsilon}}(x)=&|\det D(\tau^{\eta_t^{\varepsilon}})^{-1}(x)|
-1-D_i\eta_t^{\varepsilon\ast i}(x)
\quad x\in\bR^d,                                                                      \label{coefficients smoothed}
\end{align}
and clearly, 
$$
\cM_s^{\varepsilon * k}\phi = -D_i(\rho^{(\varepsilon) ik}_s \phi ) + B_s^{(\varepsilon)k}\phi,
$$
$$
\tilde\cL_s^{\varepsilon\ast}\phi=D_{ij}(a_s^{\varepsilon, ij}\phi)
-D_i(b^{(\varepsilon)i}_s\phi)
-\beta^k_sD_i(\rho_s^{(\varepsilon)ik}\phi)
+\beta^k_sB^{(\varepsilon)k}_s\phi
\quad
\text{for $\phi\in W^m_p$}. 
$$
Note that by Assumption \ref{assumption SDE}(i) together with Assumption \ref{assumption estimates}(i) \& (ii), for each $\omega\in\Omega$, $ t\in [0,T]$ and $\frz_i\in\frZ_i$, $i=0,1$, the mappings 
$$
\tau^\eta(x)=x+\eta_t(x,\frz_0),\quad\text{and}\quad
\tau^\xi(x) = x+\xi_t(x,\frz_1)
$$
are biLipschitz and continuously differentiable as functions of $x\in\bR^d$. Hence, as biLipschitz functions admit Lipschitz continuous inverses, it is easy to see that for each $\omega\in\Omega$, $ t\in [0,T]$ and $\frz_i\in\frZ_i$, $i=0,1$,
$$
\lambda'\leq\inf_{x\in\bR^d} |\det D\tau^\eta(x)|,\quad\text{and}\quad
\lambda'\leq\inf_{x\in\bR^d} |\det D\tau^\xi(x)|
$$
for some $\lambda'=\lambda'(d,\lambda,L,K_\eta,K_\xi)$.
Due to Assumption \ref{assumption estimates} (i) by virtue of 
Corollary \ref{corollary 1.13.3.22}  there is $\varepsilon_0\in(0,1)$  
such that 
for $\varepsilon\in(0,\varepsilon_0)$ the functions $\tau^{\eta_t^{\varepsilon}}$ 
and $\tau^{\xi_t^{\varepsilon}}$, defined in \eqref{12.7.22.1}, 
are $C^{\infty}$-diffeomorphisms on $\bR^d$ 
for all $\omega\in\Omega$, $t\in[0,T]$ and $\frz_i$, $i=0,1$.  
Moreover, the functions defined in \eqref{coefficients smoothed} 
are infinitely differentiable functions in $x\in\bR^d$,  for all
$t\in[0,T]$ and $\frz_i\in\frZ_i$, $i=0,1$. 

 Hence we can easily 
verify that for each $\varepsilon\in(0,\varepsilon_0)$ equation  \eqref{equdZn} 
satisfies the conditions of the existence and uniqueness theorem, 
Theorem 2.1 in \cite{GW2020}. Hence  \eqref{equdZn} 
has a unique $L_p$-solution $(u^{\varepsilon}_t)_{t\in[0,T]}$ 
which is weakly cadlag as $W^m_p$-valued process and satisfies the first equation in \eqref{12.7.22.2}, 
for every $m\geq1$. 
Due to the support condition \eqref{supp_condition} and that 
$|\xi|\leq K_0K_{\xi}$,  $|\eta|\leq  K_0K_{\eta}$, there is a constant 
$\bar R=\bar R(R, K_0,K,K_{\xi}, K_{\eta})$ such that for 
$\varepsilon\in(0,\varepsilon_0)$ 
and $s\in[0,T]$ we have 
$$
\tilde{\cL}^{\varepsilon}_s\varphi=\cM_s^{\varepsilon k}\varphi
=I_s^{\xi^{\varepsilon}}\varphi=J_s^{\xi^{\varepsilon}}\varphi
=J_s^{\eta^{\varepsilon}}\varphi=0, \quad k=1,2,..., d', 
$$
for all $\varphi\in C_0^{\infty}$ such that $\varphi(x)=0$ for $|x|\leq \bar R$. Thus 
from equation \eqref{equdZ*} we get that almost surely
$$
(u_t^{\varepsilon},\varphi)=0
\quad
\text{for all $\varphi\in C_0^{\infty}$ such that $\varphi(x)=0$ for $|x|\leq \bar R$}
$$
for all $t\in[0,T]$, which implies  
\begin{equation}                                                                             \label{4.24.3.22}
u_t^{\varepsilon}=0
\quad 
\text{for $dx$-almost every $x\in\{x\in\bR^d, |x|\geq \bar R$\} 
for all $t\in[0,T]$}
\end{equation} 
for each $\varepsilon\in(0,\varepsilon_0)$. 
To prove (ii) and (iii), note first that 
$$
\sup_{t\in[0,T]}|u^{\varepsilon}_t|_{L_1}
\leq {\bar R}^{d(p-1)/p}
\sup_{t\in[0,T]}|u^{\varepsilon}_t|_{L_p}<\infty \,\rm{(a.s.)}. 
$$
It is not difficult to see that $\sigma^{(\varepsilon)}_t$, 
$\rho^{(\varepsilon)}_t$, 
$b^{(\varepsilon)}_t$ and $B^{(\varepsilon)}_t$ 
are bounded and Lipschitz continuous in $x\in\bR^d$, 
uniformly in $\omega\in\Omega$,  
$t\in[0,T]$ and $\varepsilon\in(0,\varepsilon_0)$. 
Moreover, for $\varepsilon\in(0,\varepsilon_0)$ 
$$
|\eta^{(\varepsilon)}_t(x,\frz_0)|
\leq K_0\bar{\xi}(\frz_0),
\quad 
|\xi^{(\varepsilon)}_t(x,\frz_1)|
\leq K_0\bar{\xi}(\frz_1), 
$$
$$
|\eta^{(\varepsilon)}_t(x,\frz_0)-\eta^{(\varepsilon)}_t(y,\frz_0)|
\leq \bar{\eta}(\frz_0)|x-y|,
\quad 
|\xi^{(\varepsilon)}_t(x,\frz_1)-\xi^{(\varepsilon)}_t(y,\frz_1)|
\leq \bar{\xi}(\frz_1)|x-y|, 
$$
for all $x,y\in\bR^d$, $\omega\in\Omega$, 
$t\in[0,T]$, $\frz_i\in\frZ_i$, $i=0,1$. 
Hence by Lemma \ref{lemma usup} for 
$\varepsilon\in(0,\varepsilon_0)$ we have 
\begin{equation}                                                                         \label{1.24.3.22}
\E|u^{\varepsilon}_T|_{L_p}^p
+\E\Big(\int_0^T|u_t^{\varepsilon}|_{L_p}^{q}\,dt \Big)^{p/q}
\leq \E|u^{\varepsilon}_T|_{L_p}^p+T^{p/q}\E\sup_{t\in[0,T]}
|u_t^{\varepsilon}|_{L_p}^p\leq N\E|\psi|_{L_p}^p
\end{equation}
for all $q\geq1$ with a constant 
$N=N(d,p,T,K, K_{\xi}, K_{\eta},R,|\bar\eta|_{L_2},|\bar\xi|_{L_2})$. 
By virtue of \eqref{1.24.3.22}  there exists 
a sequence $\varepsilon_n\downarrow0$  
such that
$u^{\varepsilon_n}$ converges weakly 
in $\bL_{p,q}$ to some $\bar u\in\bL_{p,q}$ 
for every integer $q\geq1$ and $u^{\varepsilon_n}_T$ 
converges weakly to some 
$g$ in $\bL_p(\cF_T)$, 
the space of $L_p$-valued $\cF_T$-measurable 
random variables $Z$ with the norm $(\E|Z|^p_{L_p})^{1/p}<\infty$.   
From \eqref{1.24.3.22} we get  
\begin{equation}                                                                                               \label{2.24.3.22}
\E|g|_{L_p}^p+|\bar u|_{\bL_{p,q}}^p\leq N\E|\psi|_{L_p}^p
\quad
\text{for every integer  $q\geq 1$}
\end{equation}
with the constant $N$ in \eqref{1.24.3.22}. 
Taking $\varepsilon_n$ in place of 
$\varepsilon$ in equation \eqref{equdZ*} 
then multiplying both sides of the equation with 
an $\cF_t$-optional bounded process $\phi$ and integrating over 
$\Omega\times [0,T]$ against $P\otimes dt$ we obtain 
\begin{equation}                                                                                       \label{3.24.3.2022}
F(u^{\varepsilon_n})=F(\psi^{\varepsilon_n}) 
+ \sum_{i=1}^5 F_i^{\varepsilon_n}(u^{\varepsilon_n}), 
\end{equation}
where $F$ and $F_i^{\varepsilon}$ are linear functionals over $\bL_{p,q}$, 
defined by 
$$
F(v):=\E\int_0^T\phi_t(v_t,\vp)\,dt,
\quad 
F_1^{\varepsilon} (v)
:=\E\int_0^T\phi_t\int_0^t(v_s,\tilde\cL_s^{\varepsilon}\varphi)\,ds\,dt,
$$
$$
F_2^{\varepsilon}(v)
:=\E\int_0^T\phi_t\int_0^t(v_s,\cM_s^{\varepsilon k}\varphi)\,dV^k_s\,dt,
\quad 
F_3^{\varepsilon}(v):=\E\int_0^T\phi_t\int_0^t\int_{\frZ_0}
(v_s,J_s^{\eta^{\varepsilon}}\varphi)\,\nu_0(d\frz)ds\,dt,
$$
\begin{equation*}                                                                                     \label{equ F operators}
F_4^{\varepsilon}(v)
:=\E\int_0^T\phi_t\int_0^t\int_{\frZ_1}
(v_s,J_s^{\xi^{\varepsilon}}\varphi)\,\nu_1(d\frz)ds\,dt,
\end{equation*}
$$
 F_5^{\varepsilon}(v)
 :=\E\int_0^T\phi_t\int_0^t\int_{\frZ_1}
 (v_s,I_s^{\xi^{\varepsilon}}\varphi)\,\tilde N_1(d\frz,ds) \,dt
$$
for a fixed $\varphi\in C_0^{\infty}$. Define also $F_i$ as $F_i^{\varepsilon}$ 
for $i=1,2,...,5$, 
with $\tilde\cL_s$, $\cM_s^k$, $J_s^{\eta}$, $J^{\xi}_s$ and $I_s^{\xi}$ in place of 
$\tilde\cL^{\varepsilon}_s$, $\cM_s^{\varepsilon k}$, 
$J_s^{\eta^{\varepsilon}}$, $J_s^{\xi^{\varepsilon}}$ 
and $I_s^{\xi^{\varepsilon}}$, respectively.  It is an easy exercise to show that 
 $F$ and $F_i$  and $F_i^{\varepsilon}$, $i=1,2,3,4,5$ are 
continuous linear functionals on $\bL_{p,q}$ for all $q>1$ 
such that 
$$
\lim_{\varepsilon\downarrow0}
\sup_{|v|_{\bL_{p,q}} = 1}|F_i(v)-F_i^{\varepsilon}(v)|=0
\quad
\text{for every $q>1$}.
$$
Since $u^{\varepsilon_n}$ converges weakly to $\bar u$ in $\bL_{p,q}$,  
and $F^{\varepsilon_n}_i$ converges strongly to $F_i$ in $\bL^{\ast}_{p,q}$, 
the dual of $\bL_{p,q}$,  
we get that $F_i^{\varepsilon_n}(u^{\varepsilon_n})$ converges to 
$F_i(\bar u)$  for $i=1,2,3,4,5$. 
Therefore letting $\varepsilon\downarrow0$ in \eqref{3.24.3.2022} we obtain
$$
\E\int_0^T\phi_t(\bar u_t,\vp)\,dt=\E\int_0^T\phi_t(\psi,\vp)\,dt 
+  \E\int_0^T\phi_t\int_0^t(\bar u_{s},\tilde\cL_s\varphi)\,ds\,dt
$$
\begin{equation}
\label{equ weak Zakai}
+ \E\int_0^T\phi_t\int_0^t(\bar u_{s},\cM_s^{k}\varphi)\,dV^k_s\,dt
+ \E\int_0^T\phi_t\int_0^t\int_{\frZ_0}(\bar u_{s},J_s^{\eta}\varphi)\,\nu_0(d\frz)ds\,dt  
\end{equation}
$$
+\E\int_0^T\phi_t\int_0^t\int_{\frZ_1}(\bar u_{s},J_s^{\xi}\varphi)\,\nu_1(d\frz)ds\,dt+\E\int_0^T\phi_t\int_0^t\int_{\frZ_1}(\bar u_{s},I_s^{\xi}\varphi)\,\tilde N_1(d\frz,ds) \,dt.
$$
Since this equation holds for all bounded $\cF_t$-optional processes 
$\phi=(\phi_{t})_{t\in[0,T]}$ and functions $\varphi\in C_0^{\infty}$ 
we conclude that $\bar u$ is a $\bV_p$-solution to \eqref{equdZ}. 
Letting $n\to \infty$ in equation \eqref{equdZ*} 
after taking $\varepsilon_n$ in place of 
$\varepsilon$, $T$ in place of $t$, multiplying both sides of the equation with 
an arbitrary $\cF_T$-measurable bounded random variable 
$\rho$ and taking expectation 
we get 
$$
\E\rho(g,\vp)=\E\rho(\psi,\vp)
+  \E\rho\int_0^T(\bar u_{s},\tilde\cL_s\varphi)\,ds
$$
\begin{equation*}                                                                
+ \E\rho\int_0^T(\bar u_{s},\cM_s^{k}\varphi)\,dV^k_s
+ \E\rho\int_0^t\int_{\frZ_0}(\bar u_{s},J_s^{\eta}\varphi)\,\nu_0(d\frz)ds 
\end{equation*}
$$
+\E\rho\int_0^T\int_{\frZ_1}(\bar u_{s},J_s^{\xi}\varphi)\,\nu_1(d\frz)ds
+\E\rho\int_0^T\int_{\frZ_1}(\bar u_{s},I_s^{\xi}\varphi)\,\tilde N_1(d\frz,ds), 
$$
which implies that almost surely
$$
(g,\vp)=(\psi,\vp)
+ \int_0^T(\bar u_{s},\tilde\cL_s\varphi)\,ds                                                         
+ \int_0^T(\bar u_{s},\cM_s^{k}\varphi)\,dV^k_s
$$
$$
+\int_0^T\int_{\frZ_0}(\bar u_{s},J_s^{\eta}\varphi)\,\nu_0(d\frz)ds 
+\int_0^T\int_{\frZ_1}(\bar u_{s},J_s^{\xi}\varphi)\,\nu_1(d\frz)ds
+\int_0^T\int_{\frZ_1}(\bar u_{s},I_s^{\xi}\varphi)\,\tilde N_1(d\frz,ds).  
$$
Letting $q\to\infty$ in \eqref{2.24.3.22} we get 
$$
\E\essup_{t\in[0,T]}|\bar u_t|_{L_p}^p\leq N\E|\psi|_{L_p}^p<\infty.   
$$
Consequently, by virtue of Lemma \ref{lemma weakly cadlag} we get the existence 
of a $P\otimes dt$-modification $u$ of $\bar u$, which is an $L_p$-solution 
to \eqref{equdZ}, and hence  
\begin{equation}                                                                             \label{30.9.21.2}
\E\sup_{t\in[0,T]}|u_t|_{L_p}^p\leq N\E|\psi|_{L_p}^p. 
\end{equation}
By \eqref{4.24.3.22} almost surely $u_t=0$ for $dx$-almost 
every $x\in\{x\in\bR^d: |x|\geq\bar R\}$,    
for all $t\in[0,T]$, which due to \eqref{30.9.21.2} 
by H\"older's inequality implies 
 $$
\E\sup_{t\in [0,T]}|u_t|_{L_1}\leq N\bar R^{d(p-1)/p}\E|\psi|_{L_p}<\infty.
$$ 
Hence by \eqref{eusupbecsles} in Lemma \ref{lemma usup}
 the uniqueness 
of the $L_p$-solution follows, 
which completes the proof of the lemma. 
\end{proof}
\begin{corollary}                                                                               \label{corollary 1.3.4.22}
Let Assumptions \ref{assumption SDE},  
\ref{assumption p} and \ref{assumption estimates} 
hold with  $K_1=0$. 
Assume, moreover, that  the``support condition" \eqref{supp_condition}
holds for some $R>0$. 
Then for every $p\geq2$ there is a linear operator $\bS$ defined 
on $\bL_p$ 
such that $\bS\psi$ admits a $P\otimes dt$-modification 
$u=(u_t)_{t\in[0,T]}$ which is an $L_p$-solution 
to equation \eqref{equdZ} for every $\psi\in\bL_p$, with initial condition $u_0=\psi$,  
and 
\begin{equation}                                                                                                   \label{30.9.21.5}
\E\sup_{t\in [0,T]}|u_t|_{L_p}^p\leq N\E|\psi|_{L_p}^p
\end{equation}
with a constant 
$N=
N(d,p,T,K, K_{\xi}, K_{\eta}, L,\lambda,|\bar{\xi}|_{L_2},|\bar{\eta}|_{L_2})$. 
Moreover, if $\psi\in\bL_p$ such that almost surely $\psi(x)=0$ for $|x|\geq R$,  
then there is a constant 
$\bar{R}=\bar{R}(R,K,K_0, K_{\xi}, K_{\eta})$ such that 
almost surely $u_t(x)=0$ for $dx$-a.e. $x\in\{x\in\bR^d:|x|\geq \bar R\}$ 
for all $t\in[0,T]$.
\end{corollary}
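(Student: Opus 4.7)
The plan is to extend the existence result of Lemma~\ref{lemma compact}, which only handles initial data in the dense subspace $\bB_0$ and even integers $p\geq 2$, to arbitrary $\psi\in\bL_p$ for every $p\geq 2$. First, for even $p$ and $\psi\in\bB_0$, Lemma~\ref{lemma compact} yields a unique compactly supported $L_p$-solution $u$. Since $\bB_0\subset \bL_p\cap \bL_q$ for all $p,q\geq 1$, the uniqueness of $L_p$-solutions (delivered by Lemma~\ref{lemma usup} applied to the difference of two solutions) forces the solution to be independent of the chosen even $p$. This defines a single linear map $\bS_0: \bB_0 \to \{\text{weakly cadlag $L_p$-valued processes}\}$ satisfying the bound in \eqref{30.9.21.5} simultaneously for all even $p\geq 2$.

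Given $\psi\in\bL_p$, I would select a sequence $\psi_n\in\bB_0$ with $\psi_n\to\psi$ in $\bL_p$, and set $u^n:=\bS_0\psi_n$. Applying the estimate from Lemma~\ref{lemma compact} to $u^n-u^m = \bS_0(\psi_n-\psi_m)$ shows that $(u^n)$ is Cauchy in $L_p(\Omega;L_\infty([0,T];L_p))$, so it converges to some process $u$, with $\sup_{t\in[0,T]}|u^n_t-u_t|_{L_p}\to0$ almost surely along a subsequence. To verify $u$ is an $L_p$-solution to \eqref{equdZ}, pass to the limit in the weak form \eqref{equZ} applied to each $u^n$ and $\varphi\in C_0^\infty$: the Lebesgue integrals converge by dominated convergence using the uniform $L_p$ bound and Assumption~\ref{assumption SDE}, while the Wiener and compensated Poisson stochastic integrals converge in $L_p(\Omega)$ via the Davis and Burkholder--Davis--Gundy inequalities. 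The weakly cadlag property of $u$ is preserved since uniform $L_p$ convergence implies $(u_t,\varphi)$ is cadlag for every $\varphi\in C_0^\infty$ (a dense subset of $L_{p/(p-1)}$), combined with $\sup_t|u_t|_{L_p}<\infty$. If $\psi(x)=0$ for $|x|\geq R$, one can choose each $\psi_n\in\bB_0$ also supported in $|x|\leq R$, so each $u^n_t$ is supported in $|x|\leq\bar R$, and this support property is inherited by $u_t$.

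To reach general (non-even) $p\geq 2$, I would apply Riesz--Thorin complex interpolation to $\bS_0$, regarded as a densely defined linear operator from $L_p(\Omega\times\bR^d)$ into the mixed-norm space $L_p(\Omega;L_\infty([0,T];L_p(\bR^d)))$. For even integers $p_1<p<p_2$, the estimate \eqref{30.9.21.5} supplies the endpoint norms, and the interpolation yields the same bound at intermediate $p$ on $\bB_0$. The density extension described in the previous paragraph then produces $\bS:\bL_p\to L_p(\Omega;L_\infty([0,T];L_p))$ for every $p\geq 2$, and the remaining properties (the equation, the weakly cadlag modification, the support preservation) transfer verbatim.

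The main obstacle is the interpolation step: Riesz--Thorin is cleanest between plain $L_p$-spaces, whereas the target here involves a sup norm in time. A vector-valued Calderón interpolation does apply to $L_p(X;L_\infty(Y;L_p(Z)))$ with the interior and exterior exponents moving together, but one should verify this carefully. If this route proves delicate, a robust alternative is first to interpolate to the weaker estimate $\E\int_0^T|u_t|_{L_p}^p\,dt\leq N\E|\psi|_{L_p}^p$ (which fits into standard Riesz--Thorin directly), and then to recover the sup bound by applying an It\^o formula to a smooth approximation of $|u_t|^p$ and arguing as in Lemmas~\ref{lemma supemu} and \ref{lemma usup}, where for non-even $p$ the algebraic identities are replaced by convexity bounds on the increments.
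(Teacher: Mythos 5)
Your treatment of even integers $p$ is sound and essentially parallels the paper: Lemma~\ref{lemma compact} on $\bB_0$, uniqueness via Lemma~\ref{lemma usup} applied to differences (legitimate here because the solutions are compactly supported, so the $L_1$-bound hypothesis holds), Cauchy sequences in the sup-in-time norm, and passage to the limit in the weak formulation. The problem is the non-even case, which is the whole point of the corollary, and there your argument has a genuine gap. Your primary route — interpolating $\bS$ directly between even exponents as a map into $L_p(\Omega;L_\infty([0,T];L_p))$ — is exactly the step you yourself flag as unverified, and it is not a standard Riesz--Thorin statement: complex interpolation of mixed-norm spaces with an \emph{interior} exponent equal to $\infty$ does not in general produce the expected space. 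Your fallback does not repair this. Interpolating $\E\int_0^T|u_t|_{L_p}^p\,dt\leq N\E|\psi|_{L_p}^p$ is fine, but the proposed recovery of the sup bound "by applying an It\^o formula to $|u_t|^p$ with convexity bounds replacing the algebraic identities" is not available for non-even $p$: the a priori estimates that feed that It\^o argument (Lemmas~\ref{lemma pe1}, \ref{lemma pe3}, \ref{lemma 7.6.1}, Corollary~\ref{corollary J}, and hence Lemmas~\ref{lemma supemu} and \ref{lemma usup}) rest on writing $|\mu^{(\varepsilon)}|_{L_p}^p$ as an integral against the $p$-fold product measure $\mu^{\otimes p}$ and symmetrizing over the $p$ coordinates, which requires $p$ to be an even integer. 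The cancellations there (between the $a^{ij}D_{ij}$ contribution and the square of the $\sigma^{ik}D_i$ contribution, and between $J^{\xi*}$ and the jump correction) are not consequences of convexity of $|\cdot|^p$; if they were obtainable for general $p\geq2$, the interpolation detour would be unnecessary in the first place.

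The paper closes this gap differently, and you would need some version of its mechanism: it interpolates, on $\bB_0$, \emph{both} the solution map $\bS$ in the mixed norms $\bL_{p,q}$ for every finite $q$ (Benedek--Panzone type generalization of Riesz--Thorin, where all exponents are finite) \emph{and} the terminal-value map $\bS_T\psi=u_T$ in $\bL_p$; after the density passage (done there by weak compactness in $\bL_{p,q}$ and $\bL_p$) it lets $q\to\infty$ to obtain only $\E\esssup_{t\le T}|\bar u_t|_{L_p}^p\leq N\E|\psi|_{L_p}^p$, and then invokes Lemma~\ref{lemma weakly cadlag} — whose hypothesis is precisely the control of the time-$T$ value that $\bS_T$ provides — to produce a weakly cadlag $L_p$-solution modification $u$; the genuine sup bound \eqref{30.9.21.5} then follows from the esssup bound by weak lower semicontinuity along the cadlag paths. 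Your proposal never controls $u_T$ and never uses Lemma~\ref{lemma weakly cadlag}, so even after interpolating the finite-$q$ norms you have no way to upgrade an almost-everywhere-in-$t$ bound to a weakly cadlag solution on all of $[0,T]$ with the stated sup estimate.
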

\begin{proof}
If $p$ is an even integer, then the corollary follows from 
Lemma \ref{lemma compact}. 
Assume $p$ is not an even integer. Then let $p_0$ be 
the greatest even integer such that $p_0\leq p$ and let $p_1$ 
be the smallest even integer such that $p\leq p_1$. 
By Lemma \ref{lemma compact}
there are linear operators $\bS$ and $\bS_T$ 
defined on $\bB_0$ such that 
$\bS\psi:=(u_t)_{t\in[0,T]}$ is the 
unique $L_{p_i}$-solution of equation \eqref{equdZ} with initial condition 
$u_0=\psi\in\bB_0$ and $\bS_T\psi=u_T$. 
for $i=0,1$. 
Moreover, by \eqref{30.9.21.5} 
we have 
$$
|\bS_T\psi|_{\bL_{p_i}}+|\bS\psi|_{\bL_{p_i,q}}
\leq N|\psi|_{\bL_{p_i}}\quad\text{for $i=0,1$}
$$
for every $q\in[1,\infty)$ with a constant 
$N=
N(d,p,T,K, K_{\xi}, K_{\eta}, L,\lambda,|\bar{\xi}|_{L_2},|\bar{\eta}|_{L_2})$. 
Hence by a well-known generalization of the Riesz-Thorin interpolation theorem 
we have 
\begin{equation}                                                                       \label{2.3.4.22}
|\bS_T\psi|_{\bL_{p}}\leq N|\psi|_{\bL_{p}},
\quad
 |\bS\psi|_{\bL_{p,q}}\leq N|\psi|_{\bL_{p}}
\quad
\text{for every $q\in[1,\infty)$,}
\end{equation}
for $\psi\in\bB_0$ with a constant 
$N=
N(d,p,T,K, K_{\xi}, K_{\eta}, L,\lambda,|\bar{\xi}|_{L_2},|\bar{\eta}|_{L_2})$. 
Assume $\psi\in\bL_p$. Then there is a sequence 
$(\psi^n)_{n=1}^{\infty}\subset \bB_0$ such that 
$\psi^n\to \psi$ in $\bL_p$ and $u^n=\bS\psi^n$  
has a $P\otimes dt$-modification, again denoted by 
$u^{n}=(u_t^{n})_{t\in[0,T]}$ which is an $L_p$-solution for every $n$ 
with initial condition $u^n_0=\psi^n$. 
In particular, for each $\varphi\in C_0^{\infty}$ 
almost surely 
\begin{align}                                                                       
(u_t^{n},\varphi)=&(\psi^{n},\varphi)+
\int_0^t(u_s^{n},\tilde\cL_s\varphi)\,ds
+\int_0^t(u_s^{n},\cM_s^{k}\varphi)\,dV^k_s
+\int_0^t\int_{\frZ_0}
(u_s^n,J_s^{\eta}\varphi)\,\nu_0(d\frz)\,ds                        \nonumber\\
&+\int_0^t\int_{\frZ_1}
(u_s^{n},J_s^{\xi}\varphi)\,\nu_1(d\frz)\,ds
+\int_0^t\int_{\frZ_1}(u_s^n,I_s^{\xi}\varphi)\,\tilde N_1(d\frz,ds), \label{3.3.4.22}  
\end{align}
holds for all $t\in[0,T]$. By virtue of \eqref{2.3.4.22} $u^n$ converges in $\bL_{p,q}$ 
to some $\bar u\in \bL_{p,q}$ for every $q>1$, and $u_T^n$ converges in $\bL_p$ 
to some $g\in\bL_p$. Hence, letting $n\to\infty$ 
in equation \eqref{3.3.4.22} (after multiplying both sides 
of it  with any bounded $\cF_t$-optional process $\phi=(\phi_t)_{t\in[0,T]}$ 
and integrating it over $\Omega\times[0,T]$ against $P\otimes dt$) we can 
see that $\bar u$ is a $\bV_p$-solution such that \eqref{2.3.4.22} holds. 
Letting $n\to\infty$ in equation \eqref{3.3.4.22} with $t:=T$ (after 
multiplying both sides with an arbitrary $\cF_T$-measurable bounded 
random variable $\rho$ and 
taking expectation) we get 
$$                                                                    
\E\rho(g,\varphi)=\E\rho(\psi,\varphi)+
\E\rho\int_0^T(\bar{u}_s,\tilde\cL_s\varphi)\,ds
+\E\rho\int_0^T(\bar{u}_s,\cM_s^{k}\varphi)\,dV^k_s
$$
$$
+\E\rho\int_0^T\int_{\frZ_0}
(\bar{u}_s,J_s^{\eta}\varphi)\,\nu_0(d\frz)\,ds                     
+\E\rho\int_0^T\int_{\frZ_1}
(\bar{u}_s,J_s^{\xi}\varphi)\,\nu_1(d\frz)\,ds
$$
$$
+\E\rho\int_0^T\int_{\frZ_1}(\bar{u}_s,I_s^{\xi}\varphi)\,\tilde N_1(d\frz,ds), 
$$
which implies
$$                                                                    
(g,\varphi)=(\psi,\varphi)+
\int_0^T(\bar{u}_s,\tilde\cL_s\varphi)\,ds
+\int_0^T(\bar{u}_s,\cM_s^{k}\varphi)\,dV^k_s
$$
$$
+\int_0^T\int_{\frZ_0}
(\bar{u}_s,J_s^{\eta}\varphi)\,\nu_0(d\frz)\,ds                     
+\int_0^T\int_{\frZ_1}
(\bar{u}_s,J_s^{\xi}\varphi)\,\nu_1(d\frz)\,ds
$$
$$
+\int_0^T\int_{\frZ_1}(\bar{u}_s,I_s^{\xi}\varphi)\,\tilde N_1(d\frz,ds) \quad(a.s.).  
$$
Letting $q\to\infty$ in \eqref{2.3.4.22} we get 
$$
\E\esssup_{t\in[0,T]}|\bar{u}_t|^p_{L_p}\leq N|\psi|_{L_p}^p. 
$$
Hence by virtue of Lemma \ref{lemma weakly cadlag} the process $\bar u$ has 
a $P\otimes dt$ modification $u=(u_t)_{t\in[0,T]}$ which is an $L_p$-solution 
to equation \eqref{equdZ} and \eqref{30.9.21.5} holds. Finally the last statement 
of the corollary about the compact support of $u$ can be proved in the same way 
as it was shown for $u^{\varepsilon}$ in the proof of Lemma \ref{lemma compact}.
\end{proof}

\mysection{Proof of Theorem \ref{theorem 1}}

To prove Theorem \ref{theorem 1} we want to show that for $p\geq2$ 
equation \eqref{equdZ} has an $L_p$-solution which we can identify as the 
unnormalised conditional density of the conditional distribution of $X_t$ given 
the observation $\{Y_s:s\leq t\}$. To this end we need some lemmas. 
To formulate 
the first one, we recall that 
$\bW^m_p$ denotes the space of $W^m_p$-valued $\cF_0$-measurable random variables 
$Z$ such that 
$$
|Z|^p_{\bW^m_p}=\E|Z|^p_{W^m_p}<\infty.
$$
\begin{lemma}                                                                              \label{lemma 13.10.2021}
Let $(X,Y)$ be an $\cF_0$-measurable $\bR^{d+d'}$-valued 
random variable such that the conditional density 
$\pi=P(X\in dx|Y)/dx$ exists. Assume $(\Omega,\cF_0, P)$ is ``rich enough"
to carry an $\bR^d$-valued random variable $\zeta$ 
which is independent of $(X,Y)$ 
and has a smooth probability density $g$ supported 
in the unit ball centred at the origin.
Then there exists a sequence of $\cF_0$-measurable random variables 
$(X_n)_{n=1}^\infty$ such that the conditional density 
$\pi_n=P(X_n\in dx|Y)/dx$ exists, almost surely $\pi_n(x)=0$ 
for $|x|\geq n+1$ for each $n$, 
$$
\lim_{n\to\infty}X_n=X\quad \text{for every $\omega\in\Omega$}, 
$$
and, if $\pi\in\bW^m_p$ for some $p\geq1$, $m\geq0$, 
then $\pi_n\in\bW^m_p$ for every $n\geq1$, and  
$$
\lim_{n\to\infty}|\pi_n-\pi|_{\bW^m_p}=0.
$$
Moreover, for every $n\geq1$ we have 
$$
\E|X_n|^q\leq N(1+\E|X|^q)
\quad 
\text{for every $q\in(0,\infty)$}
$$
with a constant $N$ depending only on $q$.
\end{lemma}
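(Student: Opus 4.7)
The plan is to construct $X_n$ by combining a smooth spatial cut-off of $X$ with a rejection-sampling step that replaces $X$ by $\zeta$ whenever the cut-off ``rejects'' the sample. I would first enlarge $(\Omega,\cF_0,P)$, if necessary, so that it additionally carries a $[0,1]$-uniform random variable $U$ independent of $(X,Y,\zeta)$; this enlargement is harmless, since every intrinsic object we care about (the conditional density $\pi_n$, its $W^m_p$-norms, and the moments of $X_n$) is determined by conditional laws given $Y$ which can be expressed purely in terms of the original data. Fix a smooth $\chi_n\colon\bR^d\to[0,1]$ with $\chi_n\equiv 1$ on $\{|x|\leq n\}$, $\chi_n\equiv 0$ on $\{|x|\geq n+\tfrac{1}{2}\}$, and $\sup_{n}\sup_{x}|D^k\chi_n(x)|<\infty$ for every integer $k\geq 0$, and set
\[
X_n:=X\,\mathbf{1}_{\{U\leq\chi_n(X)\}}+\zeta\,\mathbf{1}_{\{U>\chi_n(X)\}}.
\]

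A direct calculation, using that conditionally on $(X,Y)$ the variable $U$ is uniform on $[0,1]$ and that $\zeta$ is independent of $(X,Y,U)$ with density $g$, shows that conditionally on $Y$ the variable $X_n$ has density
\[
\pi_n(x)=\chi_n(x)\pi(x)+(1-Z_n)g(x),\qquad Z_n:=\int_{\bR^d}\chi_n(x)\pi(x)\,dx.
\]
From this formula, several of the desired conclusions are immediate: $\pi_n$ is supported in $\{|x|\leq n+\tfrac{1}{2}\}\subset\{|x|\leq n+1\}$; the pointwise convergence $X_n\to X$ holds for every $\omega$ in the (enlarged) sample space because, once $n\geq|X(\omega)|$, $\chi_n(X)=1$ and the acceptance event $\{U\leq 1\}$ is automatic, forcing $X_n=X$; and the moment bound is
\[
\E|X_n|^q=\E\bigl[\chi_n(X)|X|^q+(1-\chi_n(X))|\zeta|^q\bigr]\leq\E|X|^q+1,
\]
since $\chi_n\in[0,1]$ and $|\zeta|\leq 1$.

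The central step is to prove $\pi_n\to\pi$ in $\bW^m_p$. I would write $\pi_n-\pi=(\chi_n-1)\pi+(1-Z_n)g$ and expand $D^\alpha\bigl((\chi_n-1)\pi\bigr)$ by the Leibniz rule into a ``tail'' term $(\chi_n-1)D^\alpha\pi$, supported on $\{|x|>n\}$ and dominated pointwise by $|D^\alpha\pi|$, and ``shell'' terms $(D^\beta\chi_n)(D^{\alpha-\beta}\pi)$ with $|\beta|\geq 1$, supported on the shell $\{n\leq|x|\leq n+\tfrac{1}{2}\}$ where $|D^\beta\chi_n|$ is bounded uniformly in $n$. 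Since each $D^\gamma\pi$ lies in $L_p(\bR^d)$ almost surely, every such term vanishes in $L_p$ as $n\to\infty$ by absolute continuity of the integral, and a uniform pointwise bound by a constant multiple of $|\pi|^p_{W^m_p}$ lets dominated convergence interchange the expectation with the limit. For the correction term, $Z_n\to 1$ almost surely by monotone convergence applied to $\chi_n\pi\uparrow\pi$, so by bounded convergence $\E(1-Z_n)^p\to 0$ and hence
\[
\E\bigl|(1-Z_n)g\bigr|^p_{W^m_p}=|g|^p_{W^m_p}\,\E(1-Z_n)^p\to 0.
\]

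The main technical obstacle is organising this Leibniz decomposition so that each of the finitely many resulting terms admits a single $n$-independent integrable dominant; once such an envelope is in hand, the $\bW^m_p$-convergence reduces to a term-by-term application of dominated convergence. All other assertions (the explicit formula for $\pi_n$, the support bound, pointwise convergence of $X_n$, and the moment estimate) follow from elementary algebra on the explicit expression for $\pi_n$, and the initial probability-space enlargement is handled by a standard product extension.
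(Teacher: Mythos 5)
Your construction is correct in its core computations, but it takes a genuinely different route from the paper. The paper sets $X_n:=X{\bf 1}_{|X|\leq k_n}+\varepsilon_n\zeta$, so that the conditional density of $X_n$ given $Y$ is the mollification $\mu_{k_n}^{(\varepsilon_n)}$ of the conditional law of the truncated variable; the $\bW^m_p$-convergence is then obtained by a two-parameter limit (first $k\to\infty$ at fixed $\varepsilon$, via Jensen's inequality and dominated convergence applied to $D^\alpha g_\varepsilon(x-Z_k)-D^\alpha g_\varepsilon(x-X)$, then $\varepsilon\downarrow 0$ using $|\pi^{(\varepsilon)}-\pi|_{W^m_p}\to 0$), followed by a diagonal extraction. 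Your rejection-sampling construction instead cuts off the density itself: the explicit formula $\pi_n=\chi_n\pi+(1-Z_n)g$ makes the support bound, the moment bound, the everywhere convergence $X_n\to X$, and the $\bW^m_p$-convergence (Leibniz rule plus absolute continuity of the integral and dominated convergence in $\omega$, together with $Z_n\to 1$) essentially immediate, with no mollification of $\pi$ and no diagonal argument. The trade-off is that the paper's construction uses only the randomness the lemma actually grants, namely $\zeta$, whereas yours needs an extra uniform variable $U$ independent of $(X,Y,\zeta)$.

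That extra randomization is the one real gap. The lemma asserts the existence of $\cF_0$-measurable $X_n$ on the \emph{given} space, whose ``richness'' hypothesis provides exactly $\zeta$ and nothing more; and in the only place the lemma is used (Step III of the proof of Theorem \ref{theorem 1}) the $X_n$ must be $\cF_0$-measurable initial conditions for equation \eqref{system_1} on the original filtered space carrying $(W,V,N_0,N_1)$, so ``enlarge the space, it is harmless'' is not a complete justification: the conditional laws given $Y$ do not determine $X_n$ as a random variable on $\Omega$, and an enlargement would force one to re-install the whole filtered setting (independence of the new factor from the driving noises, the filtration $(\cF_t)$, the measure change, etc.). The gap is fixable without enlargement: since the law of $\zeta$ is atomless, there is a Borel map producing from $\zeta$ a pair $(U,\zeta')$ with $U$ uniform on $[0,1]$, $\zeta'$ with density $g$, $U$ and $\zeta'$ independent, and both functions of $\zeta$, hence independent of $(X,Y)$; running your construction with $(U,\zeta')$ in place of $(U,\zeta)$ then yields $\cF_0$-measurable $X_n$ on the original space and the rest of your argument goes through verbatim. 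As written, though, this step needs to be supplied rather than waved through.
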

\begin{proof}
For $\varepsilon\in(0,1)$ define
$$
X^{\varepsilon} _{k}:=X{\bf1}_{|X|\leq k}+\varepsilon\zeta
\quad
\text{for integers $n\geq1$}.    
$$
Let $g_{\varepsilon}$ denote the density function of $\varepsilon\zeta$,   
and let  $\mu_{k}$ be the regular conditional distribution 
of $Z_{k}:=X{\bf1}_{|X|\leq k}$ given $Y$. Then 
$$
\mu_{k}^{(\varepsilon)}(x)=\int_{\bR^d}g_\varepsilon(x-y)\,\mu_{k}(dy)
\quad
\text{and}
\quad 
\pi^{(\varepsilon)}(x)=\int_{\bR^d}g_{\varepsilon}(x-y)\pi(y)dy,
\quad
x\in\bR^d,
$$
are the conditional density functions of 
$X^{\varepsilon }_{k}$ and $X+\varepsilon\zeta$, 
given $Y$, respectively.  Clearly, if $\pi\in\bW^m_p$, then 
$\mu^{(\varepsilon)}_{k}$ and $\pi^{\ep}$ belong to $\bW^m_p$ for every $k$ 
and $\varepsilon$. Moreover, 
by Fubini's theorem, for each multi-index $\alpha=(\alpha_1,\dots,\alpha_d)$, such that $0\leq |\alpha|\leq m$ we have
\begin{equation}                                                                           \label{26.10.21.1}
|D^\alpha\mu^{(\varepsilon)}_{k}-D^\alpha\pi^{\ep}|^p_{\bL_p}
=\E\int_{\bR^d}
\Big|\int_{\bR^d}D^\alpha g_\varepsilon(x-y)\mu_{k}(dy)
 - \int_{\bR^d}D^\alpha g_\varepsilon(x-y)\pi(y)dy\Big|^pdx
\end{equation}
\begin{equation*}
=\int_{\bR^d}
\E\Big|\int_{\bR^d}D^\alpha g_\varepsilon(x-y)\mu_{k}(dy)
 - \int_{\bR^d}D^\alpha g_\varepsilon(x-y)\pi(y)dy\Big|^pdx
\end{equation*}
\begin{equation*}
=\int_{\bR^d}
\E\big|\E(D^\alpha g_\varepsilon(x-Z_k)-D^\alpha g_\varepsilon(x-X)|Y)\big|^p\,dx
 \end{equation*}
 \begin{equation*}
 \leq 
 \int_{\bR^d}\E\big|D^\alpha g_\varepsilon(x-Z_{k})-D^\alpha g_\varepsilon(x-X)\big|^p\,dx 
=
\E \int_{\bR^d}\big|D^\alpha g_\varepsilon(x-Z_{k})-D^\alpha g_\varepsilon(x-X)\big|^p\,dx, 
\end{equation*}
where the inequality is obtained by an application of Jensen's inequality.  
Clearly, for every $0\leq |\alpha|\leq m$,
$$
 \int_{\bR^d}\big|D^\alpha g_\varepsilon(x-Z_{k})-D^\alpha g_\varepsilon(x-X)\big|^p\,dx
 \leq 2^{p} |g_{\varepsilon}|^p_{W^m_p}<\infty 
 \quad
 \text{for every $\omega\in\Omega$ and $k\geq1$.} 
$$
Hence by Lebesgue's theorem on dominated convergence, for each $0\leq |\alpha|\leq m$,
$$
\lim_{k\to\infty}
\E\int_{\bR^d}\big|D^\alpha g_\varepsilon(x-Z_k)-D^\alpha g_\varepsilon(x-X)\big|^p\,dx
=\E\lim_{k\to\infty}\int_{\bR^d}
\big|D^\alpha g_\varepsilon(x-Z_{k})-D^\alpha g_\varepsilon(x-X)\big|^p\,dx=0.  
$$
Consequently, by virtue of \eqref{26.10.21.1} we have 
$\lim_{k\to\infty}
|\mu^{(\varepsilon)}_{k}-\pi^{(\varepsilon)}|_{\bW^m_p}=0$ 
for every 
$\varepsilon\in(0,1)$. Since almost surely 
$|\pi^{(\varepsilon)}-\pi|_{W^m_p}\to0$  as 
$\varepsilon\downarrow0$, and 
$|\pi^{(\varepsilon)}-\pi|_{W^m_p}\leq 2|\pi|_{W^m_p}$ for every $\omega\in\Omega$,  
we have $\lim_{\varepsilon\downarrow0}|\pi^{\ep}-\pi|_{\bW^m_p}=0$ 
by Lebesgue's theorem on 
dominated convergence.  Hence there is a sequence of 
positive integers $k_n\uparrow\infty$ such that 
for $\pi_n:=\mu_{k_n}^{ (1/n)}$ we have $\lim_{n\to\infty}|\pi_n-\pi|_{\bW^m_p}=0$. 
Clearly, for $X_n:=X^{\varepsilon_n}_{ k_n}$ with $\varepsilon_n=1/n$ 
we have $\lim_{n\to\infty}X_n=X$ 
for every $\omega\in\Omega$. Moreover, for every integer $n\geq1$ 
$$
\E|X_{n}|^q
\leq N\big(\E|X{\bf1}_{|X|\leq k_n}|^q + \varepsilon_n^q\E|\zeta|^q\big)
\leq N(\E|X|^q+1) 
\quad
\text{for  $q\in(0,\infty)$}   
$$
with a constant $N=N(q)$, which completes the proof of the lemma. 
\end{proof}

To formulate our next lemma let $\chi$ be a smooth function on $\bR$ 
such that $\chi(r)=1$ for $r\in[-1,1]$, $\chi(r)=0$ for $|r|\geq2$, 
$\chi(r)\in[0,1]$ and $\chi'(r)=\tfrac{d}{dr}\chi(r)\in[-2,2]$ for all $r\in\bR$. 
\begin{lemma}                                                                          \label{lemma 8.7}
Let $b=(b^i)$ be an $\bR^d$-valued function on $\bR^m$ such that for a constant $L$ 
\begin{equation}                                                                        \label{8.5.2}
|b(v)-b(z)|\leq L|v-z| \quad\text{for all $v,z\in\bR^m$}. 
\end{equation}                                                                           
Then for $b_n(z)=\chi(|z|/n)b(z)$, $z\in\bR^m$,  
for integers $n\geq1$ we have 
\begin{equation}                                                                          \label{8.5.3}
|b_n(z)|\leq 2nL+|b(0)|,
\quad 
|b_n(v)-b_n(z)|\leq (5L+2|b(0)|)|v-z| \quad\text{for all $v,z\in\bR^m$}. 
\end{equation}
\end{lemma}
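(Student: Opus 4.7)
The plan is to verify the two estimates separately, with the pointwise bound being immediate and the Lipschitz bound requiring a standard product-rule decomposition together with a case split based on the support of $\chi(|\cdot|/n)$.

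For the pointwise bound, I would first note that $\chi(|z|/n)=0$ whenever $|z|\geq 2n$, so $b_n(z)=0$ in that case. For $|z|\leq 2n$, I would use $\chi\in[0,1]$ together with the Lipschitz hypothesis \eqref{8.5.2} applied at $v=0$ to get $|b(z)|\leq |b(0)|+L|z|\leq |b(0)|+2nL$. Combining these two cases yields the first inequality in \eqref{8.5.3}.

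For the Lipschitz bound, I would start from the decomposition
\begin{equation*}
b_n(v)-b_n(z) = \chi(|z|/n)\bigl(b(v)-b(z)\bigr) + \bigl(\chi(|v|/n)-\chi(|z|/n)\bigr)b(v),
\end{equation*}
which gives
\begin{equation*}
|b_n(v)-b_n(z)|\leq L|v-z| + |\chi(|v|/n)-\chi(|z|/n)|\,|b(v)|,
\end{equation*}
using $\chi\in[0,1]$ and \eqref{8.5.2} on the first term. Since the bound is symmetric in $v,z$, I may assume without loss of generality that $|v|\leq |z|$. If $|v|>2n$ then both $\chi(|v|/n)$ and $\chi(|z|/n)$ vanish, so the whole expression is zero and there is nothing to prove. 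Otherwise $|v|\leq 2n$, so $|b(v)|\leq |b(0)|+2nL$ as above, and the mean value inequality $|\chi(s)-\chi(t)|\leq 2|s-t|$ together with $\big||v|-|z|\big|\leq |v-z|$ yields $|\chi(|v|/n)-\chi(|z|/n)|\leq 2|v-z|/n$.

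Plugging these into the previous display gives
\begin{equation*}
|b_n(v)-b_n(z)| \leq L|v-z| + \tfrac{2|v-z|}{n}\bigl(|b(0)|+2nL\bigr) = \bigl(5L + \tfrac{2|b(0)|}{n}\bigr)|v-z|,
\end{equation*}
and since $n\geq 1$ the coefficient is at most $5L+2|b(0)|$, which is the second inequality in \eqref{8.5.3}. There is no real obstacle here: the only mild subtlety is choosing the decomposition so that $|b|$ is evaluated at the point guaranteed to lie in the support of $\chi(|\cdot|/n)$, which is what the symmetry reduction $|v|\leq|z|$ accomplishes.
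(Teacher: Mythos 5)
Your proof is correct and complete: the decomposition, the reduction to $|v|\leq|z|$ (which places $b$ at the point lying in the region where the linear growth bound $|b(v)|\leq|b(0)|+2nL$ is available), and the use of $|\chi'|\leq2$ together with $\bigl||v|-|z|\bigr|\leq|v-z|$ all check out, and the constants match \eqref{8.5.3}. The paper itself leaves this lemma as an exercise, so there is no proof to compare against; yours is exactly the standard argument one would expect.
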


\begin{proof}
We leave the proof as an easy exercise for the reader.
\end{proof}

We will  truncate the functions $\xi$ and $\eta$ 
in equation \eqref{system_1} by the help 
of the following lemma, in which for each 
fixed $R>0$ and $\epsilon>0$ we use a
function 
$\kappa^R_\epsilon$ defined on $\bR^d$ by 
\begin{equation}                                                                             \label{Sandy function}
\kappa^R_\epsilon(x)=\int_{\bR^d}\phi^R_{\epsilon}(x-y)k(y)\,dy, 
\quad
\phi^R_{\epsilon}(x)=\begin{cases}
1,& |x|\leq R+1,\\
1+\epsilon\log\big(\tfrac{R+1}{|x|}\big),& R+1<|x|< (R+1)e^{1/\epsilon},\\
0,& |x|\geq (R+1)e^{1/\epsilon}, 
\end{cases}
\end{equation}
where $k$ is a nonnegative $C^{\infty}$ 
mapping on $\bR^d$ with support in $\{x\in\bR^d:|x|\leq 1\}$. Notice that 
$\kappa^R_\epsilon\in C^{\infty}_0$ for each 
$R,\epsilon>0$,  such that  
if $x,y\in\bR^d$ and $|y|\leq |x|$, then 
$$
|\phi^{R}_{\epsilon}(x)-\phi^{R}_{\epsilon}(y)|\leq\frac{\epsilon|x-y|}{\max(R,|y|)},  
$$
and hence 
\begin{equation}                                                                         \label{Sandy}
|\kappa^R_\epsilon(x)-\kappa^R_\epsilon(y)|
\leq \int_{\bR^d}|\phi^{R}_{\epsilon}(x-u)-\phi^{R}_{\epsilon}(y-u)|k(u)\,du
\leq \frac{\epsilon|x-y|}{\max(R,|y|-1)}.
\end{equation}

\begin{lemma}                                                                               \label{lemma Sandy}
Let $\xi:\bR^d\mapsto\bR^d$ be such that for a constant $L\geq 1$ 
and for every $\theta\in [0,1]$ the function 
$\tau_\theta(x)=x+\theta\xi(x)$ is $L$-biLipschitz, i.e.,
\begin{equation}
\label{tth}L^{-1}|x-y|\leq|\tau_\theta(x)-\tau_\theta(y)|\leq L|x-y|
\end{equation}
for all $x,y\in\R^d$. Then for any $M>L$ and any $R>0$ there is an $\epsilon=\epsilon(L,M,R,|\xi(0)|)>0$ such that 
with $\kappa^R:=\kappa^R_\epsilon$ the function $\xi^R:=\kappa^R\xi$ vanishes for $|x|\geq \bar R$ for a constant 
$\bar{R}=\bar R(L,M,R,|\xi(0)|)>R$, $|\xi^R|$ is bounded by a constant 
$N=N(L,M,R, |\xi(0)|)$,  and for every $\theta\in [0,1]$ the mapping
$$
\tau^R_\theta (x) = x + \theta\xi^R(x) , \quad x\in\bR^d
$$
is $M$-biLipschitz.
\end{lemma}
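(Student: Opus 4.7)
The strategy is to view $\tau^R_\theta$ as a perturbation of a map of the form $\tau_\alpha$ with $\alpha\in[0,1]$ (for which the biLipschitz property is guaranteed by \eqref{tth}), and to make the perturbation of order $\epsilon$ through the right algebraic decomposition. For $x,y\in\bR^d$, write
\begin{equation*}
\xi^R(x)-\xi^R(y) = \kappa^R(x)\bigl(\xi(x)-\xi(y)\bigr) + \bigl(\kappa^R(x)-\kappa^R(y)\bigr)\xi(y),
\end{equation*}
so that
\begin{equation*}
\tau^R_\theta(x)-\tau^R_\theta(y) = \bigl[\tau_{\theta\kappa^R(x)}(x)-\tau_{\theta\kappa^R(x)}(y)\bigr] + \theta\bigl(\kappa^R(x)-\kappa^R(y)\bigr)\xi(y).
\end{equation*}
Since $\kappa^R\in[0,1]$ and $\theta\in[0,1]$, the exponent $\theta\kappa^R(x)$ lies in $[0,1]$, and \eqref{tth} bounds the first bracket above by $L|x-y|$ and below by $L^{-1}|x-y|$. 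It therefore suffices to estimate the error $e(x,y):=\theta(\kappa^R(x)-\kappa^R(y))\xi(y)$ by $\delta|x-y|$, where $\delta:=\min(M-L,L^{-1}-M^{-1})>0$.

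\textbf{Bounding the error.} Specialising \eqref{tth} to $\theta=1$ and using the triangle inequality shows that $\xi$ is $(L+1)$-Lipschitz, so $|\xi(y)|\leq|\xi(0)|+(L+1)|y|$ for every $y\in\bR^d$. Since the claim is symmetric in $x$ and $y$, we may assume $|y|\leq|x|$ and invoke \eqref{Sandy} to obtain
\begin{equation*}
|e(x,y)|\leq \frac{\epsilon|x-y|\,\bigl(|\xi(0)|+(L+1)|y|\bigr)}{\max(R,|y|-1)}.
\end{equation*}
Splitting into the cases $|y|\leq R+1$ and $|y|>R+1$ gives a bound $|e(x,y)|\leq C\epsilon|x-y|$ with $C=C(L,R,|\xi(0)|)$: in the first case $\max(R,|y|-1)=R$ and the numerator is at most $|\xi(0)|+(L+1)(R+1)$, while in the second case $|y|-1\geq R$ and $|y|/(|y|-1)\leq 1+1/R$.

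\textbf{Conclusion.} Choose $\epsilon=\epsilon(L,M,R,|\xi(0)|)\in(0,1)$ so small that $C\epsilon\leq\delta$. Combining the displayed error estimate with the two-sided bound on the first bracket gives
\begin{equation*}
M^{-1}|x-y|\leq L^{-1}|x-y|-C\epsilon|x-y|\leq |\tau^R_\theta(x)-\tau^R_\theta(y)| \leq L|x-y|+C\epsilon|x-y|\leq M|x-y|,
\end{equation*}
proving the $M$-biLipschitz property. Since $\phi^R_\epsilon\equiv 0$ for $|x|\geq (R+1)e^{1/\epsilon}$ and $k$ is supported in $\{|x|\leq 1\}$, the cutoff $\kappa^R$, and hence $\xi^R$, vanishes outside the ball $\{|x|\leq\bar R\}$ with $\bar R:=(R+1)e^{1/\epsilon}+1$; moreover on this ball $|\xi^R(x)|\leq|\xi(x)|\leq|\xi(0)|+(L+1)\bar R$, a constant depending only on $L,M,R,|\xi(0)|$.

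\textbf{Main obstacle.} The one delicate point is the choice of decomposition. The support radius $\bar R$ scales like $e^{1/\epsilon}$, so $|\xi|$ can be as large as $O(e^{1/\epsilon})$ on the support of $\kappa^R$; had we left $\xi(x)$ rather than $\xi(y)$ (with $|y|\leq|x|$) in the error, the naive bound would be $O(\epsilon\, e^{1/\epsilon})|x-y|$, which diverges as $\epsilon\downarrow 0$. Placing the smaller argument inside $\xi$, combined with the $R$-dependent denominator of \eqref{Sandy}, produces a genuine $O(\epsilon)$ estimate, which is precisely what allows the biLipschitz constant to be pushed below any prescribed $M>L$.
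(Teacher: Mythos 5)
Your proof is correct and takes essentially the same route as the paper's: the identical decomposition $\tau^R_\theta(x)-\tau^R_\theta(y)=\bigl[\tau_{\theta\kappa^R(x)}(x)-\tau_{\theta\kappa^R(x)}(y)\bigr]+\theta\bigl(\kappa^R(x)-\kappa^R(y)\bigr)\xi(y)$ with $|y|\leq|x|$, the first term controlled by \eqref{tth} with $\theta$ replaced by $\theta\kappa^R(x)$, and the error term made $O(\epsilon)|x-y|$ via \eqref{Sandy} and the linear growth of $\xi$. Your explicit case split on $|y|\lessgtr R+1$ simply fills in the quantitative step the paper compresses into "we can choose a sufficiently small $\epsilon$", and your support and boundedness observations match the paper's concluding remark.
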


\begin{proof}
To show $\tau^R_\theta$ is $M$-biLipschitz, we first note that if $x,y\in\R^d$ 
with $|x|\geq|y|$ then $\tau^R_\theta(x)-\tau^R_\theta(y)=A+B$ where
$A=\tau_{\theta\kappa^R(x)}(x)-\tau_{\theta\kappa^R(x)}(y)$  
and $B=\xi(y)(\kappa^R(x)-\kappa^R(y))$. 
The biLipschitz hypothesis (\ref{tth}), with $\theta$ replaced
by $\theta\kappa^R(x)$, implies $L^{-1}|x-y|\leq|A|\leq L|x-y|$. 
Due to \eqref{Sandy} and that $\xi$ has linear growth, 
we can choose a sufficiently small 
$\epsilon=\epsilon(L,M,R,|\xi(0)|)$ to get 
$|B|<(L^{-1}-M^{-1})|x-y|$ and hence
\[M^{-1}|x-y|\leq|\tau^R_\theta(x)-\tau^R_\theta(y)|\leq M|x-y|\]
as required.
Finally the boundedness of $|\xi^R|$ follows from the fact that it vanishes for $|x|>Re^{1/\epsilon}$ and that $\xi$ has linear growth.
\end{proof}

\begin{remark}
Note that if $\tau$ is a continuously differentiable $L$-biLipschitz function on $\bR^d$ then 
$$
L^{-d}\leq|\det(D\tau(x))|\leq L^d\quad\text{for $x\in\bR^d$}.
$$
\end{remark}
\begin{proof} 
This remark must be well-known, since for $d=1$ it is obvious, 
and for $d>1$ it can be easily shown by using the singular value decomposition 
for the matrices $D\tau$, $D\tau^{-1}$, or by applying Hadamard's inequality 
to their determinants. 
\end{proof}

\begin{proof}[Proof of Theorem \ref{theorem 1}.] 
The proof is structured into three steps. 
First we prove the theorem for the case where $p=2$. 
As second step we prove the results for all $p\geq 2$ 
for compactly supported coefficients and compactly supported 
initial conditional densities. The third step then involves an approximation procedure 
to obtain the desired results for coefficients and 
initial conditional densities with unbounded support.
\newline
\textbf{Step I:} Let Assumptions \ref{assumption SDE},  
\ref{assumption p} 
and \ref{assumption estimates} hold. 
Then by Theorem \ref{theorem Z1}, the process $(P_t)_{t\in[0,T]}$ 
of the regular conditional distribution $P_t$ of $X_t$ given $\cF^Y_t$, 
and $\mu=(\mu_t)_{t\in[0,T]}=(P_t(^o\!\gamma_t)^{-1})_{t\in[0,T]}$,  
the ``unnormalised" (regular) conditional 
distribution process, are measure-valued weakly cadlag 
processes, and $\mu$ is a measure-valued solution to equation \eqref{eqZ1}. 
(Recall that $(^o\!\gamma_t)_{t\in [0,T]}$ 
is the positive normalising process from Remark \ref{remark gamma}.)
Assume that $u_0:=P(X_0\in dx|Y_0)/dx$ exists almost surely 
such that $\E|u_0|_{L_p}^p<\infty$ 
for $p=2$.
In order to apply Lemma \ref{lemma eu1} if $K_1\neq 0$, 
we need to verify that
\begin{equation}                                                                                        \label{25.1.22.1}
G(\mu) = \sup_{t\in [0,T]}
\int_{\bR^d}|x|^2\,\mu_t(dx)<\infty \quad\text{almost surely.}
\end{equation}
For integers $k\geq 1$ let $\Omega_k:= \{|Y_0|\leq k\}\in \cF_0^Y$. 
Then $\Omega_k\uparrow\Omega$ as $k\to\infty$.
Taking $r>2$ from Assumption \ref{assumption nu}, by  
Doob's inequality, and by Jensen's inequality for optional projections we get
$$
\E\sup_{t\in [0,T]}
\big(\E(|X_t|^2{\bf1}_{\Omega_k}|\cF_t^Y)\big)^{r/2}
\leq \E\sup_{t\in [0,T]}
\big(\E(\sup_{s\in [0,T]}|X_s|^2{\bf1}_{\Omega_k}|\cF_t^Y)\big)^{r/2}
$$
$$
\leq N \E\big(\E(\sup_{s\in [0,T]}|X_s|^{2}{\bf1}_{\Omega_k}|\cF_T^Y)\big)^{r/2}
\leq N \E\sup_{s\in [0,T]}|X_s|^{r}{\bf1}_{\Omega_k},
$$
for all $k$ with a constant $N$ depending only on $r$.
Thus, by Fubini's theorem and H\"older's inequality, if $K_1\neq 0$, for all $k$
we have 
$$
G_k(\mu):=\E\sup_{t\in [0,T]}\int_{\bR^d}|x|^2\mu_t(dx){\bf1}_{\Omega_k} 
$$
$$
= \E\sup_{t\in [0,T]}\E(|X_t|^{2}|\cF_t^Y)({^o\!\gamma_t})^{-1}{\bf1}_{\Omega_k}
= \E\sup_{t\in [0,T]}\E(|X_t|^{2}{\bf1}_{\Omega_k}|\cF_t^Y)({^o\!\gamma})_t^{-1}
$$
$$
\leq \E\sup_{t\in [0,T]}\E(|X_t|^{2}{\bf1}_{\Omega_k}|\cF_t^Y)
\sup_{t\in [0,T]}({^o\!\gamma})_t^{-1}
\leq \big(\E\sup_{t\in [0,T]}\big(\E(|X_t|^2{\bf1}_{\Omega_k}|\cF_t^Y)\big)^{r/2} \big)^{2/r}
\big(\E \sup_{t\in [0,T]}({^o\!\gamma}_t)^{-r'}\big)^{1/r'}
$$
$$
\leq N \big( \E\sup_{t\in [0,T]}|X_t|^{r}{\bf1}_{\Omega_k}\big)^{2/r},
$$
where $2/r + 1/r'=1$, $N=N(r,d,C)$ is a constant,  and we use that 
by Jensen's inequality 
for optional projections and the boundedness of $|B|$ 
\begin{equation}
\label{24.10.21.3}
\E\sup_{t\in [0,T]}({^o\!\gamma}_t)^{-r'}
\leq \E\sup_{t\in [0,T]}\gamma_t^{-r'}:=C<\infty
\end{equation}
with a constant $C$ only depending on the bound in magnitude of $|B|$ and $r$. 
Hence, using 
\eqref{bound_Z} with $q=r$ we have 
$$
G_k(\mu)\leq N\big(1+\E\sup_{t\in [0,T]}|X_t|^{r}{\bf1}_{\Omega_k}\big)
\leq N'\big(k^{r}+\E|X_0|^{r} \big)<\infty,
$$
for constant $N=N(r,d,C)$ and 
$N' = N'(d,d',r,K,K_0,K_1,K_{\xi}, 
K_{\eta},   T,|\bar\xi|_{L_2},|\bar\eta|_{L_2})$. 
Since for all $k\geq 1$ we have that $G_k(\mu) <\infty$ 
we can conclude that \eqref{25.1.22.1} holds.
Hence, 
by Lemma \ref{lemma eu1}, almost surely  $d\mu_t/dx$ exists, and 
there is 
an $L_2$-valued weakly cadlag stochastic process $(u_t)_{t\in [0,T]}$ 
such that almost surely $u_t=d\mu_t/dx$ for all $t\in[0,T]$ and  
\begin{equation}
\label{2.11.2021.1}
\E\sup_{t\in [0,T]}|u_t|_{L_2}^2\leq N\E|\pi_0|_{L_2}^2
\end{equation}
for every $T$ with a constant 
$N=N(d, d',K,K_\xi, K_\eta, L, T,|\bar\xi|_{L_2}, |\bar\eta|_{L_2},\lambda)$. 
Thus $\pi_t=dP_t/dx=u_t{^o\!\gamma_t}$, $t\in [0,T]$, 
is an $L_2$-valued weakly cadlag 
process, which proves Theorem \ref{theorem 1} for $p=2$.\newline
\textbf{Step II.} Let the assumptions of Theorem \ref{theorem 1} 
hold with $K_1=0$ 
in Assumption \ref{assumption SDE}. 
Assume 
that $\pi_0=P(X_0\in dx|Y_0)/dx\in \bL_p$ for some $p>2$, such that 
almost surely $u_0(x)=0$  for $|x|\geq R$ for a constant $R$. 
Assume moreover, 
that the support condition \eqref{supp_condition} holds. 
Then by Corollary \ref{corollary 1.3.4.22} there is 
an $L_p$-solution $(v_t)_{t\in [0,T]}$ to \eqref{equdZ} with initial condition 
$v_0=\pi_0$ such that 
\begin{equation}                                                                             \label{24.10.21.1}
\E\sup_{t\in[0,T]}|v_t|^p_{L_p}\leq N\E|\psi|^p_{L_p}
\end{equation}
with a constant 
$N=N(d, d',K, L, K_{\xi}, K_{\eta}, T, p,\lambda,|\bar\xi|_{L_2}, |\bar\eta|_{L_2})$, 
and almost surely
$$
v_t(x)=0 \quad\text{for $dx$-a.e. $x\in\{x\in\bR^d:|x|\geq\bar R\}$ for all $t\in[0,T]$} 
$$
with a constant $\bar R=\bar R(R,K,K_0,K_\xi,K_\eta)$. 
Hence $(v_t)_{t\in[0,T]}$ is also an $L_2$-solution to equation \eqref{equdZ}, and clearly, 
$$
\sup_{t\in [0,T]}|v_t|_{L_1}
\leq \bar R^{d(p-1)/p}\sup_{t\in[0,T]}|v_t|_{L_p}<\infty. 
$$
Since in particular $\E|\pi_0|^2_{L_2}<\infty$, by Step I there is an $L_2$-solution 
$(u_t)_{t\in[0,T]}$ to equation \eqref{equdZ} such that almost surely 
$u_t=d\mu_t/dx$ for all $t\in[0,T]$, 
where $\mu_t=P_t(^o\!\,\gamma_t)^{-1}$ 
is the unnormalised (regular) conditional distribution of 
$X_t$ given $\cF^Y_t$. Clearly, 
$$
\sup_{t\in[0,T]}|u_t|_{L_1}=\sup_{t\in[0,T]}(^o\!\gamma_t)^{-1}<\infty\,(\text{a.s.}). 
$$
Hence by virtue of \eqref{eusupbecsles} in Lemma \ref{lemma usup} we obtain 
$\sup_{t\in[0,T]}|u_t-v_t|_{L_2}=0$ (a.s.), which completes 
the proof of Theorem \ref{theorem 1} under the additional assumptions of Step II. \newline
\textbf{Step III.} 
Finally, we dispense with the assumption that the 
coefficients and the initial condition are compactly supported, and that $K_1=0$  
in Assumption \ref{assumption SDE}. 
Define the functions $b^n=(b^{ni}(t,z))$, $B^n = (B^{nj}(t,z))$, 
$\sigma^n=(\sigma^{nij}(t,z))$, 
$\eta^n=(\eta^{ni}(t,z,\frz_0))$ and $\xi^n=(\xi^{ni}(t,z,\frz_1))$ by 
$$
(b^n, B^n,\sigma^n, \rho^n) = (b, B,\sigma, \rho)\chi_n,\quad 
(\eta^n,\xi^n)=(\eta,\xi)\bar\chi_n
$$
for every integer $n\geq1$, 
where $\chi_n(z)=\chi(|z|/n)$ and 
$\bar\chi_n(x,y)=\kappa^n(x)\chi(|y|/n)$, with $\chi$ 
defined before Lemma \ref{lemma 8.7} 
and with $\kappa^n$ stemming from Lemma \ref{lemma Sandy} 
applied to $\xi$ and $\eta$ as functions of $x\in\bR^d$.
By Lemma \ref{lemma 8.7},
Assumptions \ref{assumption SDE}  
and \ref{assumption p} hold for $b^n$, $B^n$, $\sigma^n$, 
$\rho^n$, $\eta^n$ and $\xi^n$, in place of 
 $b$, $\sigma$, $\rho$, $\eta$ and $\xi$, respectively, with $K_1=0$ 
 and with appropriate constants $K_0'=K_0'(n,K,K_0,K_1,K_\eta,K_\xi,L))$ and
 $L'=L'(K,K_0, K_1, L,K_{\xi},K_{\eta})$ in place of $K_0$ and $L$. Moreover, by Lemma \ref{lemma Sandy}, Assumption \ref{assumption estimates} is satisfied  with a constant $\lambda'=\lambda'(K_0,K_1,K_\xi,K_\eta,\lambda)$ in place of $\lambda$.
Since 
$\pi_0=P(X_0\in dx|Y_0)/dx\in\bL_p$ for $p>2$ 
by assumption (the case $p=2$ was proved in Step I) and clearly  $\pi_0\in\bL_1$, 
by H\"older's inequality we have 
$$
|\pi_0|_{\bL_2}\leq |\pi_0|^{1-\theta}_{\bL_1}|\pi_0|^{\theta}_{\bL_p}<\infty
\quad \text{with $\theta=\tfrac{p}{2(p-1)}\in(0,1)$}. 
$$
Thus by Lemma \ref{lemma 13.10.2021} 
 there exists a sequence $(X_0^n)_{n=1}^{\infty}$ 
 of $\cF_0$-measurable random variables 
 such that the conditional density 
 $\pi_0^n = P(X_0^n\in dx|\cF_0^Y)/dx$ exists, 
 $\pi_0^n(x)=0$ for $|x|\geq n+1$ for 
 every $n$, $\lim_{n\to\infty}X_0^n=X_0$ 
 for every $\omega\in\Omega$, 
 \begin{equation}                                                                          \label{1.5.4.22}
 \lim_{n\to\infty}|\pi_0^n-\pi_0|_{\bL_r}=0
 \quad 
 \text{for $r=2,p$}, 
 \end{equation} 
 and 
\begin{equation*}
\E|X^n_0|^{q}\leq N(1+\E|X_0|^{q}) \quad\text{for any $q>0$} 
\end{equation*}
with a constant $N=N(q)$. 
Let $(X^n_t,Y^n_t)_{t\in [0,T]}$ denote the solution of equation \eqref{system_1} 
with initial value $(X_0^n,Y_0)$ and with 
$b^n$, $\sigma^n$, $\rho^n$, $\xi^n$, $\eta^n$ and $B^n$ in place 
of $b$, $\sigma$, $\rho$, $\xi$, $\eta$ and $B$.  Define the random fields,
$$
b^n_t(x)=b^n(t,x,Y^n_{t-}), 
\quad
\sigma^n_t(x)=\sigma^n(t,x,Y^n_{t-}), 
\quad
\rho^n_t(x)=\sigma^n(t,x,Y^n_{t-}),
\quad
B^n_t(x)=B^n(t,x,Y^n_{t-})
$$
\begin{equation}
\label{truncated coeffs}
\eta^n_t(x,\frz_0)=\eta^n(t,x,Y^n_{t-}, \frz_0),
\quad
\xi^n_t(x,\frz_1)=\xi^n(t,x,Y^n_{t-},\frz_1), 
\quad
\beta^n_t=B^n(t,X^n_t,Y_{t-}^n)
\end{equation}
for $\omega\in\Omega$, $t\geq0$, $x\in\bR^d$, $\frz_i\in\frZ_i$, $i=0,1$. 
Consider the equation 
\begin{align}                                                                       
du_t^{n}=&\tilde\cL_t^{n\ast}u_t^{n}\,dt
+\cM_t^{n k\ast}u_t^{n}\,dV^k_t
+\int_{\frZ_0}J_t^{\eta^{n}\ast}u_t^{n}\,\nu_0(d\frz)dt                        \nonumber\\
&+\int_{\frZ_1}J_t^{\xi^{n}*}u_t^{n}\,\nu_1(d\frz)dt
+\int_{\frZ_1}I_t^{\xi^{n}*}u_t^{n}\,\tilde N_1(d\frz,dt),
\quad
\text{with $u_0^{n}=\pi_0^{n}$,}                                                           \label{2.5.4.22}
\end{align}
where  for each fixed $n$ and $k=1,2,...,d'$
$$
\tilde\cL_t^{n}:=a_t^{n ij}D_{ij}
+b_t^{ni}D_i+
\beta^{nk}_t\rho_t^{nik}D_i
+\beta_t^{nk}B_t^{nk}, 
\quad
\cM^{nk}_t:=\rho_t^{nik}D_i+B^{nk}_t, 
$$
$$
a_t^{n ij}
:=\tfrac{1}{2}\sum_k\sigma_t^{nik}\sigma_t^{njk}
+\tfrac{1}{2}\sum_k \rho_t^{nik}\rho_t^{njk}, 
\quad
\beta^n_t:=B^n(t,X^n_{t-},Y^n_{t-}),  
\quad i,j=1,2,...,d, 
$$
the operators $J_t^{\eta^{n}}$ and $J_t^{\xi^{n}}$ 
are defined as 
$J^{\xi}_t$ in \eqref{IJ}  with $\eta^{n}_t$ and 
$\xi^{n}_t$ in place of $\eta_t$ and $\xi_t$, respectively,  and the operator 
 $I_t^{\xi^{n}}$ is  
defined as $I_t^{\xi}$ in \eqref{IJ} with $\xi_t^{n}$ in place of 
$\xi_t$. For each $n$ let $\gamma^n$ denote the solution to 
$d\gamma^n_t=-\gamma^n_t\beta^n_t\,dV_t$, $\gamma^n_0=1$. 
By virtue of Step II  \eqref{2.5.4.22} 
has an $L_p$-solution 
$u^n=(u^n_t)_{t\in[0,T]}$, which is also its unique $L_2$-solution, i.e., 
for each $\varphi\in C_0^{\infty}$ almost surely 
\begin{align}                                                                       
(u_t^{n},\varphi)=&(\pi^n_0,\varphi)+\int_0^t(u_s^{n},\tilde\cL_s^{n}\varphi)\,ds
+\int_0^t(u_s^{n},\cM_s^{n k}\varphi)\,dV^k_s
+\int_0^t\int_{\frZ_0}(u_s^{n},J_s^{\eta^{n}}\varphi)\,\nu_0(d\frz)ds                       \nonumber\\
&+\int_0^t\int_{\frZ_1}(u_s^{n},J_s^{\xi^{n}}\varphi)\,\nu_1(d\frz)ds
+\int_0^t\int_{\frZ_1}(u_s^{n},I_s^{\xi^{n}}\varphi)\,\tilde N_1(d\frz,ds)              \label{4.5.4.22}
\end{align}
for all $t\in[0,T]$. Moreover,  almost surely 
$u^n_t = d\mu^n_t/dx$ for all $t\in[0,T]$, 
where $\mu^n_t=P^n_t(^o\!\gamma^n_t)^{-1}$ 
is the unnormalised conditional distribution, $P^n_t$ is the regular conditional 
distribution of $X_t^n$ given $\cF^{Y^n}_t$, and $^o\!\gamma^n$ denotes 
the $\cF_t^{Y^n}$-optional projection of $\gamma^n$ under $P$.  
Furthermore, for sufficiently large $n$ we have 
\begin{equation}                                                                                            \label{8.5.4.22}
\E\sup_{t\in[0,T]}|u^n_t|_{L_r}^r\leq N|\pi_0^n|_{\bL_r}\quad\text{for $r=p,2$}
\end{equation}
with a constant 
$N=N(d, d',p,K, K_{\xi}, K_{\eta}, L, T,|\bar\xi|_{L_2}, |\bar\eta|_{L_2},\lambda)$, 
which together with \eqref{1.5.4.22} implies 
$$
\sup_{n\geq1}(|u^n_T|_{\bL_r}+|u^n|_{\bL_{r,q}})<\infty
\quad
\text{for $r=2,p$ and every $q>1$}.
$$
Hence there exist a subsequence, denoted again by $(u^n)_{n=1}^\infty$, 
$\bar u\in\bigcap_{q=2}^{\infty}\bL_{r,q}$ and $g\in\bL_r$ for $r=2,p$ such that 
\begin{equation}                                                                                  \label{25.10.21.1}
u^n\rightarrow \bar u
\quad
\text{weakly in $\bL_{r,q}$ for $r=p,2$ and all integers $q>1$}, 
\end{equation}
and
\begin{equation}                                                                                  \label{25.10.21.1}
u^n_T\rightarrow g\quad\text{weakly in $\bL_{r}$ 
for $r=p,2$}.
\end{equation}
One knows, see e.g. \cite{GK1980}, that $(X^n_t,Y^n_t)_{t\geq0}$ 
converges to $(X_t,Y_t)_{t\geq0}$ in probability, uniformly in $t$ in finite intervals. 
Hence it is not difficult to show (see Lemma 3.8 in \cite{GW2021}) that
there is a subsequence of $Y^n$, denoted for simplicity also by $Y^n$, 
and there is an $\cF_t$-adapted cadlag process $(U_t)_{t\in[0,T]}$, such that 
almost surely $|Y^n_t|+|Y_t|\leq U_t$ for every $t\in[0,T]$ and integers  $n\geq1$. 
For every integer $m\geq1$ define the stopping time 
$$
\tau_m=\inf\{t\in[0,T]: U_t\geq m\}
$$
To show that $\bar u$ is a $\bV_r$-solution for $r=p,2$ 
 to \eqref{equdZ} with initial condition $u_0=\pi_0$,   
 we pass to the limit $u^n\to \bar u$ in equation \eqref{4.5.4.22} 
 in a similar way to that as we passed to the limit $u^{\varepsilon_n}\to \bar u$ 
 in equation \eqref{equdZ*} in the proof of Lemma \ref{lemma compact}. 
 We fix an integer $m\geq1$ and multiply both sides of \eqref{4.5.4.22} with 
 $(\phi_t{\bf1}_{t\leq\tau_m})_{t\in[0,T]}$, where $(\phi_t)_{t\in[0,T]}$ is 
 an arbitrary bounded $\cF_t$-optional process $\phi=(\phi_t)_{t\in[0,T]}$. 
 Then we integrate both sides of the equation we obtained  
 over $\Omega\times[0,T]$ against $P\otimes dt$ to get 
\begin{equation}                                                                                    \label{14.10.21.4}
F(u^n)=F(\pi^n_0) + \sum_{i=1}^5 F_i^n(u^n),
\end{equation}
where $F$ and $F_i^{n}$ are linear functionals over $\bL_{r,q}$, 
defined by 
$$
F(v):=\E\int_0^{T\wedge\tau_m}\phi_t(v_t,\vp)\,dt,
\quad 
F_1^{n} (v)
:=\E\int_0^{T\wedge\tau_m}\phi_t\int_0^t(v_s,\tilde\cL_s^{n}\varphi)\,ds\,dt,
$$
$$
F_2^{n}(v)
:=\E\int_0^{T\wedge\tau_m}\phi_t\int_0^t(v_s,\cM_s^{n k}\varphi)\,dV^k_s\,dt,
\quad 
F_3^{n}(v):=\E\int_0^{T\wedge\tau_m}\phi_t\int_0^t\int_{\frZ_0}
(v_s,J_s^{\eta^{n}}\varphi)\,\nu_0(d\frz)ds\,dt,
$$
\begin{equation*}                                                                                     
F_4^{n}(v)
:=\E\int_0^{T\wedge\tau_m}\phi_t\int_0^t\int_{\frZ_1}
(v_s,J_s^{\xi^{n}}\varphi)\,\nu_1(d\frz)ds\,dt,
\end{equation*}
$$
 F_5^{n}(v)
 :=\E\int_0^{T\wedge\tau_m}\phi_t\int_0^t\int_{\frZ_1}
 (v_s,I_s^{\xi^{n}}\varphi)\,\tilde N_1(d\frz,ds) \,dt
$$
for a fixed $\varphi\in C_0^{\infty}$. Define also $F_i$ as $F_i^{n}$ 
for $i=1,2,...,5$, 
with $\tilde\cL_s$, $\cM_s^k$, $J_s^{\eta}$, $J^{\xi}_s$ and $I_s^{\xi}$ in place of 
$\tilde\cL^{n}_s$, $\cM_s^{n k}$, 
$J_s^{\eta^{n}}$, $J_s^{\xi^{n}}$ 
and $I_s^{\xi^{n}}$, respectively.  It is an easy exercise to show that 
hat $F$ and $F_i^{n}$, $i=1,2,3,4,5$, are 
continuous linear functionals on $\bL_{r,q}$ for $r=p,2$ and all $q>1$. 
We are going to show now that for $r=p,2$ 
\begin{equation}                                              \label{7.5.4.22}
\lim_{n\to\infty}
\sup_{|v|_{\bL_{r,q}} = 1}|F_i(v)-F_i^{n}(v)|=0
\quad
\text{for every $q>1$, for i=1,2,...,5}.
\end{equation}
Let $r'=r/(r-1)$, $q'=q/(q-1)$. Then for $v\in\bL_{r,q}$ 
by H\"older's inequality we have 
\begin{equation}                                           \label{1.6.4.22}                                 
|F_1(v)-F^n_1(v)|
\leq KT|v|_{\bL_{r,q}}
|(\tilde\cL-\tilde\cL^n)\varphi|_{\bL_{r',q'}}
\end{equation}
with 
$K=\sup_{\omega\in\Omega}\sup_{t\in[0,T]}|\phi_t|<\infty$.  
Clearly, 
$
\lim_{n\to\infty}(\tilde\cL_s-\tilde\cL^n_s)\varphi(x)=0
$
almost surely for all $s\in[0,T]$ and $x\in\bR^d$, and 
there is a constant $N$ independent of $n$ and $m$ such that 
\begin{equation}                                                                 \label{3.6.4.22}
|(\tilde\cL_s-\tilde\cL^n_s)\varphi(x)|
\leq N(1+|x|^2+2m^2){\bf1}_{|x|\leq R}
\end{equation}
for $\omega\in\Omega$, $s\in[0,T\wedge\tau_m]$ and  $x\in\bR^d$, 
where $R$ is the diameter of the support of $\varphi$.  
Hence a repeated application of Lebesgue's theorem 
on dominated convergence gives 
$$
\lim_{n\to\infty}|(\tilde\cL-\tilde\cL^{n})\varphi|_{\bL_{r',q'}}=0, 
$$
and 
by \eqref{1.6.4.22} proves \eqref{7.5.4.22} for $i=1$. 
By the Davis inequality and H\"older's inequality we have 
\begin{equation*}                            
|F_2(v)-F^n_2(v)|
\leq 3KT
\E\Big(
\int_0^{T\wedge\tau_m}\sum_k|(v_s,(\cM^k_s-\cM^{nk}_s)\varphi)|^2\,ds
\Big)^{1/2}
\leq C_n^{(2)}|v|_{\bL_{r,q}} 
\end{equation*}
with
$$
C_n^{(2)}=3KT\Big(\E\Big(
\int_0^{T\wedge\tau_m}
\big(
\sum_k
|(\cM^k_s-\cM^{nk}_s)\varphi|^{2}_{L_{r'}}\Big)^{q/(q-2)}\,ds
\Big)^{r'(q-2)/2q}\Big)^{1/r'}.
$$
Clearly, 
$\lim_{n\to\infty}(\cM^k_s-\cM^{nk}_s)\varphi(x)=0$, 
and with a constant $N$ independent of $n$ and $m$ we have 
$$
\sum_k|(\cM^k_s-\cM^{nk}_s)\varphi(x)|
\leq 
\sup_{s\in[0,T]}N(1+|x|+2m){\bf1}_{|x|\leq R}
$$
for all $\omega\in\Omega$, $s\in[0,T\wedge\tau_m]$ and 
$x\in\bR^d$. 
Thus repeating the above argument we obtain 
\eqref{7.5.4.22} for $i=2$. By H\"older's inequality we have 
$$
|F^3_n(v)-F^3(v)|\leq KT|v|_{\bL_{r,q}}C^{(3)}_n
$$
with 
$$
C^{(3)}_n=
\Big(
\E\Big(\int_0^{T\wedge\tau_m}\big|\int_{\frZ_0}
|(J_s^{\eta}-J_s^{\eta^n})\varphi|_{L_{r'}}\nu_0(d\frz)\big|^{q'}\,ds
\Big)^{r'/q'}
\Big)^{1/r'}, 
$$
where we have suppressed the variable $\frz\in\frZ_0$ in the integrand. 
Clearly, 
$$
\lim_{n\to\infty}(J^{\eta}-J^{\eta^n})\varphi(x)=0
\quad
\text{almost surely for all $s\in[0,T]$, $x\in\bR^d$ and $\frz\in\frZ_0$}.
$$
By Taylor's formula 
$$
|J_s^{\eta^n}\varphi(x)|
\leq \sup_{\theta\in[0,1]}|D^2\varphi(x+\theta\eta^n_s(x,\frz))||\eta_s(x,\frz)|^2,
$$
$$
|J_s^{\eta}\varphi(x)|
\leq \sup_{\theta\in[0,1]}|D^2\varphi(x+\theta\eta_s(x,\frz))||\eta_s(x,\frz)|^2,
$$
and by Lemma \ref{lemma Sandy}  with $\lambda'$ from above we have 
$$
\lambda'|x|\leq |x+\theta(\eta^n_s(x,\frz)-\eta^n_s(0,\frz))|
\leq |x+\theta\eta^n_s(x,\frz)|+|\eta^n_s(0,\frz)|,
$$
$$
\lambda'|x|\leq |x+\theta(\eta_s(x,\frz)-\eta_s(0,\frz))|
\leq |x+\theta\eta_s(x,\frz)|+|\eta_s(0,\frz)|
$$
for all $\theta\in[0,1]$, 
$\omega\in\Omega$, $s\in[0,T\wedge\tau_m]$. 
Hence, taking into account the the linear growth condition on $\eta$, 
see Assumption \eqref{assumption SDE} (ii), 
for any given $R>0$ we have a constant 
$\tilde{R}=\tilde R(R,K_0,K_1,K_\eta,m)>R$ such that 
$$
|x+\theta\eta_s(x,\frz)|\geq  R,  
\quad
|x+\theta\eta^n_s(x,\frz)|\geq R
\quad
\text{for $|x|\geq \tilde{R}$}, 
$$
for all $\theta\in[0,1]$, 
$\omega\in\Omega$, $s\in[0,T\wedge\tau_m]$. 
Taking $R$ such that $\varphi(x)=0$ for $|x|\geq R$ we have
$$
|J_s^{\eta^n}\varphi(x)-J_s^{\eta}\varphi(x)|
\leq |J_s^{\eta^n}\varphi(x)|+|J_s^{\eta}\varphi(x)|
\leq 2\sup_{x\in\bR^d}|D^2\varphi(x)|\bar \eta^2(\frz){\bf1}_{|x|\leq \tilde{R}} 
$$
for $x\in\bR$, 
$\omega\in\Omega$, $s\in[0,T\wedge\tau_m]$ and $\frz\in\frZ_0$. 
Hence by Lebesgue's theorem on dominated convergence 
$\lim_{n\to\infty}C_n^{(3)}=0$ which gives \eqref{7.5.4.22} 
for $i=3$. We get \eqref{7.5.4.22} for $i=4$ in the same way. 
By the Davis inequality and H\"older's inequality we have 
\begin{equation*}                            
|F_5(v)-F^n_5(v)|
\leq 3KT
\E\Big(
\int_0^{T\wedge\tau_m}\int_{\frZ_1}|(v_s,(I^{\xi^n}_s-I^{\xi}_s)\varphi)|^2\nu_1(d\frz)\,ds
\Big)^{1/2}
\leq C^{(5)}_n|v|_{\bL_{r,q}} 
\end{equation*}
with
$$
C^{(5)}_n
=
3KT\Big
(\E\Big(
\int_0^{T\wedge\tau_m}
\Big(
\int_{\frZ_1}
|(I^{\xi^n}_s-I^{\xi}_s)\varphi|^{2}_{L_{r'}}
\nu_1(d\frz)\Big)^{q/(q-2)}\,ds
\Big)^{r'(q-2)/2q}\Big)^{1/r'}.
$$
Clearly, $\lim_{n\to\infty}(I^{\xi^n}_s-I^{\xi}_s)\varphi(x)=0$ 
almost surely for all $s\in[0,T]$, $x\in\bR^d$ and $\frz\in\frZ_1$. 
By Taylor's formula 
$$
|I_s^{\xi^n}\varphi(x)|
\leq \sup_{\theta\in[0,1]}|D\varphi(x+\theta\xi^n_s(x,\frz))||\xi_s(x,\frz)|, 
$$
$$
|I_s^{\xi}\varphi(x)|
\leq \sup_{\theta\in[0,1]}|D\varphi(x+\theta\xi_s(x,\frz))||\xi_s(x,\frz)|. 
$$
Hence, using Assumptions 
\ref{assumption SDE}, 
\ref{assumption p} and 
\ref{assumption estimates} 
in the same way as above, 
we get a constant 
$\tilde{R}=\tilde R(R,K_0,K_1,K_\eta,m)$ such that 
$$
|I_s^{\xi^n}\varphi(x)-I_s^{\xi}\varphi(x)|
\leq |I_s^{\xi^n}\varphi(x)|+|I_s^{\xi}\varphi(x)|
\leq 2\sup_{x\in\bR^d}|D\varphi(x)|\bar \xi(\frz){\bf1}_{|x|\leq \tilde R}
$$
for $x\in\bR$, 
$\omega\in\Omega$, $s\in[0,T\wedge\tau_m]$ and $\frz\in\frZ_0$. Consequently, 
by Lebesgue's theorem on dominated convergence we obtain 
\eqref{7.5.4.22} for $i=5$, which completes the proof of \eqref{7.5.4.22}. 
Since $u^{n}$ converges weakly to $\bar u$ in $\bL_{r,q}$,  
and $F^{n}_i$ converges strongly to $F_i$ in $\bL^{\ast}_{p,q}$, 
the dual of $\bL_{p,q}$,  
we get that $F_i^{n}(u^{n})$ converges to 
$F_i(\bar u)$ for for $i=1,2,3,4,5$. 
Therefore letting $n\to\infty$  in  \eqref{14.10.21.4} we obtain
$$
\E\int_0^{T\wedge\tau_m}\phi_t(\bar u_t,\vp)\,dt
=\E\int_0^{T\wedge\tau_m}\phi_t(\psi,\vp)\,dt 
+  \E\int_0^{T\wedge\tau_m}
\phi_t\int_0^t(\bar u_{s},\tilde\cL_s\varphi)\,ds\,dt
$$
\begin{equation*}                                                                   
+ \E\int_0^{T\wedge\tau_m}
\phi_t\int_0^t(\bar u_{s},\cM_s^{k}\varphi)\,dV^k_s\,dt
+ \E\int_0^{T\wedge\tau_m}
\phi_t\int_0^t\int_{\frZ_0}(\bar u_{s},J_s^{\eta}\varphi)\,\nu_0(d\frz)ds\,dt  
\end{equation*}
$$
+\E\int_0^{T\wedge\tau_m}
\phi_t\int_0^t\int_{\frZ_1}(\bar u_{s},J_s^{\xi}\varphi)\,\nu_1(d\frz)ds\,dt
+\E\int_0^{T\wedge\tau_m}
\phi_t\int_0^t\int_{\frZ_1}(\bar u_{s},I_s^{\xi}\varphi)\,\tilde N_1(d\frz,ds) \,dt.
$$
Since this equation holds for all bounded $\cF_t$-optional processes 
$\phi=(\phi_{t})_{t\in[0,T]}$, we get 
$$
{\bf1}_{t\leq\tau_m}(\bar u_t,\vp)
={\bf1}_{t\leq\tau_m}\left((\psi,\vp)
+\int_0^t(\bar u_{s},\tilde\cL_s\varphi)\,ds
+\int_0^t(\bar u_{s},\cM_s^{k}\varphi)\,dV^k_s\right)
$$
$$
+{\bf1}_{t\leq\tau_m}\left(\int_0^t\int_{\frZ_0}(\bar u_{s},J_s^{\eta}\varphi)\,\nu_0(d\frz)ds 
+\int_0^t\int_{\frZ_1}(\bar u_{s},J_s^{\xi}\varphi)\,\nu_1(d\frz)ds
+\int_0^t\int_{\frZ_1}(\bar u_{s},I_s^{\xi}\varphi)\,\tilde N_1(d\frz,ds)\right)
$$
for $P\otimes dt$-almost every $(t,\omega)\in[0,T]\times\Omega$ 
for every $\varphi\in C_0^{\infty}$ and integer $m\geq1$, which implies 
that $\bar u$ is a $\bV_r$-solution to \eqref{equdZ} for $r=2,p$. 
In the same way as in the proof  Lemma \ref{lemma compact} we can show 
first that almost surely
\begin{equation}                                                                        \label{9.5.4.22}
{\bf1}_{\tau_m>T}(g,\vp)={\bf1}_{\tau_m>T}\left((\psi,\vp) 
+\int_0^T(\bar u_{s},\tilde\cL_s\varphi)\,ds
+ \int_0^T(\bar u_{s},\cM_s^{k}\varphi)\,dV^k_s\right)
\end{equation}
$$
+{\bf1}_{\tau_m>T}\left( \int_0^T\int_{\frZ_0}(\bar u_{s},J_s^{\eta}\varphi)\,\nu_0(d\frz)ds  
+\int_0^T\int_{\frZ_1}(\bar u_{s},J_s^{\xi}\varphi)\,\nu_1(d\frz)ds
+\int_0^T\int_{\frZ_1}(\bar u_{s},I_s^{\xi}\varphi)\,\tilde N_1(d\frz,ds)
\right) 
$$
for every $m\geq1$. Hence taking into account 
$P(\cup_{m=1}^{\infty}\{\tau_m>T\})=1$, 
we get that equation \eqref{9.5.4.22} remains valid if we omit ${\bf1}_{\tau_m>T}$ everywhere in it. 
From \eqref{8.5.4.22} we get that for sufficiently large $n$ 
$$
|u^n|_{\bL_{r,q}}\leq N|\pi^n_0|_{\bL_r} \quad\text{for $r=2,p$, for integers $q>1$}
$$
with a constant 
$N=N(d, d',p,K,  K_{\xi}, K_{\eta}, L, T,|\bar\xi|_{L_2}, |\bar\eta|_{L_2},\lambda)$. 
Letting here $n\to\infty$ and taking into account \eqref{1.5.4.22} and \eqref{25.10.21.1} 
we obtain 
$$
|\bar u|_{\bL_{r,q}}\leq \liminf_{n\to\infty}|u^n|_{\bL_{r,q}}
\leq N\lim_{n\to\infty}|\pi^n_0|_{L_r}\leq N|\pi_0|_{\bL_r}
 \quad
 \text{for $r=2,p$, and for integers $q>1$}. 
$$
Letting here $q\to\infty$ we get 
$$
\E\essup_{t\in [0,T]}|\bar u_t|_{\bL_r}^r\leq N^r\E|\pi_0|^r_{\bL_r},\quad \text{for $r=2,p$}. 
$$
Hence, taking into account \eqref{9.5.4.22}, by Lemma \ref{lemma weakly cadlag} 
we get a $P\otimes dt$-modification $u$ of $\bar u$, which is an $L_r$-solution 
for $r=2,p$ to equation \eqref{equdZ} with initial condition $u_0=\pi_0$. As the limit of $P\otimes dt\otimes dx$-almost everywhere nonnegative functions, $u$ is also $P\otimes dt\otimes dx$ almost everywhere nonnegative.
We now show that $u$ satisfies 
\begin{equation}
\label{24.10.21.4}
G(u):=\sup_{t\in [0,T]}\int_{\bR^d}|x|^2u_t(dx)<\infty \,(a.s.).
\end{equation}
To show this recall that for each $n$ and $\vp\in C_b^2$, 
by Theorem \ref{theorem Z1}, Remark \ref{remark gamma} and by what we have proven above,
$$
\mu^n_t(\vp) = P_t^n(\vp)\mu^n_t({\bf1}) 
=\E(\vp(X_t^n)|\cF_t^{Y^n})({^o\!\gamma}^n_t)^{-1},
$$
where $\mu^n_t(dx) = u^n_t(x)dx$, $P^n_t(dx) 
= \pi^n_t(x)dx$ and ${^o\!\gamma}^n$ denotes the 
$\cF_t^{Y^n}$-optional projection of $(\gamma_t^n)_{t\in[0,T]}$. 
Further, for integers $m\geq 1$ let again 
$\Omega_m:= \{|Y_0|\leq m\} \in \cF_0^Y$. 
Thus by Doob's inequality and Jensen's inequality for optional projections, 
for $r>1$ we have, in the same way as in Step I, 
$$
G_{m}(u^n)
:=\E\sup_{t\in [0,T]}\int_{\bR^d}|x|^{2}u^n_t(x)\,dx{\bf1}_{\Omega_m} 
= \E\sup_{t\in [0,T]}
\E(|X_t^n|^{2}|\cF_t^{Y^n})({^o\!\gamma}^n_t)^{-1}{\bf1}_{\Omega_m}
$$
$$
\leq N\big(\E\sup_{t\in [0,T]}|X_t^n|^{r}{\bf1}_{\Omega_m} \big)^{2/r} 
\quad
\text{for $t\in[0,T]$}  
$$
with a constant $N = N(r,C)$, where $C$ is the constant 
from \eqref{24.10.21.3}, which  depends only on $K$, $r$ and $T$.
Taking $r$ from Assumption \ref{assumption nu}, by Young's inequality, 
\eqref{bound_Z} 
for all $m$ and $n$ we have 
\begin{equation}                                                               \label{21.10.2021.2}
G_{m}(u^n)\leq N\big(m^{r}+\E|X^n_0|^{r})\big)
\leq N\big(m^{r}+\sup_n\E|X^n_0|^{r}\big)=:N'(m)<\infty. 
\end{equation}
By Mazur's theorem there exists a sequence of convex linear 
combinations $v^k = \sum_{i=1}^{k}c_{i,k}u^i$ converging to $u$ 
(strongly) in $\bL_{p,q}$ as $k\to\infty$. 
Thus there exists a subsequence, also denoted 
by $(v^k)_{k=1}^{\infty}$ which converges to $u$ 
for $P\otimes dt\otimes dx$-almost every $(\omega,t,x)$. 
Then, by Fatou's lemma and \eqref{21.10.2021.2},
$$
G_m(u) = 
\E\sup_{t\in [0,T]}\int_{\bR^d}|x|^{2}
\liminf_{k\to\infty} v_t^{k}(x)\,dx
{\bf1}_{\Omega_m}
\leq \liminf_{k\to\infty}G_m(v^{k})
$$
$$
 = \liminf_{k\to\infty}\sum_{i=1}^{k} c_{k,i}G_{m}(u^i)
 \leq N'(m)
 \quad
 \text{for each integer $m\geq1$},
$$
which proves \eqref{24.10.21.4}. Next, due to 
Lemma \ref{lemma E sup L1}, using $|B^n|\leq|B|\leq K$, we have 
\begin{equation}
\sup_{n\in\bN}\E\sup_{t\in[0,T]}|u_t^n|_{L_1}\leq N,
\label{equ L1 sup}
\end{equation}
for a constant $N=N(d,K,T)$.
The estimate above implies that $u^n \in \bL_{1,q}$ for all $q\geq 1$.
Returning to the sequence $(v^k)_{k\in\bN}\subset\bL_{1,q}\cap\bL_{p,q}$ 
converging point-wise to $u$ for 
$P\otimes dt\otimes dx$-almost every $(\omega,t,x)$, 
we can compute by use of Fatou's lemma
$$
\E\essup_{t\in [0,T]}|u_t|_{L_1} 
= \E\essup_{t\in [0,T]}|\liminf_{k\to\infty}v^k_t|_{L_1}
\leq \liminf_{k\to\infty}\E\sup_{t\in [0,T]}|v^k_t|_{L_1}
$$
\begin{equation}
\label{6.11.2021.2}
\leq\liminf_{k\to\infty}\sum_{i=1}^kc_{i,k}\E\sup_{t\in [0,T]}|u^i_t|_{L_1}\leq N,
\end{equation}
with the constant $N$ from \eqref{equ L1 sup}. 
As also \eqref{24.10.21.4} holds and since $u$ is 
in particular an $L_2$-solution to \eqref{equdZ} 
we can apply Lemma \ref{lemma usup} to see 
that indeed for all $t\in [0,T]$, $u_t = d\mu_t/dx$ 
almost surely and thus $\pi_t = u_t{^o\!\gamma}_t$. 
This finishes the proof.
\end{proof}

Inspecting the proof above, together with Lemma \ref{lemma 13.10.2021}, 
it is useful to make the following observations.

\begin{corollary}
\label{lemma convergence 2}
Let the conditions of Theorem \ref{theorem 1} hold. 
Assume the initial conditional density 
$\pi_0 = P(X_0\in dx|\cF^Y_0)$ additionally satisfies 
$\E|\pi_0|_{W^m_p}^p<\infty$ for some integer $m\geq 0$.
Then there exist sequences
$$
(X_0^n)_{n=1}^\infty, ((X_t^n,Y_t^n)_{t\in [0,T]})_{n=1}^\infty,\quad\text{as well as}\quad(\pi_0^n)_{n=1}^\infty\quad \text{and}\quad ((\pi_t^n)_{t\in [0,T]})_{n=1}^\infty
$$
such that the following are satisfied:\newline
(i) For each $n\geq 1$ the coefficients 
$b^n,B^n,\sigma^n,\rho^n,\xi^n$ and $\eta^n$, 
defined in \eqref{truncated coeffs}, 
satisfy Assumptions \ref{assumption SDE} 
and \ref{assumption p} with $K_1=0$ and constants 
$K_0'=K'_0(n,L, K, K_0, K_1,K_{\xi},K_{\eta})$ 
 and $L'=L'(K,K_0, K_1, L,K_{\xi},K_{\eta})$ 
 in place of $K_0$ and $L$, as well as
 Assumption \ref{assumption estimates} 
 with $\lambda'=\lambda'(\lambda,K_0,K_1,K_\xi,K_\eta)$ in place of $\lambda$. 
 Moreover, for each $n\geq 1$ they satisfy the support 
 condition \eqref{supp_condition} 
 of Lemma \ref{lemma compact} with some $R>0$ depending only on $n$.\newline
(ii) For each $n\geq 1$ the random variable 
$X_0^n$ is $\cF_0$-measurable and such that 
$$
\lim_{n\to\infty} X_0^n = X_0\,\, ,
\omega \in \Omega,\quad\text{and}\quad \E|X_0^n|^r 
\leq N(1+\E|X_0|^r)
$$
with a constant $N$ independent of $n$.
\newline
(iii) $Z_t^n=(X_t^n,Y_t^n)$ is the solution to \eqref{system_1} 
with the coefficients $b^n,B^n,\sigma^n,\rho^n,\xi^n$ and $\eta^n$ 
in place of $b,B,\sigma,\rho,\xi$ and $\eta$, respectively, 
and with initial condition $Z_0^n = (X_0^n,Y_0)$.
\newline
(iv) For each $n\geq 1$ we have 
$\pi_0^n = P(X_0^n\in dx|\cF^Y_0)/dx$, $\pi_0^n(x)=0$ for $|x|\geq n+1$ and 
$$
\lim_{n\to\infty}|\pi^n_0-\pi_0|_{\bW^m_p}=0.
$$\newline
(v) For each $n\geq 1$ there exists an $L_r$-solution $u^n$ to \eqref{equdZ}, $r=2,p$, with initial condition $\pi^n_0$, such that $u^n$ is the unnormalised conditional density of $X^n$ given $Y^n$, almost surely
$$
u_t^n(x)=0 \quad\text{for $dx$-a.e. $x\in\{x\in\bR^d:|x|\geq\bar R\}$ for all $t\in[0,T]$} 
$$
with a constant $\bar R=\bar R(n,K,K_0,K_\xi,K_\eta)$ and 
\begin{equation}
\label{28.5.22}
\E\sup_{t\in [0,T]}|u^n_t|_{L_p}^p\leq N\E|\pi_0^n|^p_{L_p}
\end{equation} 
with a constant 
$N=N(d, d',K, L, K_{\xi}, K_{\eta}, T, p,\lambda,|\bar\xi|_{L_2}, |\bar\eta|_{L_2})$. Moreover,
$$
u^n\rightarrow  u
\quad
\text{weakly in $\bL_{r,q}$ for $r=p,2$ and all integers $q>1$},
$$
where $u$ is the unnormalised conditional density of $X$ given $Y$, satisfying \eqref{28.5.22} with the same constant $N$ and $u$ in place of $u^n$. 
\newline
(vi) Consequently, for each $n\geq 1$ and $t\in [0,T]$ we have 
$$
\pi^n_t = P(X_t^n\in dx|\cF^{Y^n}_t)/dx = u^n_t(x){^o\!\gamma_t^n},\quad\text{almost surely},
$$
as well as
$$
\pi_t = P(X_t\in dx|\cF^{Y}_t)/dx = u_t(x){^o\!\gamma_t},\quad\text{almost surely},
$$
where ${^o\!\gamma^n}$ and ${^o\!\gamma}$  are cadlag positive normalising processes.
\end{corollary}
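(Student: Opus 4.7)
The plan is to read the statement off the construction already performed in Step III of the proof of Theorem \ref{theorem 1}, enhanced by the full force of Lemma \ref{lemma 13.10.2021}. The only substantively new ingredient is upgrading the $\bL_p$-convergence of $\pi_0^n$ to $\bW^m_p$-convergence; everything else is an accounting exercise to record the dependencies of the constants in the form stated.

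First I would construct the truncated coefficients by the formulas \eqref{truncated coeffs}, using $\chi_n(z)=\chi(|z|/n)$ on $(b,B,\sigma,\rho)$ and $\bar\chi_n=\kappa^n(x)\chi(|y|/n)$ on $(\eta,\xi)$. Lemma \ref{lemma 8.7} applied to the globally Lipschitz coefficients, and Lemma \ref{lemma Sandy} applied to the jump coefficients viewed as functions of $x$ (uniformly in the other variables), show that Assumptions \ref{assumption SDE} and \ref{assumption p} hold for the truncated system with $K_1=0$ and with the claimed constants $K'_0$, $L'$, and that Assumption \ref{assumption estimates}(ii) holds with the claimed $\lambda'$. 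Assumption \ref{assumption estimates}(i) is preserved because multiplication by a smooth compactly supported cut-off preserves uniform equicontinuity of $D_x\xi$ and $D_x\eta$ on bounded sets. The support condition \eqref{supp_condition} is immediate from the supports of $\chi_n$ and $\kappa^n$. This establishes (i).

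Next, applying Lemma \ref{lemma 13.10.2021} to $(X_0,Y_0)$ under the additional hypothesis $\E|\pi_0|_{W^m_p}^p<\infty$, I obtain the sequence $X_0^n$ together with the conditional densities $\pi_0^n=P(X_0^n\in dx|\cF_0^Y)/dx$ supported in $\{|x|\leq n+1\}$, the pointwise convergence $X_0^n\to X_0$, the moment bound $\E|X_0^n|^r\leq N(1+\E|X_0|^r)$, and, crucially, the upgraded convergence $|\pi_0^n-\pi_0|_{\bW^m_p}\to 0$. This last is the one genuinely new point relative to Step III and is the reason the hypothesis of Lemma \ref{lemma 13.10.2021} has been formulated so as to cover the $W^m_p$ scale. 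This gives (ii) and (iv). For (iii) one simply defines $(X^n,Y^n)$ to be the unique solution of \eqref{system_1} with the truncated coefficients and initial value $(X_0^n,Y_0)$; existence and uniqueness are assured by Assumption \ref{assumption SDE} verified in (i).

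Finally, for (v) and (vi), since (i) puts the truncated system into the scope of Corollary \ref{corollary 1.3.4.22}, that corollary provides, for each $n$, an $L_r$-solution $u^n$ (for $r=2,p$) to the Zakai equation with initial condition $\pi_0^n$, compactly supported in a ball of radius $\bar R(n,K,K_0,K_\xi,K_\eta)$ and satisfying the bound \eqref{28.5.22}. Theorem \ref{theorem Z1} and Remark \ref{remark gamma}, applied to the truncated system, identify $u^n_t$ with the unnormalised conditional density of $X^n_t$ given $\cF^{Y^n}_t$, and yield $\pi^n_t=u^n_t\,{^o\!\gamma^n_t}$ almost surely. The weak convergence $u^n\to u$ in $\bL_{r,q}$ for every integer $q>1$, together with the identification of $u$ as the unnormalised conditional density of $X_t$ given $\cF^Y_t$, is precisely what was proved in Step III of the proof of Theorem \ref{theorem 1} via the convergence of the functionals $F^n_i\to F_i$. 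The bound \eqref{28.5.22} for $u$ follows from the one for $u^n$ by weak lower semicontinuity, and $\pi_t=u_t\,{^o\!\gamma_t}$ is Remark \ref{remark gamma} applied to the untruncated system. The only potential obstacle is confirming that the equicontinuity of $D_x\xi$, $D_x\eta$ survives truncation by $\kappa^n\chi(|y|/n)$, but a short direct computation using the Leibniz rule and uniform bounds on $D^k\kappa^n$ on $\{|x|\leq Re^{1/\epsilon}\}$ suffices.
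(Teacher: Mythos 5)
Your proposal is correct and follows essentially the same route as the paper: the paper offers no separate proof of this corollary, presenting it explicitly as observations read off Step III of the proof of Theorem \ref{theorem 1} together with Lemma \ref{lemma 13.10.2021}, which is exactly what you do (truncation via Lemmas \ref{lemma 8.7} and \ref{lemma Sandy} for (i), Lemma \ref{lemma 13.10.2021} in the $\bW^m_p$ scale for (ii) and (iv), Corollary \ref{corollary 1.3.4.22} plus the Step II/Step III identification and weak-compactness argument for (v) and (vi)). The only mild compression is attributing the identification of $u^n$ with the unnormalised conditional density to Theorem \ref{theorem Z1} and Remark \ref{remark gamma} alone, whereas it also uses the Step I/Step II uniqueness argument via Lemma \ref{lemma usup}; since you invoke Step III, which contains exactly that, this is not a gap.
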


{\bf Acknowledgements.} The authors thank two anonymous referees for their useful comments and suggestions, which improved the quality of the exposition.\newline

{\bf Conflict of interest.} 
No funding was received to assist with the preparation of this manuscript. 
The authors declare they have no financial interests. 
Authors Alexander Davie and Fabian Germ have no other conflicts of interests to declare. 
Author Istv\'an Gy\"ongy is a member of the editorial board of this journal.

\end{document}